\numberwithin{equation}{section}
\theoremstyle{plain}
\newtheorem{theorem}{Theorem}[section] 
\newtheorem{lemma}[theorem]{Lemma} 
\newtheorem{proposition}[theorem]{Proposition} 
\newtheorem{proposition-definition}[theorem]{Proposition-Definition} 
\newtheorem{corollary}[theorem]{Corollary} 
\theoremstyle{definition}
\newtheorem{definition}[theorem]{Definition} 
\newtheorem{example}[theorem]{Example} 
\theoremstyle{remark}
\newtheorem{remark}[theorem]{Remark} 
\newcommand{\Aff}{\mathbb{A}}
\newcommand{\wt}{\widetilde}
\newcommand{\CC}{\mathbb{C}}
\newcommand{\EE}{\mathbb{E}}
\newcommand{\FF}{\mathbb{F}}
\newcommand{\GG}{\mathbb{G}}
\newcommand{\PP}{\mathbb{P}}
\newcommand{\QQ}{\mathbb{Q}}
\newcommand{\RR}{\mathbb{R}}
\newcommand{\ZZ}{\mathbb{Z}}
\newcommand{\sgn}{\operatorname{sgn}}
\newcommand{\rank}{\operatorname{rank}}
\newcommand{\ord}{\operatorname{ord}}
\newcommand{\vol}{\operatorname{vol}}
\newcommand{\abs}[1]{\lvert #1 \rvert}
\newcommand{\card}[1]{\lvert #1 \rvert}
\newcommand{\norm}[1]{\lVert #1 \rVert}
\newcommand{\eps}{\epsilon}
\newcommand{\Frac}{\operatorname{Frac}}
\newcommand{\Hom}{\operatorname{Hom}}
\newcommand{\Spec}{\operatorname{Spec}}
\newcommand{\Supp}{\operatorname{Supp}}
\newcommand{\Gal}{\operatorname{Gal}}
\newcommand{\Pic}{\operatorname{Pic}}
\newcommand{\map}{\operatorname}
\newcommand{\ol}{\overline}
\newcommand{\inmat}[1]{\left[\begin{smallmatrix} #1 \end{smallmatrix}\right]}
\newcommand{\GL}{\operatorname{GL}}
\newcommand{\SL}{\operatorname{SL}}
\newcommand{\defeq}{\colonequals}
\newcommand{\maps}{\colon}
\newcommand{\belongs}{\subseteq}
\newcommand{\contains}{\supseteq}
\newcommand{\set}[1]{\{#1\}}
\DeclareMathOperator*{\argmax}{arg\,max}
\newcommand{\Rat}{\operatorname{Rat}}
\renewcommand{\div}{\map{div}}
\newcommand{\ucord}{u_1}
\newcommand{\vcord}{u_2}
\newcommand{\iboundary}{\mathfrak{i}}
\newcommand{\jboundary}{\mathfrak{j}}
\newcommand{\kboundary}{\mathfrak{k}}
\newcommand{\lboundary}{\mathfrak{l}}
\newcommand{\ispecial}{k}
\newcommand{\jspecial}{\ell}
\newcommand{\kspecial}{\mathfrak{K}}
\newcommand{\lspecial}{\mathfrak{L}}
\newcommand{\phase}{\mathsf{e}}
\newcommand{\efield}{E}
\newcommand{\adele}{\mathbf{A}}
\newcommand{\lbundle}{\mathcal{L}}
\newcommand{\oset}{\mathfrak{o}}
\newcommand{\openint}{\operatorname{Int}}
\newcommand{\Top}{\mathcal{T}}
\newcommand{\Kbad}{\mathbf{K}}
\newcommand{\Sbad}{\mathsf{S}}
\newcommand{\rbad}{\mathsf{r}}
\newcommand{\Nbad}{\mathsf{N}}
\newcommand{\Mbad}{\mathbf{M}}
\newcommand{\rord}{e_1}
\newcommand{\sord}{e_2}
\newcommand{\unitone}{\mathsf{w}_1}
\newcommand{\unittwo}{\mathsf{w}_2}
\newcommand{\unitthree}{\mathsf{w}_3}
\newcommand{\tloc}{\mathsf{t}}
\newcommand{\uloc}{\upsilon}
\newcommand{\redpur}{\rho_{\ZZ_p^{\operatorname{ur}}}}
\newcommand{\redp}{\rho_{\ZZ_p}}
\newcommand{\redR}{\rho_R}
\newcommand{\cfinal}{\mathsf{c}}
\newcommand{\Cfinal}{\mathsf{C}}
\title{Asymptotic growth of translation-dilation orbits}
\author{Victor Y. Wang}
\date{}
\address{Courant Institute, 251 Mercer Street, New York 10012, USA}
\address{IST Austria, Am Campus 1, 3400 Klosterneuburg, Austria}
\email{vywang@alum.mit.edu}
\subjclass{Primary 14G05; Secondary 11F72, 11G25, 11G50, 14M27}
\keywords{Manin conjectures, automorphic functions, special divisors, biases, cancellation}
\begin{document}

\begin{abstract}
By studying some Clausen-like multiple Dirichlet series,
we complete the proof of Manin's conjecture for sufficiently split smooth equivariant compactifications of the translation-dilation group over the rationals.
Secondary terms remain elusive in general.
\end{abstract}

\maketitle

\vspace*{-1.5em}

\setcounter{tocdepth}{1}
{\scriptsize
\begin{multicols}{2}
\tableofcontents
\end{multicols}
}
\setcounter{tocdepth}{3}

\section{Introduction}
\label{SEC:intro}

Even for rational projective varieties over $\QQ$
(e.g.~the Fermat cubic surface \cite{elkiescomplete}),
Manin's conjecture (see \cite{franke1989rational}) remains extremely difficult in general,
due to the arithmetic complexity of height functions governing point counts.
Hope increases
in the presence of symmetry or other favorable structure.
The present paper concerns one-sided \emph{equivariant compactifications} of a non-abelian algebraic group $G$;
these are defined to be projective $G$-varieties $\mathcal{Y}$ equipped with a $G$-equivariant open immersion $G\to \mathcal{Y}$ of dense image.

Cases of \emph{two-sided} compactifications
(e.g.~\cites{shalika2015height,shalika2007rational,gorodnik2008manin}),
and especially cases where the group in question is \emph{abelian}
(e.g.~\cites{batyrev1995manin,Breteche,chambert2002distribution}),
have already received fairly comprehensive treatments,
thanks to the simpler (yet deep) representation-theoretic quantities involved.
This is explained in \cites{tanimoto2012height,takloo2016distribution}.
For us, a certain infinite-dimensional representation (see \cite{tanimoto2012height}*{Proposition~3.1}; cf.~\cite{folland2016course}*{\S6.7, \S7.6; \cite{he2020representation}} over $\RR$) enjoys a subtle significance first explored, but not fully isolated, in \cite{tanimoto2012height}.

We build on \cite{tanimoto2012height}.
From now on, let $G = \set{\inmat{a & b \\ 0 & 1}} \belongs \GL_2$ be the $ax+b$ group,
viewed as an algebraic group over $\QQ$.
Explicitly, the group law on $(a,b), (\ucord,\vcord) \in G$ is
\begin{equation}
\label{EQN:group-law-on-G}
(a,b) \cdot (\ucord,\vcord) = (a\ucord, a\vcord+b).
\end{equation}
Let $X$ be a smooth, projective, right-sided equivariant compactification of $G$ over $\QQ$; so $G$ acts on $X$ from the right, and the inclusion $G\to X$ is $G$-equivariant on the right.
See \S\ref{SEC:examples} for some examples and constructions of $X$.
In this paper, we resolve key issues from \cite{tanimoto2012height},
and establish Manin's conjecture
(via the dense open set $G$)
for all sufficiently split $X$.

\begin{theorem}
\label{THM:main-Manin-application}
Assume $X$ is strictly split (Definition~\ref{DEFN:strictly-split-X}).
If $\mathsf{H}$ is a standard Weil height (Definition~\ref{DEFN:class-of-standard-Weil-heights}) associated to the anticanonical line bundle $K_X^{-1}$,
then
\begin{equation}
\label{sharp-cutoff}
\lim_{B\to \infty}
\frac{\#\set{x\in G(\QQ): \mathsf{H}(x) \le B}}{B (\log{B})^{\rank(\Pic(X)) - 1}}
= \mathcal{A}_{X,\mathsf{H}},
\end{equation}
where $\mathcal{A}_{X,\mathsf{H}}>0$ is Peyre's constant \eqref{EQN:Peyre-constant}.
Moreover, if $w\in C^\infty_c(\RR)$ and $B\ge 2$, then
\begin{equation}
\label{EQN:log-saving-Manin-Peyre-conjecture-for-standard-anticanonical-Weil-heights}
\sum_{x\in G(\QQ)} w{\left(\frac{\mathsf{H}(x)}{B}\right)}
= B (\log{B})^{\rank(\Pic(X)) - 1}
\left(\mathcal{A}_{X,\mathsf{H}}\int_0^\infty w(t)\, dt
+ O_{X,\mathsf{H},w}{\left(\frac{1}{\log{B}}\right)}\right).
\end{equation}
\end{theorem}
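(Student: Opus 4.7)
Both assertions will follow from sufficient analytic control on the height zeta function
$$Z(s) := \sum_{x \in G(\QQ)} \mathsf{H}(x)^{-s},$$
which converges absolutely in some right half-plane. The core analytic statement I would aim to prove (and treat as the technical heart of the paper) is that $Z(s)$ extends meromorphically to a half-plane $\Re(s) > 1 - \delta$ for some $\delta > 0$, with a pole of order exactly $\rho := \rank(\Pic(X))$ at $s = 1$, leading Laurent coefficient equal to $\mathcal{A}_{X,\mathsf{H}}\,(\rho-1)!$, and polynomial growth on vertical lines away from $s = 1$.

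Granting this, the smoothed statement \eqref{EQN:log-saving-Manin-Peyre-conjecture-for-standard-anticanonical-Weil-heights} follows from Mellin inversion. Setting $\widetilde{w}(s) := \int_0^\infty w(t)\,t^{s-1}\,dt$, which is entire and of rapid decay on vertical strips since $w \in C^\infty_c(\RR)$, one has
$$\sum_{x\in G(\QQ)} w\bigl(\mathsf{H}(x)/B\bigr) = \frac{1}{2\pi i}\int_{(\sigma)} \widetilde{w}(s)\,B^{s}\,Z(s)\,ds$$
for $\sigma$ in the region of absolute convergence. Shifting the contour to $\Re(s) = 1 - \delta'$ with $0 < \delta' < \delta$, the pole at $s = 1$ contributes $\mathcal{A}_{X,\mathsf{H}}\,\widetilde{w}(1)\,B\,(\log B)^{\rho-1}$ together with $O\!\left(B\,(\log B)^{\rho-2}\right)$ from the subleading terms in the Laurent expansion paired against $\widetilde{w}$ and its derivatives at $s=1$, while the shifted contour contributes $O(B^{1-\delta'})$, absorbed into the stated error. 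Since $\widetilde{w}(1) = \int_0^\infty w(t)\,dt$, this matches \eqref{EQN:log-saving-Manin-Peyre-conjecture-for-standard-anticanonical-Weil-heights}. For the sharp cutoff \eqref{sharp-cutoff}, one may either invoke a Delange-type Tauberian theorem (whose hypotheses are built into the continuation just described) or deduce it from the smoothed form by sandwiching the indicator of $[0,1]$ between compactly supported smooth majorants and minorants; either way, only the leading asymptotic survives, yielding the stated limit.

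The main obstacle is the meromorphic continuation and growth of $Z(s)$. Following the adelic framework developed by Chambert-Loir--Tschinkel for compactifications of algebraic groups and adapted to the $ax+b$ group in \cite{tanimoto2012height}, one expresses $Z(s)$ as a spectral integral on $G(\QQ) \backslash G(\adele)$ of an Eisenstein-like generating function assembled from local height data. Since $G$ is non-abelian, the expansion features a one-dimensional character contribution, handled in the style of Batyrev--Tschinkel's toric analysis via the analysis on the Picard torus, together with an infinite-dimensional principal-series contribution; unfolding the latter yields the Clausen-like multiple Dirichlet series of the abstract. Establishing their joint meromorphic continuation and controlling their growth on vertical lines --- thereby isolating a pole at $s = 1$ of the expected order with the correct residue --- is the hard step, precisely the one left open by \cite{tanimoto2012height}. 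This is also where the strictly split hypothesis enters: it supplies enough independent boundary divisors for the leading singularities of the principal-series and character contributions to combine into Peyre's constant without residual obstructions from non-split boundary components.
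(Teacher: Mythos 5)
Your framing of the reduction is not the one the paper actually uses, and the difference is not cosmetic.  You propose as ``the core analytic statement'' that the full height zeta function
$Z(s)=\sum_{x\in G(\QQ)}\mathsf{H}(x)^{-s}$
extends meromorphically past $\Re(s)=1$ with a pole of order exactly $\rho=\rank(\Pic(X))$ at $s=1$, and then you shift the contour past that pole.  The paper \emph{does not} establish such a continuation, and in fact explicitly notes that it cannot: ``the key missing ingredient seems to be to meromorphically continue (\emph{not just bound}) series such as $\sum_{m,n\ge1}\phase(m/n)\tau_i(m)\tau_j(n)m^{-s_1}n^{-s_2}$,'' and consequently ``secondary terms\ldots elude us.''  What the paper proves instead is a split statement.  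The abelian piece $\mathsf{Z}_0$ \emph{does} have the clean continuation and residue you describe (Proposition~\ref{PROP:Z_0-partial-canonical-main-terms}, combined with Lemma~\ref{LEM:snc-implies-nice-densities-implies-Tamagawa-extraction}), and contour-shifting it as in~\eqref{EQN:main-final-Z_0-result} yields the whole main term $\mathcal{A}_{X,\mathsf{H}}\,w^\vee(1)\,B(\log B)^{\rho-1}$.  The non-abelian piece $\mathsf{Z}_1$, which carries the Clausen-like multiple Dirichlet series~\eqref{EXPR:final-expanded-adelic-data-sum}, is \emph{never} meromorphically continued.  Instead, Lemmas~\ref{LEM:range-3}, \ref{LEM:range-2}, and~\ref{LEM:range-1} decompose its $\mathcal{P}_1,\mathcal{P}_2,\mathcal{P}_3$ pieces into terms holomorphic on a suitable region (in $\mathcal{H}_\star(\delta)$) plus a residual infinite sum over $m_i,n_\jspecial$ that does \emph{not} converge uniformly near $s=1$.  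The paper then bounds the Mellin integral of that residual sum directly in \S\ref{SEC:final-reductions}, showing via a dyadic counting estimate and the crucial Proposition~\ref{PROP:critical-index-upper-bound} (namely $\kspecial+\card{J^q_1}\le\rho$) that the $\mathsf{Z}_1$-contribution to the smoothed count is $O(B(\log B)^{\rho-2})$.

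So the gap in your proposal is concrete: you assume a meromorphic continuation of $Z(s)$ past $\Re(s)=1$ which is neither proved in the paper nor currently available by known techniques, and which would in fact yield a strictly stronger conclusion (a power-saving secondary term) than the theorem claims.  If one could obtain it, your contour shift would indeed give the result; but the entire technical content of \S\S\ref{SEC:non-archimedean-local-calculations}--\ref{SEC:final-reductions} exists precisely to \emph{avoid} needing it, replacing continuation of $\mathsf{Z}_1$ by a family of direct estimates: the new local coordinates and exponential-sum cancellation of Lemma~\ref{LEM:large-p-local-analytic-coordinates-generically} and Lemmas~\ref{LEM:denominator-bias}--\ref{LEM:generic-local-factor-estimate}, the local constancy of Lemma~\ref{LEM:local-constancy}, the Weyl inequality for monomials (Proposition~\ref{PROP:Weyl-inequality-for-monomials-in-several-variables}), the equidistribution estimate (Proposition~\ref{PROP:Poisson-sum-based-equidistribution-estimate}), and the partition into $\mathcal{P}_1,\mathcal{P}_2,\mathcal{P}_3$.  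None of this appears in your proposal, and it cannot be recovered from your framing, since it is designed specifically for a situation in which the pole structure at $s=1$ remains unknown.  Your Mellin-inversion scaffolding, the role of the strictly-split hypothesis, and the recovery of~\eqref{sharp-cutoff} by smooth sandwiching are all correctly identified and match the paper.
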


\begin{remark}
\label{remove-hypotheses}
We work in ``geometric generality, realized arithmetically over $\QQ$''
to keep ideas clean and clear.
But our arguments may well extend
to arbitrary number fields.
The splitness condition on $X$ would then always be satisfiable after base change;
alternatively, it
might be directly removable with enough extra notation and combinatorial effort.
\end{remark}

\begin{remark}
Generalizing our work to equivariant compactifications of homogeneous spaces of $G$ (see e.g.~\cite{derenthal2015equivariant} for the distinction) may require serious new ideas, because
(1) a quotient $H\backslash G$ of $G$ need not be a group (unlike in the case of abelian groups);
and (2) in general, $(H\backslash G)(\QQ)$ includes Galois-invariant elements of $H(\ol{\QQ})\backslash G(\ol{\QQ})$, not just $H(\QQ)\backslash G(\QQ)$.
\end{remark}

To compare our Theorem~\ref{THM:main-Manin-application} with \cite{tanimoto2012height}*{Theorem~5.1}, we need to elaborate on the geometry of $X$.
Certainly $X$ is a rational surface, since it is birational to $G$.
View the coordinates $a$, $b$ on $G$ as rational functions on $X$.
For $f\in \Rat(X)$, let $\div_0(f)$, $\div_\infty(f)$ be the zero and polar Weil divisors of $f$, respectively; so $\div(f) = \div_0(f) - \div_\infty(f)$.

The boundary $D\defeq X \setminus G$
coincides with the indeterminacy locus of the rational map $(a,b)\maps X \dashrightarrow G$.
So by the algebraic Hartogs lemma \cite{stacks-project}*{\href{https://stacks.math.columbia.edu/tag/0BCS}{Tag~0BCS}}, we have
\begin{equation}
\label{EQN:basic-boundary-decomposition}
D = \Supp(\div(a)) \cup \Supp(\div_\infty(b)).
\end{equation}

Write $D = \bigcup_{\jboundary\in J} D_\jboundary$,
where the $D_\jboundary$ are irreducible over $\QQ$.
Then \cite{tanimoto2012height}*{Theorem~5.1} proves our Theorem~\ref{THM:main-Manin-application} for all split $X$ satisfying the following conditions, which we list in roughly increasing order of significance:
(i) the divisor $D\cup \Supp(\div_0(b))$ has strict normal crossings,
(ii) the divisor $\div(a)$ is reduced,
and (iii) the mysterious condition
(for each $\jboundary\in J$)
\begin{equation}
\label{INEQ:mysterious-numerical-condition}
\ord_{D_\jboundary}(a) < 0 \Rightarrow \ord_{D_\jboundary}(b) < \ord_{D_\jboundary}(a).
\end{equation}
We remark that for split $X$,
the condition (i) implies in particular that $X$ is strictly split.

The condition \eqref{INEQ:mysterious-numerical-condition}
is related to the positivity of $K_X^{-1}$;
see Proposition~\ref{PROP:upper-bound-on-anticanonical-Weil-divisor} below.
Similar conditions, involving variables and degrees, are familiar in the circle method.
Later (in \S\ref{SEC:new-geometry}) we will prove the following result, which we state now to give numerical context for \eqref{INEQ:mysterious-numerical-condition}:

\begin{proposition}
\label{PROP:apply-lower-bound-on-anticanonical-Weil-divisor}
Let $\jboundary\in J$ and $c\in \QQ$.
Then $\ord_{D_\jboundary}(b-c) \le \ord_{D_\jboundary}(a)$.
\end{proposition}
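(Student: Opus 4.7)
The plan is to exploit the $G$-equivariance of $X$ to produce a valuation identity via a well-chosen group element, and then read off the bound from the non-archimedean triangle inequality.

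The first step is to observe that for every $g \in G(\QQ)$, right translation $R_g$ preserves each $\QQ$-irreducible boundary component $D_\jboundary$ setwise. Indeed, the permutation action of $G$ on the finite (discrete) set of such components factors through a morphism from the connected scheme $G$ to a discrete set, so it is constant, and hence trivial since $R_e = \mathrm{id}$. Consequently, $R_g$ fixes the generic point $\eta_\jboundary$ of $D_\jboundary$ and induces an automorphism of the local DVR $\mathcal{O}_{X, \eta_\jboundary}$, which gives the invariance $\ord_{D_\jboundary}(R_g^* f) = \ord_{D_\jboundary}(f)$ for every nonzero $f \in \Rat(X)$.

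The second step is to make the right choice of $g$. I would take $g = (1, t)$ for an arbitrary $t \in \QQ^\times$, which by the group law \eqref{EQN:group-law-on-G} acts via $R_g^* a = a$ and $R_g^* b = at + b$, so $R_g^*(b - c) = at + b - c$. Combining Step~1 with the identity $at = (at + b - c) - (b - c)$ and the ultrametric inequality for $\ord_{D_\jboundary}$ then yields $\ord_{D_\jboundary}(a) \ge \ord_{D_\jboundary}(b-c)$ in one line.

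I do not expect a real obstacle. The only slightly non-formal ingredient is the connectedness argument in Step~1, but it is a standard consequence of $G$ being connected; everything else is a short formal manipulation with the valuation.
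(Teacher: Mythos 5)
Your proof is correct and takes a genuinely different route from the paper. The paper derives the inequality as a consequence of two canonical-divisor bounds: Proposition~\ref{PROP:poles-of-left-invariant-top-form} (the left-invariant top form $\omega/a$ has a pole of order $\ge 1$ along every $D_\jboundary$, i.e.\ $\mathsf{d}_\jboundary \ge 1 - \ord_{D_\jboundary}(a)$) combined with Proposition~\ref{PROP:upper-bound-on-anticanonical-Weil-divisor} ($\mathsf{d}_\jboundary \le 1 - \ord_{D_\jboundary}(b-c)$, proven by a local computation of $\omega = db\,da/a$ over $\CC$). You instead argue directly from right $G$-equivariance: the invariance $D_\jboundary G = D_\jboundary$ (which the paper does record, in the paragraph introducing $\Pic^G(X)$, though not in this proof), the action of $g=(1,t)$ giving $R_g^*(b-c) = at + b - c$, and the ultrametric inequality applied to $at = (at+b-c) - (b-c)$. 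Your argument is shorter and more self-contained: it avoids any mention of $\omega$ or $K_X^{-1}$ and does not pass to $\CC$, whereas the paper's route is efficient in context because both auxiliary propositions are reused elsewhere in the paper. Both proofs establish the same inequality; you might note that your method is related in spirit to the right-action nonconstancy argument the paper later uses in Proposition~\ref{PROP:G-induced-nonconstancy-of-a-rational-function-on-Dj}.
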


When \eqref{INEQ:mysterious-numerical-condition} fails, we seem to need a new idea.
The main culprit, revealed by geometric calculations going beyond \cite{tanimoto2012height}, turns out to be pairs of divisors at which \eqref{INEQ:mysterious-numerical-condition} and a counterpart for $\ord_{D_\jboundary}(a)>0$ fail.
Relevant here are the \emph{special} divisors we now define.

\begin{definition}
\label{DEFN:special-divisors}
Given $\jboundary\in J$, call $D_\jboundary$ \emph{special} if
$\max_{c\in \QQ} \ord_{D_\jboundary}(b-c) = \ord_{D_\jboundary}(a)$.
\end{definition}

\begin{remark}
Given $c\in \QQ$ and a compactification $i\maps G\to X$,
the \emph{left} translate $i_c\maps G\to X$ given by $({}'a,{}'b) \mapsto i((1,c)({}'a,{}'b))$
satisfies ${}'a=a$ and ${}'b+c=b$.
This explains why $\QQ$-translates of $b$, not just $b$ itself, matter in our work.
\end{remark}

Suppose there are $\kspecial\ge 0$ special divisors with $\ord_{D_\jboundary}(a)<0$, and $\lspecial\ge 0$ special divisors with $\ord_{D_\jboundary}(a)>0$.
Then the main issue,
after new ``complexity-lowering'' non-archimedean calculations in \S\ref{SEC:non-archimedean-local-calculations} (in the spirit of \cites{heath1996new,
getz2018secondary,tran_thesis,wang2024_isolating_special_solutions})
relying on a new $G$-related source (Proposition~\ref{PROP:G-induced-nonconstancy-of-a-rational-function-on-Dj}) of local coordinates and cancellation in complete exponential integrals,
is to appropriately bound a class of multiple Dirichlet series including,
roughly,
\begin{equation}
\label{EXPR:desired-key-general-multiple-Dirichlet-series}
\sum_{\substack{\alpha = (m_1\cdots m_\kspecial)/(n_1\cdots n_\lspecial): \\
\textnormal{pairwise coprime }m_1,\dots,m_\kspecial,n_1,\dots,n_\lspecial\ge 1}}
\frac{\mathcal{F}(\alpha) \phase(\cfinal_0\alpha)}{m_1^{\beta'_1}\cdots m_\kspecial^{\beta'_\kspecial}}
\prod_{1\le \jspecial\le \lspecial} \frac{\phase(-\cfinal_\jspecial\alpha\bmod{\ZZ_{n_\jspecial}})}{n_\jspecial^{\gamma'_\jspecial}},
\end{equation}
for some constants $\cfinal_0,\cfinal_1,\dots,\cfinal_\lspecial\in \QQ$ and a suitable Fourier transform $\mathcal{F}\maps \RR_{>0} \to \CC$ of a reciprocal archimedean height function.
More precisely, a variant \eqref{EXPR:final-expanded-adelic-data-sum}
of the series \eqref{EXPR:desired-key-general-multiple-Dirichlet-series}
will appear in \S\ref{SEC:archimedean-endgame},
but for now we remain somewhat vague in order to sketch ideas.

To analyze \eqref{EXPR:desired-key-general-multiple-Dirichlet-series},
we first use a change of variables and a height derivative bound (Lemma~\ref{LEM:localized-regular-and-height-derivative-estimates})
in order to prove a decay bound
$(\alpha\,\frac{\partial}{\partial\alpha})^k \mathcal{F}(\alpha)
\ll_k (\alpha^\xi+\alpha^{-\xi})^{-1}$
for all integers $k\ge 0$,
for some small real $\xi>0$.
We then decompose \eqref{EXPR:desired-key-general-multiple-Dirichlet-series} into regions according to the quality of expected oscillation as $m_1,\dots,m_\kspecial,n_1,\dots,n_\lspecial$ vary in dyadic intervals.

When $\#\set{\cfinal_1,\dots,\cfinal_\lspecial} \ge 2$, we find cancellation in certain correlations of Kloosterman fractions; \cites{duke1997bilinear,bettin2018trilinear} need not apply, but reciprocity plus a multivariate Weyl-type inequality (Proposition~\ref{PROP:Weyl-inequality-for-monomials-in-several-variables}) for monomials suffices, though just barely when in the most lopsided ranges.
Ultimately, for certain $\beta'_1,\dots,\beta'_\kspecial,\gamma'_1,\dots,\gamma'_\lspecial = 1+O(s-1)$, the series \eqref{EXPR:desired-key-general-multiple-Dirichlet-series}
\emph{morally} has a pole of order $\le \rank(\Pic(X)) - 1$ at $s=1$.
Theorem~\ref{THM:main-Manin-application} then follows.

In our setting, one could express \eqref{EXPR:desired-key-general-multiple-Dirichlet-series} as a weighted average of Clausen zeta functions $\sum_{m\ge 1} \phase(m\theta) m^{-s}$ over a family of angles $\theta$.
Clausen functions for fixed $\theta$ can be studied in depth (see e.g.~\cite{knill2012analytic}), but obtaining uniform results may be difficult,
since the complexity of $\theta$ may vary wildly.
Our work smooths away such difficulties by
averaging.

It remains open to obtain a power-saving asymptotic expansion of \eqref{EQN:log-saving-Manin-Peyre-conjecture-for-standard-anticanonical-Weil-heights} in general;
secondary terms (cf.~\cites{heath1996new,getz2018secondary,tran_thesis}) elude us when $D$ is sufficiently complicated.
In our approach, the key missing ingredient seems to be to
meromorphically continue (not just \emph{bound})
series such as $\sum_{m,n\ge 1} \phase(m/n) \tau_i(m)\tau_j(n) m^{-s_1}n^{-s_2}$,
where $\tau_i$ is the $i$-fold divisor function.
Existing work on multiple Dirichlet series, such as \cite{diaconu2003multiple},
might help.
Takloo-Bighash
pointed out to us that
Pi's work in \cite{ramin},
which is based on the functional equation of the Hurwitz zeta function,
implies that $\sum_{m,n\ge 1} \phase(m/n) m^{-s_1} n^{-s_2}$,
which converges absolutely for $\Re(s_1),\Re(s_2)>1$,
has a meromorphic continuation to all $(s_1,s_2)\in \CC^2$.
This might generalize.

We remark that a homological stability result
for equivariant compactifications of solvable linear algebraic groups over $\CC$
is known \cite{BoyerHurtubiseMilgram}.
It would be interesting to try to identify any connections between
the methods of \cite{BoyerHurtubiseMilgram} and the present paper.

\subsection{Conventions}
\label{SUBSEC:conventions}

A \emph{variety} is an integral, separated scheme of finite type over a field.
We let $\Rat(\Upsilon)$ denote the field of rational functions on an integral scheme $\Upsilon$.
(If $\eta$ is the generic point of $\Upsilon$, then $\Rat(\Upsilon)=\mathcal{O}_{\Upsilon,\eta}$.)
A reduced, effective Weil divisor on $X$ has \emph{strict normal crossings} if
its irreducible components $\Upsilon_i$ are smooth,
the pairwise intersections $\Upsilon_i\cap \Upsilon_j$ are smooth of dimension $\le 0$,
and the triple intersections $\Upsilon_i\cap \Upsilon_j\cap \Upsilon_k$ are empty.

Let $\ZZ_n \defeq \prod_{p\mid n} \ZZ_p$ and $\QQ_n \defeq \prod_{p\mid n} \QQ_p$.
For $t\in \RR$, let $\phase(t) \defeq \exp(2\pi it)$.
For $y\in \QQ_n$, let $$\phase(y\bmod{\ZZ_n}) \defeq \phase(y'),$$
for any $y'\in \ZZ[1/n]$ with $y'\equiv y\bmod{\ZZ_n}$.
Define the additive automorphic character
$\psi = \prod_v \psi_v \maps \adele_\QQ \to \CC^\times$
by $\psi(x) \defeq \prod_v \psi_v(x_v)$ for $x=(x_v)_v\in \adele_\QQ$,
where $$\psi_\infty(x_\infty) \defeq \phase(-x_\infty),
\quad \psi_p(x_p) \defeq \phase(x_p\bmod{\ZZ_p}).$$
Here $p\in \Spec(\ZZ)$ denotes a prime, and $\infty$ denotes the real place of $\QQ$.

For $\mathcal{Q}_1\in \CC$ and $\mathcal{Q}_2\in \RR$,
we write $\mathcal{Q}_1\ll_\mathscr{P} \mathcal{Q}_2$,
or $\mathcal{Q}_2\gg_\mathscr{P} \mathcal{Q}_1$,
or $\mathcal{Q}_1 = O_\mathscr{P}(\mathcal{Q}_2)$,
to mean that $\abs{\mathcal{Q}_1} \le \mathcal{B} \mathcal{Q}_2$
for a real constant $\mathcal{B}=\mathcal{B}(\mathscr{P})>0$ depending on $\mathscr{P}$.
We write $\mathcal{Q}_1\asymp \mathcal{Q}_2$
to mean $\mathcal{Q}_1\ll \mathcal{Q}_2\ll \mathcal{Q}_1$.
For a statement $\mathcal{S}$, let $\bm{1}_\mathcal{S} \defeq 1$ if $\mathcal{S}$ holds, and $\bm{1}_\mathcal{S} \defeq 0$ otherwise.

There are some pieces of notation, such as the letters $f$ and $A$,
that we will sometimes number sequentially,
in order to avoid confusion between different contexts.

\section{Background}
\label{SEC:background}


Recall the setting of \S\ref{SEC:intro},
with $G$, $X$ specified as in the paragraph before Theorem~\ref{THM:main-Manin-application}.
We need some background on geometry, heights, and analysis,
drawn mostly from \cite{tanimoto2012height}*{\S\S1--3}.

\subsection{Geometry}

Let $g\in G$ act on elements $f\in \Rat(X)$ from the right:
$(fg)(x) \defeq f(xg^{-1})$.
There is an analogous left action on $\Rat(G) = \Rat(X)$,
but by default on $\Rat(X)$ we act from the right.
These actions on functions induce corresponding actions on differentials.

Let $K_X$ be the canonical line bundle on $X$.
It has two local sections of particular interest:
the right-invariant top form $\omega \defeq db\, da/a$ on $G$,
and the left-invariant top form $\omega/a$ on $G$.
For each $\jboundary\in J$, let $\mathsf{d}_\jboundary \defeq -\ord_{D_\jboundary}(\omega)$;
then $-\div(\omega) = \sum_{\jboundary\in J} \mathsf{d}_\jboundary D_\jboundary$.

%
Throughout the paper,
a useful bookkeeping tool is
the \emph{equivariant Picard group} $$\Pic^G(X)
= \bigoplus_{\jboundary\in J} \ZZ D_\jboundary;$$
see \cite{tanimoto2012height}*{\S1} or \cite{heller2015equivariant}*{Proposition~2.12(2)} for details, noting that $D_\jboundary G = D_\jboundary$.


\begin{proposition}
\label{PROP:compute-Picard-group-in-terms-of-boundary}
We have $\Pic(X_\efield) = \Pic^G(X_\efield)/\ZZ\div(a\vert_{X_\efield})$ for any field $\efield\contains \QQ$.
\end{proposition}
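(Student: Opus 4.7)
The plan is to exploit the standard excision sequence for Weil divisor classes on the smooth variety $X_\efield$ with boundary $D_\efield = X_\efield \setminus G_\efield$. Since $G$ is open in the projective variety $X$ and $G$ itself is affine, $D$ is of pure codimension $1$; after base change to $\efield$, its irreducible components freely generate $\Pic^G(X_\efield)$ by the analog of the definition recalled just before the proposition. The excision sequence then reads
\begin{equation*}
\Pic^G(X_\efield) = \bigoplus_{\jboundary'} \ZZ D_{\jboundary'} \xrightarrow{\;\phi\;} \Pic(X_\efield) \longrightarrow \Pic(G_\efield) \longrightarrow 0,
\end{equation*}
and everything reduces to computing $\Pic(G_\efield)$ and $\ker(\phi)$.

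For the first, I would observe that as a $\efield$-scheme (forgetting the group structure), the coordinates $(a,b)$ identify $G_\efield$ with $\mathbb{G}_{m,\efield}\times \Aff^1_\efield$, whose Picard group is trivial. Hence $\phi$ is surjective and $\Pic(X_\efield) \cong \Pic^G(X_\efield)/\ker(\phi)$.

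It remains to identify $\ker(\phi)$ as the subgroup of principal divisors on $X_\efield$ supported entirely on the boundary $D_\efield$. Such a principal divisor must come from a rational function $f\in \Rat(X_\efield)^\times = \efield(a,b)^\times$ which has no zero and no pole on $G_\efield$, i.e.~a unit $f\in \mathcal{O}(G_\efield)^\times$. Using the product structure above and the standard fact that units in a polynomial ring $R[b]$ over an integral domain $R$ reduce to units of $R$, one gets $\mathcal{O}(G_\efield)^\times = \efield[a^{\pm 1},b]^\times = \efield^\times\cdot a^{\ZZ}$. The constants contribute trivial divisors, so $\ker(\phi)$ is cyclic, generated by $\div(a\vert_{X_\efield})$, which is exactly the claim.

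The only points requiring genuine care, though not real obstacles, are the pure codimension-one nature of $D_\efield$ (needed so that the excision sequence takes the displayed form) and the unit computation $\mathcal{O}(G_\efield)^\times = \efield^\times \cdot a^{\ZZ}$; both are standard once the scheme-theoretic factorization $G\cong \mathbb{G}_m\times \Aff^1$ is in hand. Everything else is bookkeeping.
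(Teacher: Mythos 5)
Your proposal is correct and takes essentially the same approach as the paper: use $\Pic(G_\efield)=0$ to get surjectivity of $\Pic^G(X_\efield)\to\Pic(X_\efield)$, and identify the kernel with divisors of units on $G_\efield$, which are exactly $ca^k$ with $c\in\efield^\times$, $k\in\ZZ$. The paper just states this more tersely (phrasing the relations as $\efield$-morphisms $X_\efield\setminus D_\efield\to(\GG_m)_\efield$) whereas you spell out the excision sequence and the unit computation in $\efield[a^{\pm 1},b]$ explicitly.
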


\begin{proof}
Since $\Pic(G_\efield) = 0$,
the irreducible components of $D_\efield$ generate $\Pic(X_\efield)$.
(Here we identify a Weil divisor $W$ on $X_\efield$
with the invertible sheaf $\mathcal{O}_{X_\efield}(W)\belongs \Rat(X_\efield)$.)
Relations correspond to $\efield$-morphisms $X_\efield\setminus D_\efield \to (\GG_m)_\efield$,
namely $c a^k$ for $c\in \efield^\times$, $k\in \ZZ$.
\end{proof}

An important combinatorial role will be played by
the following closed cone:
$$\Lambda(X) \defeq \sum_{\jboundary\in J} \RR_{\ge 0} D_\jboundary
\belongs \Pic(X)\otimes \RR.$$
(In fact,
$\Lambda(X)$ is the \emph{cone of pseudo-effective divisors} of $X$,
by \cite{tanimoto2012height}*{Proposition~1.1(3)}.)
By the following result, $K_X^{-1}$ lies in $\Lambda^\circ(X)$, the interior of $\Lambda(X)$.

\begin{proposition}
[\cite{tanimoto2012height}*{Proposition~1.2}]
\label{PROP:poles-of-left-invariant-top-form}
We have $-\ord_{D_\jboundary}(\omega/a) \ge 1$ for all $\jboundary\in J$.

\end{proposition}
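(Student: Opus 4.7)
The plan is to convert the claim into a vanishing statement for a global section of $K_X^{-1}$, by exploiting that $\omega/a$ is left-invariant and that $G$ acts on $X$ from the right. On $G$, the form $\omega/a$ equals (up to sign) the wedge of the left-invariant $1$-forms $\eta \defeq da/a$ and $\xi \defeq db/a$; their dual frame consists of the left-invariant vector fields $v_1 \defeq a\,\partial/\partial a$ and $v_2 \defeq a\,\partial/\partial b$.

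The first step is to observe that $v_1$ and $v_2$ extend to regular global vector fields on $X$, because they are the fundamental vector fields for the right action of $G$ on $X$: differentiating the action morphism $X\times G\to X$ in the $G$-direction at the identity furnishes a morphism of vector bundles $\mathfrak{g}\otimes \mathcal{O}_X\to TX$, whose image consists of regular sections. Consequently $v_1\wedge v_2\in H^0(X, K_X^{-1})$, and since $v_1\wedge v_2$ is the dual frame to $\eta\wedge\xi$ on the dense open $G$, it is reciprocal to $\omega/a$ as a rational section, so $-\ord_{D_\jboundary}(\omega/a) = \ord_{D_\jboundary}(v_1\wedge v_2)$. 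It therefore suffices to show that $v_1\wedge v_2$ vanishes to order at least $1$ along each $D_\jboundary$.

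The core step is essentially tautological: each $D_\jboundary$ is $G$-invariant (it generates a summand of $\Pic^G(X)$), so every one-parameter subgroup of $G$ preserves $D_\jboundary$ setwise, which forces the fundamental vector fields $v_1, v_2$ to be tangent to $D_\jboundary$ along $D_\jboundary$. At a smooth point $p$ of the irreducible curve $D_\jboundary$ (which exists since we are in characteristic $0$ and $X$ is smooth of dimension $2$), the tangent space $T_pD_\jboundary$ is a $1$-dimensional subspace of the $2$-dimensional $T_pX$, so $v_1(p)$ and $v_2(p)$ lie in a common line and $(v_1\wedge v_2)(p) = 0$ in $\wedge^2 T_pX$. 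Hence $\ord_{D_\jboundary}(v_1\wedge v_2) \ge 1$, completing the proof. The only non-formal ingredient is the regularity of the fundamental vector fields on $X$, a standard consequence of the algebraicity of the $G$-action; no extra hypothesis on the compactification is needed.
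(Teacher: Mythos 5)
Your argument is correct, and it is the standard derived vector field argument for equivariant compactifications: the infinitesimal action gives a regular map $\mathfrak{g}\otimes\mathcal{O}_X\to TX$, so $v_1\wedge v_2\in H^0(X,K_X^{-1})$ is a regular section dual to $\omega/a$, and $G$-invariance of each $D_\jboundary$ (which holds because $G$ is geometrically connected and fixes the boundary $D$ setwise, as the paper records via $D_\jboundary G=D_\jboundary$) forces $v_1,v_2$ to be tangent to $D_\jboundary$ at its smooth points, where $T_pD_\jboundary$ is a line, so $v_1\wedge v_2$ vanishes generically on $D_\jboundary$ and hence to order $\ge 1$. The present paper does not prove this proposition itself but cites \cite{tanimoto2012height}, where the same fundamental-vector-field mechanism underlies the argument; your one parenthetical gloss, that $G$-invariance of $D_\jboundary$ follows from it ``generating a summand of $\Pic^G(X)$,'' reverses the logical order (the $\Pic^G$ decomposition is a consequence of $G$-invariance, not a cause), but this is an expository slip rather than a gap.
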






The $\ZZ$-module
$\Pic(X_{\ol{\QQ}})$ is finite free, since $X_{\ol{\QQ}}$ is a rational surface over an algebraically closed field.
Also, $\Pic(X) = \Pic(X_{\ol{\QQ}})^{\Gal(\ol{\QQ}/\QQ)}$ by \cite{kresch2008effectivity}*{Remark~3.2(ii)}, because $X(\adele_\QQ) \ne \emptyset$;
so in particular, $\Pic(X)$ is free.
Say $X$ is \emph{split} if
$\Pic(X) = \Pic(X_{\ol{\QQ}})$
\cite{manin1993notes}*{Definition~2.2(a)}.
By Proposition~\ref{PROP:compute-Picard-group-in-terms-of-boundary},
$X$ is split if and only if each $D_\jboundary$ is geometrically irreducible.
%
%

\begin{definition}
\label{DEFN:strictly-split-X}
Say a split $X$ is \emph{strictly split} if
there exists a composition $\mathcal{Y}\to X$ of blowups with smooth $G$-invariant centers,
such that $\mathcal{Y}$ is split and $\mathcal{Y}\setminus G$ has strict normal crossings.
\end{definition}

We can take $\mathcal{Y}=X$ if $D$ has strict normal crossings.
Split $\mathcal{Y}$ help
in the final portion (\S\ref{SEC:final-reductions}) of the proof of Theorem~\ref{THM:main-Manin-application}.
We do not know if all split $X$ are strictly split (over $\QQ$).

\subsection{Heights}

Let $\mathscr{H}_{\PP^n}\maps \PP^n(\QQ) \to \RR_{>0}$ be defined by the formula
$$\mathscr{H}_{\PP^n}([y_0:\dots:y_n]) \defeq (y_0^2+\dots+y_n^2)^{1/2},$$
for coprime integers $y_0,\dots,y_n$.
Let $\mathcal{O}_{\PP^n_\QQ}(1)\in \Pic(\PP^n_\QQ)$
be the \emph{hyperplane bundle} on $\PP^n_\QQ$.

Weil heights
(see \cite{hindry2000diophantine}*{{\S}B.3})
are only unique up to bounded factors,
so Theorem~\ref{THM:main-Manin-application} requires reasonable restrictions on $\mathsf{H}$.
For convenience, we impose smoothness, as in \cite{chambert2010igusa}.\footnote{However,
a referee pointed out to us that Manin's conjecture for smooth heights is known to imply
Manin's conjecture for continuous heights, by \cite{peyre1995hauteurs}*{Proposition~3.3} (or \cite{chambert2010igusa}*{Proposition~2.10}).
Therefore, while smoothness of $\mathsf{H}$ is convenient for proofs,
it is not necessary in the statement of Theorem~\ref{THM:main-Manin-application}.}
Call a function $\Delta\maps X(\adele_\QQ) \to \RR_{>0}$ \emph{simple} if there exist smooth functions $\Delta_v\maps X(\QQ_v) \to \RR_{>0}$, with $\Delta_v=1$ at all but finitely many places $v$,
such that $$\Delta(x) = \prod_v \Delta_v(x_v)
\quad \textnormal{for all $x=(x_v)_v\in X(\adele_\QQ)$}.$$
For finite $v$,
\emph{smooth} means \emph{locally constant}, as in \cite{chambert2010igusa}*{\S2.1.2}.

%

\begin{definition}
\label{DEFN:class-of-standard-Weil-heights}
Let $\lbundle \in \Pic(X)$.
Call $H_\lbundle\maps X(\QQ) \to \RR_{>0}$
a \emph{standard Weil height associated to $\lbundle$} if there exist
\begin{enumerate}
\item morphisms $f_1\maps X\to \PP^i_\QQ$
and $f_2\maps X\to \PP^j_\QQ$ with $i,j\ge 0$;

\item an integer $k\ge 1$
with $f_1^\ast(\mathcal{O}_{\PP^i_\QQ}(1))
\cong f_2^\ast(\mathcal{O}_{\PP^j_\QQ}(1))
\otimes \lbundle^{\otimes k}$;
and

\item a simple function $\Delta\maps X(\adele_\QQ) \to \RR_{>0}$;
\end{enumerate}
such that for all $x\in X(\QQ)$ we have
$H_\lbundle(x) = \Delta(x)
\cdot \mathscr{H}_{\PP^i}(f_1(x))^{1/k}
/ \mathscr{H}_{\PP^j}(f_2(x))^{1/k}$.
\end{definition}


\begin{remark}
\label{uniqueness-of-Weil-heights}
The ratio of any two standard Weil heights associated to $\lbundle$ is the restriction to $X(\QQ)$ of a simple function $X(\adele_\QQ) \to \RR_{>0}$.
\end{remark}




To tackle Theorem~\ref{THM:main-Manin-application} analytically,
we want local heights $H_{D_\jboundary,v}$ that are roughly inversely proportional to ``$v$-adic distance'' to $D_\jboundary$.
The key constructions and formulas are given below
(following \cite{tanimoto2012distribution}*{\S3.2.1}
or \cite{tanimoto2012height}*{\S2},
after tracing through the definitions).

Embed $X$ in some projective space $\PP^{\mathsf{n}}_\QQ$ over $\QQ$,
with $\mathsf{n}\ge 2$.
Let $\mathscr{X}$ be the closure of $X$ in $\PP^{\mathsf{n}}_\ZZ$.
Scheme $\mathscr{X}$ is integral, and the closure of any prime divisor on $X$ is a prime divisor on $\mathscr{X}$.
For each divisor $W\in \bigoplus_{\jboundary\in J} \ZZ D_\jboundary$,
let $\mathscr{W}$ be the closure of $W$ in $\mathscr{X}$.
That is,
let $\mathscr{D}_\jboundary$ denote the closure of $D_\jboundary$ in $\mathscr{X}$,
and define $\mathscr{W}\in \bigoplus_{\jboundary\in J} \ZZ \mathscr{D}_\jboundary$ in general
so that the map $W\mapsto \mathscr{W}$ is linear.

The integral model $\mathscr{X}$ is smooth over a dense open subset $\oset\belongs \Spec(\ZZ)$.
Moreover, if $p\in \oset$ and $x_p\in X(\QQ_p)=\mathscr{X}(\ZZ_p)$,
then $x_p\in \mathcal{V}(\ZZ_p)$
for every regular open neighborhood $\mathcal{V}$
of the point $(x_p\bmod{p})\in \mathscr{X}(\FF_p)$ in $\mathscr{X}$.
Upon considering local equations of divisors on $\mathscr{X}$,
we are led to the following definition,
featuring a certain ``distance'' quantity $\abs{f(x_v)}_v$.


\begin{definition}
\label{DEFN:standard-local-Weil-height-for-boundary-divisors}
Given a divisor $W\in \Pic^G(X) = \bigoplus_{\jboundary\in J} \ZZ D_\jboundary$,
call a collection of functions
$$(H_{W,v}\maps G(\QQ_v) \to \RR_{>0})_v$$
\emph{good} if there exists a simple function
$\Delta\maps X(\adele_\QQ) \to \RR_{>0}$ such that
for each element $f\in \Rat(\mathscr{X})$
and regular open subscheme $\mathcal{V}\belongs \mathscr{X}$
with $\div(f\vert_{\mathcal{V}}) = \mathscr{W}\vert_{\mathcal{V}}$,
the following hold:
\begin{enumerate}
\item For each place $v$ and compact set $K_v\belongs \mathcal{V}(\QQ_v)$,
there exists a smooth function $\varphi\maps K_v\to \RR_{>0}$
such that for all $x_v\in G(\QQ_v) \cap K_v$,
we have $$H_{W,v}(x_v) = \frac{\varphi(x_v)}{\abs{f(x_v)}_v}.$$

\item For all $v\in \oset$ and $x_v\in G(\QQ_v)\cap \mathcal{V}(\ZZ_v)$,
we have $$H_{W,v}(x_v) = \frac{\Delta_v(x_v)}{\abs{f(x_v)}_v}.$$
\end{enumerate}
\end{definition}

\begin{remark}
If we fix $v\in \oset$ and $x_v\in G(\QQ_v)$, and we let $\mathcal{V}$ and $f$ vary,
then in condition~(2),
the value of $\abs{f(x_v)}_v$ is independent of the choice of $\mathcal{V}$ and $f$,
provided that $x_v\in \mathcal{V}(\ZZ_v)$.
But condition~(1) is more flexible, since the set $\mathcal{V}(\QQ_v)$ is in general larger than $\mathcal{V}(\ZZ_v)$.
\end{remark}






\begin{proposition}
\label{PROP:homomorphic-adelic-local-height-decomposition}
Let $\mathsf{H}$ be a standard Weil height associated to $K_X^{-1}$.
\begin{enumerate}
\item There exists a homomorphism $\Pic(X)\to \map{Fun}(X(\QQ), \RR_{>0})$,
assigning each $\lbundle\in \Pic(X)$ to a standard Weil height $H_\lbundle$ associated to $\lbundle$,
such that $H_{K_X^{-1}} = \mathsf{H}$.

\item For each $W\in \Pic^G(X)$,
let $H_W \defeq H_{\mathcal{O}_X(W)}$.
There exists a homomorphism $\Pic^G(X)\to \prod_v \map{Fun}(G(\QQ_v), \RR_{>0})$,
assigning each $W$ to a good collection $(H_{W,v})_v$, such that
\begin{align}
\abs{a(g_v)}_v &= \prod_{\jboundary\in J} H_{D_\jboundary,v}(g_v)^{-\ord_{D_\jboundary}(a)}
&&\textnormal{for all $g_v=(a(g_v),b(g_v))\in G(\QQ_v)$, for all $v$, and}
\label{EQN:local-character-height-consistency-relation} \\
H_W(g) &= \prod_v H_{W,v}(g)
&&\textnormal{for all $g\in G(\QQ)$}.
\label{EQN:local-global-height-consistency}
\end{align}
\end{enumerate}
\end{proposition}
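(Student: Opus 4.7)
The plan is to build the assignment from below: construct good collections of local heights $(H_{D_\jboundary,v})_v$ on $G(\QQ_v)$ from local equations of $\mathscr{D}_\jboundary$ on the integral model $\mathscr{X}$; take adelic products to obtain a homomorphism $\Pic^G(X) \to \map{Fun}(G(\QQ), \RR_{>0})$ producing $H_W$; descend through $\Pic(X) = \Pic^G(X)/\ZZ\div(a)$ (Proposition~\ref{PROP:compute-Picard-group-in-terms-of-boundary}); extend each $H_\lbundle$ to a standard Weil height on $X(\QQ)$; and absorb a simple function at the end to match $\mathsf{H}$ at $K_X^{-1}$, producing part~(1). Part~(2) will then follow from the construction itself.

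For the local construction I first fix a simple $\Delta = \prod_v \Delta_v$ on $X(\adele_\QQ)$. For each $\jboundary \in J$ and place $v$, cover $\mathscr{X}$ by small regular opens $\mathcal{V}$ on which $\mathscr{D}_\jboundary$ is principal with defining equation $f \in \Rat(\mathscr{X})$, and declare $H_{D_\jboundary,v}(x_v) = \varphi(x_v)/\abs{f(x_v)}_v$ for a smooth positive $\varphi$ on $\mathcal{V}(\QQ_v)$; glue across finitely many such charts by partitions of unity at $v = \infty$ and by locally constant indicator functions at finite $v$, and set $\varphi = \Delta_v$ on $\mathcal{V}(\ZZ_v)$ when $v \in \oset$. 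Extending to $W \in \Pic^G(X)$ by multiplicativity gives good collections. For the identity \eqref{EQN:local-character-height-consistency-relation}: since $a$ is invertible on $G$, $\div(a) = \sum_\jboundary \ord_{D_\jboundary}(a) D_\jboundary$ on $X$, so on any small enough $\mathcal{V}$ we can write $a = u \prod_\jboundary f_\jboundary^{\ord_{D_\jboundary}(a)}$ for a local unit $u \in \mathcal{O}_{\mathscr{X}}(\mathcal{V})^\times$. Nonconstancy of $a$ forces some $\ord_{D_{\jboundary_0}}(a) \ne 0$; I then adjust the $\varphi$ attached to $H_{D_{\jboundary_0},v}$ to absorb the smooth factor $\abs{u(x_v)}_v$, yielding the pointwise identity at every $v$.

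Now set $H_W \defeq \prod_v H_{W,v}$ on $G(\QQ)$: this converges because at almost all $v$, a rational point $g \in G(\QQ)$ reduces into $\mathscr{X}(\FF_v)\setminus D(\FF_v)$, where every local factor equals $\Delta_v = 1$. By \eqref{EQN:local-character-height-consistency-relation} and the classical product formula, $H_{\div(a)}(g) = \prod_v \abs{a(g)}_v^{-1} = 1$ for all $g \in G(\QQ)$, so $W \mapsto H_W$ descends to a homomorphism on $\Pic(X)$. Standard Weil--Arakelov theory — writing any $\lbundle \in \Pic(X)$ as a difference of very ample bundles $B - A$ (up to a multiple $k$) and comparing $H_B/H_A$ with the ratio of Euclidean projective heights via Segre embeddings — identifies each $H_\lbundle$ as the restriction to $G(\QQ)$ of a standard Weil height for $\lbundle$, which then extends to $X(\QQ)$ via the global formula in Definition~\ref{DEFN:class-of-standard-Weil-heights}. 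Finally, Remark~\ref{uniqueness-of-Weil-heights} gives that $\mathsf{H}/H_{K_X^{-1}}$ is the restriction of a simple function, which I absorb into the $\varphi$'s attached to some $D_{\jboundary_1}$ with nonzero coefficient in $K_X^{-1}$, arranging $H_{K_X^{-1}} = \mathsf{H}$.

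The main obstacle is the simultaneous compatibility of multiplicativity in $W$, the pointwise identity \eqref{EQN:local-character-height-consistency-relation}, and the global factorization \eqref{EQN:local-global-height-consistency}, with only the smooth prefactors as free parameters. All arising discrepancies turn out to be simple functions — manifestly so for the local unit $u$, and via the product formula for the global comparison — and simple functions can always be absorbed into the $\varphi$'s or $\Delta_v$'s at a single chart without perturbing the divisorial data; the care required is only in the ordering of adjustments, so that imposing one identity does not break another (first enforce \eqref{EQN:local-character-height-consistency-relation} via $\varphi$ on $D_{\jboundary_0}$, then match $\mathsf{H}$ via a different $D_{\jboundary_1}$).
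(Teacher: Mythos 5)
Your plan runs in the opposite direction from the paper's. The paper first proves (1) abstractly (pick a basis of the free group $\Pic(X)$, assign arbitrary standard Weil heights, absorb the simple function $\mathsf{H}/H_{K_X^{-1}}$), and then in (2) chooses very ample $W_1,\dots,W_{\card{J}-1}\in \sum_\jboundary\ZZ D_\jboundary$ generating $\Pic(X)\otimes\QQ$, builds $(H_{W_l,v})_v$ from global sections of $\mathcal{O}_X(W_l)$ so that both goodness and \eqref{EQN:local-global-height-consistency} come for free from the product-formula factorization of $\mathscr{H}_{\PP^n}$, and finally uses \eqref{EQN:local-character-height-consistency-relation} as a \emph{defining} linear relation to extend from the span of $\{W_l\}\cup\{\div(a)\}$ to all of $\Pic^G(X)$. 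You instead build local heights from below (local equations of $\mathscr{D}_\jboundary$, gluing, adjustment), take adelic products, and deduce (1) at the end. This is a legitimate alternative; what the paper's route buys is that the projective-embedding structure required for ``standard Weil height'' and the identity \eqref{EQN:local-global-height-consistency} are never things you have to enforce by hand.

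However, there is a concrete gap in your ``ordering of adjustments.'' After enforcing \eqref{EQN:local-character-height-consistency-relation} by absorbing $\abs{u}_v$ into $H_{D_{\jboundary_0},v}$, you say you then match $\mathsf{H}$ by absorbing the simple function $\mathsf{H}/H_{K_X^{-1}}$ into $H_{D_{\jboundary_1},v}$ for ``a different $D_{\jboundary_1}$.'' But multiplying $H_{D_{\jboundary_1},v}$ by a nontrivial simple factor $\delta_v$ changes the right-hand side of \eqref{EQN:local-character-height-consistency-relation} by $\delta_v^{-\ord_{D_{\jboundary_1}}(a)}$. So this second adjustment preserves \eqref{EQN:local-character-height-consistency-relation} only if $\ord_{D_{\jboundary_1}}(a)=0$, i.e.\ $\jboundary_1\in J_3$; and you have no guarantee that $J_3$ is nonempty, let alone that it contains some $\jboundary_1$ with nonzero coefficient in $-\div(\omega)$. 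The fix is available but needs to be stated: distribute the final adjustment over the $D_\jboundary$ with an exponent vector $(e_\jboundary)_\jboundary\in\QQ^J$ chosen $\QQ$-orthogonal to $\div(a)$ (so that \eqref{EQN:local-character-height-consistency-relation} survives) and with $\sum_\jboundary e_\jboundary\mathsf{d}_\jboundary\ne 0$ (so that $H_{K_X^{-1}}$ actually moves). Such a vector exists precisely because $-\div(\omega)$ and $\div(a)$ are $\QQ$-linearly independent in $\Pic^G(X)\otimes\QQ$, which follows from $K_X^{-1}\ne 0$ in $\Pic(X)$ (Proposition~\ref{PROP:poles-of-left-invariant-top-form}). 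As written, your argument does not note this constraint or verify that it can be satisfied, so the claim that ``simple functions can always be absorbed \dots at a single chart'' is not literally correct and needs to be replaced by this linear-algebra step.
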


\begin{proof}
This is all proven in \cite{tanimoto2012distribution}*{first paragraph of \S3.2.1},
up to differences in exposition,
so we will be relatively brief.

(1):
Use the fact that $\Pic(X)$ is a finite free $\ZZ$-module.

(2):
Use Proposition~\ref{PROP:compute-Picard-group-in-terms-of-boundary} to choose very ample Weil divisors $W_1,\dots,W_{\card{J}-1} \belongs \sum_{\jboundary\in J} \ZZ D_\jboundary$ that generate $\Pic(X) \otimes \QQ$.
For each divisor $W_l$ (where $1\le l\le \card{J}-1$),
choose sections $$\phi^{(l)}_{1},\dots,\phi^{(l)}_{\dim \Gamma(X,\mathcal{O}_X(W_l))}
\in \Gamma(X,\mathcal{O}_X(W_l))$$
that globally generate $\mathcal{O}_X(W_l)$.
If $\phi^{(l)}\maps X\to \PP^{-1 + \dim \Gamma(X,\mathcal{O}_X(W_l))}_\QQ$
denotes the morphism defined by the sections $\phi^{(l)}_{\ast}$,
then by Remark~\ref{uniqueness-of-Weil-heights}
for the line bundle $\lbundle=\mathcal{O}_X(W_l)$, we have
$$\frac{H_{W_l}(x)}
{\mathscr{H}_{\PP^{-1 + \dim \Gamma(X,\mathcal{O}_X(W_l))}}(\phi^{(l)}(x))}
= \Delta^{(l)}(x)$$
for all $x\in X(\QQ)$,
for some simple function $\Delta^{(l)}\maps X(\adele_\QQ) \to \RR_{>0}$.
But if $x\in G(\QQ)$,
then $\phi^{(l)}(x) = [\phi^{(l)}_{1}(x):\dots:\phi^{(l)}_{\dim \Gamma(X,\mathcal{O}_X(W_l))}(x)]$,
since $\Gamma(X,\mathcal{O}_X(W_l))
\belongs \Gamma(G,\mathcal{O}_X)$.
Thus
$$H_{W_l}(x) = \Delta^{(l)}(x)
\cdot \mathscr{H}_{\PP^{-1 + \dim \Gamma(X,\mathcal{O}_X(W_l))}}([\phi^{(l)}_{1}(x):\dots:\phi^{(l)}_{\dim \Gamma(X,\mathcal{O}_X(W_l))}(x)])$$
for all $x\in G(\QQ)$.
Now, let $$H_{W_l,\infty}(x_\infty)
\defeq \Delta^{(l)}_\infty(x_\infty)
\cdot (\abs{\phi^{(l)}_{1}(x_\infty)}_\infty^2 + \dots + \abs{\phi^{(l)}_{\dim \Gamma(X,\mathcal{O}_X(W_l))}(x_\infty)}_\infty^2)^{1/2}$$
for all $x_\infty\in G(\RR)$,
and for all $x_p\in G(\QQ_p)$ let $$H_{W_l,p}(x_p)
\defeq \Delta^{(l)}_p(x_p)
\cdot \max(\abs{\phi^{(l)}_{1}(x_p)}_p, \dots, \abs{\phi^{(l)}_{\dim \Gamma(X,\mathcal{O}_X(W_l))}(x_p)}_p).$$
Then in particular, \eqref{EQN:local-global-height-consistency} holds for $W=W_l$,
in view of the well-known factorization
\begin{equation*}
\mathscr{H}_{\PP^n}([y_0:\dots:y_n])
= (y_0^2+\dots+y_n^2)^{1/2} \prod_p \max(\abs{y_0}_p,\dots,\abs{y_n}_p)
\end{equation*}
for $(y_0,\dots,y_n)\in \QQ^{n+1} \setminus \{(0,\dots,0)\}$.


Given $(H_{W_l,v})_v$, the relation \eqref{EQN:local-character-height-consistency-relation} then uniquely specifies a homomorphism $\Pic^G(X)\to \prod_v \map{Fun}(G(\QQ_v), \RR_{>0})$,
since $W_1,\dots,W_{\card{J}-1},\div(a)$ generate $\Pic^G(X)\otimes \QQ$.
The resulting $(H_{W,v})_v$ are good.
Furthermore, by (1), \eqref{EQN:local-character-height-consistency-relation}, and the product formula $\prod_v \abs{a(g)}_v = 1$ for $g\in G(\QQ)$,
the relation \eqref{EQN:local-global-height-consistency} extends from $W_l$ to all $W$,
in view of the formula $\Pic(X) = \Pic^G(X)/\ZZ\div(a)$ from Proposition~\ref{PROP:compute-Picard-group-in-terms-of-boundary}.
This completes the proof of (2).
\end{proof}

From now on, fix heights $H_\lbundle$, $(H_{W,v})_v$ satisfying Proposition~\ref{PROP:homomorphic-adelic-local-height-decomposition}.

\subsection{Polar combinatorics}
\label{SUBSEC:polar-combinatorics}

For combinatorial purposes,
we make the identification
$$\Pic^G(X) = \bigoplus_{\jboundary\in J} \ZZ D_\jboundary = \ZZ^J,$$
where $D_\jboundary\in \Pic^G(X)$ is identified with
the $\jboundary$th basis vector of $\ZZ^J$,
for each $\jboundary\in J$.
Moreover, for any subset $I\belongs J$, we identify $\ZZ^I$ with
the subspace $\bigoplus_{\jboundary\in I} \ZZ D_\jboundary
\belongs \ZZ^J$.
We make similar identifications over $\RR$ and $\CC$,
so that for instance, $\Pic^G(X)\otimes \CC = \CC^J$.
Finally, for each vector $\bm{z}=(z_\jboundary)_{\jboundary\in J}\in \CC^J$,
let $\Re(\bm{z})\defeq (\Re(z_\jboundary))_{\jboundary\in J}\in \RR^J$
and let $\bm{z}\vert_I\defeq (z_\jboundary)_{\jboundary\in I}\in \CC^I$.

Let $A_1,A_2\in \RR\cup \{\infty\}$ with $A_1<A_2$,
and let $\openint(A_1,A_2)\belongs \RR$ denote the open
interval from $A_1$ to $A_2$.
For a holomorphic function $\varphi(\bm{z})$
on the region $\Re(\bm{z}) \in \openint(A_1,A_2)^J$,
we define the \emph{shifted integral}
\begin{equation}
\label{EQN:define-shifted-integral}
\mathscr{S}(\varphi)(\bm{z})\defeq (2\pi)^{-1} \int_\RR \varphi(\bm{z}+it\div(a))\, dt.
\end{equation}
The main term in the asymptotic \eqref{EQN:log-saving-Manin-Peyre-conjecture-for-standard-anticanonical-Weil-heights} will come from
such integrals,
whose general structure we now describe
in terms of the combinatorics of the vector $\div(a)\in \ZZ^J$.

Recall \eqref{EQN:basic-boundary-decomposition}.
Let $J_1\defeq \set{\jboundary\in J: \ord_{D_\jboundary}(a) > 0}$
and $J_2\defeq \set{\jboundary\in J: \ord_{D_\jboundary}(a) < 0}$,
and let
\begin{equation*}
J_3 \defeq \set{\jboundary\in J: \ord_{D_\jboundary}(a) = 0}
\belongs \set{\jboundary\in J: \ord_{D_\jboundary}(b) < 0}.
\end{equation*}
Clearly $J_1$, $J_2$, $J_3$ partition $J$.
For convenience, let $\mathsf{u}_\jboundary\defeq \ord_{D_\jboundary}(a)$ for each $\jboundary\in J$.

The $\mathcal{X}$-function framework of \cite{chambert2001fonctions}*{\S3} is crucial
for studying \eqref{EQN:define-shifted-integral}.
For each $I\belongs J$, let
\begin{equation*}
\mathcal{X}_{\RR_{\ge 0}^I}(\bm{z})
\defeq \int_{\RR_{\ge 0}^I} e^{-\bm{y}\cdot \bm{z}\vert_I}\, d\bm{y}
= \prod_{\jboundary\in I} z_\jboundary^{-1}
\quad\textnormal{for $\Re(\bm{z})\in \RR_{>0}^J$},
\end{equation*}
with meromorphic continuation to all $\bm{z}\in \CC^J$.
We have $\div(a)\vert_I\in \ZZ^I$.
Consider the cone
\begin{equation*}
\Lambda_I(X)
\defeq \sum_{\jboundary\in I} \RR_{\ge 0} D_\jboundary
\belongs \RR^I / (\RR \div(a)\vert_I).
\end{equation*}
Let $\mu_I$ be the quotient measure on $\RR^I / (\RR \div(a)\vert_I)$
induced by assigning unit length to the vector $\div(a)\vert_I\in \RR \div(a)\vert_I$.
The dual cone $\Lambda^\ast_I(X)$ of $\Lambda_I(X)$ is
the set of elements $\varkappa\in (\RR^I)^\ast = \Hom(\RR^I,\RR)$
such that $(\varkappa(D_\jboundary))_{\jboundary\in I}\in \RR_{\ge 0}^I$
and $\varkappa(\div(a)\vert_I) = 0$.
Let
\begin{equation}
\label{EQN:define-X-Lambda_I-quotient-cone-function}
\mathcal{X}_{\Lambda_I(X)}(\bm{z})
\defeq \int_{\Lambda^\ast_I(X)} e^{-\varkappa(\bm{z}\vert_I)}\, d\varkappa
\quad\textnormal{for $\Re(\bm{z})\in \RR_{>0}^J + \RR\div(a)$},
\end{equation}
where $d\varkappa$ is the Haar measure on $(\RR^I / (\RR \div(a)\vert_I))^\ast$
such that $d\varkappa$ is dual to the measure $\mu_I$.
The function $\mathcal{X}_{\Lambda_I(X)}$ is \emph{$\CC\div(a)$-invariant},
i.e.~it is invariant under translation by $\CC\div(a)$.

\begin{remark}
\label{RMK:saturated-case-dual-measure-interpretation}
If $\gcd_{\jboundary\in I}(\mathsf{u}_\jboundary) = 1$,
then $d\varkappa$ is the unique Haar measure on $(\RR^I / (\RR \div(a)\vert_I))^\ast$
such that the dual lattice $(\ZZ^I / (\ZZ \div(a)\vert_I))^\ast$ has covolume $1$.
In particular, this applies if $I=J$,
because $\gcd_{\jboundary\in J}(\mathsf{u}_\jboundary) = 1$
(since $\Pic(X)$ is free
and $\Pic(X) = \ZZ^J / \ZZ \div(a)$).
\end{remark}

\begin{remark}
\label{RMK:X-function-vanishing-criteria}
In some degenerate cases, $\mathcal{X}_{\Lambda_I(X)}$ vanishes identically.
For example,
if $\card{I\setminus J_3}\ge 2$
and $\RR_{\ge 0}^I \cap (\RR\div(a)\vert_I) \ne 0$,
then all the nonzero coordinates of $\div(a)\vert_I$ are of the same sign
(and the number of nonzero coordinates is $\card{I\setminus J_3}\ge 2$),
so $$\dim(\Lambda^\ast_I(X)) \le \card{I}-2
< \card{I}-1 = \dim(\RR^I / (\RR \div(a)\vert_I)),$$
whence $\mathcal{X}_{\Lambda_I(X)} = 0$.


\end{remark}


Let $\norm{\bm{z}}\defeq \max_{\jboundary\in J}{\abs{z_\jboundary}}$.
Let $\mathcal{H}_J(A_1,A_2)$ be the ring of holomorphic functions $\varphi(\bm{z})$ on $\Re(\bm{z})\in \openint(A_1,A_2)^J$ that are \emph{polynomially bounded in vertical strips},
meaning that for each compact set $K\belongs \openint(A_1,A_2)$,
there exists a real $A=A_3(K)>0$ such that
$$\varphi(\bm{z})\ll_K (1+\norm{\bm{z}})^A
\quad\textnormal{for $\Re(\bm{z})\in K^J$}.$$
Let $\mathcal{H}^\CC_J(A_1,A_2)$ be the ring of $\CC\div(a)$-invariant functions in $\mathcal{H}_J(A_1,A_2)$.
Let $\mathcal{H}_{\dagger,J}(A_1,A_2)$ be the set of functions $\varphi\in \mathcal{H}_J(A_1,A_2)$
such that for each compact set $K\belongs \openint(A_1,A_2)$,
there exists a real $A=A_4(K)>0$ such that
\begin{equation}
\label{INEQ:growth-decay-condition-for-f-in-H_dagger,J}
\varphi(\bm{z}+it\div(a)) \ll_K (1+\norm{\bm{z}})^A (1+t^2)^{-1}
\quad\textnormal{for $(\Re(\bm{z}),t)\in K^J\times \RR$}.
\end{equation}
Let $\mathcal{H}_J \defeq \bigcup_{\delta>0} \mathcal{H}_J(-\delta,\delta)$,
and similarly define $\mathcal{H}^\CC_J$ and $\mathcal{H}_{\dagger,J}$.
Let $\mathcal{M}_{\dagger,J}$ be the set of functions $\digamma(\bm{z})$
(meromorphic on a neighborhood of the line $\Re(\bm{z})=\bm{0}$)
such that
$$\digamma(\bm{z}) \prod_{\jboundary\in J} \frac{z_\jboundary}{1+z_\jboundary}
\in \mathcal{H}_{\dagger,J}.$$
Both $\mathcal{H}_{\dagger,J}$ and $\mathcal{M}_{\dagger,J}$
are $\mathcal{H}^\CC_J$-modules.

The following lemma expresses $\mathscr{S}(\digamma)$ in terms of $\mathcal{X}$-functions, for any $\digamma\in \mathcal{M}_{\dagger,J}$ (in the spirit of \cite{chambert2010integral}*{(3.5.2)}, which builds on \cite{chambert2001fonctions}*{Th\'{e}or\`{e}me~3.1.14}).
For a set $I\belongs J$,
let $$\mathbf{g}_I(\bm{z})\defeq \mathcal{X}_{\RR_{\ge 0}^I}(\bm{z})
= \prod_{\jboundary\in I} z_\jboundary^{-1}$$
if $\card{I\setminus J_3}\ne 1$,
and if $\card{I\setminus J_3} = 1$ then let $$\mathbf{g}_I(\bm{z})\defeq \frac{\mathcal{X}_{\RR_{\ge 0}^I}(\bm{z})}{\prod_{\jboundary\in I} (1+z_\jboundary)}
= \prod_{\jboundary\in I} \frac{1}{z_\jboundary(1+z_\jboundary)}.$$


\begin{lemma}
\label{LEM:shifted-integral-polar-and-residue-structure}
Let $P\belongs J$ be a set.
Suppose $\digamma\in \mathcal{M}_{\dagger,J}$
and $$\digamma(\bm{z}) \prod_{\jboundary\in P} \frac{z_\jboundary}{1+z_\jboundary}
\in \mathcal{H}_{\dagger,J}.$$
\begin{enumerate}
\item There exists a vector $(h_I)_{I\belongs P}\in \prod_{I\belongs P} \mathcal{H}_J$,
with $h_I\in \mathcal{H}_{\dagger,J}$ for $I\belongs J_3$
and $h_I\in \mathcal{H}^\CC_J$
for all other $I$,
such that we have an identity of meromorphic functions
\begin{equation}
\label{decompose-f11}
\digamma = \sum_{I\belongs P} h_I \mathbf{g}_I
\end{equation}
(on a neighborhood of the line $\Re(\bm{z})=\bm{0}$).


\item
We have $h_P(\bm{0}) = (\digamma \prod_{\jboundary\in P} z_\jboundary)(\bm{0})$
and $$\mathscr{S}(\digamma)
= \sum_{I\belongs P:\, \card{I\setminus J_3}\ge 1} h_I \mathscr{S}(\mathbf{g}_I)
+ \sum_{I\belongs P\cap J_3} \mathbf{g}_I \mathscr{S}(h_I).$$

\item We have $\mathscr{S}(\mathbf{g}_I) = 0$
if $\card{I\setminus J_3} = 1$,
and $\mathscr{S}(\mathbf{g}_I) = \mathcal{X}_{\Lambda_I(X)}$
if $\card{I\setminus J_3}\ge 2$.

\item We have $$\mathscr{S}(\mathcal{H}_{\dagger,J}(A_1,A_2))
\belongs \mathcal{H}^\CC_J(A_1,A_2).$$
That is, for any function $\varphi\in \mathcal{H}_{\dagger,J}(A_1,A_2)$,
we have $\mathscr{S}(\varphi)\in \mathcal{H}^\CC_J(A_1,A_2)$.
\end{enumerate}
\end{lemma}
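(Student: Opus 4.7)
The plan is to dispatch the four parts in order: part~(1) contains the main content, and parts~(2)--(4) then follow by essentially formal manipulations.

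For part~(1), I would proceed by induction on $|P|$. The base case $P=\emptyset$ gives $h_\emptyset = \digamma$ (with $\mathbf{g}_\emptyset=1$). For the inductive step, pick $\jboundary_0\in P$ and split off the polar part at $z_{\jboundary_0}=0$:
\[
\digamma = \frac{R}{z_{\jboundary_0}(1+z_{\jboundary_0})} + \digamma^{\mathrm{reg}},
\qquad R \defeq [z_{\jboundary_0}(1+z_{\jboundary_0})\digamma]_{z_{\jboundary_0}=0},
\]
where $R$ (viewed as constant in $z_{\jboundary_0}$) and $\digamma^{\mathrm{reg}}$ each satisfy the lemma's hypotheses with $P$ replaced by $P'\defeq P\setminus\{\jboundary_0\}$. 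The induction hypothesis produces decompositions of each, and combining them --- converting between the two $\mathbf{g}$-normalizations via the identity $z_\jboundary^{-1} = [z_\jboundary(1+z_\jboundary)]^{-1} + (1+z_\jboundary)^{-1}$ as $|I\setminus J_3|$ crosses the value $1$ --- yields a preliminary decomposition $\digamma = \sum_{I\subseteq P} \tilde h_I \mathbf{g}_I$ with polynomially bounded holomorphic $\tilde h_I$. For each $I$ with $I\not\subseteq J_3$, I would then upgrade $\tilde h_I$ to a $\CC\div(a)$-invariant function $h_I$ by fixing some $\jboundary_I\in I\setminus J_3$ and setting $h_I(\bm{z}) \defeq \tilde h_I(\bm{z} - (z_{\jboundary_I}/\mathsf{u}_{\jboundary_I})\div(a))$; the correction $\tilde h_I - h_I$ vanishes at $z_{\jboundary_I}=0$, hence is divisible by $z_{\jboundary_I}$, and can be absorbed into the coefficient of $\mathbf{g}_{I\setminus\{\jboundary_I\}}$. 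Cascading these upgrades from large $|I|$ downward gives the sought decomposition.

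For part~(2), $\mathscr{S}$ is linear and each summand simplifies: for $I\subseteq J_3$, $\mathbf{g}_I$ is $\CC\div(a)$-invariant and pulls out, leaving $\mathbf{g}_I\mathscr{S}(h_I)$; for $|I\setminus J_3|\ge 1$, $h_I$ is $\CC\div(a)$-invariant by construction and pulls out, leaving $h_I\mathscr{S}(\mathbf{g}_I)$. In each case, $(1+t^2)^{-1}$-integrability is provided either by $h_I\in\mathcal{H}_{\dagger,J}$ or by $\mathbf{g}_I$ carrying at least two $t$-sensitive factors (the regularizing $(1+z_\jboundary)^{-1}$ covering the $|I\setminus J_3|=1$ case). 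The identity $h_P(\bm{0}) = (\digamma\prod_{\jboundary\in P}z_\jboundary)(\bm{0})$ drops out of~\eqref{decompose-f11} upon multiplying through by $\prod_{\jboundary\in P}z_\jboundary$ and evaluating at $\bm{z}=\bm{0}$, since only the $I=P$ summand survives.

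For part~(3), the $|I\setminus J_3|=1$ case reduces to the one-variable integral $\int_\RR[(z+itu)(1+z+itu)]^{-1}\,dt$ with $u\ne 0$, which vanishes by closing the contour in the half-plane free of both poles. The $|I\setminus J_3|\ge 2$ case follows by pulling out the $t$-independent factors indexed by $I\cap J_3$, expressing $\mathbf{g}_{I\setminus J_3}$ as an iterated Laplace transform over $\RR_{\ge 0}^{I\setminus J_3}$, and applying Fubini to re-identify $\mathscr{S}(\mathbf{g}_{I\setminus J_3})$ with the dual-cone integral $\mathcal{X}_{\Lambda_I(X)}(\bm{z})$ of~\eqref{EQN:define-X-Lambda_I-quotient-cone-function}, with the measure normalizations matching via Remark~\ref{RMK:saturated-case-dual-measure-interpretation}. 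Part~(4) is routine: holomorphy and polynomial vertical growth of $\mathscr{S}(\varphi)$ follow by differentiation under the integral using the $(1+t^2)^{-1}$ majorant, and $\CC\div(a)$-invariance follows because imaginary shifts by $\div(a)$ are absorbed into the $t$-parameterization, while real shifts by $\div(a)$ are contour translations in $t$ valid inside the holomorphy strip of $\varphi$, with analytic continuation delivering full $\CC\div(a)$-invariance.

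The main obstacle lies in part~(1), specifically the enforcement of $\CC\div(a)$-invariance: a naïve iterated partial-fraction decomposition does not automatically yield invariant coefficients, so invariance must be imposed by the cascading redistribution described above. This redistribution interacts nontrivially with the piecewise definition of $\mathbf{g}_I$ at the threshold $|I\setminus J_3|=1$, and tracking the growth/decay classes of all the intermediate pieces through the cascade is where the real bookkeeping lies.
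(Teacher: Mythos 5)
Your proposal is structurally close to the paper's argument — in particular, the heart of part~(1), namely enforcing $\CC\div(a)$-invariance on the coefficient $h_I$ by substituting $\bm{z}\mapsto\bm{z}-(z_{i_\star}/\mathsf{u}_{i_\star})\div(a)$ for some $i_\star\in I\setminus J_3$ and pushing the resulting discrepancy down to the coefficient of $\mathbf{g}_{I\setminus\{i_\star\}}$, processing $I$ in decreasing cardinality — is exactly what the paper does. But two points deserve comment.

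First, your preliminary partial-fraction induction on $|P|$ is unnecessary overhead. The paper simply initializes the decomposition by placing all the mass at $I=P$, i.e.\ $h_P\defeq\digamma/\mathbf{g}_P$ and $h_I\defeq 0$ for $I\propersub P$, and then runs the cascade; there is no need to first produce a ``preliminary decomposition'' by splitting off poles one coordinate at a time, nor to juggle the identity $z_\jboundary^{-1}=[z_\jboundary(1+z_\jboundary)]^{-1}+(1+z_\jboundary)^{-1}$ across the threshold $\card{I\setminus J_3}=1$ as $P$ shrinks.

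Second, and more substantively: you flag that ``tracking the growth/decay classes of all the intermediate pieces through the cascade is where the real bookkeeping lies,'' but you do not say how that bookkeeping is actually carried out, and this is where your sketch has a genuine gap. The crucial point is that you must verify, at the end, that $h_I\in\mathcal{H}_{\dagger,J}$ whenever $I\belongs J_3$, i.e.\ that these coefficients inherit the quadratic $t$-decay in \eqref{INEQ:growth-decay-condition-for-f-in-H_dagger,J}. This does not follow merely from the $h_I$ being polynomially bounded. The paper's device is to maintain the invariant $h_I\mathbf{g}_I\in\mathcal{M}_{\dagger,J}$ for every $I$ throughout the cascade: it is preserved when the correction $(h_{I_\star}-h'_{I_\star})\mathbf{g}_{I_\star}/\mathbf{g}_{I_\star\setminus\{i_\star\}}$ is absorbed downward, because $h'_{I_\star}\mathbf{g}_{I_\star}$ and $h_{I_\star}\mathbf{g}_{I_\star}$ both lie in $\mathcal{M}_{\dagger,J}$, and the sum telescopes. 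Then for $I\belongs J_3$ one uses $1/\mathbf{g}_I\in\mathcal{H}^\CC_J$ together with $\mathcal{M}_{\dagger,J}\cap\mathcal{H}_J=\mathcal{H}_{\dagger,J}$ to conclude $h_I\in\mathcal{H}_{\dagger,J}$. Without some such invariant, ``tracking the classes'' is not just bookkeeping — it is the missing lemma. (A minor further omission: in part~(3) you should also dispose of the degenerate case $\mathcal{X}_{\Lambda_I(X)}=0$ with $\card{I\setminus J_3}\ge 2$; there all the nonzero $\mathsf{u}_\jboundary$, $\jboundary\in I$, share a sign, so $\mathscr{S}(\mathbf{g}_I)$ again vanishes by the contour-shift identity \eqref{EQN:purely-positive-shifted-integral-vanishes}, consistent with the claimed equality.)
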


\begin{proof}
(1):
Let $2^P$ denote the set of all subsets $I\belongs P$.
We construct $(h_I)_{I\belongs P}$ by the following recursive algorithm:
\begin{enumerate}[(a)]
\item Let $h_P\defeq \digamma/\mathbf{g}_P\in \mathcal{H}_J$, and $h_I\defeq 0\in \mathcal{H}_J$ for all $I\in 2^P\setminus \set{P}$;
and let $\mathscr{I}\defeq 2^P$.
Note that $h_I\mathbf{g}_I\belongs \mathcal{M}_{\dagger,J}$ for all $I\in 2^P$; we will maintain this property at each step.

\item If $\mathscr{I} = \emptyset$, terminate the algorithm.
Otherwise, choose an element $I_\star\in \mathscr{I}$ of maximal cardinality.

\item Suppose first that $\card{I_\star\setminus J_3}\ge 1$.
Choose an element $i_\star\in I_\star\setminus J_3$,
and replace the functions $h_{I_\star}, h_{I_\star\setminus \set{i_\star}}\in \mathcal{H}_J$ with, respectively,
\begin{equation*}
h'_{I_\star}((z_\jboundary)_{\jboundary\in J}) \defeq
h_{I_\star}((z_\jboundary - \mathsf{u}_\jboundary z_{i_\star}/\mathsf{u}_{i_\star})_{\jboundary\in J})\in \mathcal{H}^\CC_J
\end{equation*}
and $h'_{I_\star\setminus \set{i_\star}}\defeq h_{I_\star\setminus \set{i_\star}} + (h_{I_\star} - h'_{I_\star}) \mathbf{g}_{I_\star}/\mathbf{g}_{I_\star\setminus \set{i_\star}} \in \mathcal{H}_J$.\footnote{One can use Cauchy's integral formula to prove $(h_{I_\star} - h'_{I_\star})/z_{i_\star}\in \mathcal{H}_J$,
which implies $h'_{I_\star\setminus \set{i_\star}}\in \mathcal{H}_J$.}
Note that $\mathbf{g}_{I_\star}\in \mathcal{M}_{\dagger,J}$, so
\begin{equation*}
h'_{I_\star} \mathbf{g}_{I_\star}\in \mathcal{M}_{\dagger,J},
\quad h'_{I_\star\setminus \set{i_\star}} \mathbf{g}_{I_\star\setminus \set{i_\star}}
= h_{I_\star\setminus \set{i_\star}} \mathbf{g}_{I_\star\setminus \set{i_\star}}
+ h_{I_\star} \mathbf{g}_{I_\star} - h'_{I_\star} \mathbf{g}_{I_\star}\in \mathcal{M}_{\dagger,J}.
\end{equation*}
Now replace $\mathscr{I}$ with $\mathscr{I}'\defeq \mathscr{I}\setminus \set{I_\star}$, and go back to step~(b).

\item Suppose instead that $I_\star\belongs J_3$.
Then $1/\mathbf{g}_{I_\star}\in \mathcal{H}^\CC_J$, which together with $h_{I_\star} \mathbf{g}_{I_\star}\in \mathcal{M}_{\dagger,J}$ implies $h_{I_\star}\in \mathcal{M}_{\dagger,J}\cap \mathcal{H}_J = \mathcal{H}_{\dagger,J}$.\footnote{One can use Cauchy's integral formula to show that $\mathcal{M}_{\dagger,J}\cap \mathcal{H}_J = \mathcal{H}_{\dagger,J}$.}
Replace $\mathscr{I}$ with $\mathscr{I}'\defeq \mathscr{I}\setminus \set{I_\star}$, and repeat (b).
\end{enumerate}
The algorithm terminates after $1 + \card{2^P}$ occurrences of (b).
For the final output $(h_I)_{I\belongs P}$,
the conditions of (1) hold by inspection of (c) and (d).



(2):
For the first part, multiply \eqref{decompose-f11} by $\prod_{\jboundary\in P} z_\jboundary$ and plug in $\bm{z}=\bm{0}$.
For the second part, use the $\CC\div(a)$-invariance of $h_I$ when $\card{I\setminus J_3}\ge 1$, and of $\mathbf{g}_I$ when $I\belongs J_3$.

(3):
This follows from \cite{chambert2001fonctions}*{Proposition~3.1.9}
if $\card{I\setminus J_3}\ge 2$ and $\mathcal{X}_{\Lambda_I(X)} \ne 0$
(in which case $\RR_{\ge 0}^I \cap (\RR\div(a)\vert_I) = 0$,
by Remark~\ref{RMK:X-function-vanishing-criteria}),
and from the identity
\begin{equation}
\label{EQN:purely-positive-shifted-integral-vanishes}
\int_\RR \frac{dt}{\prod_{1\le k\le r} (z'_k+it)} = 0
\quad\textnormal{for $r\ge 2$ and $\Re(z'_k)>0$}
\end{equation}
if $\card{I\setminus J_3} = 1$ or $\mathcal{X}_{\Lambda_I(X)} = 0$.
(To prove \eqref{EQN:purely-positive-shifted-integral-vanishes},
write $s=it$ to view the integral over $\RR$
as a contour integral over a vertical line $\Re(s)=0$ in $\CC$,
and shift the contour to $\Re(s)\to \infty$.)

(4):
This follows from \eqref{EQN:define-shifted-integral}
and \eqref{INEQ:growth-decay-condition-for-f-in-H_dagger,J},
since $\int_\RR (1+t^2)^{-1}\, dt
\ll 1$.
\end{proof}

\subsection{Peyre's constant}

Cf.~\cites{peyre1995hauteurs,salberger1998tamagawa}.
First, $$\alpha_\textnormal{Pey}(X)
\defeq \mathcal{X}_{\Pic(X)}(K_X^{-1})=\mathcal{X}_{\Lambda_J(X)}(K_X^{-1});$$
by Remark~\ref{RMK:saturated-case-dual-measure-interpretation}, the measure $d\varkappa$ in $\mathcal{X}_{\Lambda_J(X)}$ is correctly normalized in terms of $\Pic(X)$.
Here $\alpha_\textnormal{Pey}(X)>0$ by Proposition~\ref{PROP:poles-of-left-invariant-top-form}.
Second, $\beta_\textnormal{Pey}(X) = 1$ since $X$ is rational; see e.g.~\cite{tanimoto2012distribution}*{two lines before Theorem~48}.
Let $$\abs{\omega}_v\defeq \abs{a_v}_v^{-1} \, da_v\, db_v$$
be the measure on $G(\QQ_v)$ associated to
the top form $\omega = a^{-1}\, da\, db$.
Then $$\int_{\mathscr{G}(\ZZ_p)} \abs{\omega}_p
= \int_{\ZZ_p^\times} \abs{a_p}_p^{-1} \, da_p \int_{\ZZ_p} db_p
= \int_{\ZZ_p^\times} da_p
= 1-p^{-1}$$
for any prime $p$,
where the $\mathscr{G}$ is the obvious integral model of $G$ over $\ZZ$.
(As a scheme, $\mathscr{G}\cong \GG_m \times \Aff^1_\ZZ$.)
Finally,
define the local Tamagawa measure
\begin{equation}
\label{EQN:define-local-Tamagawa-measure}
d\tau_v=d\tau_{X,\mathsf{H},v}\defeq H_{\div(\omega),v} \cdot \abs{\omega}_v
\end{equation}
on $X(\QQ_v)$;
initially the formula \eqref{EQN:define-local-Tamagawa-measure} only makes sense on $G(\QQ_v)$, but it extends to $X(\QQ_v)$ by Definition~\ref{DEFN:standard-local-Weil-height-for-boundary-divisors}.
Let $L(s,\Pic(X_{\ol{\QQ}}))$ be the Artin $L$-function associated to the representation $\Pic(X_{\ol{\QQ}})\otimes \CC$ of $\Gal(\ol{\QQ}/\QQ)$.
It is known that $L(s,\Pic(X_{\ol{\QQ}}))$ converges absolutely on $\Re(s)>1$, is meromorphic on $\CC$, and has a pole of order $\rank(\Pic(X))\ge 1$ at $s=1$;
let $$L^\ast(1,\Pic(X_{\ol{\QQ}}))
\defeq \lim_{s\to 1}{\frac{L(s,\Pic(X_{\ol{\QQ}}))}{(s-1)^{\rank(\Pic(X))}}}.$$
Define on $X(\adele_\QQ)$ the measure
\begin{equation*}
d\tau\defeq L^\ast(1,\Pic(X_{\ol{\QQ}}))\, d\tau_\infty
\prod_p \frac{d\tau_p}{L_p(1,\Pic(X_{\ol{\QQ}}))}.
\end{equation*}
It is known that $d\tau$
(unlike the local measures $d\tau_v$)
depends only on $X$ and $H_{K_X^{-1}}=\mathsf{H}$,
and not on the top form $\omega$ or the local heights $H_{\div(\omega),v}$.
Let $$\tau(X,\mathsf{H}) \defeq \int_{X(\adele_\QQ)} d\tau,$$
which (by the Weil conjectures) factors as an absolutely convergent Euler product;
see e.g.~\cite{chambert2010igusa}*{Corollary~2.4 and Theorem~2.5}.
Peyre's constant is
\begin{equation}
\label{EQN:Peyre-constant}
\mathcal{A}_{X,\mathsf{H}}
\defeq \frac{\alpha_\textnormal{Pey}(X) \tau(X,\mathsf{H})}{(\rank(\Pic(X))-1)!}
> 0.
\end{equation}

\subsection{Spectral expansion}

For $\bm{s} = \sum_{\jboundary\in J} s_\jboundary D_\jboundary
\in \Pic^G(X) \otimes \CC$
and $g=(g_v)_v\in G(\adele_\QQ)$, let
\begin{equation*}
H_v(\bm{s},g_v) \defeq \prod_{\jboundary\in J} H_{D_\jboundary,v}(g_v)^{s_\jboundary},
\quad H(\bm{s},g)\defeq \prod_v H_v(\bm{s},g_v),
\quad \mathsf{Z}(\bm{s},g) \defeq \sum_{g'\in G(\QQ)} H(\bm{s},g' g)^{-1}.
\end{equation*}
For each prime $p$, choose a maximal open subgroup $\Kbad_p$ of $\mathscr{G}(\ZZ_p)$ such that $H_{D_\jboundary,p}$ is right $\Kbad_p$-invariant for all $\jboundary\in J$;
then $\Kbad_p = \mathscr{G}(\ZZ_p)$ for all but finitely many $p$.
Let $\Kbad \defeq \prod_p \Kbad_p$.
Following \cite{tanimoto2012height}*{\S5}, we will decompose $\mathsf{Z}(\bm{s},g)$
using the automorphic machinery of \cite{tanimoto2012height}*{\S3}.
Since we work more generally than \cite{tanimoto2012height}*{\S5}, it seems appropriate to provide some details;
but the key formula \eqref{EQN:difficult-part-of-spectral-expansion} is not new (only our subsequent analysis of it is).


Let $dg=\prod_v dg_v$ be the measure on $G(\adele_\QQ)$ given by
$dg_\infty = \abs{\omega}_\infty$
and $dg_p = \abs{\omega}_p/(1-p^{-1})$.
Let $d^\times{a}=\prod_v d^\times{a_v}$
be the Haar measure on $\adele_\QQ^\times$
with $d^\times{a_\infty} = a_\infty/\abs{a_\infty}_\infty$
and $\int_{\ZZ_p^\times} d^\times{a_p} = 1$.
Then $dg=d^\times{a}\,db$
and $\int_{\mathscr{G}(\ZZ_p)} dg_p = 1$.
Also, for $p$ sufficiently large,
Definition~\ref{DEFN:standard-local-Weil-height-for-boundary-divisors} implies
\begin{equation}
\label{large-p-GZp-height}
H_p(\bm{s},g_p) = 1
\quad\textnormal{for all $g_p\in \mathscr{G}(\ZZ_p)$},
\end{equation}
whence $\int_{\mathscr{G}(\ZZ_p)} H_p(\bm{s},g_p)^{-1}\, dg_p = 1$.

For the rest of \S\ref{SEC:background}, let
$A_5>0$ be a large real constant
(depending on $\div(a)$ and $\div_\infty(b)$).
If $\Re(\bm{s})\in [A_5,2A_5]^J$
and $g\in G(\adele_\QQ)$, then
\begin{equation}
\begin{split}
\label{INEQ:crude-lower-bound-on-H}
\abs{H(\bm{s},g)}
&\gg H(\div_0(a)+\div_\infty(a)+\div_\infty(b),g)^{A_5^{1/2}} \\
&\gg \prod_v \max(1,\abs{a(g_v)}_v^{-1},\abs{a(g_v)}_v,\abs{b(g_v)}_v)^{A_5^{1/2}},
\end{split}
\end{equation}
by Definition~\ref{DEFN:standard-local-Weil-height-for-boundary-divisors};
cf.~ \cite{tanimoto2012height}*{proof of Lemma~5.2}.
For unitary characters $\chi$ of $\QQ^\times\backslash \adele_\QQ^\times$, let
\begin{equation*}
H^\ast(\bm{s},\chi)
\defeq \int_{G(\adele_\QQ)} H(\bm{s},g)^{-1} \ol{\chi}(\det(g))\, dg,
\quad H^\ast_v(\bm{s},\chi)
\defeq \int_{G(\QQ_v)} H_v(\bm{s},g_v)^{-1} \ol{\chi}_v(\det(g_v))\, dg_v.
\end{equation*}
If $\Re(\bm{s})\in [A_5,2A_5]^J$, then by \eqref{INEQ:crude-lower-bound-on-H}, we have $H^\ast_v(\bm{s},\chi) \ll 1$ for all $v$, and $H^\ast_p(\bm{s},\chi) = 1+O(p^{-2})$ for all $p$ large enough in terms of the conductor of $\chi$;
so $H^\ast(\bm{s},\chi) = \prod_v H^\ast_v(\bm{s},\chi) \ll_\chi 1$.

For the rest of \S\ref{SEC:background}, assume $X$ is split.
(Splitness simplifies many formulas.)

\begin{lemma}
\label{LEM:snc-implies-nice-densities-implies-Tamagawa-extraction}
Let $\delta>0$ be small.
Then each of the following implies the next:
\begin{enumerate}
\item The divisor $D$ has strict normal crossings (see \S\ref{SUBSEC:conventions}).

\item Let $\Re(\bm{s}+\div(\omega)) \in [-\delta,\delta^{-1}]^J$.
Then for each $v$, the integral $H^\ast_v(\bm{s},1)$ converges absolutely uniformly over $\bm{s}$.
Moreover, $H^\ast_p(\bm{s},1) / \prod_{\jboundary\in J} \zeta_p(s_\jboundary-\mathsf{d}_\jboundary+1) = 1+O(p^{-1-\delta})$.

\item We have $\lim_{\bm{s}\to -\div(\omega)}{H^\ast(\bm{s},1)
\prod_{\jboundary\in J}(s_\jboundary-\mathsf{d}_\jboundary)}
= \tau(X,\mathsf{H})$.
\end{enumerate}
\end{lemma}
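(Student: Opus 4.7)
The plan is to prove each implication separately, in both cases exploiting the local-coordinate description of $X(\QQ_v)$ near $D$ furnished by the snc hypothesis.

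For (1)$\Rightarrow$(2), at each $x \in X(\QQ_v)$ snc furnishes analytic coordinates $(y_\jboundary)_{\jboundary \in A(x)}, (z_k)$ in which the boundary divisors through $x$ are $\{y_\jboundary = 0\}$, one has $\omega = \prod_{\jboundary \in A(x)} y_\jboundary^{-\mathsf{d}_\jboundary} \cdot u \, dy\, dz$ for a unit $u$, and $H_{D_\jboundary, v}(g_v) \asymp \abs{y_\jboundary}_v^{-1}$ by Definition~\ref{DEFN:standard-local-Weil-height-for-boundary-divisors}. On a small tube $\{\abs{y_\jboundary}_v \le \epsilon\}$, the integrand $H_v(\bm{s}, g_v)^{-1} \, dg_v$ therefore has the shape $\prod_{\jboundary \in A(x)} \abs{y_\jboundary}_v^{s_\jboundary - \mathsf{d}_\jboundary} \cdot (\text{smooth density})$, which integrates absolutely and uniformly in $\bm{s}$ thanks to $\Re(s_\jboundary - \mathsf{d}_\jboundary + 1) \ge 1 - \delta > 0$ on the stated region. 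Covering the compact sets $X(\RR)$ and $\mathscr{X}(\ZZ_p) = X(\QQ_p)$ by finitely many such charts gives absolute convergence of $H^\ast_v(\bm{s}, 1)$ for every $v$.

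For the refined $p$-adic estimate at $p \in \oset$ sufficiently large, I would stratify $\mathscr{X}(\FF_p) = \bigsqcup_{A \subseteq J} D^\circ_A(\FF_p)$ with $D^\circ_A = \bigcap_{\jboundary \in A} D_\jboundary \setminus \bigcup_{\jboundary \notin A} D_\jboundary$; snc combined with $\dim X = 2$ forces $D^\circ_A = \emptyset$ for $\card{A} \ge 3$ and makes each $D_A$ (for $\card{A} \le 2$) smooth of codimension $\card{A}$. A direct geometric-series computation on the tube of $\mathscr{X}(\ZZ_p)$ over each point of $D^\circ_A(\FF_p)$ yields
\begin{equation*}
H^\ast_p(\bm{s}, 1) = \sum_{A \subseteq J,\, \card{A} \le 2} \#D^\circ_A(\FF_p) \cdot p^{-(2 - \card{A})} (1-p^{-1})^{\card{A}-1} \prod_{\jboundary \in A} (\zeta_p(s_\jboundary - \mathsf{d}_\jboundary + 1) - 1).
\end{equation*}
Comparing term-by-term with $\prod_{\jboundary} \zeta_p(s_\jboundary - \mathsf{d}_\jboundary + 1) = \sum_{A \subseteq J} \prod_{\jboundary \in A} (\zeta_p - 1)$, and invoking Hasse--Weil ($\#D^\circ_\jboundary(\FF_p)/p = 1 + O(p^{-1/2})$, using geometric irreducibility of $D_\jboundary$ from split) together with $\#D^\circ_{\jboundary_1, \jboundary_2}(\FF_p) = O(1)$ and $\zeta_p(\cdots) - 1 = O(p^{-1+\delta})$, the ratio $H^\ast_p / \prod_\jboundary \zeta_p = 1 + O(p^{-1-\delta})$ emerges for all sufficiently small $\delta$.

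For (2)$\Rightarrow$(3), by (2) the factor $E(\bm{s}) \defeq \prod_p \bigl( H^\ast_p(\bm{s}, 1) / \prod_\jboundary \zeta_p(s_\jboundary - \mathsf{d}_\jboundary + 1) \bigr)$ converges absolutely on a neighborhood of $\bm{s} = -\div(\omega)$, while the Euler product identity $\prod_p \prod_\jboundary \zeta_p(\cdots) = \prod_\jboundary \zeta(\cdots)$ holds on $\Re(s_\jboundary) > \mathsf{d}_\jboundary$. These combine to give the meromorphic factorization
\begin{equation*}
H^\ast(\bm{s}, 1) = H^\ast_\infty(\bm{s}, 1) \cdot \prod_\jboundary \zeta(s_\jboundary - \mathsf{d}_\jboundary + 1) \cdot E(\bm{s})
\end{equation*}
in a neighborhood of $-\div(\omega)$. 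Multiplying by $\prod_\jboundary (s_\jboundary - \mathsf{d}_\jboundary)$ and using $\lim_{s \to 1} (s-1)\zeta(s) = 1$ gives the value $H^\ast_\infty(-\div(\omega), 1) \cdot E(-\div(\omega))$. A direct unpacking via $dg_\infty = \abs{\omega}_\infty$, $dg_p = \abs{\omega}_p/(1-p^{-1})$, and \eqref{EQN:define-local-Tamagawa-measure} yields $H^\ast_\infty(-\div(\omega), 1) = \int_{X(\RR)} d\tau_\infty$ and $H^\ast_p(-\div(\omega), 1) = \zeta_p(1) \int_{X(\QQ_p)} d\tau_p$; inserting these collapses $E(-\div(\omega)) = \prod_p (1-p^{-1})^{\card{J}-1} \int_{X(\QQ_p)} d\tau_p = \prod_p \int_{X(\QQ_p)} d\tau_p / \zeta_p(1)^{\card{J}-1}$. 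The split identities $L_p(1, \Pic(X_{\ol{\QQ}})) = \zeta_p(1)^{\card{J}-1}$ and $L^\ast(1, \Pic(X_{\ol{\QQ}})) = 1$ finally identify the total with $\tau(X, \mathsf{H})$. The main obstacle is the refined $p$-adic bound in (1)$\Rightarrow$(2): the $p^{-1-\delta}$ saving (going beyond the naive $p^{-1/2}$ from a single smooth curve) requires the stratum-by-stratum comparison with the binomial expansion of $\prod_\jboundary \zeta_p$, together with Hasse--Weil and the empty-triple-intersection consequence of snc on a surface.
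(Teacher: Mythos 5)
Your proof is correct and follows the same approach as the paper's own argument; the difference is that the paper simply cites Chambert-Loir--Tschinkel (\cite{chambert2010igusa}*{\S4.3.2--4.3.3} for (1)$\Rightarrow$(2) and \cite{chambert2002distribution}*{Proposition~6.2} or \cite{chambert2010igusa}*{Proposition~4.10} for (2)$\Rightarrow$(3)), whereas you spell the computation out. Your stratum-by-stratum formula for $H^\ast_p(\bm{s},1)$ over $D^\circ_A(\FF_p)$ with $\card{A}\le 2$ is the Denef-type formula invoked in the cited references, and your comparison with the binomial expansion $\prod_\jboundary\zeta_p = \sum_{A\subseteq J}\prod_{\jboundary\in A}(\zeta_p-1)$, combined with Lang--Weil / Hasse--Weil on each stratum, is exactly how the $O(p^{-1-\delta})$ saving is obtained there. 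One remark: for the $A=\emptyset$ stratum to contribute error $O(p^{-1-\delta})$ rather than the naive $O(p^{-1})$, you implicitly use that $\#D^\circ_\emptyset(\FF_p) = p^2 - p + O(p^{1/2})$, which follows from $\#\mathscr{X}(\FF_p) = p^2 + (\card{J}-1)p + 1$ (split rational surface with trivial Frobenius action on $\Pic$ for $p\gg 1$) together with $\#D(\FF_p) = \card{J}p + O(p^{1/2})$; you gesture at this but it is worth noting that the splitness hypothesis, which is in force for the rest of \S2, is what makes the Frobenius trace exactly $\card{J}-1$ rather than merely bounded. Your (2)$\Rightarrow$(3) step---switching the limit and Euler product, then unwinding $d\tau_v$ via \eqref{EQN:define-local-Tamagawa-measure} and the identities $L_p(1,\Pic)=\zeta_p(1)^{\card{J}-1}$, $L^\ast(1,\Pic)=1$ in the split case---is precisely the calculation indicated in the paper.
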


\begin{proof}

(1)$\Rightarrow$(2):
See \cite{chambert2010igusa}*{\S4.3.2 (or Lemma~4.1)} for the convergence of individual integrals,
and \cite{chambert2010igusa}*{\S4.3.3 with $\mathscr{B}=\emptyset$} (based on a formula of Denef) for the asymptotic.

(2)$\Rightarrow$(3):
Write $H^\ast = \prod_v H^\ast_v$;
switch $\lim$, $\prod$ using (2).
Then express $\tau(X,\mathsf{H})$ in terms of $H^\ast_v$ via \eqref{EQN:define-local-Tamagawa-measure}.
Cf.~\cite{chambert2002distribution}*{proof of Proposition~6.2} or \cite{chambert2010igusa}*{proof of Proposition~4.10}.
\end{proof}

For the rest of \S\ref{SEC:background}, assume $D$ has strict normal crossings.
(This condition makes some technical calculations more uniform.)
To decompose $\mathsf{Z}(\bm{s},g)$ requires not just $H^\ast(\bm{s},\chi)$ but other integral transforms as well, which we now define alongside other relevant notions.

First, let $\Sbad\belongs \Spec(\ZZ)$ be a finite set such that the following hold:
\begin{enumerate}
\item If $p\notin \Sbad$ and $\jboundary\in J$, then $H_{D_\jboundary,p}$ satisfies Definition~\ref{DEFN:standard-local-Weil-height-for-boundary-divisors} with $\Delta_p=1$.

\item If $p\notin \Sbad$ and $\jboundary\in J$, then $H_{D_\jboundary,p}$ is $\mathscr{G}(\ZZ_p)$-invariant (on the right), i.e.~$\Kbad_p=\mathscr{G}(\ZZ_p)$.

\item If $I\belongs J$, then $\bigcap_{\jboundary\in I} \mathscr{D}_\jboundary$ is smooth over $\ZZ[1/\Sbad]$, the ring of $\Sbad$-integers.
Here $\mathscr{D}_\jboundary$ denotes the closure of $D_\jboundary$ in $\mathscr{X}$,
and for $I=\emptyset$ we let $\bigcap_{\jboundary\in I} \mathscr{D}_\jboundary \defeq \mathscr{X}$.
\end{enumerate}
(We can arrange for (3) to hold since $X$ is smooth and $D$ has strict normal crossings.)

For $p\in \Sbad$, let $\rbad_p(\Kbad)
\defeq \min{\set{r\in \ZZ_{\ge 1}:
(1+p^r\ZZ_p) \times p^r\ZZ_p \belongs \Kbad_p}}$.
Let $\Nbad \defeq \prod_{p\in \Sbad} p^{\rbad_p(\Kbad)}$, and let $\Mbad$ be the set of characters
\begin{equation}
\label{relevant-badness-characters}
\lambda\maps (\RR^\times/(\RR^\times)^2)
\times \prod_{p\in \Sbad} (\ZZ_p^\times/(1+p^{\rbad_p(\Kbad)}\ZZ_p))
\to \CC^\times.
\end{equation}
For $a'=(a'_v)_v\in \RR^\times \times \prod_p \ZZ_p^\times$,
write $\lambda(a') = \prod_v \lambda_v(a'_v)$,
where $\lambda_\infty\maps \RR^\times/(\RR^\times)^2\to \CC^\times$
and $\lambda_p\maps \ZZ_p^\times\to \CC^\times$ are characters
such that $\lambda_p=1$ for all $p\notin \Sbad$.
For $a_p\in \QQ_p^\times$,
let $\sgn_p(a_p) \defeq a_p/p^{v_p(a_p)}$.
Also, for $a=(a_v)_v\in \adele_\QQ^\times$,
let $\lambda_\Sbad(a) \defeq \prod_p \lambda_{\Sbad,p}(a_p)$, where
\begin{equation}
\label{lambda-Sp-definition}
\lambda_{\Sbad,p}(a_p) \defeq \prod_{p'\ne p} \lambda_{p'}(p^{v_p(a_p)})
= \prod_{p'\in \Sbad\setminus \{p\}} \lambda_{p'}(p^{v_p(a_p)}).
\end{equation}
We are now prepared to define the key components $\theta_{m,\lambda,t}(g)$
of a non-abelian Fourier transform on $G(\QQ)\backslash G(\adele_\QQ)$.
For $(m,\lambda,t)\in \ZZ_{\ge 1} \times \Mbad \times \RR$
and $g=(a,b)\in G(\adele_\QQ)$, let
\begin{equation}
\label{clever-theta-sum-over-Qcross}
\theta_{m,\lambda,t}(g) \defeq \sum_{\alpha\in \QQ^\times} \psi(\alpha b)
\mathbf{v}_{m,\lambda}(\alpha a)\, \abs{\alpha a_\infty}_\infty^{it},
\end{equation}
where $\psi$ is the additive character from \S\ref{SUBSEC:conventions},
and where for $a=(a_v)_v\in \adele_\QQ^\times$ we let
\begin{equation}
\label{EQN:define-finite-vector-v_m,lambda}
\mathbf{v}_{m,\lambda}(a)
\defeq \lambda_\infty(a_\infty)
\prod_p \lambda_p(\sgn_p(a_p)) \bm{1}_{\Nbad a_p/m\in \ZZ_p^\times}.
\end{equation}
By additive reciprocity ($\psi(\QQ) = 1$),
the function $\theta_{m,\lambda,t}$ is left $G(\QQ)$-invariant.
Furthermore, $\abs{\theta_{m,\lambda,t}(g)} \le 2$ for all $g$,
since the sum in \eqref{clever-theta-sum-over-Qcross} is supported on
$\alpha = \pm \prod_p p^{v_p(m/\Nbad a_p)} \in \QQ^\times$.
Moreover, at $1_G\defeq (1,0)$ we have
\begin{equation*}
\begin{split}
\theta_{m,\lambda,t}(1_G)
&= \sum_{\alpha = \pm m/\Nbad }
\mathbf{v}_{m,\lambda}(\alpha)\, \abs{\alpha}_\infty^{it} \\
&= \abs{m/\Nbad }_\infty^{it} (1+\lambda(-1))
\prod_p \lambda_p(\sgn_p(m/\Nbad )) \\
&= \abs{m/\Nbad }_\infty^{it} (1+\lambda(-1))
\prod_p \prod_{p'\ne p} \lambda_p((p')^{v_{p'}(m/\Nbad )}) \\
&= \abs{m/\Nbad }_\infty^{it} (1+\lambda(-1))
\prod_{p'} \lambda_{\Sbad,p'}(m/\Nbad ),
\end{split}
\end{equation*}
by the definition of $\lambda_{\Sbad,p'}$.
By the definition of $\lambda_\Sbad$, it follows that
\begin{equation}
\label{EQN:formula-for-theta-at-1}
\theta_{m,\lambda,t}(1_G)
= \abs{m/\Nbad }_\infty^{it} (1+\lambda(-1))
\lambda_\Sbad(m/\Nbad )
= \frac{(1+\lambda(-1)) \lambda_\Sbad(m/\Nbad )}{\prod_p \abs{m/\Nbad }_p^{it}},
\end{equation}
where the last step follows from the product formula
$\prod_v \abs{m/\Nbad }_v^{it} = 1$.
Let
\begin{equation}
\label{EQN:define-non-abelian-Fourier-coefficient-H^ast}
H^\ast(\bm{s},m,\lambda,t)
\defeq \int_{G(\adele_\QQ)} H(\bm{s},g)^{-1} \ol{\theta}_{m,\lambda,t}(g)\, dg.
\end{equation}


For $\alpha\in \QQ^\times$,
define $H^\vee_p(\bm{s},\lambda,t,\alpha)$ to be
\begin{equation}
\label{EXPR:formula-for-H^vee_p}
\int_{G(\QQ_p):\, \Nbad \alpha a_p\in \ZZ_p} H_p(\bm{s},g_p)^{-1}
\phase(-\alpha b_p\bmod{\ZZ_p}) \ol{\lambda}_p(\sgn_p(\alpha a_p))
\lambda_{\Sbad,p}(\alpha a_p) \abs{a_p}_p^{-it}\, dg_p.
\end{equation}
Let $H^\vee_\infty(\bm{s},\lambda,t,\alpha)
\defeq \int_{G(\RR)} H_\infty(\bm{s},g_\infty)^{-1} \phase(\alpha b_\infty)
\lambda_\infty(\alpha a_\infty) \abs{a_\infty}_\infty^{-it} \, dg_\infty$.
(Note that $\lambda_\infty=\ol{\lambda}_\infty$.)

Let $H'_\infty(\bm{s},\lambda,t,\alpha)
\defeq \abs{\alpha}_\infty^{-it} H^\vee_\infty(\bm{s},\lambda,t,\alpha)$.
For $m\in \ZZ_{\ge 1}$, define $H'_p(\bm{s},\lambda,m,\alpha)$ to be
\begin{equation}
\label{EXPR:formula-for-H'_p}
\int_{G(\QQ_p):\, \Nbad \alpha a_p\in m\ZZ_p^\times} H_p(\bm{s},g_p)^{-1}
\phase(-\alpha b_p\bmod{\ZZ_p}) \ol{\lambda}_p(\sgn_p(\alpha a_p))\, dg_p.
\end{equation}

Before proceeding, we need some foundational bounds.
Let $\norm{\bm{s}}\defeq \max_{\jboundary\in J}{\abs{s_\jboundary}}$.

\begin{proposition}
[Cf.~\cite{tanimoto2012height}*{Lemma~5.2}]
\label{PROP:large-s-absolute-convergence-properties}
Let $\Re(\bm{s})\in [A_5,2A_5]^J$ and $g\in G(\adele_\QQ)$.
Then $\mathsf{Z}(\bm{s},g)$ converges absolutely to a bounded function of $(\bm{s},g)$, holomorphic in $\bm{s}$ and smooth in $g$.
Also, $H(\bm{s},g)^{-1}\in L^1(G(\adele_\QQ))$ and $\mathsf{Z}(\bm{s},g)\in L^q(G(\QQ)\backslash G(\adele_\QQ))^{\Kbad}$ for all $\bm{s}$, for all $q\in [1,\infty]$.
\end{proposition}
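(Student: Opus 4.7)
The plan is to deduce every assertion from the crude lower bound \eqref{INEQ:crude-lower-bound-on-H}. Writing
$M_v(g_v) \defeq \max(1,\abs{a(g_v)}_v^{-1},\abs{a(g_v)}_v,\abs{b(g_v)}_v)$,
the bound reads $\abs{H(\bm{s},g)}^{-1} \ll \prod_v M_v(g_v)^{-A_5^{1/2}}$ throughout the strip $\Re(\bm{s})\in [A_5,2A_5]^J$, with the implied constant independent of $\bm{s}$ and $g$.

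First I would show $H(\bm{s},\cdot)^{-1}\in L^1(G(\adele_\QQ))$ by decomposing $dg_v = \abs{a_v}_v^{-1}\,da_v\,db_v$ (up to the normalizing constants from \S\ref{SEC:background}) and evaluating each local integral of $M_v^{-A_5^{1/2}}$. At $v=\infty$, splitting on whether $\abs{b_\infty}_\infty\le 1$ reduces the integral to a handful of one-variable power integrals, each finite for $A_5$ large. At a finite prime $p$, the $(a_p,b_p)$-plane decomposes into annuli of definite $p$-adic valuation on which $M_p$ is constant, and summing the resulting geometric series gives $1+O(p^{-2})$ uniformly for large $p$; the Euler product therefore converges, uniformly on the strip. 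Second, absolute convergence of $\mathsf{Z}(\bm{s},g)$ --- together with holomorphy in $\bm{s}$ (via Morera) and smoothness in $g$ (in the adelic sense of \cite{chambert2010igusa}) --- follows from the same crude bound applied to each summand, plus a height-counting argument: for fixed $g$, the product formula $\prod_v\abs{a'}_v = 1$ for $a'\in\QQ^\times$ forces $\prod_v M_v(g'g)^{-A_5^{1/2}}$ to be bounded by a negative power of the global height of $g'=(a',b')\in\QQ^\times\times\QQ$, so the sum over $g'\in G(\QQ)$ converges absolutely, locally uniformly in $(\bm{s},g)$. Right $\Kbad$-invariance of $\mathsf{Z}$ is inherited directly from that of $H$.

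For the $L^q$ statement, unfolding gives
$$\int_{G(\QQ)\backslash G(\adele_\QQ)} \mathsf{Z}(\bm{s},g)\,dg = \int_{G(\adele_\QQ)} H(\bm{s},g)^{-1}\,dg < \infty,$$
so $\mathsf{Z}\in L^1$; interpolation against $L^\infty$ then yields all $q\in [1,\infty]$. To obtain the $L^\infty$ bound, I would fix a Siegel-type fundamental domain $\mathcal{F}$ for $G(\QQ)\backslash G(\adele_\QQ)$ and bound the Poincar\'{e}-style sum $\sum_{g'\in G(\QQ)} \prod_v M_v(g'g)^{-A_5^{1/2}}$ uniformly for $g\in\mathcal{F}$, via the same lattice-counting argument but now with $g$ itself ranging over the cusp directions of $\mathcal{F}$. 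The main obstacle will be this last step: since $G(\QQ)\backslash G(\adele_\QQ)$ is not compact, uniform boundedness of $\mathsf{Z}$ on a Siegel set is the analogue of showing that an Eisenstein series does not blow up on its abscissa of absolute convergence, and it is only feasible because \eqref{INEQ:crude-lower-bound-on-H} is so strong when $A_5$ is large.
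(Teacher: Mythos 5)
Your convergence arguments ($L^1$ of $H^{-1}$, absolute convergence of $\mathsf{Z}$, the $L^q$ chain via unfolding and interpolation, $\Kbad$-invariance) follow essentially the same line as the paper, which defers them to \cite{tanimoto2012height}*{proof of Lemma~5.2} using exactly the crude bound \eqref{INEQ:crude-lower-bound-on-H}. However, there is a genuine gap where you claim that \emph{smoothness in $g$} ``follows from the same crude bound applied to each summand.'' The crude bound controls the \emph{size} of $H(\bm{s},g'g)^{-1}$, but not its \emph{derivatives} in $g_\infty$; to conclude that the infinite sum $\mathsf{Z}(\bm{s},g)=\sum_{g'\in G(\QQ)} H(\bm{s},g'g)^{-1}$ is $C^\infty$ at the archimedean place, one must differentiate termwise and show the resulting series of derivatives still converges locally uniformly. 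Nothing in the size bound rules out the derivatives of $H^{-1}$ growing much faster than $H^{-1}$ itself near the boundary $D$.

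The paper's proof closes precisely this gap by working with the \emph{left-invariant} differential operators $a\,\frac{\partial}{\partial a}$ and $a\,\frac{\partial}{\partial b}$ on $G(\RR)$. Left invariance gives $\Top\bigl(H(\bm{s},\cdot)^{-1}(g'g)\bigr) = \bigl(\Top H(\bm{s},\cdot)^{-1}\bigr)(g'g)$, i.e. the operator commutes with left translation by $g'$; and the geometric derivative bound \eqref{T-stable-derivatives} (a consequence of Lemma~\ref{LEM:localized-regular-and-height-derivative-estimates}(2)) shows that $\Top H_{D_\jboundary,\infty}^{-1}\ll_\Top H_{D_\jboundary,\infty}^{-1}$, so after applying $\Top$ the summand is still bounded by the same majorant and the series of derivatives converges. (At finite places the summand is locally constant by $\Kbad$-invariance, so no archimedean-style estimate is needed there --- you did note this.) Without invoking such a derivative estimate, or some equivalent control of $\partial_{g_\infty}^\alpha H_\infty^{-1}$ near the boundary, your sketch does not establish smoothness in $g$.
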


\begin{proof}

Except for smoothness in $g$, this follows from \eqref{INEQ:crude-lower-bound-on-H} as in \cite{tanimoto2012height}*{proof of Lemma~5.2}.
For smoothness in $g$, use $\Kbad$-invariance at finite places,
and at the infinite place use the fact that
for every composition $\Top$ of operators in $\set{a\,\frac{\partial}{\partial a}, a\,\frac{\partial}{\partial b}}$, we have
\begin{equation}
\label{T-stable-derivatives}
\Top(H_{D_\jboundary,\infty}^{-1})
\ll_\Top H_{D_\jboundary,\infty}^{-1},
\end{equation}
thanks to Lemma~\ref{LEM:localized-regular-and-height-derivative-estimates}(2)
(or \cite{tanimoto2012distribution}*{proof of Lemma~5.9}),
together with left invariance of $\Top$,
i.e.~the property $\Top(f(g' g)) = (\Top{f})(g' g)$ for $g'\in G(\RR)$.
\end{proof}

\begin{lemma}
\label{LEM:large-s-real-IBP-bounds}
Suppose $\Re(\bm{s})\in [A_5,2A_5]^J$ and $\alpha, t\in \RR$.
Then the following hold:
\begin{enumerate}
\item For $\varsigma\in \set{\pm 1}$, we have
$$\int_{G(\RR):\, \sgn(a_\infty)=\varsigma} H_\infty(\bm{s},g_\infty)^{-1}
\phase(\alpha b_\infty) \abs{a_\infty}_\infty^{-it}\, dg_\infty
\ll \frac{1+\norm{\bm{s}}^6}{(1+\alpha^4)(1+t^2)}.$$

\item We have $\int_{\RR^\times} \abs{\int_\RR H_\infty(\bm{s},g_\infty)^{-1}
\phase(\alpha b_\infty)\, db_\infty}\, d^\times{a_\infty}
\ll (1+\norm{\bm{s}}^4) / (1+\alpha^4)$.
\end{enumerate}
\end{lemma}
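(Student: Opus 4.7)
The common tool is integration by parts (IBP) with respect to the left-invariant operators $a_\infty\frac{\partial}{\partial a_\infty}$ and $a_\infty\frac{\partial}{\partial b_\infty}$ on $G(\RR)$. By \eqref{T-stable-derivatives} and the product structure $H_\infty(\bm{s},g_\infty)^{-1}=\prod_{\jboundary\in J} H_{D_\jboundary,\infty}(g_\infty)^{-s_\jboundary}$, any polynomial $\Top$ of degree $k$ in these two operators satisfies $|\Top H_\infty^{-1}|\ll_k (1+\norm{\bm{s}})^k H_\infty^{-1}$, the factor $(1+\norm{\bm{s}})^k$ absorbing the logarithmic derivatives of the $H_{D_\jboundary,\infty}$. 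Moreover, \eqref{INEQ:crude-lower-bound-on-H} gives, for $A_5$ large, pointwise decay $H_\infty^{-1}\ll (1+|a_\infty|+|a_\infty|^{-1})^{-B}(1+|b_\infty|)^{-B}$ with $B$ as large as we please.

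For (2), for each fixed $a_\infty\in\RR^\times$, I would integrate by parts four times in $b_\infty$ via $\phase(\alpha b_\infty)=(2\pi i\alpha)^{-1}\frac{\partial}{\partial b_\infty}\phase(\alpha b_\infty)$. Writing $\frac{\partial}{\partial b_\infty}=a_\infty^{-1}(a_\infty\frac{\partial}{\partial b_\infty})$, the derivative bound above yields $|\frac{\partial^4}{\partial b_\infty^4} H_\infty^{-1}|\ll |a_\infty|^{-4}(1+\norm{\bm{s}})^4 H_\infty^{-1}$. Combining with the trivial (no-IBP) bound gives
$$\left|\int_\RR H_\infty^{-1}\phase(\alpha b_\infty)\,db_\infty\right| \ll \min\bigl(1,|\alpha a_\infty|^{-4}\bigr)(1+\norm{\bm{s}})^4 \int_\RR H_\infty^{-1}\,db_\infty.$$
Substituting $u=\log|a_\infty|$, so $d^\times a_\infty=du$, the inner integral is $\ll e^{-B|u|}$, and the outer integral is controlled by $(1+\norm{\bm{s}})^4\int_\RR e^{-B|u|}\min(1,|\alpha|^{-4}e^{-4u})\,du$; a direct case split on the sign of $u+\log|\alpha|$ bounds this by $(1+\norm{\bm{s}})^4(1+\alpha^4)^{-1}$ for $B>4$.

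For (1), I would additionally integrate by parts twice in $u=\log|a_\infty|$ (for fixed $\varsigma=\sgn a_\infty$), using $|a_\infty|^{-it}=e^{-itu}$ with $\frac{\partial}{\partial u}=a_\infty\frac{\partial}{\partial a_\infty}$, which produces a factor of $t^{-2}$. The composite derivative
$$\frac{\partial^2}{\partial u^2}\frac{\partial^4}{\partial b_\infty^4} H_\infty^{-1} = a_\infty^{-4}\bigl[(a_\infty\tfrac{\partial}{\partial a_\infty})-4\bigr]^2\bigl(a_\infty\tfrac{\partial}{\partial b_\infty}\bigr)^4 H_\infty^{-1}$$
reduces, via the commutator $[a_\infty\frac{\partial}{\partial a_\infty},a_\infty\frac{\partial}{\partial b_\infty}]=a_\infty\frac{\partial}{\partial b_\infty}$, to a polynomial of degree at most $6$ in the two left-invariant operators applied to $H_\infty^{-1}$, hence is $\ll |a_\infty|^{-4}(1+\norm{\bm{s}})^6 H_\infty^{-1}$. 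The resulting four-IBP/two-IBP bound $\ll \alpha^{-4}t^{-2}(1+\norm{\bm{s}})^6$, interpolated with the bounds from no IBP, from IBP only in $b_\infty$, and from IBP only in $u$ (and integrated as in (2)), yields the claimed $(1+\norm{\bm{s}}^6)/((1+\alpha^4)(1+t^2))$.

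The main technical point is to justify that the IBPs are legitimate, i.e.~that boundary terms vanish as $|u|,|b_\infty|\to\infty$; this is routine from \eqref{INEQ:crude-lower-bound-on-H} once $A_5$ is taken sufficiently large. The other moving parts (the commutator algebra, the interpolation between the trivial bound and the IBP bound, and the final elementary integration in $u$) are equally routine.
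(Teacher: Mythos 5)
Your proposal matches the paper's proof: four integrations by parts in $b_\infty$, two more in $\log\abs{a_\infty}$ for part (1), derivatives bounded via \eqref{T-stable-derivatives} and the chain rule for powers of $H_{D_\jboundary,\infty}$, and absolute decay supplied by \eqref{INEQ:crude-lower-bound-on-H} with $A_5$ large. The only cosmetic difference is that the paper states the integration by parts conditionally (e.g.~``if $\abs{\alpha}\ge 1$'') while you interpolate the IBP bound with the trivial bound via a $\min$; these are equivalent, and your explicit commutator bookkeeping just fills in a step the paper leaves implicit.
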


\begin{proof}
For (1) and (2), integrate by parts over $b_\infty$ four times if $\abs{\alpha} \ge 1$.
For (1), further integrate by parts over $\log{\abs{a_\infty}}$ twice if $\abs{t} \ge 1$.
To bound derivatives, use \eqref{T-stable-derivatives};
to bound $H^{-1}$ itself, use \eqref{INEQ:crude-lower-bound-on-H} and the largeness of $A_5$.
\end{proof}

\begin{lemma}
\label{LEM:large-s-absolute-p-adic-bounds}
Suppose $\Re(\bm{s})\in [A_5,2A_5]^J$ and $(m, \alpha) \in \ZZ_{\ge 1}\times \QQ^\times$.
Then the following hold:
\begin{enumerate}
\item We have $H'_p(\bm{s},\lambda,m,\alpha)
= \bm{1}_{v_p(m)=v_p(\alpha)} + O((p+\abs{m/\alpha}_p+\abs{\alpha/m}_p)^{-4})$.

\item We have $\int_{G(\QQ_p):\, \Nbad \alpha a_p\in \ZZ_p}
\abs{H_p(\bm{s},g_p)}^{-1}\, dg_p
= \bm{1}_{v_p(\alpha)\ge 0} + O((p+\abs{\alpha}_p)^{-4})$.
\end{enumerate}
\end{lemma}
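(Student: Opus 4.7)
The approach for both parts is to partition the $p$-adic integration domain into a ``main ball'' $\mathscr{G}(\ZZ_p) = \ZZ_p^\times \times \ZZ_p$ and a complementary set built from dyadic shells on $\abs{a_p}_p$ and $\abs{b_p}_p$, and then to exploit the hypothesis that $A_5$ is large via \eqref{INEQ:crude-lower-bound-on-H}, which yields the pointwise tail estimate $\abs{H_p(\bm{s},g_p)}^{-1} \ll \max(1,\abs{a_p}_p^{-1},\abs{a_p}_p,\abs{b_p}_p)^{-A_5^{1/2}}$. Since $\Sbad$ is finite, any prime $p\in \Sbad$ contributes only a bounded multiplicative factor absorbable into the implied constant, so I would focus on $p\notin \Sbad$, where $\Nbad$ is a $p$-unit, $\lambda_p=1$, $\Kbad_p=\mathscr{G}(\ZZ_p)$, and Definition~\ref{DEFN:standard-local-Weil-height-for-boundary-divisors} is satisfied with $\Delta_p=1$.

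For part~(1), the constraint $\Nbad\alpha a_p\in m\ZZ_p^\times$ pins down $\abs{a_p}_p = \abs{m/\alpha}_p$, so two cases arise. If $v_p(m)=v_p(\alpha)$, the $a_p$-constraint is met on $\ZZ_p^\times$, and I would decompose the $b_p$-integral into a main piece $b_p\in \ZZ_p$ and dyadic tails $\abs{b_p}_p = p^k$ with $k\ge 1$. On the main piece $\mathscr{G}(\ZZ_p)$, \eqref{large-p-GZp-height} gives $H_p=1$, and since $v_p(\alpha)=v_p(m)\ge 0$ we have $\alpha b_p\in \ZZ_p$, so $\phase(-\alpha b_p\bmod \ZZ_p)=1$; thus the main piece contributes $\int_{\mathscr{G}(\ZZ_p)}dg_p = 1$, producing the indicator main term. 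The $b_p$-tails are $O(p^{-4})$ by the dyadic bound for $A_5$ chosen large enough. If instead $v_p(m)\ne v_p(\alpha)$, then $\abs{a_p}_p\ne 1$ on the entire domain, so \eqref{INEQ:crude-lower-bound-on-H} yields $\abs{H_p}^{-1}\ll (\abs{m/\alpha}_p+\abs{\alpha/m}_p)^{-A_5^{1/2}}$ pointwise, and summing dyadically over the $b_p$-range recovers the claimed error.

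Part~(2) is structurally identical but simpler: there is no phase or character twist to track, and the constraint $\Nbad\alpha a_p\in \ZZ_p$ is compatible with $a_p\in \ZZ_p^\times$ precisely when $v_p(\alpha)\ge 0$. In that case, the main piece $\mathscr{G}(\ZZ_p)$ again contributes $\int_{\mathscr{G}(\ZZ_p)}dg_p = 1$ and the tails are $O(p^{-4})$ as before. When $v_p(\alpha)<0$, the domain forces $v_p(a_p)\ge -v_p(\alpha)>0$, hence $\abs{a_p}_p^{-1}\ge \abs{\alpha}_p$ pointwise; then \eqref{INEQ:crude-lower-bound-on-H} on dyadic shells in $\abs{a_p}_p$ and $\abs{b_p}_p$ immediately yields the desired $O((p+\abs{\alpha}_p)^{-4})$ bound.

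The main obstacle is purely bookkeeping: arranging the dyadic decompositions so that the tail sums assemble cleanly into the stated error shape $O((p+\abs{m/\alpha}_p+\abs{\alpha/m}_p)^{-4})$ uniformly across all $p$, and absorbing the finite, uniformly bounded local data $(\Delta_p,\lambda_p,v_p(\Nbad))$ at primes $p\in\Sbad$ into the implied constant without altering the form of the bound. Since $A_5$ may be taken arbitrarily large, convergence of the dyadic sums and the decay of tails are never tight; the challenge is only to present the argument without branching into too many sub-cases.
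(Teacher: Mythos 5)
Your proof is correct and takes essentially the same route as the paper, which compresses the whole argument into the one-liner ``Use \eqref{INEQ:crude-lower-bound-on-H} and the fact \eqref{large-p-GZp-height} for large $p$.'' You are simply expanding the bookkeeping the paper leaves implicit: use \eqref{large-p-GZp-height} to read off the main term $1$ from $\mathscr{G}(\ZZ_p)$ for $p\notin\Sbad$ large enough, and the pointwise decay \eqref{INEQ:crude-lower-bound-on-H} (with $A_5$ large) to crush the dyadic tails and the off-diagonal cases. One small caveat worth flagging: the clause ``primes $p\in\Sbad$ contribute a bounded factor absorbable into the implied constant'' is a bit glib, because the error bound must still decay in $\abs{m/\alpha}_p+\abs{\alpha/m}_p$ (respectively $\abs{\alpha}_p$) at a \emph{fixed} small prime; so even at $p\in\Sbad$ you still need the dyadic/tail argument, not just an $O_p(1)$ bound. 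Fortunately the same decomposition works there verbatim once you note that on the integration domain $\abs{a_p}_p \asymp_p \abs{m/\alpha}_p$, so the constant merely degrades by a $p$-dependent (and hence finitely-bounded) amount; your argument is correct but could say this more carefully.
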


\begin{proof}
Use \eqref{INEQ:crude-lower-bound-on-H} and the fact \eqref{large-p-GZp-height} for large $p$.
\end{proof}

\begin{lemma}
\label{LEM:large-s-technical-adelic-convergence-bounds}
Suppose $\Re(\bm{s})\in [A_5,2A_5]^J$ and $t\in \RR$.
Then the following hold:
\begin{enumerate}
\item We have $${\textstyle
\sum_{\alpha\in \QQ^\times} \sum_{\lambda\in \Mbad} \sum_{m\ge 1}
\abs{H'_\infty(\bm{s},\lambda,t,\alpha)
\prod_p H'_p(\bm{s},\lambda,m,\alpha)}
\ll (1+\norm{\bm{s}}^6) / (1+t^2).}$$

\item We have $${\textstyle \sum_{\alpha\in \QQ^\times}
(\int_{\RR^\times} \abs{\int_\RR H_\infty(\bm{s},g_\infty)^{-1}
\phase(\alpha b_\infty)\, db_\infty}\, d^\times{a_\infty})
\prod_p (\int_{G(\QQ_p):\, \Nbad \alpha a_p\in \ZZ_p}
\abs{H_p(\bm{s},g_p)}^{-1}\, dg_p)
< \infty.}$$
\end{enumerate}
\end{lemma}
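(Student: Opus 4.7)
My plan is to derive both bounds from the pointwise estimates in Lemma~\ref{LEM:large-s-absolute-p-adic-bounds} and Lemma~\ref{LEM:large-s-real-IBP-bounds}, by performing the $m$-sum and $p$-adic product as an Euler product and then reducing to the absolute convergence of $\sum_{n_1, n_2 \ge 1} (n_1^4 + n_2^4)^{-1}$.

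For part (1), I would first handle the archimedean factor. Since $\lambda_\infty$ is unitary and $\abs{\alpha}_\infty^{-it}$ has modulus one, $\abs{H'_\infty(\bm{s},\lambda,t,\alpha)} = \abs{H^\vee_\infty(\bm{s},\lambda,t,\alpha)}$, which splits by $\varsigma = \sgn(a_\infty)$ into two integrals each bounded by Lemma~\ref{LEM:large-s-real-IBP-bounds}(1); this yields $\abs{H'_\infty} \ll (1 + \norm{\bm{s}}^6)/((1+\alpha^4)(1+t^2))$. Next, for fixed $\alpha$, write $m = \prod_p p^{c_p}$ and $d_p = v_p(\alpha)$. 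Lemma~\ref{LEM:large-s-absolute-p-adic-bounds}(1) gives $\abs{H'_p} \le \bm{1}_{c_p = d_p} + C p^{-4\max(1, \abs{c_p - d_p})}$, so the sum over $m \ge 1$ factors across primes:
\[
\sum_{m \ge 1} \prod_p \abs{H'_p} \le \prod_p \Bigl(\sum_{c_p \ge 0} \bigl[\bm{1}_{c_p = d_p} + Cp^{-4\max(1, \abs{c_p - d_p})}\bigr]\Bigr).
\]
At primes with $d_p \ge 0$, the local sum is $1 + O(p^{-4})$; at primes with $d_p < 0$, the indicator vanishes (since $c_p \ge 0 > d_p$) and the local sum equals $O(p^{-4\abs{d_p}})$. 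Writing $\alpha = \pm n_1/n_2$ in lowest terms, the primes with $d_p < 0$ are exactly those dividing $n_2$, so multiplying gives $\sum_m \prod_p \abs{H'_p} \ll n_2^{-4}$. Since $\Mbad$ is finite, summing over $\lambda$ costs an absolute constant, and the remaining sum
\[
\sum_{\alpha \in \QQ^\times} \frac{1}{(1+\alpha^4)\, n_2^4}
\ll \sum_{n_1, n_2 \ge 1} \frac{1}{n_1^4 + n_2^4} < \infty
\]
combines with the $(1+\norm{\bm{s}}^6)/(1+t^2)$ factor to yield (1).

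Part (2) is analogous and strictly simpler: there is no $\lambda$, $t$, or $m$. Lemma~\ref{LEM:large-s-real-IBP-bounds}(2) bounds the archimedean factor by $\ll (1+\norm{\bm{s}}^4)/(1+\alpha^4)$, and Lemma~\ref{LEM:large-s-absolute-p-adic-bounds}(2) bounds each $p$-adic integral by $\bm{1}_{v_p(\alpha) \ge 0} + O((p + \abs{\alpha}_p)^{-4})$. The same Euler-product argument at primes with $v_p(\alpha) < 0$ gives $\prod_p \ll n_2^{-4}$, and summability over $\alpha$ follows identically. The only nontrivial step in both parts is the uniform $n_2^{-4}$ bound on the Euler product, which requires separating the resonant case $c_p = d_p$ (or $v_p(\alpha) \ge 0$) from the tail at each prime; once this is in place, everything reduces to an absolutely convergent Dirichlet-type sum over $\QQ^\times$.
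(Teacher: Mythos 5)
Your proof follows essentially the same route as the paper's: bound the archimedean and non-archimedean factors by Lemmas~\ref{LEM:large-s-real-IBP-bounds} and~\ref{LEM:large-s-absolute-p-adic-bounds}, factor the $m$-sum and $p$-adic product as an Euler product, and reduce to convergence of a Dirichlet-type sum over $\QQ^\times$. The paper organizes the bookkeeping a bit differently by substituting $\alpha' = \alpha/m$ (exploiting that $H'_p$ and the archimedean bound both factor through $\alpha/m$ and $m\alpha'$), then closing with the algebraic identity $(1+(m\alpha')^4)(r_1r_2)^4 \ge (r_2^4+m^4)r_1^4$; but the content is the same.

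One small correction: your Euler-product bound should read $\sum_m \prod_p \abs{H'_p} \ll_\eps n_2^{\eps-4}$, not $\ll n_2^{-4}$. Each local factor at $p\mid n_2$ contributes $O(p^{-4|v_p(\alpha)|})$ with an absolute implied constant $C'$ that you have no reason to believe is $\le 1$, so multiplying over the $\omega(n_2)$ primes dividing $n_2$ yields $(C')^{\omega(n_2)} n_2^{-4}$, and $(C')^{\omega(n_2)}$ is only $\ll_\eps n_2^\eps$. This is harmless here (the paper's own bound $\abs{r_1r_2}^{\eps-4}$ carries exactly this $\eps$), and the final sum still converges, but the exponent should be written with the $\eps$.
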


\begin{proof}
(1):
Let $\alpha'\defeq \alpha/m$.
Now plug in Lemmas~\ref{LEM:large-s-real-IBP-bounds}(1)
and~\ref{LEM:large-s-absolute-p-adic-bounds}(1).
It then suffices to prove $\sum_{m\ge 1} \sum_{\alpha'=r_1/r_2\in \QQ^\times} (1+(m\alpha')^4)^{-1} \cdot \abs{r_1r_2}^{\eps-4} < \infty$, where we write $\alpha' = r_1/r_2$ in lowest terms (with $r_2\ge 1$).
But $(1+(m\alpha')^4) \cdot (r_1r_2)^4 \ge (r_2^4+m^4) \cdot r_1^4$, which suffices.


(2):
Use Lemmas~\ref{LEM:large-s-real-IBP-bounds}(2) and~\ref{LEM:large-s-absolute-p-adic-bounds}(2), and the bound $\sum_{\alpha=r_1/r_2\in \QQ^\times} (1+\alpha^4)^{-1} \abs{r_2}^{\eps-4} < \infty$.
\end{proof}

For the rest of \S\ref{SEC:background}, assume $\Re(\bm{s})\in [A_5,2A_5]^J$.
We will decompose $\mathsf{Z}(\bm{s},g)$ into two pieces,
based on an averaging process over $\QQ\backslash \adele_\QQ$,
using the short exact sequence of \emph{pointed sets}
(noting that $G(\QQ)\backslash G(\adele_\QQ)$ is not actually a group)
\begin{equation}
\label{adelic-SES}
1 \to \QQ\backslash \adele_\QQ
\xrightarrow{b\mapsto (1,b)} G(\QQ)\backslash G(\adele_\QQ)
\xrightarrow{\det\maps (a,b)\mapsto a} \QQ^\times\backslash \adele_\QQ^\times
\to 1,
\end{equation}
where the map $\QQ\backslash \adele_\QQ \to G(\QQ)\backslash G(\adele_\QQ)$
is well-defined since $(1,c+b) = (1,c)(1,b)$
for all $b\in \adele_\QQ$ and $c\in \QQ$.
In addition, if $\phi\in C(G(\QQ)\backslash G(\adele_\QQ))$,
then $\phi(a,b)$ is $\QQ$-translation invariant in the $b$ entry,
thanks to the translation identity $(1,c)(a,b) = (a,c+b)$ for $c\in \QQ$.
For $g\in G(\adele_\QQ)$, following \cite{tanimoto2012distribution}*{\S3.2.2}
(or \cite{tanimoto2012height}*{Lemma~3.3}), we now let
\begin{equation}
\label{EQN:define-Z_0-as-b-average}
\mathsf{Z}_0(\bm{s},g)
\defeq \int_{\QQ\backslash \adele_\QQ} \mathsf{Z}(\bm{s},(\det(g),b'))\, db',
\end{equation}
where $\mathsf{Z}(\bm{s},(\det(g),b'))$ is well-defined
because $\mathsf{Z}$ is left $G(\QQ)$-invariant.
For any $g'\in G(\QQ)$,
the scaling identity $(\det(g'),1)(\det(g),b') = (\det(g'g),b'\det(g'))$ implies that
\begin{equation*}
\mathsf{Z}_0(\bm{s},g)
= \int_{\QQ\backslash \adele_\QQ} \mathsf{Z}(\bm{s},(\det(g'g),b'\det(g')))\, db'
= \int_{\QQ\backslash \adele_\QQ} \mathsf{Z}(\bm{s},(\det(g'g),b'))\, db'
= \mathsf{Z}_0(\bm{s},g'g),
\end{equation*}
since $\mathsf{Z}$ is left $G(\QQ)$-invariant
and multiplication by $\det(g')\in \QQ$
on the $\QQ$-vector space $\QQ\backslash \adele_\QQ$
is measure-preserving.
Thus $\mathsf{Z}_0$ is left $G(\QQ)$-invariant.
In fact, $\mathsf{Z}_0$ is also right $\Kbad$-invariant,
because if $k=(k_1,k_2)\in \Kbad$,
then the identity $(\det(g),b')k = (\det(gk),\det(g)k_2+b')$
and right $\Kbad$-invariance of $\mathsf{Z}$ imply that
\begin{equation*}
\mathsf{Z}_0(\bm{s},g)
= \int_{\QQ\backslash \adele_\QQ} \mathsf{Z}(\bm{s},(\det(g),b')k)\, db'
= \int_{\QQ\backslash \adele_\QQ} \mathsf{Z}(\bm{s},(\det(gk),b'))\, db'
= \mathsf{Z}_0(\bm{s},gk).
\end{equation*}

We have $\mathsf{Z}_0\in (C^\infty\cap L^\infty)(G(\adele_\QQ))$,
because $\mathsf{Z}\in (C^\infty\cap L^\infty)(G(\adele_\QQ))$
(by Proposition~\ref{PROP:large-s-absolute-convergence-properties})
and $\QQ\backslash \adele_\QQ$ is compact.
Also, $\mathsf{Z}_0$ is left $\adele_\QQ$-invariant, and thus descends to a (right) $\det(\Kbad)$-invariant function of $\det(g)\in \QQ^\times\backslash \adele_\QQ^\times$,
where $\det(\Kbad)$ denotes the image of $\Kbad$
under the map $\det\maps G(\adele_\QQ) \to \adele_\QQ^\times$.
Fourier analysis on $C(\QQ^\times\backslash \adele_\QQ^\times)^{\det(\Kbad)}$,
as in \cite{tanimoto2012distribution}*{\S3.2.2},
gives
\begin{equation}
\label{EQN:Fourier-expand-Z_0}
\mathsf{Z}_0(\bm{s},g)
= \mathsf{Z}_0(\bm{s},(\det(g),0))
= \int H^\ast(\bm{s},\chi) \chi(\det(g))\, d\chi,
\end{equation}
where $\chi$ runs over characters of $\adele_\QQ^\times$ lying in $L^\infty(\QQ^\times\backslash \adele_\QQ^\times)^{\det(\Kbad)}$.
Before defining $d\chi$ and justifying \eqref{EQN:Fourier-expand-Z_0},
we first explain why $H^\ast(\bm{s},\chi)$ is a Fourier coefficient of $\mathsf{Z}_0$.
By Proposition~\ref{PROP:large-s-absolute-convergence-properties},
$H(\bm{s},g)^{-1} \in L^1(G(\adele_\QQ))$,
so Fubini's theorem allows us to
rewrite the integral $H^\ast(\bm{s},\chi)$,
by folding $G(\adele_\QQ)$ first over $G(\QQ)$
and then over $\QQ\backslash \adele_\QQ$,
as follows:
\begin{equation*}
\begin{split}
H^\ast(\bm{s},\chi)
= \int_{G(\adele_\QQ)} H(\bm{s},g)^{-1} \ol{\chi}(\det(g))\, dg
&= \int_{G(\QQ)\backslash G(\adele_\QQ)}
\sum_{g'\in G(\QQ)} H(\bm{s},g' g)^{-1} \ol{\chi}(\det(g))\, dg \\
&= \int_{G(\QQ)\backslash G(\adele_\QQ)} \mathsf{Z}(\bm{s},g) \ol{\chi}(\det(g))\, dg \\
&= \int_{\QQ^\times\backslash \adele_\QQ^\times}
\int_{\QQ\backslash \adele_\QQ} \mathsf{Z}(\bm{s},(a,b))\ol{\chi}(a)
\, db\, d^\times{a} \\
&= \int_{\QQ^\times\backslash \adele_\QQ^\times} \mathsf{Z}_0(\bm{s},(a,0)) \ol{\chi}(a)\, d^\times{a},
\end{split}
\end{equation*}
by \eqref{EQN:define-Z_0-as-b-average},
where $\int_{\QQ\backslash \adele_\QQ} \mathsf{Z}(\bm{s},(a,b))\ol{\chi}(a)\, db$ is viewed as a $\QQ^\times$-invariant function of $a\in \adele_\QQ^\times$,
even though $\mathsf{Z}(\bm{s},(a,b))$ is not $\QQ^\times$-invariant.
This is what we wanted.


The characters $\chi$ in \eqref{EQN:Fourier-expand-Z_0} are of the form $\chi_0\, \abs{\cdot}^{it}$ with $(\chi_0,t)\in \mathcal{U} \times \RR$, where $\mathcal{U}$ is a finite group, because we work over $\QQ$.
(Over number fields, the story is more complicated
and we refer the interested reader to \cite{tanimoto2012distribution}*{Lemma~47}.)
The measure $d\chi$ is the product of the counting measure on $\mathcal{U}$
with the measure $\frac{dt}{2\pi}$ on $\RR$.
The Fourier expansion \eqref{EQN:Fourier-expand-Z_0} is justified, since
$\mathsf{Z}_0\in L^1$ by Proposition~\ref{PROP:large-s-absolute-convergence-properties},
and $\int \abs{H^\ast(\bm{s},\chi)}\, d\chi < \infty$ by Lemma~\ref{LEM:large-s-real-IBP-bounds}(1) with $\alpha=0$.

\begin{proposition}
\label{PROP:Z_0-partial-canonical-main-terms}
Fix $I\belongs J$
with $I\cap J_1 \ne \emptyset$ and $I\cap J_2 \ne \emptyset$.
Fix a vector $$\bm{\kappa}=(\kappa_\jboundary)_{\jboundary\in J}
\in -\div(\omega) + \sum_{\jboundary\in J\setminus I} \RR_{>0} D_\jboundary
\belongs \Pic^G(X)\otimes \RR = \RR^J.$$
Then $(s-1)^{\card{I}-1} \mathsf{Z}_0(s\bm{\kappa},1_G)$ is holomorphic, with at most polynomial growth in vertical strips, on $\Re(s)\ge 1-\delta$.
Also, $$\lim_{s\to 1}{(s-1)^{\card{I}-1} \mathsf{Z}_0(s\bm{\kappa},1_G)}
= \mathcal{X}_{\Lambda_I(X)}(\bm{\kappa})
\lim_{\bm{s}'\to \bm{\kappa}}{H^\ast(\bm{s}',1)
\prod_{\jboundary\in I}(s'_\jboundary-\kappa_\jboundary)}.$$
\end{proposition}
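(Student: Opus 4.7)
The plan is to combine the Fourier-type expansion \eqref{EQN:Fourier-expand-Z_0} (specialized at $g=1_G$, so that $\chi(\det g) = 1$) with Lemmas~\ref{LEM:snc-implies-nice-densities-implies-Tamagawa-extraction} and~\ref{LEM:shifted-integral-polar-and-residue-structure}. Parametrize $\chi=\chi_0\,\abs{\cdot}^{it}$ with $\chi_0\in\mathcal{U}$ (finite) and $t\in\RR$. The identity $H(\bm{s},g)\,\abs{\det g}^{it}=H(\bm{s}-it\,\div(a),g)$, immediate from \eqref{EQN:local-character-height-consistency-relation} since $\div(a)=\sum_\jboundary \mathsf{u}_\jboundary D_\jboundary$, converts the inner $t$-integral into a shifted integral in the sense of \eqref{EQN:define-shifted-integral}:
$$\mathsf{Z}_0(\bm{s},1_G)=\sum_{\chi_0\in\mathcal{U}}\mathscr{S}\bigl(H^\ast(\cdot,\chi_0)\bigr)(\bm{s}),$$
with convergence guaranteed in the region of absolute convergence by Lemma~\ref{LEM:large-s-real-IBP-bounds}(1).

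For the trivial character, Lemma~\ref{LEM:snc-implies-nice-densities-implies-Tamagawa-extraction}(2) combined with Euler-product convergence yields a meromorphic continuation of $H^\ast(\bm{s},1)$ to a neighborhood of $\{\Re(\bm{s}+\div(\omega))\in[-\delta,\delta]^J\}$ whose only singularities are simple poles along the hyperplanes $s_\jboundary=\mathsf{d}_\jboundary$. Because $\kappa_\jboundary=\mathsf{d}_\jboundary$ exactly for $\jboundary\in I$, the shifted function $\digamma(\bm{z})\defeq H^\ast(\bm{\kappa}+\bm{z},1)$ lies in $\mathcal{M}_{\dagger,J}$ with $\digamma\cdot\prod_{\jboundary\in I}z_\jboundary/(1+z_\jboundary)\in\mathcal{H}_{\dagger,J}$, once the $(1+t^2)^{-1}$ vertical-strip decay is extended from the absolutely convergent region (Lemma~\ref{LEM:large-s-real-IBP-bounds}(1)) down to $\Re(\bm{z})\approx \bm{0}$ via the zeta-factor factorization.

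Applying Lemma~\ref{LEM:shifted-integral-polar-and-residue-structure} with $P=I$ decomposes $\digamma=\sum_{I'\belongs I}h_{I'}\mathbf{g}_{I'}$ and gives the corresponding formula for $\mathscr{S}(\digamma)$. Evaluating at $\bm{z}=(s-1)\bm{\kappa}$ and invoking part~(3) together with the homogeneity $\mathcal{X}_{\Lambda_{I'}(X)}((s-1)\bm{\kappa})=(s-1)^{-(\card{I'}-1)}\mathcal{X}_{\Lambda_{I'}(X)}(\bm{\kappa})$ makes the pole structure transparent. The hypotheses $I\cap J_1\ne\emptyset$ and $I\cap J_2\ne\emptyset$ force $\card{I\setminus J_3}\ge 2$, so the term $I'=I$ delivers a pole of order exactly $\card{I}-1$ with leading coefficient $h_I(\bm{0})\,\mathcal{X}_{\Lambda_I(X)}(\bm{\kappa})$. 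Every other term is dominated: for $I'\propersub I$ in the first sum the contribution vanishes or has pole order $\card{I'}-1\le\card{I}-2$; for $I'\belongs I\cap J_3$ in the second sum, $\card{I'}\le\card{I\cap J_3}\le\card{I}-2$. Part~(2) identifies $h_I(\bm{0})=\lim_{\bm{s}'\to\bm{\kappa}}H^\ast(\bm{s}',1)\prod_{\jboundary\in I}(s'_\jboundary-\kappa_\jboundary)$, producing the claimed limit, and Lemma~\ref{LEM:shifted-integral-polar-and-residue-structure}(4) applied to each $\mathscr{S}(h_{I'})$ and each holomorphic $h_{I'}$ supplies the polynomial vertical-strip growth on $\Re(s)\ge 1-\delta$.

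For $\chi_0\ne 1$ the local analogue of Lemma~\ref{LEM:snc-implies-nice-densities-implies-Tamagawa-extraction}(2) produces Dirichlet-type $L$-factors rather than $\zeta_p$ factors, so $H^\ast(\cdot,\chi_0)$ is holomorphic on a neighborhood of $\bm{s}=\bm{\kappa}$ and Lemma~\ref{LEM:shifted-integral-polar-and-residue-structure}(4) makes its shifted integral regular at $s=1$; these terms add nothing to the leading behavior. The main obstacle in the whole plan is not the algebraic bookkeeping with $\mathscr{S}$ and $\mathcal{X}_{\Lambda_I(X)}$, but the analytic meromorphic continuation and $(1+t^2)^{-1}$ decay of $H^\ast(\bm{s},\chi_0)$ into the critical region near $\bm{s}=\bm{\kappa}$, since Lemmas~\ref{LEM:large-s-real-IBP-bounds}--\ref{LEM:large-s-technical-adelic-convergence-bounds} only furnish such bounds for large $\Re(\bm{s})$; one overcomes this by isolating the zeta/$L$-factors and reducing to an absolutely convergent remainder Euler product.
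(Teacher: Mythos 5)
Your proposal follows essentially the same route as the paper: expand $\mathsf{Z}_0(\bm{s},1_G)$ via \eqref{EQN:Fourier-expand-Z_0}, recognize the $t$-integral as a shifted integral $\mathscr{S}(H^\ast(\cdot,\chi_0))$ using \eqref{EQN:local-character-height-consistency-relation}, apply Lemma~\ref{LEM:shifted-integral-polar-and-residue-structure} with $P=I$ to $\digamma(\bm{z})=H^\ast(\bm{z}+\bm{\kappa},\chi_0)$ via Lemma~\ref{LEM:snc-implies-nice-densities-implies-Tamagawa-extraction}(1)$\Rightarrow$(2), and isolate the $\chi_0=1$ contribution. The bookkeeping via parts (1)--(4) of Lemma~\ref{LEM:shifted-integral-polar-and-residue-structure}, the homogeneity of $\mathcal{X}_{\Lambda_{I'}(X)}$, the hypothesis $\card{I\setminus J_3}\ge 2$, and the identification $h_I(\bm{0})=\lim H^\ast(\bm{s}',1)\prod_{\jboundary\in I}(s'_\jboundary-\kappa_\jboundary)$ are all as intended, and they fill in details that the paper's one-sentence proof leaves implicit.

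However, your last paragraph has a concrete error. You assert that for $\chi_0\ne 1$, the factorization ``produces Dirichlet-type $L$-factors rather than $\zeta_p$ factors, so $H^\ast(\cdot,\chi_0)$ is holomorphic on a neighborhood of $\bm{s}=\bm{\kappa}$.'' That is false in general: for $\jboundary\in I\cap J_3$ we have $\mathsf{u}_\jboundary=0$, so near $D_\jboundary$ the function $a$ is a unit, the twist $\ol{\chi}_{0,p}(a_p)$ is trivial for $p\notin\Sbad$, and the local factor is still $\zeta_p(s_\jboundary-\mathsf{d}_\jboundary+1)$, which contributes a pole at $s_\jboundary=\mathsf{d}_\jboundary$ no matter what $\chi_0$ is. More generally, for $\jboundary\in I\setminus J_3$ the factor is $L(s_\jboundary-\mathsf{d}_\jboundary+1,\chi_0^{\mathsf{u}_\jboundary})$, which also has a pole whenever $\chi_0^{\mathsf{u}_\jboundary}=1$. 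Thus $H^\ast(\cdot,\chi_0)$ is \emph{not} holomorphic near $\bm{\kappa}$, and you cannot just invoke Lemma~\ref{LEM:shifted-integral-polar-and-residue-structure}(4). The correct argument is to apply Lemma~\ref{LEM:shifted-integral-polar-and-residue-structure}(1)--(3) to $\digamma^{(\chi_0)}$ as well, and observe that its polar set inside $I$ is $P'=\{\jboundary\in I: \chi_0^{\mathsf{u}_\jboundary}=1\}$; one needs $P'\propersub I$ (equivalently, some $\jboundary\in I\setminus J_3$ has $\chi_0^{\mathsf{u}_\jboundary}\ne 1$) to force $h_I(\bm{0})=0$, which then bounds the pole order at $s=1$ by $\max(\card{P'}-1,\card{I\cap J_3})\le\card{I}-2$. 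Whether $P'\propersub I$ for all $\chi_0\ne 1$ hinges on $\gcd_{\jboundary\in I}\mathsf{u}_\jboundary=1$; this holds in the paper's application ($I=J$, Remark~\ref{RMK:saturated-case-dual-measure-interpretation}), but your proof as written does not address the point, and the paper itself just refers to \cite{tanimoto2012height} for it.
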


\begin{proof}
Use \eqref{EQN:Fourier-expand-Z_0}, Lemma~\ref{LEM:snc-implies-nice-densities-implies-Tamagawa-extraction}(1)$\Rightarrow$(2),
and Lemma~\ref{LEM:shifted-integral-polar-and-residue-structure} with $P=I$
and $$\digamma(\bm{z})=H^\ast(\bm{z}+\bm{\kappa},\chi_0),$$
where $\chi_0\in \mathcal{U}$.
Then specialize to $\bm{z} = (s-1)\bm{\kappa}$ and let $s\to 1$.
The main contribution to $\mathsf{Z}_0$ comes from $\chi_0=1$.
Cf.~\cite{tanimoto2012height}*{proof of Theorem~2.1} or \cite{tanimoto2012distribution}*{proof of Theorem~48}.
\end{proof}

Proposition~\ref{PROP:Z_0-partial-canonical-main-terms} and Lemma~\ref{LEM:snc-implies-nice-densities-implies-Tamagawa-extraction} give a satisfactory understanding of $\mathsf{Z}_0$ for us.
It remains to discuss $\mathsf{Z}_1\defeq \mathsf{Z} - \mathsf{Z}_0$.
We follow \cite{tanimoto2012height}*{proofs of Lemma~5.3 and Proposition~5.4}.
Clearly $\mathsf{Z}_1$ is left $G(\QQ)$-invariant,
since $\mathsf{Z}$ and $\mathsf{Z}_0$ are.
For $a\in \adele_\QQ^\times$, let
\begin{equation}
\label{EQN:define-h-as-twisted-Z_1-integral}
\mathsf{h}(\bm{s},a) \defeq \int_{\QQ\backslash \adele_\QQ} \mathsf{Z}_1(\bm{s},(a,b)) \ol{\psi}(b)\, db,
\end{equation}
where the quantities $\mathsf{Z}_1(\bm{s},(a,b))$
and $\psi(b)$ (the character from \S\ref{SUBSEC:conventions})
are well-defined.
We have $\mathsf{h}\in (C^\infty\cap L^\infty)(\adele_\QQ^\times)$, since $\mathsf{Z}_1=\mathsf{Z}-\mathsf{Z}_0\in (C^\infty\cap L^\infty)(G(\adele_\QQ))$ and $\QQ\backslash \adele_\QQ$ is compact.
Since $\vol(\QQ\backslash \adele_\QQ)=1$,
and $\mathsf{Z}_0(\bm{s},(a,b))$ is independent of $b$,
we have
\begin{equation*}
\int_{\QQ\backslash \adele_\QQ} \mathsf{Z}_0(\bm{s},(a,b'))\, db'
=  \mathsf{Z}_0(\bm{s},(a,b))
= \int_{\QQ\backslash \adele_\QQ} \mathsf{Z}(\bm{s},(a,b'))\, db'
\end{equation*}
by \eqref{EQN:define-Z_0-as-b-average},
whence $\int_{\QQ\backslash \adele_\QQ} \mathsf{Z}_1(\bm{s},(a,b))\, db = 0$.
By Fourier expansion of $\mathsf{Z}_1(\bm{s},(a,b))$ in $b\in \QQ\backslash \adele_\QQ$,
as in \cite{tanimoto2012height}*{proof of $\Theta I = 1$
in the final display of the proof of Lemma~3.4},
we find that
\begin{equation*}
\begin{split}
\mathsf{Z}_1(\bm{s},(a,b))
&= \sum_{\alpha\in \QQ}
\int_{\QQ\backslash \adele_\QQ} \mathsf{Z}_1(\bm{s},(a,b+y)) \ol{\psi}(\alpha y)\, dy \\
&= \sum_{\alpha\in \QQ^\times}
\int_{\QQ\backslash \adele_\QQ} \mathsf{Z}_1(\bm{s},(a,y))
\ol{\psi}(\alpha (y-b))\, dy \\
&= \sum_{\alpha\in \QQ^\times} \psi(\alpha b)
\int_{\QQ\backslash \adele_\QQ} \mathsf{Z}_1(\bm{s},(\alpha a,\alpha y))
\ol{\psi}(\alpha y)\, dy,
\end{split}
\end{equation*}
by the scaling identity $(\alpha,0)(a,b) = (\alpha a,\alpha b)$ for $\alpha\in \QQ^\times$.
Multiplication by $\alpha$ on $\QQ\backslash \adele_\QQ$ is measure-preserving,
so by \eqref{EQN:define-h-as-twisted-Z_1-integral},
we conclude that
\begin{equation}
\label{EQN:Z_1-prelim-psi-h-expansion}
\mathsf{Z}_1(\bm{s},g)
= \sum_{\alpha\in \QQ^\times} \psi(\alpha b) \mathsf{h}(\bm{s}, \alpha a).
\end{equation}

But $\int_{\QQ\backslash \adele_\QQ} \mathsf{Z}_0(\bm{s},(a,b)) \ol{\psi}(b)\, db
= \mathsf{Z}_0(\bm{s},(a,0)) \int_{\QQ\backslash \adele_\QQ} \ol{\psi}(b)\, db
= 0$, so \eqref{EQN:define-h-as-twisted-Z_1-integral} simplifies to
\begin{equation}
\label{EQN:rewrite-h-as-twisted-Z-integral}
\mathsf{h}(\bm{s},a)
= \int_{\QQ\backslash \adele_\QQ} \mathsf{Z}(\bm{s},(a,b)) \ol{\psi}(b)\, db.
\end{equation}
The following lemma is essentially
\cite{tanimoto2012height}*{first paragraph of the proof of Lemma~5.3}:
\begin{lemma}
\label{h-is-in-L^1}
We have $\mathsf{h}\in L^1(\adele_\QQ^\times)$.
\end{lemma}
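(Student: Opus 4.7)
The plan is to unfold the $G(\QQ)$-sum defining $\mathsf{Z}$, express $\mathsf{h}$ as a sum over $\QQ^\times$ of factorizable adelic integrals, and then bound its $L^1$-norm directly via Lemma~\ref{LEM:large-s-technical-adelic-convergence-bounds}(2). Starting from \eqref{EQN:rewrite-h-as-twisted-Z-integral} and the absolute convergence of $\mathsf{Z}$ supplied by Proposition~\ref{PROP:large-s-absolute-convergence-properties}, I would substitute $\mathsf{Z}(\bm{s},(a,b)) = \sum_{(a',b')\in G(\QQ)} H(\bm{s},(a'a,a'b+b'))^{-1}$, interchange sum and integral, apply the measure-preserving change of variables $y = a'b$ on $\adele_\QQ$ for each fixed $a'\in\QQ^\times$, and then unfold $\sum_{b'\in\QQ}\int_{\QQ\backslash \adele_\QQ}\to \int_{\adele_\QQ}$. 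The unfolding is valid because $\ol{\psi}(y/a')$ is $\QQ$-invariant in $y$ (as $c/a'\in \QQ$ for $c\in \QQ$ and $\psi$ is trivial on $\QQ$). This yields
\[
\mathsf{h}(\bm{s}, a) = \sum_{a'\in \QQ^\times} \int_{\adele_\QQ} H(\bm{s},(a'a, y))^{-1}\, \ol{\psi}(y/a')\, dy.
\]

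To estimate $\int_{\adele_\QQ^\times}|\mathsf{h}(\bm{s}, a)|\, d^\times{a}$, I would move the absolute value inside the $a'$-sum via the triangle inequality, substitute $A = a'a$ in each term, set $\alpha = 1/a'\in\QQ^\times$, and use that both $H$ and $\psi$ factor over places. Fubini and $\bigl|\prod_v z_v\bigr| = \prod_v|z_v|$ yield
\[
\int_{\adele_\QQ^\times}|\mathsf{h}(\bm{s}, a)|\, d^\times{a} \le \sum_{\alpha\in\QQ^\times} \prod_v \int_{\QQ_v^\times}\left|\int_{\QQ_v} H_v(\bm{s},(A_v, y_v))^{-1}\, \ol{\psi}_v(\alpha y_v)\, dy_v\right| d^\times{A_v}.
\]
At the infinite place, this local factor is exactly the quantity bounded in Lemma~\ref{LEM:large-s-real-IBP-bounds}(2). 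At a finite place $p$, the right-$\Kbad_p$-invariance of $H_p^{-1}$ under translations by $(1, p^{\rbad_p(\Kbad)}k)\in \Kbad_p$ for $k\in \ZZ_p$ (which shift $y_p$ by $A_p p^{\rbad_p(\Kbad)}k$), combined with character orthogonality, forces the inner Fourier integral to vanish unless $\Nbad\alpha A_p \in \ZZ_p$. Together with the trivial bound $|\int H_p^{-1}\ol{\psi}_p\, dy_p| \le \int |H_p|^{-1}\, dy_p$, each $p$-adic factor is dominated by the corresponding factor in Lemma~\ref{LEM:large-s-technical-adelic-convergence-bounds}(2), up to the Haar-normalization constant $1-p^{-1}$.

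The desired conclusion $\mathsf{h}\in L^1(\adele_\QQ^\times)$ then follows from Lemma~\ref{LEM:large-s-technical-adelic-convergence-bounds}(2). The main technical point is the local analysis at finite places, specifically extracting the support condition $\Nbad\alpha A_p\in \ZZ_p$ from $\Kbad_p$-invariance and character orthogonality; the remaining steps are routine measure-theoretic and adelic manipulations already implicit in \S\ref{SEC:background}.
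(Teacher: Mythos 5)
Your proposal takes essentially the same route as the paper: unfold $\mathsf{Z}$ over $G(\QQ)$, unfold $\QQ\backslash\adele_\QQ$ over $\QQ$ (justified by the $L^1$ bound from Proposition~\ref{PROP:large-s-absolute-convergence-properties} and reciprocity $\psi(\QQ)=1$), factor over places, extract the support condition $\Nbad\alpha a_p\in\ZZ_p$ from right $\Kbad$-invariance, and invoke Lemma~\ref{LEM:large-s-technical-adelic-convergence-bounds}(2). One small remark: the parenthetical about a stray $(1-p^{-1})$ factor is unnecessary, since $dg_p = d^\times a_p\, db_p$ exactly (the $(1-p^{-1})$ in the $|\omega|_p$ normalization is precisely absorbed by the normalization $\int_{\ZZ_p^\times} d^\times a_p = 1$), so your $p$-adic factor matches the one in Lemma~\ref{LEM:large-s-technical-adelic-convergence-bounds}(2) on the nose.
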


\begin{proof}
For each $b\in \adele_\QQ$, the definition of $\mathsf{Z}$ yields
\begin{equation*}
\mathsf{Z}(\bm{s},(a,b))
= \sum_{(\alpha^{-1},y\alpha^{-1})\in G(\QQ)} H(\bm{s},(\alpha^{-1},y\alpha^{-1})(a,b))^{-1}
= \sum_{\alpha\in \QQ^\times} \sum_{y\in \QQ} H(\bm{s},(\alpha^{-1} a,\alpha^{-1}(b+y)))^{-1}.
\end{equation*}
Plugging this into \eqref{EQN:rewrite-h-as-twisted-Z-integral}
and unfolding $\QQ\backslash \adele_\QQ$ over $\QQ$
(by Fubini's theorem), we get
\begin{equation*}
\mathsf{h}(\bm{s},a)
= \sum_{\alpha\in \QQ^\times}
\int_{\adele_\QQ} H(\bm{s},(\alpha^{-1} a,\alpha^{-1} b))^{-1} \ol{\psi}(b)\, db.
\end{equation*}
Using the triangle inequality, and moving $\sum_\alpha$ to the outside, we get
\begin{equation*}
\begin{split}
\int_{\adele_\QQ^\times} \abs{\mathsf{h}(\bm{s},a)}\, d^\times{a}
&\le \sum_{\alpha\in \QQ^\times} \int_{\adele_\QQ^\times}
\biggl\lvert{\int_{\adele_\QQ} H(\bm{s},(\alpha^{-1} a,\alpha^{-1} b))^{-1} \ol{\psi}(b)\, db}\biggr\rvert
\, d^\times{a} \\
&= \sum_{\alpha\in \QQ^\times} \int_{\adele_\QQ^\times}
\biggl\lvert{\int_{\adele_\QQ} H(\bm{s},(a,b))^{-1} \ol{\psi}(\alpha b)\, db}\biggr\rvert
\, d^\times{a}
= \sum_{\alpha\in \QQ^\times} \int_{\adele_\QQ^\times}
\abs{I(a,\alpha)}
\, d^\times{a},
\end{split}
\end{equation*}
say.
With analogous local notation, we have
$I(a,\alpha) = \prod_v I_v(a_v,\alpha)$ (by Fubini's theorem).
However, by the right $\Kbad$-invariance of $H$,
we have $I_p(a_p,\alpha) = I_p(a_p,\alpha) \psi_p(\alpha a_py_p)$
for all $y_p\in \Nbad \ZZ_p$.
Taking $y_p=\Nbad$, it follows that $I_p(a_p,\alpha)=0$
unless $\alpha a_p\Nbad\in \ZZ_p$.
By the triangle inequality, we get
$\int_{\QQ_p^\times} \abs{I_p(a_p,\alpha)}\, d^\times{a_p}
\le \int_{G(\QQ_p):\, \Nbad \alpha a_p\in \ZZ_p}
\abs{H_p(\bm{s},g_p)}^{-1}\, dg_p$.
Since
\begin{equation*}
\int_{\adele_\QQ^\times} \abs{I(a,\alpha)}\, d^\times{a}
= \int_{\adele_\QQ^\times} \prod_v \abs{I_v(a_v,\alpha)}\, d^\times{a}
= \prod_v \int_{\QQ_v^\times} \abs{I_v(a_v,\alpha)}\, d^\times{a_v},
\end{equation*}
it now follows from Lemma~\ref{LEM:large-s-technical-adelic-convergence-bounds}(2)
that $\mathsf{h}\in L^1(\adele_\QQ^\times)$.
\end{proof}

By Lemma~\ref{h-is-in-L^1},
we now have $\mathsf{h}\in L^q(\adele_\QQ^\times)$ for all $q\in [1,\infty]$
(since $\mathsf{h}\in L^\infty$).
This lets us Fourier-expand $\mathsf{h}(\bm{s},a)$ in the variable $a$.
We do this first at finite places $v$, using $L^2$ theory,
and then at $v=\infty$, using $L^1$ theory.
(We roughly follow \cite{tanimoto2012height}*{proof of Proposition~3.6}.)
Let $\adele_{\QQ,f}^\times$ be the finite part of $\adele_\QQ^\times$.
Recall the characters $\lambda\in \Mbad$ from \eqref{relevant-badness-characters},
and the functions $\mathbf{v}_{m,\lambda}(a)$ from \eqref{EQN:define-finite-vector-v_m,lambda}.
Let $L^q(\RR^\times)^{\lambda_\infty}$ consist of
even (resp.~odd) functions in $L^q(\RR^\times)$
if $\lambda_\infty$ is trivial (resp.~nontrivial).
Then $\mathbf{v}_{m,\lambda}(a_f,a_\infty)\in L^\infty(\RR^\times)^{\lambda_\infty}$
as a function of $a_\infty\in \RR^\times$.
Let
\begin{equation}
\label{discrete-fourier-coefficient-hmlambda}
\mathsf{h}_{m,\lambda}(\bm{s},a_\infty)
\defeq \frac12 \sum_{\varsigma\in \set{\pm 1}} \int_{\adele_{\QQ,f}^\times}
\mathsf{h}(\bm{s},(a_f,\varsigma a_\infty))
\ol{\mathbf{v}}_{m,\lambda}(a_f,\varsigma a_\infty)\, d^\times{a_f}.
\end{equation}
Then $\mathsf{h}_{m,\lambda}\in L^1(\RR^\times)
\cap L^\infty(\RR^\times)$,
where $\mathsf{h}_{m,\lambda}\in L^1$
because $\mathsf{h}\in L^1$ and $\mathbf{v}_{m,\lambda}\in L^\infty$,
and where $\mathsf{h}_{m,\lambda}\in L^\infty$
because $\mathsf{h},\mathbf{v}_{m,\lambda}\in L^\infty$ and $\mathbf{v}_{m,\lambda}$ is supported on $\RR^\times\times \prod_p \Nbad^{-1}m\ZZ_p^\times$.

\begin{lemma}
\label{finite-fourier-expand-h}
We have
$\mathsf{h}(\bm{s},a) = \sum_{\lambda\in \Mbad} \sum_{m\ge 1}
\mathbf{v}_{m,\lambda}(a) \mathsf{h}_{m,\lambda}(\bm{s},a_\infty)$.
\end{lemma}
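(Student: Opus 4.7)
The plan is to read the claimed identity as Fourier inversion on $\adele_{\QQ,f}^\times/U$ combined with Fourier inversion on $\{\pm 1\}$, where $U \defeq \prod_{p\in \Sbad}(1+p^{\rbad_p(\Kbad)}\ZZ_p) \times \prod_{p\notin\Sbad} \ZZ_p^\times$ is the invariance subgroup of $\mathsf{h}(\bm{s}, \cdot)$ in $a_f$. First I would establish two foundational properties of $\mathsf{h}$ that follow from right $\Kbad$-invariance of $\mathsf{Z}_1 = \mathsf{Z} - \mathsf{Z}_0$ via \eqref{EQN:rewrite-h-as-twisted-Z-integral}: substituting $b \mapsto b + a k_2$ for any $(k_1, k_2) \in \Kbad$ gives the transformation law
\begin{equation*}
\mathsf{h}(\bm{s}, a k_1) = \overline{\psi}(a k_2)\, \mathsf{h}(\bm{s}, a).
\end{equation*}
Taking $k_1 = 1$ and letting $k_2$ range over the second-coordinate subgroup of $\Kbad$ forces $\mathsf{h}(\bm{s}, a) = 0$ unless $\Nbad a_f \in \widehat{\ZZ}$; taking $k_2 = 0$ yields $U$-invariance of $\mathsf{h}(\bm{s}, \cdot)$.

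For fixed $a = (a_f, a_\infty)$, the indicator product in \eqref{EQN:define-finite-vector-v_m,lambda} vanishes for every $m \ge 1$ when $\Nbad a_f \notin \widehat{\ZZ}$ (in which case both sides vanish by the support property), and otherwise forces a unique value $m = m(a_f) \defeq \prod_p p^{v_p(\Nbad) + v_p(a_p)} \ge 1$. Thus the sum on the right reduces to the finite character sum $\sum_{\lambda\in \Mbad} \mathbf{v}_{m,\lambda}(a)\, \mathsf{h}_{m,\lambda}(\bm{s}, a_\infty)$. I would substitute \eqref{discrete-fourier-coefficient-hmlambda} and swap the $\lambda$-sum with the $\varsigma$-sum and $a'_f$-integral (justified since the $\lambda$-sum is finite and $\mathsf{h} \in L^1 \cap L^\infty(\adele_\QQ^\times)$ by Lemma~\ref{h-is-in-L^1} and Proposition~\ref{PROP:large-s-absolute-convergence-properties}), and then collapse the inner kernel $\sum_\lambda \mathbf{v}_{m,\lambda}(a)\, \overline{\mathbf{v}}_{m,\lambda}(a'_f, \varsigma a_\infty)$ by character orthogonality. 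Writing $\lambda_\infty(a_\infty)\overline{\lambda_\infty(\varsigma a_\infty)} = \lambda_\infty(\varsigma)$ (since $\lambda_\infty^2 = 1$) and applying orthogonality on $\{\pm 1\}$ and on each finite factor $\ZZ_p^\times/(1+p^{\rbad_p(\Kbad)}\ZZ_p)$, the kernel becomes
\begin{equation*}
2\bm{1}_{\varsigma = 1}\cdot \abs{F}\cdot \bm{1}_{a'_f \in a_f U}, \qquad \abs{F} \defeq \prod_{p\in\Sbad}\abs{\ZZ_p^\times/(1+p^{\rbad_p(\Kbad)}\ZZ_p)}.
\end{equation*}

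Finally, the prefactor $\tfrac{1}{2}\sum_\varsigma$ absorbs the factor $2\bm{1}_{\varsigma = 1}$, and the $a'_f$-integral restricts to the coset $a_f U$ on which $\mathsf{h}$ is constant by the invariance established in the first paragraph, leaving $\abs{F}\cdot \mathsf{h}(\bm{s}, a) \cdot \vol(a_f U)$. Under the normalization $\int_{\widehat{\ZZ}^\times} d^\times a_f = 1$, one checks that $\vol(U) = 1/\abs{F}$, which cancels $\abs{F}$ exactly and recovers $\mathsf{h}(\bm{s}, a)$. The main delicate point will be confirming the measure normalizations so that the orbit volume cancels the orthogonality cardinality; there is no convergence obstacle, because the $m$-sum collapses to a single term for each $a$ and $\Mbad$ is finite.
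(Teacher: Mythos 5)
Your proof is correct and takes essentially the same route as the paper: both rest on the right $\Kbad$-invariance of $\mathsf{Z}$, via \eqref{EQN:rewrite-h-as-twisted-Z-integral}, to establish the support and $U$-invariance of $\mathsf{h}(\bm{s},\cdot)$ in the finite part, and then appeal to Fourier inversion on the finite abelian group $(\RR^\times/(\RR^\times)^2)\times\prod_{p\in\Sbad}(\ZZ_p^\times/(1+p^{\rbad_p(\Kbad)}\ZZ_p))$. The paper invokes this finite-group Fourier expansion of the restriction $\mathsf{h}'_m$ as a one-line appeal, whereas you verify the same inversion identity by plugging in \eqref{discrete-fourier-coefficient-hmlambda}, applying character orthogonality explicitly, and checking that $\vol(U)=1/\abs{F}$ under the normalization $\int_{\ZZ_p^\times}d^\times a_p=1$ — all of which is correct and fills in exactly the steps the paper leaves implicit.
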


\begin{proof}
Since $\mathsf{Z}$ is right $\Kbad$-invariant,
\eqref{EQN:rewrite-h-as-twisted-Z-integral} implies
that $\mathsf{h}(\bm{s},a\ucord) = \psi(a\vcord) \mathsf{h}(\bm{s},a)$ for all $(\ucord,\vcord)\in \Kbad$.
Taking $\ucord=1$ and $\vcord\in \Nbad\ZZ_p$,
we find that $\mathsf{h}(\bm{s},a)$ is supported on
$$a\in \adele_\QQ^\times \cap \left(\RR\times \prod_p \Nbad^{-1}\ZZ_p\right)
= \bigcup_{m\ge 1} \left(\RR^\times\times \prod_p \Nbad^{-1}m\ZZ_p^\times\right).$$
Taking $\ucord\in \ZZ_p^\times\cap (1+\Nbad\ZZ_p)$,
we find that if $a\in \RR^\times\times \prod_p \Nbad^{-1}m\ZZ_p^\times$,
then $\mathsf{h}(\bm{s},a)$ depends only on
$\bm{s}$, $a_\infty$, and the class of $\sgn_p(a_p)=a_p/p^{v_p(a_p)}$ in $\ZZ_p^\times/(1+\Nbad\ZZ_p)$ for each $p\in \Sbad$.
Consider the restriction $\mathsf{h}'_m(\bm{s},a)\defeq \mathsf{h}(\bm{s},a)
\cdot \bm{1}_{a\in \RR^\times\times \prod_p \Nbad^{-1}m\ZZ_p^\times}$
of $\mathsf{h}$ to $\RR^\times\times \prod_p \Nbad^{-1}m\ZZ_p^\times$.
Then $\mathsf{h}(\bm{s},a) = \sum_{m\ge 1} \mathsf{h}'_m(\bm{s},a)$.
Additionally, by Fourier analysis on the finite group
$(\RR^\times/(\RR^\times)^2)
\times \prod_{p\in \Sbad} (\ZZ_p^\times/(1+\Nbad\ZZ_p))$,
we have $\mathsf{h}'_m(\bm{s},a)
= \sum_{\lambda\in \Mbad} \mathbf{v}_{m,\lambda}(a)
\mathsf{h}_{m,\lambda}(\bm{s},a_\infty)$,
since the Fourier coefficient of
the vector $\mathbf{v}_{m,\lambda}(a)
=\lambda_\infty(a_\infty)\prod_p \lambda_p(\sgn_p(a_p))$
in $\mathsf{h}'_m(\bm{s},a)$ is
\begin{equation}
\label{h'-versus-h-fourier-coefficient}
\frac12 \sum_{\varsigma\in \set{\pm 1}} \int_{\adele_{\QQ,f}^\times}
\mathsf{h}'(\bm{s},(a_f,\varsigma a_\infty))
\ol{\mathbf{v}}_{m,\lambda}(a_f,\varsigma a_\infty)\, d^\times{a_f}
= \mathsf{h}_{m,\lambda}(\bm{s},a_\infty).
\end{equation}
The equality \eqref{h'-versus-h-fourier-coefficient}
holds by \eqref{discrete-fourier-coefficient-hmlambda},
since $\mathsf{h}'(\bm{s},a)\ol{\mathbf{v}}_{m,\lambda}(a)
= \mathsf{h}(\bm{s},a)\ol{\mathbf{v}}_{m,\lambda}(a)$,
since $\ol{\mathbf{v}}_{m,\lambda}$ is supported on $\RR^\times\times \prod_p \Nbad^{-1}m\ZZ_p^\times$.
The lemma now immediately follows from Fubini's theorem,
since for any $a\in \adele_\QQ^\times$,
there are only finitely many pairs $(\lambda,m)$ with $\mathbf{v}_{m,\lambda}(a)\ne 0$.
\end{proof}

\begin{lemma}
\label{infinite-fourier-coeff-h}
For any $(m,\lambda,t)\in \ZZ_{\ge 1} \times \Mbad \times \RR$, we have
\begin{equation*}
\int_{\RR^\times} \mathsf{h}_{m,\lambda}(\bm{s},a_\infty)
\abs{a_\infty}_\infty^{-it}\, d^\times{a_\infty}
= \int_{G(\QQ)\backslash G(\adele_\QQ)} \mathsf{Z}(\bm{s},g) \ol{\theta}_{m,\lambda,t}(g)\, dg.
\end{equation*}
\end{lemma}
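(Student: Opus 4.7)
The plan is to show that both sides of the identity equal the common quantity
\begin{equation*}
\mathcal{I}(\bm{s}) \defeq \int_{\adele_\QQ^\times} \mathsf{h}(\bm{s}, a) \ol{\mathbf{v}}_{m,\lambda}(a) \abs{a_\infty}_\infty^{-it} \, d^\times a.
\end{equation*}

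For the left-hand side, I would substitute the definition \eqref{discrete-fourier-coefficient-hmlambda} of $\mathsf{h}_{m,\lambda}$ and swap the order of integration. Fubini is justified since $\mathsf{h} \in L^1(\adele_\QQ^\times)$ by Lemma~\ref{h-is-in-L^1} and $\ol{\mathbf{v}}_{m,\lambda}$ is bounded with finite-place support in $\prod_p \Nbad^{-1} m \ZZ_p^\times$. For each $\varsigma \in \set{\pm 1}$, the measure-preserving change of variables $\varsigma a_\infty \mapsto a_\infty$ (using $\abs{\varsigma a_\infty}_\infty = \abs{a_\infty}_\infty$) makes the summand independent of $\varsigma$ and cancels the factor $\tfrac12$, yielding $\mathcal{I}(\bm{s})$.

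For the right-hand side, the idea is to unfold the sum $\sum_{\alpha \in \QQ^\times}$ inside $\ol{\theta}_{m,\lambda,t}$ against the fundamental-domain integration over $\QQ^\times \backslash \adele_\QQ^\times$. Using the exact sequence \eqref{adelic-SES}, parameterize $G(\QQ)\backslash G(\adele_\QQ)$ as the quotient of $\adele_\QQ^\times \times (\QQ \backslash \adele_\QQ)$ by the $\QQ^\times$-action $(\alpha, 0) \cdot (a, \tilde b) = (\alpha a, \alpha \tilde b)$, which is measure-preserving by the product formula $\prod_v \abs{\alpha}_v = 1$. For each $\alpha$-summand of $\ol{\theta}_{m,\lambda,t}$, apply the $G(\QQ)$-invariance of $\mathsf{Z}$ together with the substitution $(a, \tilde b) \mapsto (\alpha a, \alpha \tilde b)$; summing over $\alpha$ then extends the integration in $a$ from $\QQ^\times \backslash \adele_\QQ^\times$ to $\adele_\QQ^\times$. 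Absolute convergence for this Fubini step is secured by $\mathsf{Z} \in L^1(G(\QQ) \backslash G(\adele_\QQ))$ (Proposition~\ref{PROP:large-s-absolute-convergence-properties}) together with the pointwise bound $\sum_\alpha \abs{\ol{\mathbf{v}}_{m,\lambda}(\alpha a)} \le 2$, which follows from the support observation made just after \eqref{clever-theta-sum-over-Qcross}. The remaining inner integral $\int_{\QQ \backslash \adele_\QQ} \mathsf{Z}(\bm{s}, (a, \tilde b)) \ol{\psi}(\tilde b) \, d\tilde b$ is $\mathsf{h}(\bm{s}, a)$ by \eqref{EQN:rewrite-h-as-twisted-Z-integral}, producing $\mathcal{I}(\bm{s})$.

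The hardest part will be purely organizational: carefully verifying that the unfolding is measure-preserving, tracking the $\QQ^\times$-action through both factors of the adelic decomposition, and justifying Fubini's theorem at each step. No new analytic input is needed beyond the $L^1$-bounds from Proposition~\ref{PROP:large-s-absolute-convergence-properties} and Lemma~\ref{h-is-in-L^1}, plus the product formula on $\QQ^\times$.
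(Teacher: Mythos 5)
Your proposal is correct and takes essentially the same approach as the paper: both proofs identify each side with $\int_{\adele_\QQ^\times} \mathsf{h}(\bm{s},a) \ol{\mathbf{v}}_{m,\lambda}(a) \abs{a_\infty}_\infty^{-it}\, d^\times{a}$, using the same ingredients (Fubini via $\mathsf{h}\in L^1$ and $\mathsf{Z}\in L^1$, measure-invariance of $\pm 1$ and of scaling by $\alpha\in\QQ^\times$, the exact sequence \eqref{adelic-SES}, and the identity \eqref{EQN:rewrite-h-as-twisted-Z-integral}). The only difference is organizational: you unfold the right-hand side toward the common expression, whereas the paper folds the common expression over $\QQ^\times$ toward the right-hand side.
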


\begin{proof}
Multiplication by $-1$ on $\RR^\times$ is measure-preserving.
So by \eqref{discrete-fourier-coefficient-hmlambda} and Fubini's theorem,
\begin{equation*}
I\defeq \int_{\RR^\times} \mathsf{h}_{m,\lambda}(\bm{s},a_\infty)
\abs{a_\infty}_\infty^{-it}\, d^\times{a_\infty}
= \int_{\adele_\QQ^\times} \mathsf{h}(\bm{s},a) \ol{\mathbf{v}}_{m,\lambda}(a)
\abs{a_\infty}_\infty^{-it}\, d^\times{a}.
\end{equation*}
Plugging in \eqref{EQN:rewrite-h-as-twisted-Z-integral},
and folding $\adele_\QQ^\times$ over $\QQ^\times$,
we get
\begin{equation*}
\begin{split}
I
&= \int_{\adele_\QQ^\times} \int_{\QQ\backslash \adele_\QQ} \mathsf{Z}(\bm{s},(a,b)) \ol{\psi}(b)\ol{\mathbf{v}}_{m,\lambda}(a) \abs{a_\infty}_\infty^{-it}\, db\, d^\times{a} \\
&= \int_{\QQ^\times\backslash \adele_\QQ^\times} \sum_{\alpha\in \QQ^\times}
\int_{\QQ\backslash \adele_\QQ} \mathsf{Z}(\bm{s},(\alpha a,\alpha b)) \ol{\psi}(\alpha b)\ol{\mathbf{v}}_{m,\lambda}(\alpha a) \abs{\alpha a_\infty}_\infty^{-it}\, db\, d^\times{a}
\end{split}
\end{equation*}
by Fubini's theorem, which applies since $\mathsf{Z}\in L^1(G(\QQ)\backslash G(\adele_\QQ))$ and $\ol{\mathbf{v}}_{m,\lambda}(\alpha a)$ is supported on $\alpha = \pm \prod_p p^{v_p(m/\Nbad a_p)} \in \QQ^\times$ (if $a\in \adele_\QQ^\times$ is fixed).
Thanks to
the invariance $\mathsf{Z}(\bm{s},(\alpha a,\alpha b))=\mathsf{Z}(\bm{s},(a,b))$ for $\alpha\in \QQ^\times$, we can move $\sum_\alpha$ inside to get
\begin{equation*}
\begin{split}
I
&= \int_{\QQ^\times\backslash \adele_\QQ^\times}
\int_{\QQ\backslash \adele_\QQ} \mathsf{Z}(\bm{s},(a,b))
\sum_{\alpha\in \QQ^\times} \ol{\psi}(\alpha b)\ol{\mathbf{v}}_{m,\lambda}(\alpha a)
\abs{\alpha a_\infty}_\infty^{-it}\, db\, d^\times{a}.
\end{split}
\end{equation*}
By \eqref{clever-theta-sum-over-Qcross},
we have $\sum_{\alpha\in \QQ^\times} \ol{\psi}(\alpha b)\ol{\mathbf{v}}_{m,\lambda}(\alpha a) \abs{\alpha a_\infty}_\infty^{-it} = \ol{\theta}_{m,\lambda,t}(a,b)$,
for any $a\in \adele_\QQ^\times$ and $b\in \QQ\backslash \adele_\QQ$.
So $I = \int_{\QQ^\times\backslash \adele_\QQ^\times} \int_{\QQ\backslash \adele_\QQ}
\mathsf{Z}(\bm{s},(a,b)) \ol{\theta}_{m,\lambda,t}(a,b)\, db\, d^\times{a}$,
which suffices by \eqref{adelic-SES}.
\end{proof}

\begin{lemma}
\label{fourier-coeff-Z}
For any $(m,\lambda,t)\in \ZZ_{\ge 1} \times \Mbad \times \RR$, we have
\begin{equation*}
\int_{G(\QQ)\backslash G(\adele_\QQ)} \mathsf{Z}(\bm{s},g) \ol{\theta}_{m,\lambda,t}(g)\, dg
= H^\ast(\bm{s},m,\lambda,t)
= \sum_{\alpha\in \QQ^\times} H'_\infty(\bm{s},\lambda,t,\alpha) \prod_p H'_p(\bm{s},\lambda,m,\alpha).
\end{equation*}
\end{lemma}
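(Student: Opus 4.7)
The plan is to verify both equalities by direct manipulation, using only the absolute-convergence results already in hand.

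For the first equality, I would substitute the defining formula $\mathsf{Z}(\bm{s},g) = \sum_{g'\in G(\QQ)} H(\bm{s},g'g)^{-1}$ into the left-hand side. Proposition~\ref{PROP:large-s-absolute-convergence-properties} ensures $H(\bm{s},\cdot)^{-1}\in L^1(G(\adele_\QQ))$, and $\abs{\theta_{m,\lambda,t}}\le 2$ pointwise (since the sum in \eqref{clever-theta-sum-over-Qcross} is supported on two values of $\alpha$ for any fixed $a$), so Fubini lets me unfold $\int_{G(\QQ)\backslash G(\adele_\QQ)} \sum_{g'}$ into a single integral over $G(\adele_\QQ)$. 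Because $\ol{\theta}_{m,\lambda,t}$ is left $G(\QQ)$-invariant (noted right after \eqref{clever-theta-sum-over-Qcross}), and $dg$ is left $G(\QQ)$-invariant (by the product formula applied to the modular character of $G$), the unfolded integral is simply $\int_{G(\adele_\QQ)} H(\bm{s},g)^{-1} \ol{\theta}_{m,\lambda,t}(g)\, dg$, which is $H^\ast(\bm{s},m,\lambda,t)$ by definition \eqref{EQN:define-non-abelian-Fourier-coefficient-H^ast}.

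For the second equality, I would expand $\ol{\theta}_{m,\lambda,t}(g) = \sum_{\alpha\in\QQ^\times} \ol{\psi}(\alpha b)\, \ol{\mathbf{v}}_{m,\lambda}(\alpha a)\, \abs{\alpha a_\infty}_\infty^{-it}$ via \eqref{clever-theta-sum-over-Qcross}, then Fubini-swap $\sum_\alpha$ past the adelic integral; this is legitimate by the absolute bound of Lemma~\ref{LEM:large-s-technical-adelic-convergence-bounds}(1). For each fixed $\alpha$, every factor of the integrand splits as a product over places: $H(\bm{s},g)^{-1} = \prod_v H_v(\bm{s},g_v)^{-1}$, $\psi = \prod_v \psi_v$, and $\mathbf{v}_{m,\lambda}$ is a product over places by \eqref{EQN:define-finite-vector-v_m,lambda}. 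At a finite prime $p$, using $\ol{\psi}_p(\alpha b_p) = \phase(-\alpha b_p\bmod{\ZZ_p})$ and the support condition $\Nbad\alpha a_p\in m\ZZ_p^\times$ coming from $\ol{\mathbf{v}}_{m,\lambda,p}$, the local factor is exactly $H'_p(\bm{s},\lambda,m,\alpha)$ of \eqref{EXPR:formula-for-H'_p}. At the archimedean place, pulling out the scalar $\abs{\alpha}_\infty^{-it}$ and using that $\lambda_\infty$ is real-valued (so $\ol{\lambda}_\infty=\lambda_\infty$) gives $\abs{\alpha}_\infty^{-it} H^\vee_\infty(\bm{s},\lambda,t,\alpha) = H'_\infty(\bm{s},\lambda,t,\alpha)$. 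Assembling the factors yields the claimed expansion.

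There is no serious analytic obstacle here: the preparatory Lemmas~\ref{LEM:large-s-real-IBP-bounds}--\ref{LEM:large-s-technical-adelic-convergence-bounds} have already cleared all Fubini hurdles. The only nontrivial item is the bookkeeping of sign and complex-conjugate conventions in $\psi_v$, $\lambda_v$, $\mathbf{v}_{m,\lambda}$, and the twist $\abs{a_\infty}_\infty^{-it}$, to ensure each local factor matches the quantity in \eqref{EXPR:formula-for-H^vee_p} or \eqref{EXPR:formula-for-H'_p}. Once these identifications are checked, the lemma follows.
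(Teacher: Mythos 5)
Your argument matches the paper's proof almost step for step: unfold the sum over $G(\QQ)$ using the left $G(\QQ)$-invariance of $\ol{\theta}_{m,\lambda,t}$ (and of $dg$, which indeed holds by the product formula since $G$ is not unimodular but $\det(G(\QQ))\subset\QQ^\times$), justify Fubini by Proposition~\ref{PROP:large-s-absolute-convergence-properties}; then expand $\ol\theta$ via \eqref{clever-theta-sum-over-Qcross} and \eqref{EQN:define-finite-vector-v_m,lambda}, swap $\sum_\alpha$ past the integral via Lemma~\ref{LEM:large-s-technical-adelic-convergence-bounds}(1), and factor over places to identify $H'_\infty$ and $H'_p$. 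Your bookkeeping of the conjugation/sign conventions ($\ol\psi_\infty(\alpha b_\infty)=\phase(\alpha b_\infty)$, $\ol\psi_p(\alpha b_p)=\phase(-\alpha b_p\bmod\ZZ_p)$, $\ol\lambda_\infty=\lambda_\infty$) is correct, so there is nothing to add.
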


\begin{proof}
Unfolding $G(\QQ)\backslash G(\adele_\QQ)$ over $G(\QQ)$ proves the first equality,
by the definition \eqref{EQN:define-non-abelian-Fourier-coefficient-H^ast} of $H^\ast$.
Fubini's theorem here
is justified by Proposition~\ref{PROP:large-s-absolute-convergence-properties}.
For the second equality, we first expand definitions,
using \eqref{clever-theta-sum-over-Qcross}
and \eqref{EQN:define-finite-vector-v_m,lambda},
to write $H^\ast(\bm{s},m,\lambda,t)$ as
\begin{equation*}
\int_{G(\adele_\QQ)} H(\bm{s},g)^{-1}
\sum_{\alpha\in \QQ^\times} \ol{\psi}(\alpha b)
\abs{\alpha a_\infty}_\infty^{-it} \ol{\lambda}_\infty(\alpha a_\infty)
\prod_p \ol{\lambda}_p(\sgn_p(\alpha a_p)) \bm{1}_{\Nbad \alpha a_p/m\in \ZZ_p^\times}
\, dg.
\end{equation*}
Moving $\sum_\alpha$ to the outside,
and writing $\int_{G(\adele_\QQ)} = \prod_v \int_{G(\QQ_v)}$,
completes the proof,
by the definition of $H'_\infty$ and $H'_p$ from \eqref{EXPR:formula-for-H'_p}.
The manipulations are justified by Lemma~\ref{LEM:large-s-technical-adelic-convergence-bounds}(1).
\end{proof}

We have
$\sum_{\lambda\in \Mbad} \sum_{m\ge 1} \abs{H^\ast(\bm{s},m,\lambda,t)} < \infty$,
by Lemma~\ref{LEM:large-s-technical-adelic-convergence-bounds}(1)
and the second equality in Lemma~\ref{fourier-coeff-Z}.
Therefore, Fourier expansion of $\mathsf{h}_{m,\lambda}\in L^1(\RR^\times)$ gives
\begin{equation}
\mathsf{h}(\bm{s},a) = \sum_{\lambda\in \Mbad} \sum_{m\ge 1} \mathbf{v}_{m,\lambda}(a) (4\pi)^{-1} \int_\RR H^\ast(\bm{s},m,\lambda,t) \abs{a_\infty}_\infty^{it}\, dt,
\end{equation}
by Lemmas~\ref{finite-fourier-expand-h},
\ref{infinite-fourier-coeff-h},
and~\ref{fourier-coeff-Z}.
Plugging this into \eqref{EQN:Z_1-prelim-psi-h-expansion} gives
\begin{equation}
\mathsf{Z}_1(\bm{s},g) = \sum_{\lambda\in \Mbad} \sum_{m\ge 1} (4\pi)^{-1} \int_{\RR} H^\ast(\bm{s},m,\lambda,t) \theta_{m,\lambda,t}(g)\, dt,
\end{equation}
upon recalling the definition of $\theta_{m,\lambda,t}(g)$ from \eqref{clever-theta-sum-over-Qcross};
the required use of Fubini's theorem
is justified by Lemma~\ref{LEM:large-s-technical-adelic-convergence-bounds}(1).
Using \eqref{EQN:formula-for-theta-at-1} to evaluate $\theta_{m,\lambda,t}(g)$ at $g=1_G$, and using the second equality in Lemma~\ref{fourier-coeff-Z} to rewrite $H^\ast(\bm{s},m,\lambda,t)$, we get
\begin{equation*}
\mathsf{Z}_1(\bm{s},1_G)
= \sum_{\lambda\in \Mbad} \sum_{m\ge 1} (4\pi)^{-1} \int_{\RR}
\frac{(1+\lambda(-1)) \lambda_\Sbad(m/\Nbad )}{\prod_p \abs{m/\Nbad }_p^{it}}
\sum_{\alpha\in \QQ^\times} H'_\infty(\bm{s},\lambda,t,\alpha) \prod_p H'_p(\bm{s},\lambda,m,\alpha)\, dt.
\end{equation*}
By \eqref{lambda-Sp-definition},
the factor $\lambda_{\Sbad,p}(a_p)$ of $\lambda_\Sbad$ depends only on $v_p(a_p)$.
Move $\sum_\alpha$ outside,
write
\begin{equation*}
\begin{split}
&\sum_{m=p^k:\, k\ge 0} H'_p(\bm{s},\lambda,m,\alpha)
\lambda_{\Sbad,p}(m/\Nbad ) \abs{m/\Nbad }_p^{-it} \\
&= \sum_{m=p^k:\, k\ge 0}
\int_{G(\QQ_p):\, \Nbad \alpha a_p\in m\ZZ_p^\times} H_p(\bm{s},g_p)^{-1}
\phase(-\alpha b_p\bmod{\ZZ_p}) \ol{\lambda}_p(\sgn_p(\alpha a_p))
\lambda_{\Sbad,p}(\alpha a_p) \abs{\alpha a_p}_p^{-it}\, dg_p \\
&= \abs{\alpha}_p^{-it} H^\vee_p(\bm{s},\lambda,t,\alpha)
\end{split}
\end{equation*}
using \eqref{EXPR:formula-for-H'_p} and \eqref{EXPR:formula-for-H^vee_p},
and recall $H'_\infty = \abs{\alpha}_\infty^{-it} H^\vee_\infty$,
to get
\begin{equation}
\label{EQN:difficult-part-of-spectral-expansion}
\mathsf{Z}_1(\bm{s},1_G)
= \sum_{\lambda\in \Mbad:\, \lambda(-1)=1}\,
\sum_{\alpha\in \QQ^\times} (2\pi)^{-1}
\int_\RR \prod_v H^\vee_v(\bm{s},\lambda,t,\alpha)\, dt
\end{equation}
(cf.~\cite{tanimoto2012height}*{Proposition~5.4}).
The required manipulations are justified by Lemma~\ref{LEM:large-s-technical-adelic-convergence-bounds}(1).

Both additive and multiplicative harmonics appear above; the sum over $\alpha\in \QQ^\times$ somehow reflects the non-abelian nature of $G$.
The multiplicative harmonics in \eqref{EQN:difficult-part-of-spectral-expansion} in fact reflect a symmetry in $\bm{s}$.
By \eqref{EQN:local-character-height-consistency-relation} we have $\abs{a(g_v)}_v = H_v(\div(a),g_v)^{-1}$, so by inspection of \eqref{EXPR:formula-for-H^vee_p},
\begin{equation}
\label{EQN:change-of-t-identity}
H^\vee_v(\bm{s},\lambda,t,\alpha)
= H^\vee_v(\bm{s}-it\div(a), \lambda, 0, \alpha).
\end{equation}
Thus it will suffice to study $H^\vee_v$ for $t=0$ (without the factor $\abs{a}_v^{-it}$).

\section{New geometric and parametric observations}
\label{SEC:new-geometry}


In this section, we make crucial observations for later.
Until specified otherwise,
we do not assume $X$ is split or that $D$ has strict normal crossings.
(This generality will be useful in \S\ref{SEC:final-reductions}.)
In particular, the $D_\jboundary$ are irreducible but not necessarily smooth or geometrically irreducible.

\begin{proposition}
\label{PROP:upper-bound-on-anticanonical-Weil-divisor}
Let $\jboundary\in J$ and $c\in \QQ$.
Then $\mathsf{d}_\jboundary \le 1-\ord_{D_\jboundary}(b-c)$.
\end{proposition}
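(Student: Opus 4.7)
The plan is to recast the desired inequality as $\ord_{D_\jboundary}(\omega) \ge \ord_{D_\jboundary}(b-c) - 1$, and to verify this by a direct coordinate computation of $\omega = db \wedge da / a$ at a carefully chosen closed point $P \in D_\jboundary$.

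First I would pick a closed point $P$ of $D_\jboundary$ satisfying: (i) $D_\jboundary$ is regular at $P$; (ii) $P$ lies on no other boundary component; and (iii) after fixing a local equation $\pi \in \mathcal{O}_{X,P}$ of $D_\jboundary$, the rational functions $\tilde a \defeq a/\pi^\alpha$ and $\tilde b \defeq (b-c)/\pi^\beta$ are both units at $P$, where $\alpha \defeq \ord_{D_\jboundary}(a)$ and $\beta \defeq \ord_{D_\jboundary}(b-c)$. Each of (i)-(iii) is a nonempty open condition on $D_\jboundary$: (i) holds on the smooth locus, dense because $D_\jboundary$ is reduced and we are in characteristic zero; (ii) holds away from the finitely many closed intersections with other boundary components; and (iii) holds away from the zero loci of $\tilde a, \tilde b$, which cannot contain $D_\jboundary$ since these rational functions have order $0$ there by construction.

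At such $P$, complete $\pi$ to a regular system of parameters $\pi, \tau$ of $\mathcal{O}_{X,P}$, so that $d\pi \wedge d\tau$ generates $K_{X,P}$. Using $d(b-c) = db$, a direct computation yields
\[
\omega = \frac{db \wedge da}{a} = \pi^{\beta-1}\cdot \frac{Q}{\tilde a}\, d\pi \wedge d\tau,
\]
where
\[
Q \defeq \beta \tilde b\,\partial_\tau \tilde a - \alpha \tilde a\,\partial_\tau \tilde b + \pi\bigl(\partial_\pi \tilde b\,\partial_\tau \tilde a - \partial_\pi \tilde a\,\partial_\tau \tilde b\bigr) \in \mathcal{O}_{X,P}.
\]
The point of the cancellation is that $\pi^{\alpha+\beta-1}$ factors out of $db \wedge da$, and dividing by $a = \pi^\alpha \tilde a$ removes $\pi^\alpha$ (with $\tilde a$ invertible), leaving $\pi^{\beta-1}$ times an element of the local ring.

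Since $Q/\tilde a \in \mathcal{O}_{X,P}$ has nonnegative order along $D_\jboundary$, it follows that $\ord_{D_\jboundary}(\omega) \ge \beta - 1$, i.e., $\mathsf{d}_\jboundary \le 1 - \beta$, as desired. The argument uses only the smoothness of $X$ (which makes $D_\jboundary$ Cartier and $\Omega^1_{X/\QQ}$ locally free of rank $2$), so it does not require splitness of $X$ or any SNC hypothesis on $D$, matching the generality of this section. The only mildly delicate point is the existence of a closed $P$ satisfying (i)-(iii), an open-density matter on $D_\jboundary$; everything else is a routine Jacobian expansion.
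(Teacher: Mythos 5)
Your proof is correct and takes essentially the same approach as the paper: expand $a = \pi^\alpha\tilde a$ and $b-c = \pi^\beta\tilde b$ in local coordinates at a sufficiently general point of $D_\jboundary$, compute $db\wedge da/a$, and observe that the $\pi^\alpha$ contributed by $da$ cancels against the $\pi^\alpha$ in the denominator, leaving $\pi^{\beta-1}$ times a regular function. The only cosmetic difference is that the paper does the coordinate expansion analytically over $\CC$ with $O(\cdot)$ notation, whereas you carry it out algebraically in $\mathcal{O}_{X,P}$ with the explicit Jacobian $Q$; both hinge on the same cancellation.
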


\begin{proof}
Let $\rord = \ord_{D_\jboundary}(a)$ and $\sord = \ord_{D_\jboundary}(b-c)$.
Now work over $\CC$.
Locally near a \emph{general} point $x\in D_\jboundary$,
choose coordinates $y,z\ll 1$
such that $$a = y^{\rord} f_3(y,z),
\quad b-c = y^{\sord} f_4(y,z),$$
where $f_3$, $f_4$ are analytic with $f_3(0,0) f_4(0,0) \ne 0$.
(We choose $y$ so that $y=0$ cuts out $D_\jboundary$ near $x$.)
Thus $$da\,db = O(y^{\rord+\sord-1})\,dy\,dz.$$
So $\omega = db\, da/a = O(y^{\sord-1})\, dz\, dy$.
So $-\mathsf{d}_\jboundary = \ord_{D_\jboundary}(\omega) \ge \sord-1$.
\end{proof}

\begin{proof}
[Proof of Proposition~\ref{PROP:apply-lower-bound-on-anticanonical-Weil-divisor}]
By Proposition~\ref{PROP:poles-of-left-invariant-top-form}, $\mathsf{d}_\jboundary\ge 1-\ord_{D_\jboundary}(a)$.
Now use Proposition~\ref{PROP:upper-bound-on-anticanonical-Weil-divisor}.
\end{proof}

Recall $J_1$, $J_2$, $J_3$ from \S\ref{SUBSEC:polar-combinatorics}.
Inspired by Proposition~\ref{PROP:apply-lower-bound-on-anticanonical-Weil-divisor}
and the condition \eqref{INEQ:mysterious-numerical-condition}, let
\begin{align}
J^c_1 &\defeq \set{\jboundary\in J_1:
\ord_{D_\jboundary}(b-c) = \ord_{D_\jboundary}(a)},
\label{Jc1} \\
J^\ast_2 &\defeq \set{\jboundary\in J_2:
\ord_{D_\jboundary}(b/a) = 0},
\label{Jast2} \\
I^c &\defeq \set{\jboundary\in J:
\ord_{D_\jboundary}(b-c) > 0},
\label{Ic}
\end{align}
for each $c\in \QQ$.
Then in particular,
\begin{equation}
\label{Jc1-Ic-J1-containments}
J^c_1\belongs I^c\belongs J_1,
\end{equation}
where the latter containment holds by Proposition~\ref{PROP:apply-lower-bound-on-anticanonical-Weil-divisor}.

\begin{definition}
For each $\jboundary\in J$, let
\begin{equation}
\label{C0j}
{\textstyle \mathcal{C}^{(0)}(\jboundary)
\defeq \argmax_{c\in \QQ} \ord_{D_\jboundary}(b-c)},
\end{equation}
where in general for a function $f_5\maps V\to \RR$,
we let
\begin{equation}
\label{argmax}
{\textstyle\argmax_V f_5
\defeq \argmax_{x\in V} f_5(x)
\defeq \set{x\in V: f_5(x) \ge f_5(y)\;\textnormal{for all $y\in V$}}}.
\end{equation}
\end{definition}

\begin{proposition}
\label{PROP:compute-arg-max-set-C0j}
If $\jboundary\in I^c$ for some $c\in \QQ$,
then $\mathcal{C}^{(0)}(\jboundary)=\set{c}$;
otherwise,
$\mathcal{C}^{(0)}(\jboundary)=\QQ$.
\end{proposition}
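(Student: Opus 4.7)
The plan is to exploit the non-Archimedean nature of the discrete valuation $v\defeq \ord_{D_\jboundary}$ on $\Rat(X)$, together with the trivial observation that $v(c) = 0$ for every $c\in \QQ^\times$ (since $\QQ^\times$ consists of units in the DVR $\mathcal{O}_{X,D_\jboundary}$, because $D_\jboundary$ is a prime divisor on $X$ defined over $\QQ$). Writing the identity $b - c = (b - c_0) + (c_0 - c)$ for any two rationals $c, c_0$, the ultrametric strict-inequality clause, namely $v(f_1 + f_2) = \min(v(f_1), v(f_2))$ whenever $v(f_1) \ne v(f_2)$, does essentially all the work.

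First, I will handle the case $\jboundary\in I^{c_0}$ for some $c_0\in \QQ$, i.e.~$v(b - c_0) > 0$. For every $c\in \QQ$ with $c\ne c_0$, the constant $c_0 - c\in \QQ^\times$ has $v(c_0 - c) = 0 < v(b - c_0)$, so the strict-inequality clause gives $v(b - c) = 0$. Hence $v(b - c_0) > v(b - c)$ for all such $c$, yielding $\mathcal{C}^{(0)}(\jboundary) = \{c_0\}$. As a byproduct, $c_0$ is the unique rational with $\jboundary \in I^{c_0}$, so the dichotomy in the statement is well posed.

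Next, I will handle the case $\jboundary\notin I^c$ for every $c\in \QQ$, so that $v(b - c) \le 0$ for all $c$. The claim is that $c\mapsto v(b - c)$ is constant on $\QQ$, which immediately yields $\mathcal{C}^{(0)}(\jboundary) = \QQ$. If it were not constant, pick $c_1\ne c_2\in \QQ$ with $v(b - c_1) > v(b - c_2)$; applying the strict-inequality clause to $c_1 - c_2 = (b - c_2) - (b - c_1)$ forces $0 = v(c_1 - c_2) = v(b - c_2)$, hence $v(b - c_1) > 0$, contradicting $\jboundary\notin I^{c_1}$.

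There is no real obstacle here: the proposition is a purely formal consequence of the ultrametric inequality, and I expect the write-up to take only a few lines. The only mild subtlety is noting that the two cases really are exhaustive (this is the definition of $I^c$) and mutually exclusive (which is exactly the uniqueness of $c_0$ obtained in Case 1), so that the conclusion $\mathcal{C}^{(0)}(\jboundary) = \{c\}$ is unambiguous.
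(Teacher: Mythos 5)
Your proof is correct and takes essentially the same approach as the paper: both arguments are just the ultrametric inequality for the valuation $\ord_{D_\jboundary}$ applied to $b-c = (b-c_0)+(c_0-c)$, with the paper splitting the second case according to the sign of $\ord_{D_\jboundary}(b)$ where you instead argue directly that $c\mapsto\ord_{D_\jboundary}(b-c)$ is constant.
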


\begin{proof}
For $d\in \QQ$, let $f_6(d) \defeq \ord_{D_\jboundary}(b-d)$.
If $\jboundary\in I^c$, then for all $d\ne c$, we have
$$f_6(d) = 0 < f_6(c),$$
so $\mathcal{C}^{(0)}(\jboundary)=\set{c}$.
If $\jboundary\notin \bigcup_{c\in \QQ} I^c$
and $f_6(0) \ge 0$, then $f_6(c) = 0$ for all $c\in \QQ$,
so $\mathcal{C}^{(0)}(\jboundary)=\QQ$.
If $f_6(0) < 0$, then $f_6(c) = f_6(0)$ for all $c\in \QQ$,
so $\mathcal{C}^{(0)}(\jboundary)=\QQ$.
\end{proof}

Thus $I^c\cap I^{c'} = \emptyset$ if $c\ne c'$.
In addition, for $\jboundary\in J$, we find that
$D_\jboundary$ is special (Definition~\ref{DEFN:special-divisors})
if and only if
$\jboundary\in J^\ast_2$ or $\jboundary\in \bigcup_{c\in \QQ} J^c_1$.

\begin{proposition}
\label{PROP:critical-index-upper-bound}
Let $c\in \QQ$.
Then $\card{J^c_1 \cup J^\ast_2} \le \rank(\Pic(X))$.
\end{proposition}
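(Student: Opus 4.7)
First, I would reduce the problem to exhibiting at least one $\jboundary \in J$ lying outside $J^c_1 \cup J^\ast_2$. By Proposition~\ref{PROP:compute-Picard-group-in-terms-of-boundary}, $\Pic(X) = \ZZ^J / \ZZ\div(a)$. The element $\div(a) \in \ZZ^J$ is nonzero: $a \in \Rat(X)$ is regular and invertible on $G$ yet nonconstant, so it cannot extend to a global unit on the projective variety $X$. Hence $\rank(\Pic(X)) = \card{J} - 1$, and it suffices to show $J^c_1 \cup J^\ast_2 \propersub J$.

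Suppose for contradiction that $J^c_1 \cup J^\ast_2 = J$. Since $J^c_1 \belongs J_1$, $J^\ast_2 \belongs J_2$, and $J_1, J_2, J_3$ partition $J$, this forces $J_3 = \emptyset$ and the disjoint union $J = J^c_1 \sqcup J^\ast_2$. For $\jboundary \in J^c_1$, the definition \eqref{Jc1} directly gives $\ord_{D_\jboundary}(b-c) = \ord_{D_\jboundary}(a)$. For $\jboundary \in J^\ast_2$, the definition \eqref{Jast2} gives $\ord_{D_\jboundary}(b) = \ord_{D_\jboundary}(a) < 0$; since the scalar $c \in \QQ$ has order $\ge 0$ along every divisor of $X$, this in turn forces $\ord_{D_\jboundary}(b-c) = \ord_{D_\jboundary}(b) = \ord_{D_\jboundary}(a)$. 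In either case, $\ord_{D_\jboundary}((b-c)/a) = 0$ for every $\jboundary \in J$.

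I would then finish by a global regularity argument. The rational function $(b-c)/a$ is regular on $G$ (as $a$ is invertible there) and, by the previous paragraph, has no pole along any boundary component $D_\jboundary$. Algebraic Hartogs on the smooth projective $X$ therefore forces $(b-c)/a$ to be globally regular on $X$, hence a constant. But on $G = \Spec \QQ[a, a^{-1}, b]$ the function $(b-c)/a$ is manifestly nonconstant (compare its values at $(a,b) = (1, c+1)$ and $(1, c+2)$), which is the desired contradiction. The only step with any real content is the case analysis on $J^\ast_2$ — one must observe that adding a constant to $b$ cannot perturb the pole order along any divisor at which $b$ already has a pole — and everything else is a direct application of Hartogs together with the Picard computation of Proposition~\ref{PROP:compute-Picard-group-in-terms-of-boundary}.
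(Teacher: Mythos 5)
Your argument is correct and follows the same route as the paper: reduce via Proposition~\ref{PROP:compute-Picard-group-in-terms-of-boundary} to finding one boundary divisor outside $J^c_1 \cup J^\ast_2$, observe that $\ord_{D_\jboundary}((b-c)/a) \ge 0$ on $J^c_1 \cup J^\ast_2$ (you verify the sharper $=0$, but only $\ge 0$ is needed), and conclude by the fact that a pole-free rational function on the projective $X$ would be constant while $(b-c)/a$ is not. The only stylistic difference is that you phrase it as a contradiction from $J = J^c_1 \sqcup J^\ast_2$, whereas the paper argues directly that $\div_\infty((b-c)/a)$, being nonzero and supported on $D$, hits some $\jboundary$ outside $J^c_1 \cup J^\ast_2$.
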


\begin{proof}
Since $X$ admits no nonconstant morphism to $\Aff^1$, we have $\div_\infty((b-c)/a) \ne 0$ (by the algebraic Hartogs lemma).
But $\ord_{D_\jboundary}((b-c)/a) \ge 0$ for all $\jboundary\in J^c_1 \cup J^\ast_2$.
Since $\div_\infty((b-c)/a)$ is supported on $D$,
it follows that $\card{J^c_1 \cup J^\ast_2} \le \card{J} - 1 = \rank(\Pic(X))$,
where the final equality holds by Proposition~\ref{PROP:compute-Picard-group-in-terms-of-boundary}.
(This holds even if $X$ is non-split, because $J$ labels irreducible divisors over $\QQ$ rather than $\ol{\QQ}$.)
\end{proof}


Precisely understanding the $D_\jboundary$ seems tricky in general, but the following will suffice:

\begin{proposition}
\label{PROP:G-induced-nonconstancy-of-a-rational-function-on-Dj}
Let $\jboundary\in J$ and
$c\in \mathcal{C}^{(0)}(\jboundary)$.
Let $\rord = \ord_{D_\jboundary}(a)$, $\sord = \ord_{D_\jboundary}(b-c)$,
and $F_7 = (b-c)^{\rord}/a^{\sord}$.
Then $F_7\in \Gamma(U_1,\mathcal{O}_X)^\times$ for some dense open set $U_1\belongs X$ with $U_1\cap D_\jboundary \ne \emptyset$.
The rational map $f_8\maps D_\jboundary \dashrightarrow \PP^1_\QQ$ extending $F_7\vert_{U_1\cap D_\jboundary}$ is nonconstant.
\end{proposition}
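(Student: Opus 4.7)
The plan is to prove the two claims in turn. For the existence of $U_1$, compute $\ord_{D_\jboundary}(F_7) = \rord \cdot \sord - \sord \cdot \rord = 0$, so $D_\jboundary$ is not contained in $\Supp(\div(F_7))$. Setting $U_1 \defeq X \setminus \Supp(\div(F_7))$ makes $F_7 \in \Gamma(U_1,\mathcal{O}_X)^\times$, and $U_1 \cap D_\jboundary$, being the complement in $D_\jboundary$ of a proper closed subset, is open and dense (hence nonempty) in $D_\jboundary$.

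For the nonconstancy of $f_8$, the idea is to exploit the right $G$-action on $\Rat(X)$. Because $D_\jboundary$ is $G$-invariant, each $g \in G(\QQ)$ induces a $\QQ$-algebra automorphism $g^\ast$ of $\Rat(D_\jboundary)$. A direct computation using the convention $(fg)(x) = f(xg^{-1})$ and the group law \eqref{EQN:group-law-on-G} shows that for $g = (\alpha, 0)$ with $\alpha \in \QQ^\times$, one has $a \cdot g = a/\alpha$ and $(b-c) \cdot g = b-c$, giving $F_7 \cdot g = \alpha^\sord F_7$. Restricting to $D_\jboundary$ yields $g^\ast(F_7|_{D_\jboundary}) = \alpha^\sord \, F_7|_{D_\jboundary}$. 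If $F_7|_{D_\jboundary}$ were a $\QQ$-constant $\gamma \in \QQ^\times$ (the obstruction to $f_8$ being nonconstant to a $\QQ$-rational point of $\PP^1_\QQ$), then $\gamma = g^\ast(\gamma) = \alpha^\sord \gamma$ for all $\alpha \in \QQ^\times$ would force $\sord = 0$.

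In the residual sub-case $\sord = 0$, Proposition~\ref{PROP:compute-arg-max-set-C0j} gives $\mathcal{C}^{(0)}(\jboundary) = \QQ$ and $\jboundary \notin \bigcup_{c' \in \QQ} I^{c'}$; combined with $D_\jboundary \belongs \Supp(\div(a)) \cup \Supp(\div_\infty(b))$ and the fact that $\sord = 0$ forces $\ord_{D_\jboundary}(b) \ge 0$, one obtains $\rord \ne 0$, so $F_7 = (b-c)^\rord$. The key observation is that $b|_{D_\jboundary}$ cannot be a $\QQ$-constant: otherwise $b - \beta_0$ would vanish on $D_\jboundary$ for some $\beta_0 \in \QQ$, placing $\jboundary \in I^{\beta_0}$ and contradicting $\jboundary \notin \bigcup_{c'} I^{c'}$. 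Hence $(b-c)|_{D_\jboundary} \notin \QQ$, which rules out the remaining constancy obstruction for $f_8$. This sub-case is the main obstacle, because the scaling $G$-action on $F_7|_{D_\jboundary}$ is trivial when $\sord = 0$; nonconstancy has to be extracted from the maximality of $c \in \mathcal{C}^{(0)}(\jboundary)$ and the arithmetic structure of $\QQ$-rational values on $D_\jboundary$, rather than directly from the $G$-action.
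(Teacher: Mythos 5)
Your proof of the first claim (computing $\ord_{D_\jboundary}(F_7)=0$ and taking $U_1$ to be the complement of $\Supp(\div(F_7))$) is the same as the paper's. For nonconstancy you take a genuinely different route when $\sord\ne 0$: instead of splitting into $\rord=\sord$ versus $\rord\ne\sord$ and constructing a $G$-action on $\PP^1$ that makes the full-$G$ restriction $f_8$ equivariant (the paper's Cases~2 and~3), you restrict attention to the scaling subgroup $\{(\alpha,0)\}\subset G$ and read nonconstancy off the single law $F_7\,g=\alpha^{\sord}F_7$. This unifies the paper's two cases and is cleaner; your $\sord=0$ sub-case and the paper's Case~1 are then the same order-of-vanishing argument.

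There is, however, a genuine gap in your write-up. Proposition~\ref{PROP:G-induced-nonconstancy-of-a-rational-function-on-Dj} is stated and proved before the paper's blanket assumption that $X$ is split, so $D_\jboundary$ is not assumed geometrically irreducible and the constant field $K$ of $\Rat(D_\jboundary)$ can strictly contain $\QQ$. You only rule out $F_7|_{D_\jboundary}$ being a $\QQ$-rational constant; a constant value $\gamma\in K^\times\setminus\QQ$ would also make $f_8$ constant (with image a non-$\QQ$-rational closed point of $\PP^1_\QQ$), and this is not excluded by the identity $\gamma=\alpha^\sord\gamma$ because $g^\ast(\gamma)$ need not equal $\gamma$ for $\gamma\notin\QQ$. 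For $\sord\ne 0$ this is repairable within your framework: $g^\ast$ is a $\QQ$-automorphism of $\Rat(D_\jboundary)$ and hence preserves minimal polynomials over $\QQ$, so $g^\ast(\gamma)=\alpha^\sord\gamma$ forces $p(\alpha^\sord t)$ and $p(t)$ to be proportional for the minimal polynomial $p$ of $\gamma$, which (taking $\alpha=2$) kills every sub-leading coefficient and forces $\gamma=0$ or $\sord=0$. This is essentially how the paper's Cases~2 and~3 stay robust --- they argue with $G$-invariant closed \emph{subsets} of $\PP^1$ rather than with $\QQ$-rational values. The $\sord=0$ sub-case has the same $\QQ$-rationality issue in both your argument and the paper's Case~1 (where $c'\in\QQ$ is asserted without comment); this is harmless in practice because the proposition is only invoked after the split assumption is in force (so $K=\QQ$), but if you want the statement at its advertised generality you should either add that hypothesis or justify, for $\sord=0$, that a constant value of $b|_{D_\jboundary}$ necessarily lies in $\QQ$.
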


\begin{proof}
Since $\ord_{D_\jboundary}(F_7) = 0$, the first statement is clear.
In particular, $f_8^{-1}(\GG_m) \ne \emptyset$.

\emph{Case~1: $\sord=0$.}
If $f_8$ were constant, then we would have $b\vert_{D_\jboundary} = c'$
for some constant $c'\in \QQ$,
and thus $1\le \ord_{D_\jboundary}(b-c')
\le \ord_{D_\jboundary}(b-c) = \sord = 0$,
where the second inequality holds because $c\in \mathcal{C}^{(0)}(\jboundary)$.
That is absurd;
so instead, $f_8$ is nonconstant.

\emph{Case~2: $\rord=\sord\ne 0$.}
The formula \eqref{EQN:group-law-on-G} is ``homogeneous'' in $a$, $b$,
so it induces a right $G$-action
$$y\cdot (\ucord,\vcord) \defeq (\vcord+y)/\ucord$$
on $y=b/a\in \Aff^1$,
which extends to a $G$-action on $y\in \PP^1$
(with a fixed point at $\infty$).
Moreover, for this $G$-action on $\PP^1$,
the rational map $b/a$ from $X$ to $\PP^1$ is $G$-equivariant.
Similarly,
the rational map $(b-c)/a$ from $X$ to $\PP^1$ is $G$-equivariant;
one can check this using \eqref{EQN:group-law-on-G},
or by factoring $(b-c)/a$ through the $G$-equivariant map
$(a,b)\mapsto (1,-c)(a,b) = (a,b-c)$ on $G$.
Thus $\frac{b-c}{a}\vert_{D_\jboundary}$
is a $G$-equivariant rational map $D_\jboundary\dashrightarrow \PP^1$.
Since $f_8^{-1}(\GG_m) \ne \emptyset$,
and $G$ acts transitively on $\Aff^1(\ol{\QQ})$,
it follows that $f_8$ is nonconstant.

\emph{Case~3: $\sord\ne 0$ and $\rord\ne \sord$.}
Then $\rord>\sord$
by Proposition~\ref{PROP:apply-lower-bound-on-anticanonical-Weil-divisor},
so $a/(b-c)$ vanishes on a dense open set $V_1\belongs D_\jboundary$.
But \eqref{EQN:group-law-on-G} yields,
in $\Rat(X\times G)$,
the identity
\begin{equation*}
F_7(x\cdot (\ucord,\vcord))
= (a\vcord+b-c)^{\rord}/(a\ucord)^{\sord}
= (a\vcord/(b-c) + 1)^{\rord}
\cdot F_7(x)/\ucord^{\sord}
\end{equation*}
(for $x\in X$ and $(\ucord,\vcord)\in G$).
Since $F_7(x)$,
$F_7(x\cdot (\ucord,\vcord))$,
and $(a\vcord/(b-c) + 1)^{\rord}$
are regular on $X\times G$ near a \emph{general} point $(x,g)\in D_\jboundary\times G$,
restriction to $D_\jboundary$ gives
$$f_8(x\cdot (\ucord,\vcord)) = f_8(x)/\ucord^{\sord}$$
in $\Rat(D_\jboundary\times G)$.
Thus $f_8$ is $G$-equivariant for
a nontrivial multiplicative action of $G$ on $\PP^1$
(where $G$ fixes $0$ and $\infty$, and acts transitively on $\GG_m(\ol{\QQ})$).
Since $f_8^{-1}(\GG_m)$ is nonempty, it follows that $f_8$ is nonconstant.
\end{proof}

From now on, assume $X$ is split.
Thus, each $D_\jboundary$ is \emph{geometrically} irreducible.
In particular, a nonconstant function on $D_\jboundary$
will remain nonconstant on any nonempty open subset of $D_\jboundary(\efield)$,
for any local field $\efield$ of characteristic $0$.
This observation, applied to the function $F_7$ from Proposition~\ref{PROP:G-induced-nonconstancy-of-a-rational-function-on-Dj}, leads to new insight on local parameterizations of $X$ near $D_\jboundary$.

\begin{lemma}
\label{LEM:convenient-local-analytic-coordinates-generically}
Let $\jboundary$, $c$, $\rord$, $\sord$, and $F_7$
be as in Proposition~\ref{PROP:G-induced-nonconstancy-of-a-rational-function-on-Dj}.
Let $U_2\belongs X$ be the largest open set such that
$D_\jboundary\cap U_2$ is smooth and
$a, b-c\in \Gamma(U_2\setminus D_\jboundary, \mathcal{O}_X)^\times$.
Let $x\in (D_\jboundary\cap U_2)(\RR)$.
\begin{enumerate}
\item Assume $\jboundary\in J_1 \cup J_2$.
Then locally near $x$,
there are real-analytic coordinates $y,z\ll 1$
with $a = \unitone y^{\rord}$ and $(b-c)/y^{\sord} = \unittwo+z$,
where $\unitone, \unittwo\in \RR^\times$.

\item Assume $\jboundary\in J_3$.
Then locally near $x$,
there are real-analytic coordinates $y,z\ll 1$
with $b-c = \unitone y^{\sord}$ and $a = \unittwo+z$,
where $\unitone, \unittwo\in \RR^\times$.

\item The functions $(\Top{y})/y$
and $\Top{z}$ are regular near $x$,
for any $\Top
\in \set{(b-c)\,\frac{\partial}{\partial b},
a\,\frac{\partial}{\partial a},
a\,\frac{\partial}{\partial b}}$.
\end{enumerate}
\end{lemma}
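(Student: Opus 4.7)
The plan is to build $(y,z)$ from a real-analytic local equation $y_0$ of $D_\jboundary$ near $x$, rescaled so that either $a$ (case~(1)) or $b-c$ (case~(2)) becomes a pure monomial in the new coordinate, with $z$ extracted as a shift of the corresponding unit. The main obstacle is to verify that $(y,z)$ actually spans the tangent space $T_xX$, equivalently that one of the resulting units has nonvanishing differential on $D_\jboundary$ at $x$. Proposition~\ref{PROP:G-induced-nonconstancy-of-a-rational-function-on-Dj}, combined with splitness of $X$ (so $D_\jboundary$ is a geometrically irreducible curve on which the critical locus of any nonconstant regular function is finite), will supply this after possibly shrinking $U_2$.

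Since $X$ is smooth at $x$, I would choose a real-analytic local equation $y_0$ for $D_\jboundary$. Because $a, b-c\in \Gamma(U_2\setminus D_\jboundary,\mathcal{O}_X)^\times$, locally near $x$ we have $a=u_1 y_0^{\rord}$ and $b-c=u_2' y_0^{\sord}$ for real-analytic units $u_1,u_2'$. In case~(1), with $\rord\ne 0$, set $\unitone\defeq u_1(x)$ and rescale $y\defeq (u_1/\unitone)^{1/\rord} y_0$; the real $\rord$-th root exists since $u_1/\unitone>0$ near $x$, and $a=\unitone y^{\rord}$. Define $u_2\defeq (b-c)/y^{\sord}$, $\unittwo\defeq u_2(x)$, $z\defeq u_2-\unittwo$; by construction $(b-c)/y^{\sord}=\unittwo+z$. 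A quick calculation gives $F_7=u_2^{\rord}/\unitone^{\sord}$, so nonconstancy of $f_8=F_7\vert_{D_\jboundary}$ (Proposition~\ref{PROP:G-induced-nonconstancy-of-a-rational-function-on-Dj}) forces $u_2\vert_{D_\jboundary}$ nonconstant; its critical locus on the irreducible curve $D_\jboundary$ is then finite, and after excluding these points from $U_2$ I obtain $du_2\vert_{D_\jboundary}(x)\ne 0$, hence $dy\wedge dz\ne 0$ at $x$.

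Case~(2) is symmetric: here $\rord=0$, so $a$ itself is a real-analytic unit near $x$, and I set $\unittwo\defeq a(x)$, $z\defeq a-\unittwo$, and rescale $y$ so that $b-c=\unitone y^{\sord}$ with $\unitone\defeq u_2'(x)$. The same argument applied to $F_7=1/a^{\sord}$ yields nonconstancy of $a\vert_{D_\jboundary}$, and hence $(y,z)$ is a coordinate system at generic $x$.

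For part~(3), a direct computation in case~(1) using $y=(a/\unitone)^{1/\rord}$ (which depends only on $a$) gives $(a\,\partial/\partial a)(y)/y=1/\rord$ and $(a\,\partial/\partial b)(y)/y=((b-c)\,\partial/\partial b)(y)/y=0$. From $b-c=y^{\sord}(\unittwo+z)$ one then computes
\begin{equation*}
(a\,\partial/\partial a)(z)=-(\sord/\rord)(\unittwo+z),
\quad ((b-c)\,\partial/\partial b)(z)=\unittwo+z,
\quad (a\,\partial/\partial b)(z)=\unitone\,y^{\rord-\sord},
\end{equation*}
all regular near $x$, the last since Proposition~\ref{PROP:apply-lower-bound-on-anticanonical-Weil-divisor} forces $\sord\le\rord$. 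Case~(2) follows by the symmetric calculation with the roles of $a$ and $b-c$ swapped.
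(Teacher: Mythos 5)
Your setup matches the paper's through the construction of $y$ and $z$, and your explicit derivative formulas in part~(3) are correct and actually a bit more concrete than the paper's sketch. But there is a real gap in parts~(1)--(2): you write ``after excluding these points from $U_2$ I obtain $du_2\vert_{D_\jboundary}(x)\ne 0$,'' which proves the lemma only for $x$ outside a finite subset of $D_\jboundary\cap U_2$. The lemma, however, fixes $U_2$ as \emph{the} largest open set with the stated properties and claims the coordinates for every $x\in (D_\jboundary\cap U_2)(\RR)$; you are not free to shrink $U_2$. This is not a cosmetic defect: the $p$-adic refinement Lemma~\ref{LEM:large-p-local-analytic-coordinates-generically} (proved by the same method) must hold at \emph{every} $x_0\in\mathscr{D}^\ast_\jboundary(\FF_p)$, and the proof of Lemma~\ref{LEM:denominator-bias} then sums $\mathcal{I}_p(x_0)$ over all such $x_0$ and relies on the coordinate form at each one. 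A finite excluded set modulo $p$ would leave unaccounted contributions.

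The missing ingredient is that nonconstancy of $F_7\vert_{D_\jboundary}$ yields an \emph{empty} critical locus in $U_2\cap D_\jboundary$, not merely a finite one. The paper gets this from the identity
\begin{equation*}
\rord F_7 \;=\; (b-c)\,\frac{\partial F_7}{\partial b},
\end{equation*}
valid because $F_7=(b-c)^{\rord}/a^{\sord}$. Since $a,b-c\in\Gamma(U_2\setminus D_\jboundary,\mathcal{O}_X)^\times$ and $\ord_{D_\jboundary}(F_7)=0$, normality gives $F_7\in\Gamma(U_2,\mathcal{O}_X)^\times$, so the left side is a unit. Writing the right side via the chain rule as $(b-c)(\partial\uloc/\partial b)\cdot(\partial f_{12}/\partial\uloc)$ (with $y$ depending on $a$ alone, exactly as in your construction), the first factor is regular off $D_\jboundary$ and the second is analytic; because the product is a unit, either the second factor is a unit near $x$, or it vanishes \emph{along} $D_\jboundary$ near $x$ (to cancel a pole of the first factor). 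The latter would force $F_7=\unitthree+f_{12}$ to be locally constant on $D_\jboundary$, hence constant on the irreducible curve $D_\jboundary$, contradicting Proposition~\ref{PROP:G-induced-nonconstancy-of-a-rational-function-on-Dj}. So the second factor is a unit at every $x\in D_\jboundary\cap U_2$, and no points need be excluded. Your argument should be upgraded with this dichotomy (and the analogous one, using $1=\partial a/\partial a$, in case~(2)) to reach the full statement.
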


\begin{proof}
(1):
There exists $\tloc\in \Rat(X)$ (defining $D_\jboundary$ locally)
such that $a/\tloc^{\rord}, (b-c)/\tloc^{\sord} \in \mathcal{O}_{X,x}^\times$.
Let $\uloc\in \Rat(X)$ be a regular local coordinate complementary to $\tloc$;
then analytically we have $$a/\tloc^{\rord} = \unitone+f_9(\tloc,\uloc),
\quad (b-c)/\tloc^{\sord} = \unittwo+f_{10}(\tloc,\uloc),$$
say,
where $\unitone,\unittwo\in \RR^\times$ and $f_9(0,0)=f_{10}(0,0)=0$.
Since $\rord\ne 0$
(we have $\rord>0$ if $\jboundary\in J_1$, and $\rord<0$ if $\jboundary\in J_2$),
we may change variables from $\tloc$
to $y\defeq \tloc (1+f_9(\tloc,\uloc)/\unitone)^{1/\rord}$.
More precisely,
we are changing variables from $(\tloc,\uloc)\in \RR^2$ to $(y,\uloc)\in \RR^2$,
using the inverse function theorem for analytic functions.
Then $$a = \unitone y^{\rord},
\quad (b-c)/y^{\sord} = \unittwo+f_{11}(y,\uloc),
\quad F_7(a,b)
= (\unittwo+f_{11})^{\rord} / \unitone^{\sord}
= \unitthree+f_{12}(y,\uloc),$$
say,
where $\unitthree = \unittwo^{\rord}/\unitone^{\sord} \in \RR^\times$.

Differentiating $a=\unitone y^{\rord}$ in $b$
gives $\partial{y}/\partial{b} = 0$,
since $\rord\ne 0$ and $\partial{a}/\partial{b} = 0$.
Differentiating $F_7=\unitthree+f_{12}$ in $b$ gives
$$\rord F_7 = (b-c)\partial{F_7(a,b)}/\partial{b}
= (b-c)\partial{f_{12}(y,\uloc)}/\partial{b}
= (b-c)(\partial{\uloc}/\partial{b}) (\partial{f_{12}}/\partial{\uloc}),$$
since $\partial{y}/\partial{b} = 0$.
Yet the function $\partial{\uloc}/\partial{b}$ near $x$
is regular away from $D_\jboundary$
(because $\uloc\in \mathcal{O}_{X,x}$,
and $(a,b)$ is a pair of regular \emph{coordinates} away from $D_\jboundary$).
Therefore, $(b-c)\partial{\uloc}/\partial{b}$ is either regular near $x$, or polar along $D_\jboundary$.
Since $\rord F_7\in \Gamma(U_2, \mathcal{O}_X)^\times$,
we conclude that
the \emph{regular} function $\partial{f_{12}}/\partial{\uloc}$ near $x$
is either invertible, or zero along $D_\jboundary$.
But by Proposition~\ref{PROP:G-induced-nonconstancy-of-a-rational-function-on-Dj},
the map $F_7 = \unitthree+f_{12}$ is nonconstant along $D_\jboundary$,
so the latter case is impossible
(since $f_{12}(y,\uloc)\vert_{D_\jboundary} = f_{12}(0,\uloc)$ locally).
Instead, $\partial{f_{12}}/\partial{\uloc}$ must be invertible.
So by the analytic inverse function theorem,
we may change variables from $\uloc$ to $\uloc'\defeq f_{12}(y,\uloc)$.
Yet $$(1 + f_{11}(y,\uloc)/\unittwo)^{\rord}
= 1 + f_{12}(y,\uloc)/\unitthree
= 1 + \uloc'/\unitthree.$$
Taking $\rord$th roots lets us change variables
from $\uloc'$ to $z\defeq f_{11}(y,\uloc)$.
Thus $a = \unitone y^{\rord}$ and $(b-c)/y^{\sord} = \unittwo+z$,
as desired.

(2):
This is similar.
First, note that $\rord = 0$ (since $\jboundary\in J_3$) and write
$$(b-c)/\tloc^{\sord} = \unitone+f_{13}(\tloc,\uloc),
\quad a = \unittwo+f_{14}(\tloc,\uloc),$$
say, where $D_\jboundary$ is locally cut out by $\tloc=0$.
Next, change variables from $\tloc$ to a suitable $y$
so that $b-c = \unitone y^{\sord}$ and $a = \unittwo+f_{15}(y,\uloc)$, say.
Here $\partial{y}/\partial{a}=0$, since $\sord\ne 0$ and $\partial{b}/\partial{a}=0$.
Now analyze the equation
$$1 = \partial{a}/\partial{a}
= \partial{f_{15}(y,\uloc)}/\partial{a}
= (\partial{\uloc}/\partial{a}) (\partial{f_{15}}/\partial{\uloc})$$
using the nonconstancy of $a\vert_{D_\jboundary}$ (guaranteed by Proposition~\ref{PROP:G-induced-nonconstancy-of-a-rational-function-on-Dj})
and the local identity $f_{15}(y,\uloc)\vert_{D_\jboundary} = f_{15}(0,\uloc)$,
to change variables from $\uloc$ to $z\defeq f_{15}(y,\uloc)$.

(3):
Given (1) and (2),
this is routine.
For $\Top\in \set{(b-c)\,\frac{\partial}{\partial b}, a\,\frac{\partial}{\partial a}}$,
use the chain rule and the fact that
$(\Top a)/a\in \ZZ$, $(\Top b)/(b-c)\in \ZZ$
are constant.
For $\Top = a\,\frac{\partial}{\partial b}$,
use $a/(b-c)\in \Gamma(U_2, \mathcal{O}_X)$ (given by Proposition~\ref{PROP:apply-lower-bound-on-anticanonical-Weil-divisor})
to reduce to the $\Top = (b-c)\,\frac{\partial}{\partial b}$ case.
(We note here that (3) is only new when $\Top = (b-c)\,\frac{\partial}{\partial b}$;
the other cases already appeared in \cite{chambert2002distribution}.)
\end{proof}

\begin{remark}
\label{RMK:generalize-basic-coordinates-to-local-fields}
The lemma generalizes to any
local field $\efield$ of characteristic $0$.
This gives a map $(y,z)\mapsto (a,b)$ that locally parameterizes $G(\efield)$.
Being a parameterization, this map must be injective away from $D$.
Taking $\efield=\CC$, we find that injectivity forces $\gcd(\rord,\sord)=1$.
\end{remark}

For the rest of \S\ref{SEC:new-geometry}, assume $D$ has strict normal crossings.
Using the inverse function theorem for formal power series with integral coefficients,\footnote{The particular statement we need is that if $\varphi_1\in R[[x,y]]$, $\varphi_1(0,0)=0$, and $\partial{\varphi_1}/\partial{y}\in R[[x,y]]^\times$, where $R$ is a ring, then there exists a $\varphi_2\in R[[x,y]]$ such that $\varphi_2(0,0)=0$ and $\varphi_1(x,\varphi_2(x,y))=\varphi_2(x,\varphi_1(x,y))=y$.
The proof is standard:
we first construct a right inverse $\varphi_2$ of $\varphi_1(x,\cdot)$,
degree by degree, by Hensel's lemma in $R[[x,y]]$;
iterating, we get a right inverse $\varphi_3$ of $\varphi_2(x,\cdot)$;
finally, a composition trick shows that $\varphi_3=\varphi_1$.}
instead of the inverse function theorem for analytic functions,
we can refine Lemma~\ref{LEM:convenient-local-analytic-coordinates-generically} over $\QQ_p$.
The result is Lemma~\ref{LEM:large-p-local-analytic-coordinates-generically} below.
Let $\QQ_p^{\map{ur}} \belongs \ol{\QQ}_p$ be the maximal unramified extension of $\QQ_p$ (in an algebraic closure $\ol{\QQ}_p$ of $\QQ_p$),
and let $\ZZ_p^{\map{ur}}$ be the integral closure of $\ZZ_p$ in $\QQ_p^{\map{ur}}$.
Explicitly, $\ZZ_p^{\map{ur}}$ (resp.~$\QQ_p^{\map{ur}}$)
is generated over $\ZZ_p$ (resp.~$\QQ_p$)
by the set of all roots of unity in $\ol{\QQ}_p$ of order coprime to $p$.
Moreover, the residue field $\ZZ_p^{\map{ur}}/p\ZZ_p^{\map{ur}}$
is an algebraic closure $\ol{\FF}_p$ of
the residue field $\ZZ_p/p\ZZ_p = \FF_p$.

Let $\redp\maps \mathscr{X}(\ZZ_p) \to \mathscr{X}(\FF_p)$
and $\redpur\maps \mathscr{X}(\ZZ_p^{\map{ur}}) \to \mathscr{X}(\ol{\FF}_p)$
be reduction modulo $p$.
If $\mathscr{V}$ is a scheme and $R$ is a ring,
then $\mathscr{V}\times R$, or $\mathscr{V}_R$,
denotes the \emph{base change} $\mathscr{V}\times \Spec(R)$.
If moreover $R\in \{\ZZ_p, \ZZ_p^{\map{ur}}\}$,
then we may identify $\mathscr{V}(R/pR)$
with the set of $R/pR$-valued closed points of
the special fiber $\mathscr{V}_{R/pR} \belongs \mathscr{V}_R$.
Finally, elements $f\in \Gamma(\mathscr{V},\mathcal{O}_{\mathscr{V}})$
will often be written as $f(x)$,
in order to emphasize that $f$ induces a map $\mathscr{V}(R)\to R,\;x\mapsto f(x)$.
This will all be convenient below,
for describing and studying certain local rings of schemes.

\begin{lemma}
\label{LEM:large-p-local-analytic-coordinates-generically}
Let $\jboundary$, $c$, $\rord$, $\sord$,
and $U_2$ be as in Lemma~\ref{LEM:convenient-local-analytic-coordinates-generically}.
Let $\mathscr{U}_2$ be the complement of the closure of $X\setminus U_2$ in $\mathscr{X}$.
Let $p$ be a large prime.
Let $R\in \{\ZZ_p, \ZZ_p^{\map{ur}}\}$
and $x_0\in \mathscr{U}_2(R/pR)$.
Then there exist an analytic isomorphism
$$\redR^{-1}(x_0) \to pR^2,
\quad x\mapsto (y,z),$$
and units $\unitone,\unittwo\in R^\times$ depending on $x_0$,
such that for all $x\in \redR^{-1}(x_0)$ the following hold:
\begin{enumerate}
\item if $x_0\in \mathscr{D}_\jboundary(R/pR)$
and $\jboundary\in J_1\cup J_2$,
then $a(x) = \unitone y^{\rord}$ and $b(x)-c = y^{\sord} (\unittwo+z)$;

\item if $x_0\in \mathscr{D}_\jboundary(R/pR)$
and $\jboundary\in J_3$,
then $b(x)-c = \unitone y^{\sord}$ and $a(x) = \unittwo+z$;
and

\item if $x_0\in \mathscr{G}(R/pR)$,
then $a(x) = \unitone+y$ and $b(x)-c = \unittwo+z$.
\end{enumerate}
\end{lemma}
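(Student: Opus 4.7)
The plan is to port the proof of Lemma~\ref{LEM:convenient-local-analytic-coordinates-generically} to the integral setting, replacing the analytic inverse function theorem by the formal power series version stated in the footnote immediately preceding, and verifying that each step survives passage to $R \in \{\ZZ_p, \ZZ_p^{\map{ur}}\}$ once $p$ is large in a way depending only on $X$, $c$, and $\jboundary$.

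Case~(3) is immediate: since $\mathscr{X}$ is smooth at $x_0$ and both $a$ and $b-c$ are units on the formal neighborhood, any lifts $\unitone, \unittwo\in R^\times$ of $a(x_0)$ and $b(x_0)-c$ induce an identification $\redR^{-1}(x_0) \cong pR^2$ via $(y,z)\defeq (a-\unitone, b-c-\unittwo)$.  For cases~(1) and~(2), I would use condition~(3) in the definition of $\Sbad$ (which ensures $\mathscr{D}_\jboundary$ is smooth over $\ZZ[1/\Sbad]$) to pick, in the completed local ring $\mathcal{O}^\wedge_{\mathscr{X}_R, x_0} \cong R[[\tloc,\uloc]]$, a regular parameter $\tloc$ cutting out $\mathscr{D}_\jboundary$ together with a complementary parameter $\uloc$.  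Then, just as in the analytic proof, I would expand $a/\tloc^{\rord} = \unitone + f_9(\tloc,\uloc)$ and $(b-c)/\tloc^{\sord} = \unittwo + f_{10}(\tloc,\uloc)$ with $\unitone,\unittwo\in R^\times$, and apply Hensel's lemma in $R[[\tloc,\uloc]]$ (legal because $p\nmid \rord$ for $p$ large) to substitute $y\defeq \tloc(1+f_9/\unitone)^{1/\rord}$.  The resulting relations $a=\unitone y^{\rord}$ and $(b-c)/y^{\sord} = \unittwo + f_{11}(y,\uloc)$, together with the footnote's inverse function theorem applied to the change of variables $z\defeq f_{11}(y,\uloc)$ in $R[[y,\uloc]]$, will then deliver the desired forms.

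The main obstacle is verifying the hypothesis $\partial f_{11}/\partial\uloc(0,0)\in R^\times$ required by the formal inverse function theorem.  The analytic proof obtained this via a chain-rule identity $\rord F_7 = (b-c)(\partial \uloc/\partial b)(\partial f_{12}/\partial \uloc)$ forcing $\partial f_{12}/\partial \uloc$ to be either invertible at $x$ or identically zero along $D_\jboundary$; the latter was ruled out by the nonconstancy of $f_8 = F_7\vert_{D_\jboundary}$ from Proposition~\ref{PROP:G-induced-nonconstancy-of-a-rational-function-on-Dj}.  The same chain-rule identity holds in the formal setting, so to rule out the degenerate alternative I need $f_8$ to remain nonconstant on every nonempty open subset of $\mathscr{D}_\jboundary\otimes (R/pR)$.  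This follows, for $p$ large, from the fact that $\mathscr{D}_\jboundary$ is smooth and geometrically integral over $\ZZ[1/\Sbad]$ (using that $X$ is split): its special fiber over $\ol{\FF}_p$ is geometrically integral for $p$ outside a (larger) finite set, and a nonconstant morphism from a geometrically integral variety is nonconstant on every nonempty open subset.

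Case~(2) is handled symmetrically, with the roles of $a$ and $b-c$ interchanged and the nonconstancy of $a\vert_{D_\jboundary}$ (again supplied by Proposition~\ref{PROP:G-induced-nonconstancy-of-a-rational-function-on-Dj}) replacing that of $F_7\vert_{D_\jboundary}$; all remaining steps are mechanical translations of the analytic argument.
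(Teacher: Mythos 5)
Your proof takes a genuinely different route from the paper's, and unfortunately it has a gap at the crucial step.

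The paper handles uniformity via quasi-compactness: it covers $U_2$ by finitely many small affine opens $V_2$, on each of which it chooses \emph{global} regular parameters $\tloc,\uloc\in\Gamma(V_2,\mathcal{O}_{V_2})$. It then invokes the analytic lemma's proof to conclude that $\rord F_7$ and $(b-c)\partial\uloc/\partial b$ (resp.~$a$ and $\partial\uloc/\partial a$) are invertible regular functions on $(V_2)_\QQ$, spreads these out to $\ZZ[1/A_6]$ for a single $A_6$, and then for $p\nmid A_6$ applies the inverse function theorem in $R[[\tloc(x)-\tloc(x'),\uloc(x)-\uloc(x')]]$. The entire derivative-unit question is answered in one shot, uniformly over $p$ and $x_0$, with no further appeal to nonconstancy modulo~$p$.

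Your approach instead chooses $\tloc,\uloc$ locally in the completed local ring at each $x_0$ and tries to rerun the chain-rule dichotomy there, using nonconstancy of $f_8 = F_7\vert_{\mathscr{D}_\jboundary}$ on the special fiber. This has a concrete problem. In $R[[\tloc,\uloc]]$, the alternative to $\partial f_{12}/\partial\uloc(0,0)\in R^\times$ is not merely ``$\partial f_{12}/\partial\uloc$ vanishes along $\{\tloc=0\}$''; there is a third possibility, namely that $(b-c)\partial\uloc/\partial b$ has a vertical pole along $\{p=0\}$, forcing $\partial f_{12}/\partial\uloc\in pR[[\tloc,\uloc]]$. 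In that case $f_8\bmod p$ has identically zero $\uloc$-derivative, but over the residue field of characteristic $p$ this only means $f_8\bmod p$ is a $p$-th power, \emph{not} that it is constant. So nonconstancy of $f_8$ on $\mathscr{D}_\jboundary\otimes\ol{\FF}_p$ does not rule this case out. The paper's route never runs into this, because invertibility of $(b-c)\partial\uloc/\partial b$ over $\QQ$ on $V_2$ spreads out to $\ZZ[1/A_6]$ and thus prevents a vertical pole for $p\nmid A_6$. Your argument is also imprecise on uniformity: picking $\tloc,\uloc$ inside $\mathcal{O}^\wedge_{\mathscr{X}_R,x_0}$ at each point gives no a priori control on the denominators of the resulting auxiliary power series as $x_0$ ranges over $\mathscr{U}_2(R/pR)$, whereas the paper's global choice on a finite affine cover makes the bad set of primes manifestly finite. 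To repair your proof you would essentially have to reintroduce the spreading-out step, at which point the nonconstancy-mod-$p$ argument becomes unnecessary.
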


\begin{proof}
Uniformity over $p$ and $x_0$ takes some care.
We use the quasi-compactness of $U_2$.
Let $V_2\belongs \mathscr{U}_2$ be a small affine open neighborhood of a point of $U_2$.
Then there exist two elements $\tloc(x),\uloc(x)\in \Gamma(V_2,\mathcal{O}_{V_2})$
such that
for every prime $p\gg 1$
and point $x'\in V_2(R)$,
the maximal ideal of the local ring $\mathcal{O}_{V_2\times R,\redR(x')}$
is generated by $p$, $\tloc(x)-\tloc(x')$, and $\uloc(x)-\uloc(x')$.\footnote{Thus
$\tloc(x)-\tloc(x')$ and $\uloc(x)-\uloc(x')$
are regular local coordinates for $(V_2)_{R/pR}$ at $\redR(x')\in V_2(R/pR)$.
This condition, when cast in terms of differentials,
is equivalent to the nonvanishing of some determinant modulo $p$,
and thus can be guaranteed by removing from $V_2$ the zero locus of the determinant if necessary.}
Moreover, since $D_\jboundary$ is smooth, we may choose $\tloc$ and $\uloc$
so that $\div(\tloc\vert_{V_2}) = \mathscr{D}_\jboundary\vert_{V_2}$
(if $V_2$ is small enough).

In general,
given a point $x_0 \in \mathscr{U}_2(R/pR)$,
we choose $V_2$ and $x'\in V_2(R)$
such that $\redR(x') = x_0$.
This is possible by Hensel's lemma,
provided that $p$ is large enough.

Now $a/\tloc^{\rord}$ and $(b-c)/\tloc^{\sord}$
are invertible regular functions over $\QQ$
(i.e.~morphisms $(V_2)_\QQ\to \GG_m$);
so are $\rord F_7$ and $(b-c)\partial{\uloc}/\partial{b}$
(resp.~$a$ and $\partial{\uloc}/\partial{a}$)
if $\jboundary\in J_1\cup J_2$
(resp.~$\jboundary\in J_3$).
So they are invertible regular
over some $\ZZ[1/A_6]$ with $A_6\in \ZZ_{\ge 1}$.
By the inverse function theorem in
$$R[[\tloc(x)-\tloc(x'),\uloc(x)-\uloc(x')]],$$
if $p\nmid A_6$,
the method of Lemma~\ref{LEM:convenient-local-analytic-coordinates-generically}
thus yields (1) and (2)
for each $x_0 \in (\mathscr{D}_\jboundary\cap V_2)(R/pR)$.

If $x_0 \in (\mathscr{G}\cap V_2)(R/pR)$
and $p\nmid A_6$,
then $a(x'), b(x')-c\in R^\times$,
so for (3) we can simply take
$\unitone \defeq a(x')$,
$\unittwo \defeq b(x')-c$,
$y \defeq a(x)-a(x')$,
and $z \defeq b(x)-b(x')$.
\end{proof}

We end with some local constancy analysis useful for Lemmas~\ref{LEM:local-constancy} and~\ref{LEM:localized-regular-and-height-derivative-estimates}.
Roughly speaking, we will cover $X$ by open sets on which all divisors $D_\jboundary$ behave compatibly.
On each such open set, we will prove two smoothness results for local height functions:
Lemma~\ref{LEM:large-p-localized-height-symmetry-constancy} for large primes $p$,
and Lemma~\ref{LEM:fixed-p-localized-height-symmetry-constancy} for small primes $p$.

\begin{definition}
\label{DEFN:C1U-maximizes-for-any-Dj-intersecting-U}
For any set $U\belongs X$, not necessarily open,
let $$\mathcal{C}^{(1)}(U)
\defeq \bigcap_{\jboundary\in J:\, D_\jboundary\cap U \ne \emptyset}
\mathcal{C}^{(0)}(\jboundary) \belongs \QQ,$$
where $\mathcal{C}^{(0)}(\jboundary)$ is the set defined in \eqref{C0j}.
\end{definition}

\begin{proposition}
\label{PROP:U-with-nonempty-c(U)-cover-X}
Let $\mathcal{C}^{(2)}
\defeq \set{\textnormal{open }U\belongs X:
\mathcal{C}^{(1)}(U)\ne \emptyset}$.
Then $\bigcup_{U\in \mathcal{C}^{(2)}} U = X$.
\end{proposition}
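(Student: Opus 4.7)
The plan is to fix $x\in X$ and exhibit an explicit open neighborhood $U\ni x$ belonging to $\mathcal{C}^{(2)}$. Since $J$ is finite and each $D_\jboundary$ is closed in $X$, the set $U\defeq X\setminus \bigcup_{\jboundary\in J:\, x\notin D_\jboundary} D_\jboundary$ is an open neighborhood of $x$ with
\[
\set{\jboundary\in J: D_\jboundary\cap U \ne \emptyset}
= \set{\jboundary\in J: x\in D_\jboundary}.
\]
Thus it suffices to show that $\bigcap_{\jboundary:\, x\in D_\jboundary} \mathcal{C}^{(0)}(\jboundary) \ne \emptyset$.

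By Proposition~\ref{PROP:compute-arg-max-set-C0j}, each factor $\mathcal{C}^{(0)}(\jboundary)$ is either all of $\QQ$ or a singleton $\set{c_\jboundary}$, with the latter occurring precisely when $\jboundary\in I^{c_\jboundary}$. If no singleton case arises, the intersection equals $\QQ$ and we are done. Otherwise, the intersection is non-empty if and only if all of the involved $c_\jboundary$ coincide. So the crux is the claim: if $\jboundary_1,\jboundary_2\in J$ with $x\in D_{\jboundary_1}\cap D_{\jboundary_2}$ and $\jboundary_i\in I^{c_i}$ for $i=1,2$, then $c_1=c_2$. The case $\jboundary_1=\jboundary_2$ is immediate from the disjointness $I^{c}\cap I^{c'}=\emptyset$ for $c\ne c'$, so we may assume $\jboundary_1\ne \jboundary_2$.

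Here the SNC hypothesis enters decisively: since triple intersections among the $D_\jboundary$ are empty, $D_{\jboundary_1}$ and $D_{\jboundary_2}$ are the only boundary components passing through $x$. Because $X$ is smooth, $\mathcal{O}_{X,x}$ is a regular (hence normal) local ring. The ultrametric identity $\ord_{D_{\jboundary_i}}(b) = \ord_{D_{\jboundary_i}}((b-c_i)+c_i) \ge 0$, combined with $\Supp(\div_\infty(b))\subseteq D$ (valid since $b$ is regular on $G$), shows that no polar divisor of $b$ passes through $x$. The algebraic Hartogs lemma then yields $b\in \mathcal{O}_{X,x}$, i.e., $b$ is regular at $x$. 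Evaluating gives $b(x)=c_1$ (since $b-c_1$ vanishes on $D_{\jboundary_1}$) and similarly $b(x)=c_2$, forcing $c_1=c_2$.

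The main obstacle is the final paragraph: without SNC one could have three or more boundary divisors meeting at $x$, losing the codimension-$1$ control needed for Hartogs; and without $b$ being a regular function on $G$, one could not confine its polar support to the listed components. Both ingredients are essential to upgrade the valuation inequalities into regularity of $b$ at $x$, which is what pins $c_1$ to $c_2$.
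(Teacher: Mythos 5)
Your proof is correct and follows essentially the same route as the paper's own: reduce to showing $\mathcal{C}^{(1)}(\set{x})\ne\emptyset$, use Proposition~\ref{PROP:compute-arg-max-set-C0j} to isolate the case of two distinct components $D_{\jboundary_1}, D_{\jboundary_2}$ through $x$ with singleton values $\set{c_1},\set{c_2}$, then invoke strict normal crossings to keep $\div_\infty(b)$ away from $x$, conclude $b\in\mathcal{O}_{X,x}$ by normality, and read off $c_1=b(x)=c_2$. The only cosmetic difference is that you construct the open set $U$ explicitly, where the paper simply asserts $\mathcal{C}^{(1)}(U)=\mathcal{C}^{(1)}(\set{x})$ for $U$ small enough.
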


\begin{proof}
If $x\in X$, then $\mathcal{C}^{(1)}(U)=\mathcal{C}^{(1)}(\set{x})$
for any small enough open set $U\ni x$.
To conclude, it suffices to show that
$\mathcal{C}^{(1)}(\set{x})\ne \emptyset$ for all $x\in X$.
Suppose for contradiction that $\mathcal{C}^{(1)}(\set{x}) = \emptyset$
for some $x\in X$.
Then by Proposition~\ref{PROP:compute-arg-max-set-C0j},
we must have $x\in D_\iboundary\cap D_\jboundary$
for some $\iboundary\in I^c$ and $\jboundary\in I^{c'}$ with $c\ne c'$.
Now $\ord_{D_\iboundary}(b-c),\ord_{D_\jboundary}(b-c') > 0$,
by \eqref{Ic}.
But $D$ has strict normal crossings,
and $\iboundary\ne \jboundary$ (since $\mathcal{C}^{(0)}(\iboundary)
= \{c\} \ne \{c'\} = \mathcal{C}^{(0)}(\jboundary)$),
so $D_\iboundary\cap D_\jboundary\cap \div_\infty(b) = \emptyset$.
Thus $x\notin \div_\infty(b)$,
whence $b\in \mathcal{O}_{X,x}$.
So $c = b(x) = c'$, a contradiction.
\end{proof}

It turns out that
sets $U\in \mathcal{C}^{(2)}$ possess extra analytic symmetries.
To facilitate proofs, let $\Gamma_p(\mathfrak{M},\mathfrak{R})$ be the set of $\mathfrak{R}$-valued analytic functions on a $p$-adic analytic manifold $\mathfrak{M}$.

\begin{lemma}
\label{LEM:large-p-localized-height-symmetry-constancy}
Let $U\in \mathcal{C}^{(2)}$ and $c\in \mathcal{C}^{(1)}(U)$.
Let $\mathscr{U}$ be the complement of the closure of $X\setminus U$ in $\mathscr{X}$.
Then for all sufficiently large primes $p$, the following hold for all $x_1,x_2\in 1+p\ZZ_p$:
\begin{enumerate}
\item The map $\phi\maps (a,b)\mapsto (ax_1,c+(b-c)x_2)$ on $G(\QQ_p)$
maps $G(\QQ_p)\cap \mathscr{U}(\ZZ_p)$ to itself.

\item If $\jboundary\in J$, then $H_{D_\jboundary,p}(g) = H_{D_\jboundary,p}(\phi(g))$ for all $g\in G(\QQ_p)\cap \mathscr{U}(\ZZ_p)$.
\end{enumerate}
\end{lemma}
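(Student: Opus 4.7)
The plan is to reduce both conclusions to a purely local claim at each residue disk of $\mathscr{U}(\ZZ_p)$. For $p$ sufficiently large (so in particular $p \in \oset \setminus \Sbad$, $c \in \ZZ_p$, and $p \nmid \rord_\jboundary \sord_\jboundary$ for every relevant $\jboundary \in J$), Definition~\ref{DEFN:standard-local-Weil-height-for-boundary-divisors} with $\Delta_p = 1$ gives $H_{D_\jboundary, p}(g) = p^{\ord_{\mathscr{D}_\jboundary}(g)}$, where $\ord_{\mathscr{D}_\jboundary}(g)$ denotes the intersection multiplicity of the $\ZZ_p$-extension of $g$ with $\mathscr{D}_\jboundary$. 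It therefore suffices to prove, for each $g \in G(\QQ_p) \cap \mathscr{U}(\ZZ_p)$, that $\redp(\phi(g)) = \redp(g)$ (giving (1), since $\mathscr{U}$ is open in $\mathscr{X}$) and $\ord_{\mathscr{D}_\jboundary}(\phi(g)) = \ord_{\mathscr{D}_\jboundary}(g)$ for every $\jboundary \in J$ (giving (2)). The guiding principle is that $x_1, x_2 \in 1 + p\ZZ_p$ forces $\phi$ to be congruent to the identity modulo $p$.

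I would stratify $x_0 \defeq \redp(g) \in \mathscr{U}(\FF_p)$ by its incident boundary divisors. When $x_0 \in \mathscr{G}(\FF_p)$, direct substitution shows $a(\phi(g)) = a(g) x_1 \in \ZZ_p^\times$ and $b(\phi(g)) = c + (b(g) - c) x_2 \in \ZZ_p$ reduce to $a(g), b(g)$ modulo $p$, so both claims are immediate as no $\mathscr{D}_\jboundary$ meets $x_0$. When $x_0 \in \mathscr{D}_\jboundary(\FF_p)$ lies on a single boundary divisor with $D_\jboundary \cap U \ne \emptyset$, I would invoke Lemma~\ref{LEM:large-p-local-analytic-coordinates-generically} with our chosen $c \in \mathcal{C}^{(0)}(\jboundary)$ to obtain analytic coordinates $\redp^{-1}(x_0) \xrightarrow{\sim} (p\ZZ_p)^2$, $x \mapsto (y, z)$; solving $a(\phi(g)) = a(g) x_1$ (when $\jboundary \in J_1 \cup J_2$) or $(b - c)(\phi(g)) = (b(g) - c) x_2$ (when $\jboundary \in J_3$) forces $y(\phi(g)) = y(g) \cdot x_1^{1/\rord}$ or $y(g) \cdot x_2^{1/\sord}$ respectively, with the $p$-adic root in $1 + p\ZZ_p$ since $p \nmid \rord\sord$; a parallel check places $z(\phi(g)) \in p\ZZ_p$. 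This preserves $v_p(y) = \ord_{\mathscr{D}_\jboundary}(g)$ and keeps $\redp(\phi(g)) = x_0$, handling both claims.

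The genuinely new case, which I expect to be the main obstacle, is the strict-normal-crossings intersection $x_0 \in (\mathscr{D}_\iboundary \cap \mathscr{D}_\jboundary)(\FF_p)$ for distinct $\iboundary, \jboundary \in J$ both meeting $U$ (at most two boundary divisors by SNC in dimension $2$). I would choose regular local parameters $y_\iboundary, y_\jboundary$ over $\ZZ_p$ cutting out $\mathscr{D}_\iboundary, \mathscr{D}_\jboundary$, identifying $\redp^{-1}(x_0) \cong (p\ZZ_p)^2$, and write $a = u_a y_\iboundary^{\rord_\iboundary} y_\jboundary^{\rord_\jboundary}$ and $b - c = u_b y_\iboundary^{\sord_\iboundary} y_\jboundary^{\sord_\jboundary}$ for units $u_a, u_b$ in the completed local ring $\hat{\mathcal{O}}_{\mathscr{X}, x_0}$. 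The relations $\phi^\ast a = a x_1$ and $\phi^\ast(b - c) = (b - c) x_2$ then translate into a $2 \times 2$ system of $p$-adic analytic equations for $(y_\iboundary(\phi(g)), y_\jboundary(\phi(g)))$, solvable by Hensel's lemma around $(y_\iboundary(g), y_\jboundary(g))$ provided its Jacobian is non-degenerate modulo $p$. When the integer matrix $M \defeq \bigl(\begin{smallmatrix} \rord_\iboundary & \rord_\jboundary \\ \sord_\iboundary & \sord_\jboundary \end{smallmatrix}\bigr)$ is invertible modulo $p$, unique solvability is immediate and yields $y_k(\phi(g))/y_k(g) \in 1 + p\ZZ_p$ for $k \in \{\iboundary, \jboundary\}$, hence both (1) and (2). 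The hardest subcase is $\det M = 0$, which occurs in particular when both $\iboundary, \jboundary \in J_3$ (so $\rord_\iboundary = \rord_\jboundary = 0$); there the $a$-equation degenerates to $u_a(\phi^\ast y_\iboundary, \phi^\ast y_\jboundary) = u_a(y_\iboundary, y_\jboundary) x_1$, and it is precisely Proposition~\ref{PROP:G-induced-nonconstancy-of-a-rational-function-on-Dj} (applied with $F_7 = 1/a^{\sord_k}$ to each of $\mathscr{D}_\iboundary, \mathscr{D}_\jboundary$) that ensures $u_a\vert_{\mathscr{D}_k}$ is nonconstant, providing the transversal gradient needed to restore Jacobian non-degeneracy, after which Hensel's lemma applies and completes the proof.
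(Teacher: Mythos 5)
Your approach diverges substantially from the paper's. The paper deliberately avoids analyzing what happens at double points $x_0 \in (\mathscr{D}_\iboundary\cap\mathscr{D}_\jboundary)(\FF_p)$: it introduces the closed set $B$ of pairwise intersections of the irreducible components of $D\cup \div_0(b-c)$ (a $0$-dimensional locus over $\QQ$), proves via Lemma~\ref{LEM:large-p-local-analytic-coordinates-generically} that the rational functions $(\tloc_\jboundary - \phi^\ast\tloc_\jboundary)/(p\tloc_\jboundary)$ and $(f_{16} - \phi^\ast f_{16})/p$ are regular at every point of $(V\setminus B)_{\FF_p}$, and then invokes algebraic Hartogs (every height-one prime through $B_{\FF_p}$ also meets $(V\setminus B)_{\FF_p}$) to conclude regularity on all of $V_{\FF_p}$. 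This yields the analytic extension of $\phi$ and the conservation of $\abs{\tloc_\jboundary}_p$ in one stroke, with no need to compute at any intersection point. You instead attack the intersection points head-on, which is where the difficulty actually lives.

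Two concrete gaps undermine the direct attack. First, the writing $b-c = u_b\, y_\iboundary^{\sord_\iboundary} y_\jboundary^{\sord_\jboundary}$ with $u_b \in \widehat{\mathcal{O}}_{\mathscr{X},x_0}^\times$ need not hold: $\div_0(b-c)$ contains a non-boundary component (the closure of $\{b=c\}\belongs G$), and when that component passes through $x_0$ the candidate $u_b$ vanishes at $x_0$ and is not a unit. The choice $c\in \mathcal{C}^{(1)}(U)$ only controls $\ord_{D_\jboundary}(b-c)$, it does not steer $\{b=c\}$ away from $D_\iboundary\cap D_\jboundary$. This is precisely the degeneration that the paper's set $B$ is built to quarantine. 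Second, and more seriously, your handling of the $\det M\equiv 0 \bmod p$ case is incorrect as stated: Proposition~\ref{PROP:G-induced-nonconstancy-of-a-rational-function-on-Dj} gives nonconstancy of (a power of) $a$ restricted to a boundary curve $D_k$, which is a global statement about the curve; it says nothing about the gradient of $u_a$ at the particular point $x_0\in D_\iboundary\cap D_\jboundary$. A nonconstant regular function on a curve can perfectly well have vanishing derivative at a given point, so the ``transversal gradient'' you need to restore Jacobian non-degeneracy is not supplied by that proposition. To salvage your route you would either need a new pointwise transversality statement at intersection points (there is no obvious one in the paper), or the codimension-two avoidance trick that the paper actually uses --- which, once introduced, makes your case analysis unnecessary.
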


\begin{proof}


Let $B$ be the closure in $\mathscr{X}$ of the set of pairwise intersections of irreducible components of $D\cup \div_0(b-c)$.
Importantly for Hartogs-style arguments, $B_\QQ$ has dimension $0$.

Let $p$ be large.
Let $V\belongs \mathscr{U}_{\ZZ_p}$ be an affine open set
such that for each $\jboundary\in J$, there exist $\tloc_\jboundary,\uloc_\jboundary\in \Gamma(V,\mathcal{O}_V)$ with $\div(\tloc_\jboundary\vert_V) = \mathscr{D}_\jboundary\vert_V$ such that $\tloc_\jboundary$, $\uloc_\jboundary$ are regular coordinates for $V_{\FF_p}$ (as in the proof of Lemma~\ref{LEM:large-p-local-analytic-coordinates-generically}; note that $D_\jboundary$ is smooth).
By calculation in terms of the analytic local coordinates of Lemma~\ref{LEM:large-p-local-analytic-coordinates-generically} (applicable by Definition~\ref{DEFN:C1U-maximizes-for-any-Dj-intersecting-U}),
we find that for all $f_{16}\in \Gamma(V,\mathcal{O}_V)$
and $x'\in (V\setminus B)(\ZZ_p^{\map{ur}})$,
the rational functions
$(\tloc_\jboundary-\phi^\ast{\tloc_\jboundary})/(p\tloc_\jboundary)$,
$(f_{16} - \phi^\ast{f_{16}})/p$
on $V$ lie in
\begin{equation}
\label{EQN:rational-power-series-lies-in-local-ring}
\Rat(V)
\cap \ZZ_p^{\map{ur}}[[\tloc_\jboundary(x)-\tloc_\jboundary(x'),
\uloc_\jboundary(x)-\uloc_\jboundary(x')]]
= \Rat(V)\cap \mathcal{O}^\wedge_{V\times \ZZ_p^{\map{ur}},\redpur(x')}
= \mathcal{O}_{V,\redpur(x')},
\end{equation}
where $\mathcal{O}^\wedge$ denotes completion.
(The first equality in \eqref{EQN:rational-power-series-lies-in-local-ring} comes from
the maximal ideal $(p,
\tloc_\jboundary(x)-\tloc_\jboundary(x'),
\uloc_\jboundary(x)-\uloc_\jboundary(x'))$
of $\mathcal{O}_{V\times \ZZ_p^{\map{ur}},\redpur(x')}$.
For the second,
note that $\Rat(V) = \Frac(\mathcal{O}_{V,\redpur(x')})$,
and $\mathcal{O}_{V,\redpur(x')}\to \mathcal{O}^\wedge_{V\times \ZZ_p^{\map{ur}},\redpur(x')}$
is a faithfully flat injection of regular local rings.)
Thus $$\frac{\tloc_\jboundary-\phi^\ast{\tloc_\jboundary}}{p\tloc_\jboundary},
\frac{f_{16} - \phi^\ast{f_{16}}}{p}
\in \bigcap_{x'\in (V\setminus B)_{\FF_p}} \mathcal{O}_{V,x'},$$
since $(V\setminus B)(\ZZ_p^{\map{ur}})$ maps onto $(V\setminus B)(\ol{\FF}_p)$.
Yet $\dim(B_{\FF_p}) = 0 = \dim(V_{\FF_p})-2$,
so every prime divisor $\mathfrak{P}\belongs V$ that intersects $B_{\FF_p}$
must pass through some point of $(V\setminus B)_{\FF_p}$.
Upon writing each $\mathcal{O}_{V,x'}$ as the intersection of
its localizations at height $1$ primes,
we conclude that
$$\frac{\tloc_\jboundary-\phi^\ast{\tloc_\jboundary}}{p\tloc_\jboundary},
\frac{f_{16} - \phi^\ast{f_{16}}}{p}
\in \bigcap_{x'\in V_{\FF_p}} \mathcal{O}_{V,x'}.$$

Crucially, each function in $\bigcap_{x'\in V_{\FF_p}} \mathcal{O}_{V,x'}$
induces on $\ZZ_p$-points a function in $\Gamma_p(V(\ZZ_p),\ZZ_p)$.
By converting between
the local analytic coordinates in \eqref{EQN:rational-power-series-lies-in-local-ring}
and the global algebraic coordinates on $V$ given by $\Gamma(V,\mathcal{O}_V)$,
it follows that for every $x'\in V(\FF_p)$, the analytic map
\begin{equation*}
\phi\vert_{\redp^{-1}(x')\cap \phi^{-1}(V(\QQ_p))}
\maps \redp^{-1}(x')\cap \phi^{-1}(V(\QQ_p)) \to G(\QQ_p)\cap V(\QQ_p)
\end{equation*}
(where $\phi^{-1}(V(\QQ_p)) \defeq \set{g\in G(\QQ_p): \phi(g)\in V(\QQ_p)}$)
extends uniquely to an analytic map
$$\redp^{-1}(x')\to \redp^{-1}(x'),$$
since $\redp^{-1}(x')\cap \phi^{-1}(V(\QQ_p))$ is dense in $\redp^{-1}(x')$,
in the $p$-adic topology.
(Density follows from
the fact that $X(\QQ_p)\setminus \phi^{-1}(V(\QQ_p))$
is a proper algebraic subset of $X(\QQ_p)$.)

Thus $\phi$ extends (uniquely) to a map $V(\ZZ_p)\to V(\ZZ_p)$.
Covering $\mathscr{U}_{\ZZ_p}$ by $V$ yields (1).\footnote{A referee
showed us that for $p\gg 1$,
the map in (1) in fact induces a $\ZZ_p$-scheme morphism
$\mathscr{U}_{\ZZ_p}\to \mathscr{U}_{\ZZ_p}$.
The main additional observation needed is that
$(\tloc_\jboundary-\phi^\ast{\tloc_\jboundary})/(p\tloc_\jboundary),
(f_{16} - \phi^\ast{f_{16}})/p
\in \Gamma(\mathscr{G}_{\ZZ_p}\cap V,\mathcal{O}_V)$.}
Also, $(\phi^\ast{\tloc_\jboundary})/\tloc_\jboundary
\in 1+p\Gamma_p(V(\ZZ_p),\ZZ_p)$
implies $$\abs{\tloc_\jboundary(\phi(x))}_p
= \abs{\tloc_\jboundary(x)}_p$$ for all $x\in V(\ZZ_p)$,
whence (2) holds by Definition~\ref{DEFN:standard-local-Weil-height-for-boundary-divisors}.
\end{proof}




Lemma~\ref{LEM:large-p-localized-height-symmetry-constancy} leaves out small primes $p$, which we address by
a $p$-adic analytic Hartogs.
Let $\ord_0(f_{17})\in \ZZ_{\ge 0}\cup \set{\infty}$
be the degree of the leading homogeneous term of a power series $f_{17}$.

\begin{lemma}
\label{LEM:construct-Hartogs-type-poles}
Fix a finite extension $K/\QQ_p$ and an integer $n\ge 2$.
Fix $P,Q\in \mathcal{O}_K[[\bm{x}]]=\mathcal{O}_K[[x_1,\dots,x_n]]$.
Suppose $\gcd(P,Q)=1$ and
$\ord_0(Q) = \ord_0(Q\bmod{\pi_K}) \ge 1$,
where $\pi_K$ is a uniformizer of $\mathcal{O}_K$.
Then for some finite extension $L/K$,
the set $$\set{\bm{x}\in p\mathcal{O}_L^n: Q(\bm{x})=0, P(\bm{x})\ne 0}$$
has a limit point lying in
$(p\mathcal{O}_L^2\setminus 0) \times \set{0}^{n-2}$,
where $p\mathcal{O}_L^2\setminus 0 \defeq p\mathcal{O}_L^2\setminus \set{(0,0)}$.
\end{lemma}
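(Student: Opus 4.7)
The plan is two-step: (I) produce a point $\bm{c} = (c_1, c_2, 0, \dots, 0) \in \{Q = 0\} \cap p\mathcal{O}_L^n$ with $(c_1, c_2) \ne (0, 0)$ for some finite extension $L/K$; (II) show $\bm{c}$ is a limit point of $\set{\bm{x} \in p\mathcal{O}_L^n : Q(\bm{x}) = 0,\, P(\bm{x}) \ne 0}$, using the coprimality of $P, Q$.

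For (I), I consider $q(x_1, x_2) \defeq Q(x_1, x_2, 0, \dots, 0)$. If $q \equiv 0$, then $\{Q = 0\}$ contains the entire plane $\set{x_3 = \cdots = x_n = 0}$, and any $(c_1, c_2) \in p\mathcal{O}_K^2 \setminus \set{0}$ works. Otherwise, write $q = \pi_K^m \tilde{q}$ with $\tilde{q} \not\equiv 0 \bmod \pi_K$; after a generic linear change of variables in $(x_1, x_2)$ (which preserves the plane structure), Weierstrass preparation yields $\tilde{q} = u' w'$ with $w' \in \mathcal{O}_K[[x_1]][x_2]$ distinguished of positive degree and vanishing at the origin. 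By continuity of roots, some root $c_2$ of $w'(c_1, \cdot)$ (as a polynomial in $x_2$) depends continuously on $c_1$ with $c_2 \to 0$ as $c_1 \to 0$; taking $c_1 = p^N$ for $N$ sufficiently large and $L/K$ finite containing this $c_2$ realizes $\bm{c}$ in $p\mathcal{O}_L^n$ with $(c_1, c_2) \ne (0, 0)$.

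For (II), I apply Weierstrass preparation to $Q$ at the origin: a generic linear change of variables $\phi$ in all $n$ coordinates makes $Q \circ \phi$ regular in the last variable, yielding $Q \circ \phi = uW$ with $W \in \mathcal{O}_K[[y_1, \dots, y_{n-1}]][y_n]$ distinguished. Factoring $W = W_1 \cdots W_s$ into irreducibles in the UFD $\mathcal{O}_K[[\bm{y}]]$, the point $\phi^{-1}(\bm{c})$ lies on some branch $\{W_{i_0} = 0\}$. Since $\gcd(P \circ \phi, Q \circ \phi) = \gcd(P, Q) = 1$ and $W_{i_0}$ is an irreducible factor of $Q \circ \phi$, we have $W_{i_0} \nmid P \circ \phi$. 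Thus the norm of $P \circ \phi$ from the finite cover $\mathcal{O}_K[[\bm{y}]]/(W_{i_0})$ down to $\mathcal{O}_K[[y_1, \dots, y_{n-1}]]$ is a nonzero power series; the preimage of its vanishing locus in $\{W_{i_0} = 0\}$ has codimension $\ge 1$, and off this preimage, every point of $\{W_{i_0} = 0\}$ has $P \circ \phi \ne 0$. Such points accumulate at $\phi^{-1}(\bm{c})$, and translating back through $\phi$ gives a sequence in $\{Q = 0\} \setminus \{P = 0\}$ converging to $\bm{c}$.

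The hard part will be converting these formal and algebraic manipulations --- Weierstrass preparation, irreducible factorization, and nonvanishing of the norm --- into genuine $p$-adic analytic statements on a neighborhood of the origin large enough to contain $\bm{c}$: controlling radii of convergence, enlarging $L$ enough to realize the relevant roots and factorizations, and bridging ``formal codimension $\ge 1$'' with ``$p$-adic analytic density''. These steps are standard but require some bookkeeping.
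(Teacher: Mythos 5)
Your outline is correct, and it takes a genuinely different route from the paper. Both proofs reduce to Weierstrass preparation and a resultant/norm-type consequence of coprimality, but the overall strategy differs. You first produce a candidate limit point $\bm{c}=(c_1,c_2,0,\dots,0)$ by applying Weierstrass preparation to the two-variable slice $Q|_{x_3=\dots=x_n=0}$ and tracking a small root, and then argue by a density/Hartogs step that points of $\{Q=0,P\ne 0\}$ accumulate at $\bm{c}$. The paper instead works globally with distinguished polynomials $P_1,Q_1$ of $P,Q$, forms a resultant-like relation $AP_1+BQ_1=C$ with $C\ne 0$, and builds \emph{explicit} sequences of zeros of $Q_1$ on which $C\ne 0$, with casework on $n$ and on vanishing of restricted coefficients $Q_{1,j}$; a convergent subsequence is then extracted by compactness. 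One concrete consequence of the structural difference: the hypothesis $\ord_0(Q)=\ord_0(Q\bmod\pi_K)$ is used in the paper precisely to upgrade $Q_{1,j}(\bm 0)\in\pi_K\mathcal{O}_K$ (which Weierstrass gives for free) to $Q_{1,j}(\bm 0)=0$, so that the Newton polygon forces the constructed roots into $p\mathcal{O}_L$; your argument needs only the automatic fact that the constant term of $w'(0,\cdot)$ vanishes, so you are implicitly proving a slightly more general statement.

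That said, the load-bearing step you defer --- ``such points accumulate at $\phi^{-1}(\bm c)$'' --- really does need three separate inputs that are worth naming, since they are what buys the cleaner structure. (i) A $p$-adic identity theorem: the vanishing locus in $p\mathcal{O}_L^{n-1}$ of the nonzero norm $N\in\mathcal{O}_K[[\bm{y}']]$ has empty interior (one reduces by recentering and scaling to a Tate-algebra element vanishing on $\mathcal{O}_L^{n-1}$, then argues inductively). (ii) Continuity of roots for the monic, bounded-degree polynomial $W_{i_0}(\bm{y}',\cdot)$ so that for $\bm{y}'\to\bm{c}'$ with $N(\bm{y}')\ne 0$, some lift $y_n$ tends to $(\phi^{-1}(\bm c))_n$; Rouch\'{e}/Newton-polygon arguments cover this, but the student's phrase ``depends continuously on $c_1$'' (also in Step I) should be weakened to ``some root tends to $0$,'' since individual root branches need not vary continuously. (iii) Uniformity of the finite extension $L$: since $\deg W_{i_0}$ is fixed, Krasner's lemma gives a single finite $L$ containing all the lifts. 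You flag (i)--(iii) as ``bookkeeping,'' which is fair, but they are exactly where the paper's explicit construction pays its way --- it never needs the abstract density claim. I'd also note that the generic linear changes should be taken in $\GL_2(\mathcal{O}_{K'})$ resp.\ $\GL_n(\mathcal{O}_{K'})$ for a finite unramified $K'/K$ (to have a large enough residue field) so that they preserve $p\mathcal{O}_L^n$; the paper does this too.
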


\begin{proof}


If $Q=0$, then $P\in \mathcal{O}_K[[\bm{x}]]^\times$, so $P(\bm{x})\in \mathcal{O}_K^\times$ for all $\bm{x}\in p\mathcal{O}_K^n$, and the result is obvious.
Now suppose $Q\ne 0$.
Since $Q\notin \mathcal{O}_K[[\bm{x}]]^\times$, we must have $P\ne 0$.
Also, $Q$ is primitive (i.e.~$\pi_K\nmid Q$), since $\ord_0(Q\bmod{\pi_K}) = \ord_0(Q) < \infty$.
By removing factors of $\pi_K$ in $P$, we may assume $P$ is primitive too.
Let $P_0,Q_0\in \mathcal{O}_K[\bm{x}]$ be the lowest-degree \emph{primitive} homogeneous terms in $P$, $Q$, respectively.
Then in particular,
$$\deg(Q_0) = \ord_0(Q\bmod{\pi_K}) = \ord_0(Q).$$

After replacing $K$ with a finite extension if necessary,
there exists a $\bm{k}\in \mathcal{O}_K^{n-1} \times \set{0}$ such that $P_0(k_1,\dots,k_{n-1},1) Q_0(k_1,\dots,k_{n-1},1) \in \mathcal{O}_K^\times$.
Via the $\mathcal{O}_K$-linear automorphism $\bm{x}\mapsto \bm{x}+x_n\bm{k}$ of $\mathcal{O}_K[[\bm{x}]]$,
we may then assume $$P_0(0,\dots,0,1) Q_0(0,\dots,0,1) \in \mathcal{O}_K^\times.$$
Thus the power series $P,Q\bmod{(\pi_K,x_1,\dots,x_{n-1})}
\in (\mathcal{O}_K/\pi_K)[[x_n]]$
have orders $\ord_0(\cdot)$ equal to $\deg{P_0}$, $\deg{Q_0}$, respectively.
So by the Weierstrass preparation theorem
in $R[[x_n]]$,
where $R \defeq \mathcal{O}_K[[x_1,\dots,x_{n-1}]]$,
there exist unique \emph{monic} polynomials $P_1,Q_1\in R[x_n]$
of degrees $\deg{P_0}$, $\deg{Q_0}$, respectively,
with $$P/P_1, Q/Q_1\in \mathcal{O}_K[[\bm{x}]]^\times.$$

Let $P_{1,j}, Q_{1,j}\in R$ be the $x_n^j$ coefficients of $P_1$, $Q_1$, respectively.
Reduction modulo $(\pi_K,x_1,\dots,x_{n-1})$ forces $\pi_K\mid P_{1,j}(\bm{0})$ if $j<\deg{P_1}$, and $\pi_K\mid Q_{1,j}(\bm{0})$ if $j<\deg{Q_1}$.
Therefore, any divisor of $P_1Q_1$ in $R[x_n]$ either lies in $R^\times$ or the ideal $(\pi_K,x_1,\dots,x_n)$.
From $\gcd(P,Q)=1$ in $\mathcal{O}_K[[\bm{x}]]$,
we thus get $\gcd(P_1,Q_1)=1$ in $R[x_n]$.
So by Gauss' lemma,
we have $\gcd(P_1,Q_1)=1$ in $\Frac(R)[x_n]$,
a Euclidean domain, whence
$$A P_1 + B Q_1 = C$$
for some $A,B,C\in R$ with $C\ne 0$.
On the other hand,
$Q/Q_1\in \mathcal{O}_K[[\bm{x}]]^\times$ implies
$$\ord_0(Q_1) = \ord_0(Q) = \deg(Q_0) = \deg(Q_1)$$
(if we define $\ord_0(Q_1)$ viewing $Q_1$ as a power series in $\bm{x}$),
whence $$\ord_0(Q_{1,j})\ge \deg(Q_1) - j \ge 1$$
(so $Q_{1,j}(\bm{0})=0$)
for all $j<\deg{Q_1}$.
Let $$f_{18,j}\defeq Q_{1,j}\vert_{x_3=\dots=x_{n-1}=0}\in \mathcal{O}_K[[x_1,x_2]].$$

The idea now is to choose points $\bm{x}$ at which $Q_1=0$ and $C\ne 0$.
Let $L(d)$ be the compositum of all degree $\le d$ extensions of $K$ in $\ol{K}$;
then $L(d)/K$ is finite (by Krasner's lemma).

\emph{Case~1: $n\ge 3$, and $f_{18,j} = 0$ for all $j<\deg{Q_1}$.}
For each large integer $r\ge 1$,
choose $$x_1,x_2\in p+p^r\mathcal{O}_K^\times,
\quad x_3,\dots,x_{n-1}\in p^r\mathcal{O}_K^\times$$
with $C\ne 0$; then choose $x_n\in \ol{K}$ with $Q_1=0$.
(Roughly, $v_p(x_3),\dots,v_p(x_{n-1})$ ensure $Q_1(x_1,\dots,x_n)$ resembles a monomial in $x_n$ of degree $\deg{Q_1}$.)
Letting $r\to \infty$ produces an infinite sequence of $\bm{x}\in p\mathcal{O}_{L(\deg{Q_1})}^n$ tending to $(p,p,0,\dots,0)$.

\emph{Case~2: $n\ge 3$, and $f_{18,j} \ne 0$ for some $j<\deg{Q_1}$.}
Take $j<\deg{Q_1}$ minimal with $f_{18,j} \ne 0$.
Via a linear automorphism $(x_1,x_2)\mapsto (x_1,x_2+kx_1)$ with $k\in \mathcal{O}_K$, assume $f_{18,j}\vert_{x_2=0} \ne 0$.
Choose $c\in \ZZ_{\ge 0}$ such that
$f_{18,j}(p^c y,0)\in \mathcal{O}_K[[y]]$ is divisible by its own leading term.
Given large integers $r,d\ge 1$,
choose $$x_2\in p^d+p^r\mathcal{O}_K^\times,
\quad x_3,\dots,x_{n-1}\in p^{dr}\mathcal{O}_K^\times$$
with $C\in \mathcal{O}_K[[x_1]]\setminus 0$.
Then $C$ has only finitely many zeros $x_1\in \ol{K}$ with $v_p(x_1)>0$,
and for each such $x_1$ the set $\set{x_n\in \ol{K}: Q_1=0}$ is finite.
Thus there exists $x_n\in p^r\mathcal{O}_K^\times$ such that
$$\set{x_1\in \ol{K}: v_p(x_1)>0,\; Q_1=C=0} = \emptyset.$$
Next, if $r$ and $d$ are large enough, then by Weierstrass preparation in $\mathcal{O}_K[[y]]$ (applied to $Q_1(p^c y,x_2,\dots,x_n)/x_n^j$) and the fact $Q_{1,j}(\bm{0})=0$, there exists $x_1\in p^{c+1}\mathcal{O}_{L(d)}$ with $Q_1=0$.
(Roughly, $v_p(x_2),\dots,v_p(x_n)$ and the minimality of $j$ ensure
$Q_1/x_n^j$ resembles a monomial in $y$ of degree $\ord_0(f_{18,j}\vert_{x_2=0})$.)
Taking $r\to \infty$ produces $\bm{x}\in p\mathcal{O}_{L(d)}^n$ with $1\ll \max(\abs{x_1}_p, \abs{x_2}_p)\ll 1$ and $x_3,\dots,x_n\to 0$.
Now pass to a convergent subsequence (by compactness).


\emph{Case~3: $n=2$.}
Here $\#\set{x_1\in \ol{K}: v_p(x_1)>0,\; C=0} < \infty$.
So there are infinitely many $x_1\in p^{\deg{Q_1}}\mathcal{O}_K^\times$ with $C\ne 0$.
For each such $x_1$, there exists $x_2\in p\mathcal{O}_{L(\deg{Q_1})}$ with $Q_1=0$.
(It is important here that $Q_{1,j}(0)=0$ for all $j<\deg{Q_1}$.)
By compactness, this suffices.
\end{proof}

\begin{proposition}
\label{PROP:p-adic-analytic-Hartogs}
Let $f_{19}\in \Frac(\ZZ_p[[\xi_1,\dots,\xi_N,y,z]])$,
where $N\in \ZZ_{\ge 0}$.
Suppose that for every $l\in \ZZ_{\ge 1}$ and finite extension $\efield/\QQ_p$,
there exists $m=m(l,\efield)\in \ZZ_{\ge 1}$ such that
\begin{equation}
\label{f29-integrality-condition}
f_{19}\in \Gamma_p(p^m\mathcal{O}_{\efield}^N
\times (p^l\mathcal{O}_{\efield}^2 \setminus p^{l+1}\mathcal{O}_{\efield}^2),
\mathcal{O}_{\efield}).
\end{equation}
Then there exists $l\in \ZZ_{\ge 1}$ such that
$$f_{19}\in \Gamma_p(p^l\ZZ_p^{N+2},\ZZ_p).$$
\end{proposition}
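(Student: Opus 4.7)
The plan is to write $f_{19}=P/Q$ in lowest terms with $P,Q\in\ZZ_p[[\bm{\xi},y,z]]$ (using that this ring is a UFD) and then split into cases according to the behavior of $Q$ at $\bm{0}$.

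First, if $Q(\bm{0})\in\ZZ_p^\times$, then $Q$ is a unit in $\ZZ_p[[\bm{\xi},y,z]]$, so $f_{19}=PQ^{-1}\in\ZZ_p[[\bm{\xi},y,z]]$ and the conclusion holds with $l=1$. Next, if $Q(\bm{0})\in p\ZZ_p\setminus\{0\}$, let $e\defeq v_p(Q(\bm{0}))\ge 1$; on any polydisk $p^L\ZZ_p^{N+2}$ with $L>e$, the Taylor estimate $Q(\bm{x})-Q(\bm{0})\in p^L\ZZ_p$ forces $v_p(Q(\bm{x}))=e$. Choosing for each $l$ a point $\bm{x}_l$ in the annular set provided by the hypothesis (with $\efield=\QQ_p$), we obtain $\bm{x}_l\to\bm{0}$ as $l\to\infty$ with $f_{19}(\bm{x}_l)\in\ZZ_p$, so by closedness of $\ZZ_p$ in $\QQ_p$ we get $f_{19}(\bm{0})=P(\bm{0})/Q(\bm{0})\in\ZZ_p$, hence $v_p(P(\bm{0}))\ge e$. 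The same Taylor estimate applied to $P$ then gives $v_p(P(\bm{x}))\ge\min(v_p(P(\bm{0})),L)\ge e$ throughout $p^L\ZZ_p^{N+2}$, so $v_p(f_{19}(\bm{x}))\ge 0$ there, and $l=L$ works.

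The remaining case is $Q(\bm{0})=0$, which I would handle by contradiction using Lemma~\ref{LEM:construct-Hartogs-type-poles}. After reordering variables so that $(y,z)$ appear first, the lemma (provided $\ord_0(Q)=\ord_0(Q\bmod p)\ge 1$ holds) produces a finite extension $L/\QQ_p$ and a sequence $\bm{x}^{(k)}\in p\mathcal{O}_L^{N+2}$ with $Q(\bm{x}^{(k)})=0$, $P(\bm{x}^{(k)})\ne 0$, and limit point $(y^*,z^*,0,\dots,0)\in(p\mathcal{O}_L^2\setminus 0)\times\{0\}^N$. Setting $l^*\defeq\min(v_p(y^*),v_p(z^*))\ge 1$, for large $k$ the point $\bm{x}^{(k)}$ lies in $p^m\mathcal{O}_L^N\times(p^{l^*}\mathcal{O}_L^2\setminus p^{l^*+1}\mathcal{O}_L^2)$ for every prescribed $m$; since $\gcd(P,Q)=1$ and $Q(\bm{x}^{(k)})=0\ne P(\bm{x}^{(k)})$, the rational function $f_{19}$ is undefined there, contradicting the hypothesis at $(l,\efield)=(l^*,L)$.

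The main obstacle is arranging the condition $\ord_0(Q)=\ord_0(Q\bmod p)$ required by the lemma. If $d_0\defeq\ord_0(Q)<d_1\defeq\ord_0(Q\bmod p)$ and $e_0\ge 1$ is the least $v_p$ of the coefficients of the degree-$d_0$ homogeneous part $Q_{d_0}$, I would rescale $\bm{x}\mapsto p^M\bm{x}'$ with $M$ large enough that $M(d_1-d_0)>e_0$: then $\tilde Q(\bm{x}')\defeq Q(p^M\bm{x}')/p^{Md_0+e_0}\in\ZZ_p[[\bm{x}']]$ satisfies $\ord_0(\tilde Q)=\ord_0(\tilde Q\bmod p)=d_0$. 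After cancelling any common factor of $\tilde P(\bm{x}')\defeq P(p^M\bm{x}')$ and $\tilde Q$ (which must be primitive since $\tilde Q\bmod p\ne 0$), Lemma~\ref{LEM:construct-Hartogs-type-poles} applies, and pulling back the resulting zeros of $\tilde Q$ via $\bm{x}=p^M\bm{x}'$ gives zeros of $Q$ in $p^{M+1}\mathcal{O}_L^{N+2}$ with limit point in $(p^{M+1}\mathcal{O}_L^2\setminus 0)\times\{0\}^N$, completing the contradiction with $l^*$ shifted accordingly.
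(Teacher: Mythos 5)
Your proposal follows the paper's broad strategy — write $f_{19}=P/Q$, case-split on $Q(\bm{0})$, apply Lemma~\ref{LEM:construct-Hartogs-type-poles} when $Q(\bm{0})=0$ — but organizes the reductions differently: the paper rescales upfront to force $\ord_0(Q)=\ord_0(Q\bmod p)$, which collapses your Cases~1 and~2 into a single case, whereas you handle the intermediate case $Q(\bm{0})\in p\ZZ_p\setminus\{0\}$ directly with a Taylor estimate. Your Cases~1 and~2 are correct as written.

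Case~3 has a gap. After the rescaling $\bm{x}\mapsto p^M\bm{x}'$, you say ``after cancelling any common factor of $\tilde P$ and $\tilde Q$, Lemma~\ref{LEM:construct-Hartogs-type-poles} applies,'' but you do not check that the Lemma's hypotheses $\gcd=1$ and $\ord_0(\cdot)=\ord_0(\cdot\bmod p)\ge 1$ survive the cancellation. A priori, dividing out $G=\gcd(\tilde P,\tilde Q)$ could destroy the equality $\ord_0(\tilde Q/G)=\ord_0((\tilde Q/G)\bmod p)$ (the paper rules this out via Gauss' lemma on leading homogeneous parts, using that $\FF_p[[\bm{x}']]$ is a domain), and could even force $\ord_0(\tilde Q/G)=0$, at which point the Lemma no longer applies and you must fall back to an earlier case. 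One way to close the gap without Gauss' lemma is to observe that no cancellation ever occurs: since $\gcd(P,Q)=1$ and the substitution $\bm{x}\mapsto p^M\bm{x}'$ becomes an isomorphism after inverting $p$, the gcd of $\hat P\defeq P(p^M\bm{x}')$ and $\hat Q\defeq Q(p^M\bm{x}')$ in $\ZZ_p[[\bm{x}']]$ is a power of $p$, so $\gcd(\hat P,\tilde Q)=1$ because $\tilde Q$ is primitive; but this needs to be said. Two minor points: the threshold $M(d_1-d_0)>e_0$ is not always enough to ensure $\tilde Q\in\ZZ_p[[\bm{x}']]$ (the degree-$(d_0+1)$ coefficient forces essentially $M\ge e_0$; just take $M$ sufficiently large, as the paper does), and $l^*$ should be $\lfloor\min(v_p(y^*),v_p(z^*))\rfloor$, since $v_p$ on a ramified extension $L$ takes non-integer values.
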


\begin{proof}
Write $f_{19}=\varsigma P/Q$ with $P,Q\in \ZZ_p[[\xi_1,\dots,z]]$, $Q\ne 0$, and $\varsigma\in \QQ_p^\times$.
By a suitable scaling $(\xi_1,\dots,z)\mapsto (p^d\xi_1,\dots,p^d z)$, we may assume
$\ord_0(Q)=\ord_0(Q\bmod{p})$.
After dividing out $\gcd(P,Q)$, and applying Gauss' lemma to the leading homogeneous terms of $Q$, $\gcd(P,Q)$, $Q/\gcd(P,Q)$, we may assume that $\gcd(P,Q)=1$ and $\ord_0(Q)=\ord_0(Q\bmod{p})$.

\emph{Case~1: $Q(\bm{0}) \ne 0$.}
Then $Q(\bm{0}) \in \ZZ_p^\times$,
so $f_{19}\in \ZZ_p[[\xi_1,\dots,z]] \otimes \QQ_p$.
Hence $f_{19} \in f_{19}(\bm{0}) + \Gamma_p(p^l\ZZ_p^{N+2},\ZZ_p)$
if $l$ is large enough.
Yet $f_{19}(0,\dots,0,p^l)\in \ZZ_p$ for all $l\ge 1$,
by \eqref{f29-integrality-condition}.
We conclude that $f_{19}(\bm{0})\in \ZZ_p$,
and thus $f_{19} \in \Gamma_p(p^l\ZZ_p^{N+2},\ZZ_p)$ for any sufficiently large $l$.

\emph{Case~2: $Q(\bm{0}) = 0$.}
Then $\ord_0(Q) \ge 1$.
So by the $(K,n) = (\QQ_p,N+2)$ case of Lemma~\ref{LEM:construct-Hartogs-type-poles},
there exist a finite extension $L/\QQ_p$,
an integer $l\ge 1$,
and a sequence of points
$\bm{x}^{(m)}\in p^m\mathcal{O}_L^N
\times (p^l\mathcal{O}_L^2 \setminus p^{l+1}\mathcal{O}_L^2)$
indexed by integers $m\ge 1$,
such that $Q(\bm{x}^{(m)}) = 0$ and $P(\bm{x}^{(m)})\ne 0$
for all $m\ge 1$.
This contradicts the $m=m(l,L)$ case of \eqref{f29-integrality-condition}.
\end{proof}

\begin{lemma}
\label{LEM:fixed-p-localized-height-symmetry-constancy}
Let $U\in \mathcal{C}^{(2)}$
and $c\in \mathcal{C}^{(1)}(U)$.
Fix a prime $p$
and a compact open set $\oset_p\belongs U(\QQ_p)$.
Then there exists an integer $A_7=A_7(U,c,p,\oset_p)\ge 1$
such that the following hold for all $x_1,x_2\in 1+p^{A_7}\ZZ_p$:
\begin{enumerate}
\item The map $\phi\maps (a,b)\mapsto (ax_1,c+(b-c)x_2)$ on $G(\QQ_p)$
maps $G(\QQ_p)\cap \oset_p$ to itself.

\item If $\jboundary\in J$, then $H_{D_\jboundary,p}(g) = H_{D_\jboundary,p}(\phi(g))$ for all $g\in G(\QQ_p)\cap \oset_p$.
\end{enumerate}
\end{lemma}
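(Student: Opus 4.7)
I adapt the proof of Lemma~\ref{LEM:large-p-localized-height-symmetry-constancy} to an arbitrary fixed prime $p$: the algebro-geometric Hartogs step (which exploited mod-$p$ smoothness for $p \gg 1$) is replaced by the $p$-adic analytic Hartogs principle of Proposition~\ref{PROP:p-adic-analytic-Hartogs}, and the fixed exponent $1$ on $p$ in Lemma~\ref{LEM:large-p-localized-height-symmetry-constancy} is replaced by an $A_7$ taken as large as necessary.

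By compactness of $\oset_p$, I cover it with finitely many small compact open balls $\mathcal{B}_\alpha \subseteq V_\alpha(\QQ_p)$, where each $V_\alpha \subseteq \mathscr{X}$ is affine open and carries global equations $t_\jboundary \in \Gamma(V_\alpha,\mathcal{O}_{V_\alpha})$ for the $\mathscr{D}_\jboundary$ meeting $V_\alpha$. By Definition~\ref{DEFN:standard-local-Weil-height-for-boundary-divisors}(1), on $\mathcal{B}_\alpha$ one has $H_{D_\jboundary,p}(g) = \varphi_{\alpha,\jboundary}(g)/\abs{t_\jboundary(g)}_p$ with $\varphi_{\alpha,\jboundary}$ smooth, hence $p$-adically locally constant. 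After further shrinking the $\mathcal{B}_\alpha$ and taking $A_7$ large, $p$-adic continuity of $\phi_{x_1,x_2}$ secures (1) and gives $\varphi_{\alpha,\jboundary}\circ \phi = \varphi_{\alpha,\jboundary}$; thus (2) reduces to showing $\abs{t_\jboundary(\phi(g))}_p = \abs{t_\jboundary(g)}_p$ for all $g \in \mathcal{B}_\alpha \cap G(\QQ_p)$ and relevant $\jboundary$.

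For this last step, fix a center $x_0 \in \mathcal{B}_\alpha \cap D_\jboundary$ (the cases $x_0 \in G$ and $\jboundary \in J_3$ are analogous) and invoke the $\QQ_p$-analytic normal forms of Lemma~\ref{LEM:convenient-local-analytic-coordinates-generically} (valid by Remark~\ref{RMK:generalize-basic-coordinates-to-local-fields}): coordinates $(y_1,y_2)$ on $\mathcal{B}_\alpha$ with $a = \unitone y_1^{\rord}$ and $b-c = y_1^{\sord}(\unittwo+y_2)$. Combining this with $\phi^\ast a = x_1 a$ and $\phi^\ast(b-c) = x_2(b-c)$, the $p$-adic implicit function theorem (applied once $A_7 > v_p(\rord\sord)$, so that $x_1^{1/\rord}, x_2^{1/\sord} \in 1+p\ZZ_p$) yields $y_1(\phi) = x_1^{1/\rord} y_1 \cdot u_1$ with $u_1$ an analytic unit in $(y_1,y_2,\xi_1,\xi_2)$, where $\xi_i \defeq x_i - 1$. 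View $h \defeq y_1(\phi)/y_1 - 1$ as an element of $\Frac(\ZZ_p[[\xi_1,\xi_2,y_1,y_2]])$ and apply Proposition~\ref{PROP:p-adic-analytic-Hartogs} with $N=2$ and distinguished pair $(y_1,y_2)$: the hypothesis — integrality of $h$ on $p^m\mathcal{O}_\efield^2 \times (p^l\mathcal{O}_\efield^2 \setminus p^{l+1}\mathcal{O}_\efield^2)$ for some $m=m(l,\efield)$ — is straightforward on $(y_1,y_2)$-annuli bounded away from $D$, where $h$ is an ordinary analytic function of $(\xi_1,\xi_2)$ vanishing at the origin. The conclusion gives $h \in \Gamma_p(p^l\ZZ_p^4,\ZZ_p)$ for some $l$, which combined with $h\vert_{(\xi_1,\xi_2)=(0,0)} = 0$ forces $\abs{h}_p \le p^{-A_7}$ for $(\xi_1,\xi_2) \in (p^{A_7}\ZZ_p)^2$ and $A_7 \ge l$; this gives $\abs{t_\jboundary(\phi(g))}_p = \abs{y_1(\phi)}_p^{\textnormal{sgn}} = \abs{y_1(g)}_p^{\textnormal{sgn}} = \abs{t_\jboundary(g)}_p$.

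\textbf{Main obstacle.} The principal technical hurdle is uniformity as $g$ approaches $D_\jboundary$: both the $p$-adic root extractions $x_1^{1/\rord}, x_2^{1/\sord}$ and the implicit-function series must converge, and the resulting rational expression $h$ must remain integral at points $g$ where $y_1$ is arbitrarily small. The root-extraction issue is resolved by taking $A_7 > v_p(\rord\sord)$ uniformly over the finitely many $\jboundary$ with $D_\jboundary \cap \oset_p \ne \emptyset$ (the coprimality $\gcd(\rord,\sord)=1$ from Remark~\ref{RMK:generalize-basic-coordinates-to-local-fields} making the solution unambiguous), while the second concern — extending integrality from generic regions to a neighborhood of the origin $(y_1,y_2)=(0,0)$ in the presence of potential singularities — is precisely what Proposition~\ref{PROP:p-adic-analytic-Hartogs} is designed to handle.
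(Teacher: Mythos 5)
Your overall strategy is the right one — compactness covers, the $p$-adic version of Lemma~\ref{LEM:convenient-local-analytic-coordinates-generically} via Remark~\ref{RMK:generalize-basic-coordinates-to-local-fields}, and the $N=2$ case of Proposition~\ref{PROP:p-adic-analytic-Hartogs} to promote ``generic'' integrality to integrality in a full neighborhood. This is indeed the structure of the paper's proof. However, there is a genuine gap in the key step, stemming from your choice of which function to feed into Proposition~\ref{PROP:p-adic-analytic-Hartogs}.

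You apply Proposition~\ref{PROP:p-adic-analytic-Hartogs} to $h \defeq y_1(\phi)/y_1 - 1$, where $(y_1,y_2)$ are the local analytic normal-form coordinates of Lemma~\ref{LEM:convenient-local-analytic-coordinates-generically}. But those coordinates exist only near points of $D_\jboundary \cap U_2$, where $U_2$ is the open set on which $D_\jboundary$ is smooth and $a,\ b-c$ are invertible away from $D_\jboundary$ --- i.e.\ away from the ``bad'' locus $B$ of pairwise intersections of $D \cup \div_0(b-c)$. A compact set $\oset_p$ may well contain points of $B(\QQ_p)$, and a ball $\mathcal{B}_\alpha$ centered at such a point has no $(y_1,y_2)$ normal form. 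Consequently $h$ is not an element of $\Frac(\ZZ_p[[\xi_1,\xi_2,y_1,y_2]])$ at the relevant center: the expression is not defined there, and Proposition~\ref{PROP:p-adic-analytic-Hartogs} cannot be invoked. (Your phrase ``on $(y_1,y_2)$-annuli bounded away from $D$'' reveals the tension --- the annuli $p^l\mathcal{O}_\efield^2 \setminus p^{l+1}\mathcal{O}_\efield^2$ are not bounded away from $D = \{y_1 = 0\}$, only from the deeper bad locus $B$, and it is precisely the behavior at the deleted center point of the annulus that Proposition~\ref{PROP:p-adic-analytic-Hartogs} is supposed to control.)

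The paper's proof circumvents this by not coordinatizing via $(y_1,y_2)$ at all for the purposes of Proposition~\ref{PROP:p-adic-analytic-Hartogs}. It works instead with a genuine affine chart $V$ with equations $\tloc_\jboundary$ cutting out $D_\jboundary\vert_V$ and generators $f_{20,i}$ of $\Gamma(V,\mathcal{O}_V)$, and considers the honest rational functions $(\tloc_\jboundary - \phi^\ast\tloc_\jboundary)/(p\tloc_\jboundary)$ and $(f_{20,i}-\phi^\ast f_{20,i})/p$ on $\Aff^2 \times V$. Being rational, these lie in $\Frac(\ZZ_p[[\xi_1,\xi_2,y,z]])$ after expansion at any point of $V(\QQ_p)$, including bad ones. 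The $p$-adic Lemma~\ref{LEM:convenient-local-analytic-coordinates-generically} is then used only to establish the annulus-integrality hypothesis of Proposition~\ref{PROP:p-adic-analytic-Hartogs} (on annuli avoiding $B$ by a shrinking argument), and the proposition pushes integrality through the bad center.

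A smaller remark, which sharpens rather than weakens your argument near good points: since $\unitone$ in Lemma~\ref{LEM:convenient-local-analytic-coordinates-generically}(1) is a constant, the identities $a = \unitone y_1^{\rord}$, $\phi^\ast a = x_1 a$ force $y_1(\phi) = x_1^{1/\rord} y_1$ exactly, so your unit $u_1$ is identically $1$ and the invariance $\abs{y_1(\phi)}_p = \abs{y_1}_p$ is immediate once $A_7 > v_p(\rord)$. In other words, near good points no appeal to Proposition~\ref{PROP:p-adic-analytic-Hartogs} is needed at all; its sole purpose is the extension across the bad locus, which is precisely the part your formulation does not reach.
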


\begin{proof}
Given a point $x'\in \oset_p$,
let $\oset'_p\cong p\ZZ_p^2$
denote a small compact open neighborhood of $x'$ in $\oset_p$,
where $x'$ is identified with the origin $(0,0)\in p\ZZ_p^2$.
Let $V\belongs U_{\QQ_p}$ be an affine open set such that
for each $\jboundary\in J$ there exists $\tloc_\jboundary\in \Gamma(V,\mathcal{O}_V)$
with $\div(\tloc_\jboundary\vert_V) = D_\jboundary\vert_V$.
Let $f_{20,1},\dots,f_{20,k}\in \Gamma(V,\mathcal{O}_V)$ generate $\Gamma(V,\mathcal{O}_V)$ as a $\QQ_p$-algebra.
Assume $\oset'_p\belongs V(\QQ_p)$.
If $\oset'_p\cong p\ZZ_p^2$ is sufficiently small,
then for each finite extension $\efield/\QQ_p$,
we may view $p\mathcal{O}_{\efield}^2$ as an open neighborhood of $\oset'_p$ in $V(\efield)$.
Let $B\belongs \mathscr{X}$
be as in the proof of Lemma~\ref{LEM:large-p-localized-height-symmetry-constancy};
then $B(\ol{\QQ}_p)$ is a finite set.
By shrinking $\oset'_p\cong p\ZZ_p^2$ if necessary,
we may assume that
$$p\mathcal{O}_{\ol{\QQ}_p}^2\cap B(\ol{\QQ}_p)
\belongs \set{x'}.$$
That is, we assume that $\oset'_p$
(and its neighborhood $p\mathcal{O}_{\efield}^2$ for every $\efield/\QQ_p$)
is disjoint from $B(\ol{\QQ}_p)$,
except in the case where $x'\in B(\QQ_p)$ to begin with.

Algebraically, view $\phi$ now as a rational map $\Aff^2 \times X \dashrightarrow X$
defined by the formula $$(x_1,x_2,a,b)\mapsto (ax_1,c+(b-c)x_2)$$ on $(a,b)\in G$.
A $p$-adic Lemma~\ref{LEM:convenient-local-analytic-coordinates-generically} (see Remark~\ref{RMK:generalize-basic-coordinates-to-local-fields})
shows that for each $\efield/\QQ_p$ and $l\in \ZZ_{\ge 1}$, there exists $m\in \ZZ_{\ge 1}$ such that
the rational functions
$(\tloc_\jboundary-\phi^\ast{\tloc_\jboundary})/(p\tloc_\jboundary)$
and $(f_{20,i} - \phi^\ast{f_{20,i}})/p$ on $\Aff^2 \times V$
lie in $\Gamma_p((1+p^m\mathcal{O}_\efield)^2
\times (p^l\mathcal{O}_\efield^2\setminus p^{l+1}\mathcal{O}_\efield^2),
\mathcal{O}_\efield)$.
By the $N=2$ case of Proposition~\ref{PROP:p-adic-analytic-Hartogs},
we get $$(\tloc_\jboundary-\phi^\ast{\tloc_\jboundary})/(p\tloc_\jboundary),
(f_{20,i} - \phi^\ast{f_{20,i}})/p
\in \Gamma_p((1+p^l\ZZ_p)^2 \times p^l\ZZ_p^2, \ZZ_p)$$
for some $l=l(x')\ge 1$.
In particular, if $\oset''_p\cong p^l\ZZ_p^2\belongs p\ZZ_p^2\cong \oset'_p$,
then $\phi$ extends uniquely to an analytic map
$\Phi\maps (1+p^l\ZZ_p)^2 \times \oset''_p\to \oset''_p$,
since $((1+p^l\ZZ_p)^2 \times \oset''_p)\cap \phi^{-1}(V(\QQ_p))$
is dense in $(1+p^l\ZZ_p)^2 \times \oset''_p$.
In addition,
by Definition~\ref{DEFN:standard-local-Weil-height-for-boundary-divisors},
we have
$H_{D_\jboundary,p} = \Phi^\ast{H_{D_\jboundary,p}}$ on $(1+p^{l'}\ZZ_p)^2 \times \oset''_p$
for some integer $l'=l'(x')\ge l$.
So (1) and (2) hold with $A_7\defeq \max_{x'\in \oset^\ast_p} l'(x')$,
where $\oset^\ast_p\belongs \oset_p$ is a finite set
of points $x'\in \oset_p$ whose associated neighborhoods $\oset''_p\ni x'$
cover $\oset_p$.
\end{proof}

\section{Non-archimedean local calculations}
\label{SEC:non-archimedean-local-calculations}


Recall the definition $\mathsf{u}_\jboundary\defeq \ord_{D_\jboundary}(a)$
from \S\ref{SUBSEC:polar-combinatorics}.
Throughout \S\ref{SEC:non-archimedean-local-calculations},
assume $X$ is split and $D$ has strict normal crossings,
fix $\delta \in \openint(0,1)$ and $A_8\in \openint(1,\infty)$,
and let $\bm{s}\in \CC^J$ be such that
\begin{equation}
\label{INEQ:key-new-region}
-\delta \le \Re(s_\jboundary-\mathsf{d}_\jboundary-2\mathsf{u}_\jboundary) \le A_8
\qquad\textnormal{for all $\jboundary\in J$}.
\end{equation}
In particular,
$\Re(s_\jboundary-\mathsf{d}_\jboundary+1) > 0$ for $\jboundary\in J_1\cup J_3$,
but not always for $\jboundary\in J_2$.
This asymmetry turns out to be natural,
due to the integrality condition $\Nbad \alpha a_p\in \ZZ_p$ in \eqref{EXPR:formula-for-H^vee_p}.
Ultimately, the translate $2\mathsf{u}_\jboundary$ in \eqref{INEQ:key-new-region}
will ensure satisfactory bounds in
Lemmas~\ref{LEM:denominator-bias},
\ref{LEM:numerator-bias},
and~\ref{LEM:generic-local-factor-estimate}.

For any $\delta>0$, the region \eqref{INEQ:key-new-region} goes beyond the region $\Omega'_\eps$ considered in \cite{tanimoto2012height}*{Lemma~5.11}.
Our eventual success thus relies on precise calculations, revealing leading-order structure like in \cite{wang2024_isolating_special_solutions}, that we carry out using
a new source of cancellation in the local integral \eqref{EXPR:formula-for-H^vee_p}: the nonconstancy identified in Proposition~\ref{PROP:G-induced-nonconstancy-of-a-rational-function-on-Dj}, fed into \cite{bombieri1966exponential}.

Let $(\lambda, \alpha) \in \Mbad \times \QQ^\times$.
Over \eqref{INEQ:key-new-region}, it turns out that the most important factors $H^\vee_p(\bm{s},\lambda,t,\alpha)$ in \eqref{EQN:difficult-part-of-spectral-expansion} are those with $p$ large and $v_p(\alpha)$ small but nonzero.
But we also need reasonable control on other factors.
To get our bearings,
we first prove a useful bound
(applicable to \eqref{EXPR:formula-for-H^vee_p} after taking absolute values)
valid for arbitrary $p$ and $\alpha$,
which does not require any subtle cancellations:

\begin{lemma}
\label{LEM:general-fixed-p-bound}
Then $\int_{G(\QQ_p):\, \Nbad \alpha a_p\in \ZZ_p} \abs{H_p(\bm{s},g_p)}^{-1}\, dg_p$
is (for all $\eps>0$)
\begin{equation*}
\ll_{p,\eps} \abs{\alpha}_p^{-\eps} \bm{1}_{\abs{\alpha}_p \le 1}
+ \abs{\alpha}_p^\eps \sum_{\jboundary\in J_1} \abs{\alpha}_p^{-\Re(s_\jboundary-\mathsf{d}_\jboundary+1)/\mathsf{u}_\jboundary} \bm{1}_{\abs{\alpha}_p > 1}
+ \abs{\alpha}_p^{-\eps} \sum_{\jboundary\in J_2} \abs{\alpha}_p^{\Re(s_\jboundary-\mathsf{d}_\jboundary+1)/\abs{\mathsf{u}_\jboundary}} \bm{1}_{\abs{\alpha}_p < 1}.
\end{equation*}
\end{lemma}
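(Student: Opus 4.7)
The plan is to bound the integral by taking absolute values in the integrand (no cancellation is needed here, only convergence) and partitioning the integration region according to the stratification of $\mathscr{X}(\ZZ_p)$ by boundary components. Since $\mathscr{X}$ is proper over $\ZZ$, the set $\mathscr{X}(\ZZ_p)$ is compact, so I would cover it by finitely many compact open charts $\mathcal{U}_i$; each can be chosen small enough to lie in some $U \in \mathcal{C}^{(2)}$ (Proposition~\ref{PROP:U-with-nonempty-c(U)-cover-X}) and, by strict normal crossings, to meet at most two boundary divisors of $\mathscr{X}_{\FF_p}$. For each chart I would fix some $c \in \mathcal{C}^{(1)}(U)$.

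Next, on each chart I would introduce $p$-adic analytic coordinates $(y,z)$ in which $a$ and $b-c$ become monomials times units, extending Lemma~\ref{LEM:convenient-local-analytic-coordinates-generically} via Remark~\ref{RMK:generalize-basic-coordinates-to-local-fields} to the double-intersection case by using regular local coordinates cutting out the two meeting divisors (available by strict normal crossings). Writing $a = u_1 y^{\mathsf{u}_\jboundary} z^{\mathsf{u}_\kboundary}$ and $b-c = y^{e_\jboundary} z^{e_\kboundary}(u_2 + \text{linear})$ in these coordinates (with an index set to $0$ when the corresponding divisor does not meet the chart), a direct computation of $\omega = db\, da/a$ yields $\mathsf{d}_\jboundary = 1 - e_\jboundary$. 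Combined with $H_{D_\jboundary,p}\asymp_p |y|_p^{-1}$ (from Definition~\ref{DEFN:standard-local-Weil-height-for-boundary-divisors}) and $dg_p = |\omega|_p/(1 - p^{-1})$, the integrand against $dg_p$ becomes, up to a $p$-adic unit, $|y|_p^{\Re(s_\jboundary - \mathsf{d}_\jboundary)} |z|_p^{\Re(s_\kboundary - \mathsf{d}_\kboundary)} |dy\wedge dz|_p$, while the constraint $\Nbad \alpha a_p \in \ZZ_p$ translates to $v_p(\alpha) + \mathsf{u}_\jboundary v_p(y) + \mathsf{u}_\kboundary v_p(z) \ge -v_p(\Nbad)$.

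Then I would evaluate the resulting two-dimensional geometric sum over $(v_p(y), v_p(z)) \in \ZZ_{\ge 1}^2$. Hypothesis \eqref{INEQ:key-new-region} forces $\Re(s_\jboundary - \mathsf{d}_\jboundary + 1) \ge 1 + 2\mathsf{u}_\jboundary - \delta$, which is strictly positive for $\jboundary \in J_1 \cup J_3$ and strictly negative for $\jboundary \in J_2$. In a chart meeting exactly one boundary divisor $D_\jboundary$, this produces: a convergent geometric series summing to $\asymp_p |\alpha|_p^{-\Re(s_\jboundary - \mathsf{d}_\jboundary + 1)/\mathsf{u}_\jboundary}$ when $\jboundary \in J_1$ and $|\alpha|_p > 1$; a sum truncated by the constraint and dominated by its largest term, of size $\asymp_p |\alpha|_p^{\Re(s_\jboundary - \mathsf{d}_\jboundary + 1)/|\mathsf{u}_\jboundary|}$, when $\jboundary \in J_2$ and $|\alpha|_p < 1$; and $O_p(1) \bm{1}_{|\alpha|_p \le 1}$ otherwise, absorbable into $|\alpha|_p^{-\eps} \bm{1}_{|\alpha|_p \le 1}$.

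The main obstacle I anticipate is the double-intersection case, where the linear constraint couples $v_p(y)$ and $v_p(z)$ in a way depending on the signs of $\mathsf{u}_\jboundary, \mathsf{u}_\kboundary$ and $v_p(\alpha)$. A case analysis on $(\jboundary, \kboundary) \in (J_1 \cup J_2 \cup J_3)^2$ should show that each sub-sum is dominated by one of the single-intersection contributions above, with an extra factor $p^{-\Omega(1)}$ from summing over the ``second coordinate''; these losses are absorbed by the $|\alpha|_p^{\pm \eps}$ slack in the statement, yielding the claimed bound.
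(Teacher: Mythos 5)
Your overall plan matches the paper's: cover $X(\QQ_p)=\mathscr{X}(\ZZ_p)$ by small compact open charts each meeting at most two of the $D_\jboundary$, rewrite the integral as a lattice-point sum over $(v_p(y),v_p(z))$ with the linear constraint coming from $\Nbad\alpha a_p\in\ZZ_p$, and reduce to single-intersection bounds. Two remarks on your setup, though. First, invoking the exact monomial coordinates of Lemma~\ref{LEM:convenient-local-analytic-coordinates-generically} (and extending them to double intersections, where that lemma is not stated to apply) is overkill: since no cancellation is used here, asymptotic comparisons $\abs{a}_p\asymp_p\abs{y}_p^{\mathsf{u}_\kboundary\varsigma_\kboundary}\abs{z}_p^{\mathsf{u}_\lboundary\varsigma_\lboundary}$ and $H_{D_\jboundary,p}\asymp_p\abs{y}_p^{-1}$, from local equations of $D_\kboundary$, $D_\lboundary$ and Definition~\ref{DEFN:standard-local-Weil-height-for-boundary-divisors}, are all that is needed (and all the paper uses). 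Second, the relation $\mathsf{d}_\jboundary = 1 - e_\jboundary$ and the choice of $c\in\mathcal{C}^{(1)}(U)$ play no role in this lemma at all; the paper makes no reference to $b-c$.

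The genuine gap is your last paragraph. You correctly identify the double-intersection lattice-point sum as the main obstacle, but then defer it with ``a case analysis \dots should show,'' which is precisely where all the work lies. The paper isolates this in a separate Lemma~\ref{LP-axiom}, whose proof (i) establishes compactness of the sub-level sets $\{f_{21}\ge M\}$ via the two-case estimate \eqref{compact-envelope}, itself a delicate consequence of the exact form of \eqref{INEQ:key-new-region}; (ii) carries out a Lebesgue-style dyadic decomposition to obtain \eqref{lebesgue-sum}; and (iii) only then deploys the vertex bound \eqref{R-vertex-bounding}, which is the only step your outline anticipates. Your claim of ``an extra factor $p^{-\Omega(1)}$ from summing over the second coordinate'' is incorrect: when the constraint facet is parallel (or near-parallel) to the level sets of $f_{21}$, the number of lattice points near the optimum scales like $\abs{v_p(\alpha)}$ --- a loss, not a gain --- and when one of $\mathsf{u}_\kboundary\varsigma_\kboundary$, $\mathsf{u}_\lboundary\varsigma_\lboundary$ vanishes (a chart meeting some $D_\jboundary$ with $\jboundary\in J_3$), the constraint polytope is even unbounded, so convergence is not automatic. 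Absorbing this into $\abs{\alpha}_p^{\pm\eps}$ is the right conclusion, but reaching it requires the compactness step and the careful interaction of the signs of $\mathsf{u}_\kboundary,\mathsf{u}_\lboundary$ with the lower bounds $\Re(s_\jboundary-\mathsf{d}_\jboundary+1)\ge 1+2\mathsf{u}_\jboundary-\delta$ from \eqref{INEQ:key-new-region}; none of this is visible from the outline you gave.
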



\begin{proof}
Since $\abs{H_p(\bm{s},g)} = H_p(\Re(\bm{s}),g)$, we may assume $\bm{s}$ is real.
Let $U\belongs X(\QQ_p)$ be small open sets $\cong \ZZ_p^2$ forming a cover $\mathscr{C}_p$ of $X(\QQ_p)$.
Writing $U\cap D \belongs D_\kboundary \cup D_\lboundary$
for some $\kboundary,\lboundary\in J$ depending on $U$
(possible since $D$ has strict normal crossings),
we get\footnote{In \S\ref{SEC:non-archimedean-local-calculations},
we let $dg\defeq dg_p$
and generally omit subscripts of $p$ from variables and measures.}
\begin{equation*}
\int_{G(\QQ_p):\, \Nbad \alpha a\in \ZZ_p} \frac{dg}{\abs{H_p(\bm{s},g)}}
\ll_p \sum_{U\in \mathscr{C}_p}
\int_{\ZZ_p^2} \abs{y}_p^{(s_\kboundary-\mathsf{d}_\kboundary)\varsigma_\kboundary} \abs{z}_p^{(s_\lboundary-\mathsf{d}_\lboundary)\varsigma_\lboundary}
\bm{1}_{\Nbad'\alpha y^{\mathsf{u}_\kboundary\varsigma_\kboundary} z^{\mathsf{u}_\lboundary\varsigma_\lboundary} \in \ZZ_p} \, dy\, dz,
\end{equation*}
where $\varsigma_\jboundary\defeq \bm{1}_{U\cap D_\jboundary \ne \emptyset}$ for $\jboundary\in \set{\kboundary,\lboundary}$,
where $y$ (resp.~$z$) is a local parameter for $D_\kboundary$ (resp.~$D_\lboundary$) if $\varsigma_\kboundary=1$ (resp.~$\varsigma_\lboundary=1$),
and where $\Nbad'\ll_p \Nbad$ is a nonzero multiple of $\Nbad$ such that
the local analytic function $\Nbad' y^{\mathsf{u}_\kboundary\varsigma_\kboundary} z^{\mathsf{u}_\lboundary\varsigma_\lboundary}/(\Nbad a)$ is $\ZZ_p$-valued.
Given $U$, $\kboundary$, $\lboundary$, call the integral on the right $I(U,\kboundary,\lboundary)$.
The contribution to $I(U,\kboundary,\lboundary)$ from $\abs{y}_p = Y$ and $\abs{z}_p = Z$,
where $Y,Z\in \RR_{>0}$,
is $$(1-p^{-1})^2 \cdot f_{21}(Y,Z)
\cdot \bm{1}_{(Y,Z)\in \mathfrak{S}_\ZZ(\Nbad'\alpha)},$$
where $f_{21}(Y,Z) \defeq
Y^{(s_\kboundary-\mathsf{d}_\kboundary)\varsigma_\kboundary+1} Z^{(s_\lboundary-\mathsf{d}_\lboundary)\varsigma_\lboundary+1}$
and for any set $R\belongs \RR$ we let
\begin{equation}
\label{envelope-R}
\mathfrak{S}_R(\alpha) \defeq \set{(Y,Z)\in (0,1]^2:
\abs{\alpha}_p\, Y^{\mathsf{u}_\kboundary\varsigma_\kboundary}Z^{\mathsf{u}_\lboundary\varsigma_\lboundary} \le 1,
\; (\log_p{Y},\log_p{Z})\in R^2}.
\end{equation}

We now apply the $(d,e) = (2,3)$ case of Lemma~\ref{LP-axiom} below.
(In particular, $V=\RR^2$.)
Take half-planes $\hbar_1,\hbar_2,\hbar_3\belongs \RR^2$ corresponding to
the half-regions $\{Y\le 1\}$, $\{Z\le 1\}$, and $\{\abs{\Nbad'\alpha}_p\, Y^{\mathsf{u}_\kboundary\varsigma_\kboundary}Z^{\mathsf{u}_\lboundary\varsigma_\lboundary} \le 1\}$ in $\RR_{>0}^2$.
Then $\mathfrak{S}'_R = \mathfrak{S}_R(\Nbad'\alpha)$.
Using \eqref{lebesgue-sum} and \eqref{R-vertex-bounding} if $\mathfrak{S}'_\RR\ne \emptyset$, or the trivial fact that $I(U,\kboundary,\lboundary) = 0$ if $\mathfrak{S}'_\RR=\emptyset$, we find that
\begin{equation*}
I(U,\kboundary,\lboundary)
\ll_\eps (\abs{\Nbad'\alpha}_p^{-1}+\abs{\Nbad'\alpha}_p)^{-1/\eps}
+ (\abs{\Nbad'\alpha}_p^{-1}+\abs{\Nbad'\alpha}_p)^\eps \cdot (M_1+M_2+M_3),
\end{equation*}
where $M_1 = f_{22}(1,1)$,
$M_2 = f_{22}(\abs{\Nbad'\alpha}_p^{-1/\mathsf{u}_\kboundary},1) \bm{1}_{\mathsf{u}_\kboundary\varsigma_\kboundary\ne 0}$,
and $M_3 = f_{22}(1,\abs{\Nbad'\alpha}_p^{-1/\mathsf{u}_\lboundary}) \bm{1}_{\mathsf{u}_\lboundary\varsigma_\lboundary\ne 0}$,
where $f_{22}(x)\defeq f_{21}(x) \bm{1}_{x\in \mathfrak{S}_\RR(\Nbad'\alpha)}$.
Here $M_1 = \bm{1}_{\abs{\Nbad'\alpha}_p\le 1}$.
Also, $M_2$ (resp.~$M_3$) equals
\begin{equation*}
\abs{\Nbad'\alpha}_p^{-(s_\jboundary-\mathsf{d}_\jboundary+1)/\mathsf{u}_\jboundary}
(\bm{1}_{\jboundary\in J_1} \bm{1}_{\abs{\Nbad'\alpha}_p\ge 1}
+ \bm{1}_{\jboundary\in J_2} \bm{1}_{\abs{\Nbad'\alpha}_p\le 1})
\bm{1}_{\mathsf{u}_\jboundary\varsigma_\jboundary\ne 0}
\end{equation*}
for $\jboundary=\kboundary$ (resp.~$\jboundary=\lboundary$).
This suffices, since $\card{\mathscr{C}_p},\Nbad' \ll_p 1$ (and $\bm{s}\ll 1$) and
\begin{equation*}
\begin{split}
(\abs{\alpha}_p^{-1}+\abs{\alpha}_p)^{-1/\eps}
&\le \bm{1}_{\abs{\alpha}_p \le 1}
+ \sum_{i\in J_1} \abs{\alpha}_p^{-(s_i-\mathsf{d}_i+1)/\mathsf{u}_i} \bm{1}_{\abs{\alpha}_p > 1}, \\
\bm{1}_{\abs{\Nbad'\alpha}_p \le 1}
&\le \bm{1}_{\abs{\alpha}_p \le 1}
+ \sum_{i\in J_1} \abs{\Nbad'\alpha}_p^{-(s_i-\mathsf{d}_i+1)/\mathsf{u}_i} \bm{1}_{\abs{\alpha}_p > 1}, \\
\abs{\alpha}_p^{-(s_\jboundary-\mathsf{d}_\jboundary+1)/\mathsf{u}_\jboundary} \bm{1}_{\jboundary\in J_1} \bm{1}_{\abs{\Nbad'\alpha}_p\ge 1}
&\le \bm{1}_{\abs{\alpha}_p = 1}
+ \sum_{i\in J_1} \abs{\alpha}_p^{-(s_i-\mathsf{d}_i+1)/\mathsf{u}_i} \bm{1}_{\abs{\alpha}_p > 1}, \\
\abs{\alpha}_p^{-(s_\jboundary-\mathsf{d}_\jboundary+1)/\mathsf{u}_i} \bm{1}_{\jboundary\in J_2} \bm{1}_{\abs{\Nbad'\alpha}_p\le 1}
&\ll_p \bm{1}_{\abs{\alpha}_p \le 1}
+ \sum_{i\in J_1} \abs{\alpha}_p^{-(s_i-\mathsf{d}_i+1)/\mathsf{u}_i} \bm{1}_{\abs{\alpha}_p > 1}
+ \sum_{i\in J_2} \abs{\alpha}_p^{(s_i-\mathsf{d}_i+1)/\abs{\mathsf{u}_i}} \bm{1}_{\abs{\alpha}_p < 1}.
\end{split}
\end{equation*}
(These final displayed inequalities follow from \eqref{INEQ:key-new-region} and the fact $J_1\ne \emptyset$.)
\end{proof}

The goal of the following lemma
is to bound a certain class of $p$-adic integrals
via linear programming over a suitable region in $\RR^2$.

\begin{lemma}
\label{LP-axiom}
Assume $\bm{s}$ is real.
Let $\kboundary,\lboundary\in J$.
Let $\varsigma_\kboundary,\varsigma_\lboundary\in \{0,1\}$.
Let $\mathfrak{S}_R(\alpha)$ be as in \eqref{envelope-R}.
Let $d,e\ge 1$ be integers.
Let $V\belongs \RR^2$ be an affine space of dimension $d$.
Let $\hbar_1,\dots,\hbar_e\belongs \RR^2$ be closed half-planes,
whose boundaries are the lines $\ell_1,\dots,\ell_e\belongs \RR^2$, respectively.
Let $$\mathfrak{S}'_R \defeq \set{(Y,Z)\in \RR_{>0}^2:
(\log_p{Y},\log_p{Z})\in R^2\cap V\cap \hbar_1\cap \dots\cap \hbar_e}.$$
Assume $\mathfrak{S}'_\RR\belongs \mathfrak{S}_\RR(\alpha)$
and $\mathfrak{S}'_\RR\ne \emptyset$.
Then $\max_{\mathfrak{S}'_\RR}{f_{21}}$ exists,
and for all $\eps>0$ we have
\begin{equation}
\label{lebesgue-sum}
\int_{(\abs{y}_p,\abs{z}_p)\in \mathfrak{S}'_\ZZ} \abs{y}_p^{(s_\kboundary-\mathsf{d}_\kboundary)\varsigma_\kboundary} \abs{z}_p^{(s_\lboundary-\mathsf{d}_\lboundary)\varsigma_\lboundary} \, dy\, dz
\ll_\eps (\abs{\alpha}_p^{-1}+\abs{\alpha}_p)^{-1/\eps}
+ (\abs{\alpha}_p^{-1}+\abs{\alpha}_p)^\eps
\max_{\mathfrak{S}'_\RR}{f_{21}}.
\end{equation}
For any set $\mathcal{E}\belongs \{1,\dots,e\}$,
let $\ell_{\mathcal{E}}\defeq \bigcap_{e'\in \mathcal{E}} \ell_{e'}\belongs \RR^2$.
Call $\mathcal{E}$ \emph{extreme} if
\begin{equation*}
\begin{split}
(V\cap \hbar_1\cap \dots\cap \hbar_e)\cap \ell_{\mathcal{E}} &\ne \emptyset, \\
(V\cap \hbar_1\cap \dots\cap \hbar_e)\cap \ell_{\mathcal{E}\cup \{e''\}} &= \emptyset,
\end{split}
\end{equation*}
for all $e''\in \{1,\dots,e\}\setminus \mathcal{E}$.
Then
\begin{equation}
\label{R-vertex-bounding}
\max_{\mathfrak{S}'_\RR}{f_{21}}
\le \sum_{\textnormal{extreme }\mathcal{E}} f_{21}(p^{x_{\mathcal{E}}}),
\end{equation}
for some choice of points
$x_{\mathcal{E}}\in V\cap \ell_{\mathcal{E}}$.
\end{lemma}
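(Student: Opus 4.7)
The plan is to pass to logarithmic coordinates $x_1\defeq \log_p\abs{y}_p$ and $x_2\defeq \log_p\abs{z}_p$. Under this change, $f_{21}(p^{x_1},p^{x_2}) = p^{\beta_\kboundary x_1+\beta_\lboundary x_2}$ with $\beta_\kboundary\defeq (s_\kboundary-\mathsf{d}_\kboundary)\varsigma_\kboundary+1$ and $\beta_\lboundary\defeq (s_\lboundary-\mathsf{d}_\lboundary)\varsigma_\lboundary+1$; the region $\mathfrak{S}'_\RR$ becomes the convex polyhedron $P\defeq V\cap \hbar_1\cap \dots\cap \hbar_e\belongs \RR^2$; and $\log_p f_{21}$ is affine-linear on $P$. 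The two bounds then split cleanly into a linear-programming estimate \eqref{R-vertex-bounding} for the continuous max on $P$, and a discrete counting estimate \eqref{lebesgue-sum} that converts it to the $p$-adic integral.

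First I would establish \eqref{R-vertex-bounding} by verifying that $\max_P(\beta_\kboundary x_1+\beta_\lboundary x_2)$ is finite and attained. The containment $\mathfrak{S}'_\RR\belongs \mathfrak{S}_\RR(\alpha)$ yields $x_1,x_2\le 0$ together with $\mathsf{u}_\kboundary\varsigma_\kboundary x_1+\mathsf{u}_\lboundary\varsigma_\lboundary x_2\le v_p(\alpha)$, so the recession cone of $P$ sits inside $\{(d_1,d_2)\in \RR^2:d_1,d_2\le 0,\, \mathsf{u}_\kboundary\varsigma_\kboundary d_1+\mathsf{u}_\lboundary\varsigma_\lboundary d_2\le 0\}$. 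Casework on $\varsigma_\jboundary\in\{0,1\}$ and the sign of $\mathsf{u}_\jboundary$, combined with the shift $2\mathsf{u}_\jboundary$ in \eqref{INEQ:key-new-region}, shows that whenever $(-1,0)$ (resp.~$(0,-1)$) lies in this recession cone, the corresponding $\beta_\jboundary$ is strictly positive. Hence $\beta_\kboundary d_1+\beta_\lboundary d_2\le 0$ along every recession direction of $P$, so the affine form $\beta_\kboundary x_1+\beta_\lboundary x_2$ attains its maximum on some face $P\cap \ell_\mathcal{E}$; by augmenting $\mathcal{E}$ with every constraint still active on the maximizer we may assume $\mathcal{E}$ is extreme. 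Picking $x_\mathcal{E}\in V\cap \ell_\mathcal{E}$ on this face and bounding $\max\le f_{21}(p^{x_\mathcal{E}})\le \sum_{\textnormal{extreme }\mathcal{E}}f_{21}(p^{x_\mathcal{E}})$ proves \eqref{R-vertex-bounding}.

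Next I would deduce \eqref{lebesgue-sum} by decomposing the integral into $v_p$-fibers: since the Haar measure of $\{y\in \QQ_p:\abs{y}_p=p^{-n_1}\}$ is $(1-p^{-1})p^{-n_1}$, the integral equals $(1-p^{-1})^2\sum_{(n_1,n_2)\in \mathfrak{N}}f_{21}(p^{-n_1},p^{-n_2})$, where $\mathfrak{N}\defeq\{(n_1,n_2)\in \ZZ^2:(-n_1,-n_2)\in P\}$. Along each coordinate of $P$ that is bounded by the $\mathfrak{S}_\RR(\alpha)$-constraint (diameter $O(\abs{v_p(\alpha)}+1)$), I crudely bound the sum by (number of lattice points)$\cdot \max f_{21}$, while along each unbounded coordinate I use that $f_{21}$ decays geometrically (with ratio $p^{-\beta_\jboundary}\le 1-c(\delta)$ by the sign analysis above and \eqref{INEQ:key-new-region}), summing to $O(1)\cdot \max f_{21}$. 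The resulting bound is
\[\sum_{\mathfrak{N}}f_{21}\ll_\eps (1+\abs{v_p(\alpha)})^2\max_{\mathfrak{S}'_\RR}f_{21}\ll_\eps 2^{\eps\abs{v_p(\alpha)}}\max_{\mathfrak{S}'_\RR}f_{21}\le (\abs{\alpha}_p^{-1}+\abs{\alpha}_p)^\eps \max_{\mathfrak{S}'_\RR}f_{21},\]
with implied constants independent of $p$; this is the second summand of \eqref{lebesgue-sum}, while the first summand $(\abs{\alpha}_p^{-1}+\abs{\alpha}_p)^{-1/\eps}\le 2^{-1/\eps}$ is a trivial safety term not needed by this argument.

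The main obstacle will be the sign casework in the second paragraph, namely verifying that every recession direction of $P$ has nonpositive image under $\beta_\kboundary x_1+\beta_\lboundary x_2$. The shift $2\mathsf{u}_\jboundary$ in \eqref{INEQ:key-new-region} is decisive here: when $\varsigma_\jboundary=1$ and $\mathsf{u}_\jboundary<0$, it permits $\beta_\jboundary<0$, but then the $\mathsf{u}_\jboundary\varsigma_\jboundary$-constraint cuts off the escape direction of $x_\jboundary$; conversely, whenever the $x_\jboundary$-direction can escape to $-\infty$ within $\mathfrak{S}_\RR(\alpha)$, the shift guarantees $\beta_\jboundary>0$. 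Once this sign dichotomy is in place, the augmentation argument for \eqref{R-vertex-bounding} and the geometric-series bound for \eqref{lebesgue-sum} are standard polytope combinatorics.
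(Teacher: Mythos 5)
Your overall strategy for \eqref{R-vertex-bounding}—pass to log coordinates, analyze the recession cone of $P$, conclude the linear form $\beta_\kboundary x_1+\beta_\lboundary x_2$ attains a max at a vertex—is sound in spirit and runs parallel to the key insight of the paper's proof, which packages the same sign analysis as a ``bounded superlevel set'' estimate \eqref{compact-envelope}. However, your casework for the recession-cone claim has a real gap. You verify that $\beta_\jboundary>0$ whenever the coordinate direction $(-1,0)$ or $(0,-1)$ escapes, and then assert ``Hence $\beta_\kboundary d_1+\beta_\lboundary d_2\le 0$ along every recession direction.'' But in the tilted case $\mathsf{u}_\kboundary\varsigma_\kboundary>0>\mathsf{u}_\lboundary\varsigma_\lboundary$, only $(-1,0)$ lies in the recession cone, yet the cone still has a second extreme ray $\bigl(-\abs{\mathsf{u}_\lboundary}\varsigma_\lboundary,-\mathsf{u}_\kboundary\varsigma_\kboundary\bigr)$; here $\beta_\lboundary$ can be negative, so nonpositivity of the linear form on that ray is a nontrivial inequality. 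It does hold—one checks
\[-\abs{\mathsf{u}_\lboundary}\beta_\kboundary - \mathsf{u}_\kboundary\beta_\lboundary \le -\bigl(\mathsf{u}_\kboundary+\abs{\mathsf{u}_\lboundary}\bigr)(1-\delta) < 0\]
by plugging in the lower bounds on $\beta_\kboundary,\beta_\lboundary$ from \eqref{INEQ:key-new-region}—and this is precisely the computation the paper does inside Case~2 of its estimate \eqref{compact-envelope}. Your proposal flags this as ``the main obstacle'' but the coordinate-direction dichotomy you offer does not resolve it.

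The argument you give for \eqref{lebesgue-sum} is more seriously off. Splitting into ``bounded coordinate (diameter $O(\abs{v_p(\alpha)}+1)$)'' versus ``unbounded coordinate (geometric decay with ratio $p^{-\beta_\jboundary}$)'' fails in the same tilted case. Take $\mathsf{u}_\kboundary\varsigma_\kboundary=1$, $\mathsf{u}_\lboundary\varsigma_\lboundary=-1$, $\abs{\alpha}_p=1$: in log coordinates $P$ contains the ray $\{(-t,-t):t\ge 0\}$, so \emph{both} coordinates are unbounded (contradicting your diameter claim), yet $\beta_\lboundary$ can be as small as $-1-\delta<0$ by \eqref{INEQ:key-new-region}, so summing the $\lboundary$-geometric series alone diverges. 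The convergence only materializes along the diagonal (where $\beta_\kboundary+\beta_\lboundary>0$), and the correct way to capture this without a case explosion is the paper's approach: decompose by level sets $L\le f_{21}<2L$, prove the quantitative boundedness estimate $\log_p{Y},\log_p{Z}\ll_\eps(1+\abs{\alpha}_p^{-1}+L^{-1})^\eps$ for points at that level (this is \eqref{compact-envelope}), count lattice points at each level accordingly, and sum the resulting series. The dependence on $L^{-1}$ in that estimate, which your proposal does not anticipate, is exactly what balances the factor of $L$ in the Lebesgue sum and produces the first summand $(\abs{\alpha}_p^{-1}+\abs{\alpha}_p)^{-1/\eps}$ in \eqref{lebesgue-sum} after a final $M_\star$-threshold split.
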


\begin{proof}
Let $\mathfrak{S}_R\defeq \mathfrak{S}_R(\alpha)$ for convenience.
We first prove that if $L\in \RR_{>0}$ and $(Y,Z)\in \mathfrak{S}_\RR$
with $L\le f_{21}\le 2L$, then
\begin{equation}
\label{compact-envelope}
\log_p{Y},\log_p{Z}
\ll_\eps (1+\abs{\alpha}_p^{-1}+L^{-1})^\eps.
\end{equation}
Order $\kboundary$, $\lboundary$ so that $\mathsf{u}_\kboundary\varsigma_\kboundary \ge \mathsf{u}_\lboundary\varsigma_\lboundary$.
We prove \eqref{compact-envelope} by casework.

\emph{Case~1: $\mathsf{u}_\kboundary\varsigma_\kboundary<0$.}
Then $\mathfrak{S}_\RR = \emptyset$ unless $\abs{\alpha}_p\le 1$.
Also, $\log_p{Y},\log_p{Z}\ll \abs{v_p(\alpha)}$ for $(Y,Z)\in \mathfrak{S}_\RR$.
The claim \eqref{compact-envelope} follows,
since if $\abs{\alpha}_p\le 1$,
then $$\abs{v_p(\alpha)} = \log_p(1/\abs{\alpha}_p)
\ll \log(1/\abs{\alpha}_p)
\ll_\eps \abs{\alpha}_p^{-\eps}.$$

\emph{Case~2: $\mathsf{u}_\kboundary\varsigma_\kboundary\ge 0$.}
Then $(s_\kboundary-\mathsf{d}_\kboundary)\varsigma_\kboundary\ge -\delta$,
since either $\varsigma_\kboundary=0$ or $\kboundary\in J_1\cup J_3$.
Now suppose $f_{21}\asymp L$ for some $(Y,Z)\in \mathfrak{S}_\RR$;
then $Y \asymp L^{e_1}/Z^{e_2}$
and $Y^{\mathsf{u}_\kboundary\varsigma_\kboundary}Z^{\mathsf{u}_\lboundary\varsigma_\lboundary} \asymp L^{e_3}/Z^{e_4}$,
where
\begin{equation*}
e_1 = ((s_\kboundary-\mathsf{d}_\kboundary)\varsigma_\kboundary+1)^{-1},
\; e_2 = ((s_\lboundary-\mathsf{d}_\lboundary)\varsigma_\lboundary+1) e_1,
\; e_3 = \mathsf{u}_\kboundary\varsigma_\kboundary e_1,
\; e_4 = \mathsf{u}_\kboundary\varsigma_\kboundary e_2 - \mathsf{u}_\lboundary\varsigma_\lboundary.
\end{equation*}
If $\mathsf{u}_\lboundary\varsigma_\lboundary\ge 0$,
then $(s_\lboundary-\mathsf{d}_\lboundary)\varsigma_\lboundary\ge -\delta$,
so the constraint $Y\le 1$ in $\mathfrak{S}_\RR$
implies $Z\gg L^{e_1/e_2}$,
where $0<\frac{e_1}{e_2}\le \frac{1}{1-\delta}$.
Meanwhile, if $\mathsf{u}_\lboundary\varsigma_\lboundary<0$, then $\varsigma_\lboundary=1$ and (using \eqref{INEQ:key-new-region} for $\jboundary\in \set{\kboundary,\lboundary}$)
\begin{equation*}
\begin{split}
\tfrac{e_4}{e_1}
= \mathsf{u}_\kboundary\varsigma_\kboundary \tfrac{e_2}{e_1} + \abs{\mathsf{u}_\lboundary} e_1^{-1}
&\ge \mathsf{u}_\kboundary\varsigma_\kboundary (2\mathsf{u}_\lboundary+1-\delta) + \abs{\mathsf{u}_\lboundary} ((2\mathsf{u}_\kboundary-\delta)\varsigma_\kboundary+1) \\
&= \mathsf{u}_\kboundary\varsigma_\kboundary (1-\delta) + \abs{\mathsf{u}_\lboundary} (-\delta\varsigma_\kboundary+1) \\
&\ge \mathsf{u}_\kboundary\varsigma_\kboundary (1-\delta) + \abs{\mathsf{u}_\lboundary} (1-\delta)
= (\mathsf{u}_\kboundary\varsigma_\kboundary + \abs{\mathsf{u}_\lboundary}) (1-\delta),
\end{split}
\end{equation*}
so the constraint $Y^{\mathsf{u}_\kboundary\varsigma_\kboundary}Z^{\mathsf{u}_\lboundary\varsigma_\lboundary}
\le \abs{\alpha}_p^{-1}$
implies $Z\gg \abs{\alpha}_p^{1/e_4} L^{e_3/e_4}$,
where $0 < \frac{1}{e_4} \ll \frac{1}{e_1} \ll A_8$
and $0\le \frac{e_3}{e_4} \le \frac{1}{1-\delta}$.
Either way, we now have a lower bound on $Z$,
which together with the upper bound $Z\le 1$
and the estimate $Y \asymp L^{e_1}/Z^{e_2}$
gives \eqref{compact-envelope}.

Having established \eqref{compact-envelope} (in both Case~1 and Case~2),
we now use our assumptions on $\mathfrak{S}'_\RR$
to prove the existence of $\max_{\mathfrak{S}'_\RR}{f_{21}}$,
and the inequality \eqref{R-vertex-bounding}.
The function $\log{f_{21}}$ is linear (hence convex)
in $(\log_p{Y},\log_p{Z})\in \RR^2$.
However, the closed set $\mathfrak{S}'_\RR$ might not be bounded.
To address this, let $M\in \RR_{>0}$
and note that by the $L\ge M$ case of \eqref{compact-envelope},
we have
\begin{equation*}
\{(Y,Z)\in \mathfrak{S}_\RR:
f_{21}\ge M\}
\belongs \{(Y,Z)\in \mathfrak{S}_\RR:
\log_p{Y},\log_p{Z}
\ll_\eps (1+\abs{\alpha}_p^{-1}+M^{-1})^\eps\}.
\end{equation*}
Since $\mathfrak{S}'_\RR\belongs \mathfrak{S}_\RR$,
it follows that for every $M\in \RR_{>0}$, the closed set
$\set{x\in \mathfrak{S}'_\RR: f_{21}(x)\ge M}$
is bounded, and therefore compact.
Choose a point $x_0\in \mathfrak{S}'_\RR$
(possible since $\mathfrak{S}'_\RR\ne \emptyset$),
and let $M = f_{21}(x_0)/2 > 0$.
Then $f_{21}$ achieves a maximum $M_\star\ge f_{21}(x_0)=2M>M$
on the set $\set{x\in \mathfrak{S}'_\RR: f_{21}(x)\ge M}$,
which must then be a global maximum over the set $\mathfrak{S}'_\RR$.
Moreover, recall the definition of $\argmax(\cdot)$ from \eqref{argmax}.
By convexity of $\log{f_{21}}$, the set $\argmax_{\mathfrak{S}'_\RR}{f_{21}}$
contains a vertex $x'$ of $\set{x\in \mathfrak{S}'_\RR: f_{21}\ge M}$,
with $f_{21}(x')=M_\star\ne M$.
(Here we think of
$\set{x\in \mathfrak{S}'_\RR: f_{21}\ge M}$
as the image under $\exp\maps \RR^2\to \RR_{>0}^2$
of a compact polytope.)

We have $\log_p(x')\in \log_p(\mathfrak{S}'_\RR) =  (V\cap \hbar_1\cap \dots\cap \hbar_e)\cap \ell_{\emptyset}$, so there exists a set $\mathcal{E}\belongs \{1,\dots,e\}$ such that $\log_p(x')\in (V\cap \hbar_1\cap \dots\cap \hbar_e)\cap \ell_{\mathcal{E}}$.
Take $\mathcal{E}$ to have maximal cardinality among such sets.
Then $\log_p(x')\in \hbar_{e'}\setminus \ell_{e'}$
for all $e'\in \{1,\dots,e\}\setminus \mathcal{E}$.
Suppose for contradiction that for some $e''\in \{1,\dots,e\}\setminus \mathcal{E}$,
the set $(V\cap \hbar_1\cap \dots\cap \hbar_e)\cap \ell_{\mathcal{E}\cup \{e''\}}$
contains a point, say $\log_p(x'')$.
Then for all $\varepsilon\in \RR$ sufficiently small (not necessarily positive),
the point $$\log_p(x'''_\varepsilon)
\defeq (1-\varepsilon) \log_p(x') + \varepsilon \log_p(x'')$$
lies in $(V\cap \hbar_1\cap \dots\cap \hbar_e)\cap \ell_{\mathcal{E}}$
and satisfies $f_{21}(x'''_\varepsilon) > M$,
so $x'''_\varepsilon\in \set{x\in \mathfrak{S}'_\RR: f_{21}\ge M}$.
Since $x'$ is a vertex,
this can only happen if $x'''_\varepsilon=x'$,
i.e.~$\log_p(x'') = \log_p(x')$.
But $\mathcal{E}$ is maximal, so $x''\ne x'$.
Thus our assumption was wrong,
so in fact $\mathcal{E}$ is extreme,
giving \eqref{R-vertex-bounding}.

Finally, we turn to \eqref{lebesgue-sum}.
Call the integral $I$.
Then
\begin{equation*}
I = (1-p^{-1})^2 \sum_{(Y,Z)\in \mathfrak{S}'_\ZZ} f_{21}(Y,Z)
= (1-p^{-1})^2 \sum_{\log_2{L}\in \ZZ} L
\cdot \#\{(Y,Z)\in \mathfrak{S}'_\ZZ: L\le f_{21}<2L\}
\end{equation*}
by summing over level sets of $f_{21}$, a la Lebesgue.
By \eqref{compact-envelope},
since $\mathfrak{S}'_\ZZ\belongs \mathfrak{S}'_\RR\belongs \mathfrak{S}_\RR$,
we have $$\#\{(Y,Z)\in \mathfrak{S}'_\ZZ: L\le f_{21}<2L\}
\ll_\eps (1+\abs{\alpha}_p^{-1}+L^{-1})^\eps
\ll_\eps 1+\abs{\alpha}_p^{-\eps}+L^{-\eps},$$
where the last step follows from the estimate
$1+\abs{\alpha}_p^{-1}+L^{-1} \asymp \max(1,\abs{\alpha}_p^{-1},L^{-1})$.
So
\begin{equation*}
I \ll_\eps \sum_{\log_2{L}\in \ZZ} (L+L\abs{\alpha}_p^{-\eps}+L^{1-\eps})
\ll_\eps M_\star+M_\star\abs{\alpha}_p^{-\eps}+M_\star^{1-\eps},
\end{equation*}
by summing geometric series over $L\ll M_\star$.
Finally, note that $\max(\abs{\alpha}_p^{-1},\abs{\alpha}_p)\ge 1$.
If $M_\star \ge \max(\abs{\alpha}_p^{-1},\abs{\alpha}_p)^{-1/\eps^{1/2}}$,
then $M_\star^{1-\eps}\le \max(\abs{\alpha}_p^{-1},\abs{\alpha}_p)^{\eps^{1/2}} M_\star$,
so $I\ll_\eps \max(\abs{\alpha}_p^{-1},\abs{\alpha}_p)^{\eps^{1/2}} M_\star$.
If $M_\star \le \max(\abs{\alpha}_p^{-1},\abs{\alpha}_p)^{-1/\eps^{1/2}}$,
then $I\ll_\eps \max(\abs{\alpha}_p^{-1},\abs{\alpha}_p)^{-0.9/\eps^{1/2}}$.
Either way, we have $$I\ll_\eps \max(\abs{\alpha}_p^{-1},\abs{\alpha}_p)^{-0.9/\eps^{1/2}}
+ \max(\abs{\alpha}_p^{-1},\abs{\alpha}_p)^{\eps^{1/2}} M_\star.$$
Replacing $\eps$ with $\frac14\eps^2$, we conclude that \eqref{lebesgue-sum} holds.
\end{proof}

For subsequent calculations,
we choose $c(\jboundary)\in \mathcal{C}^{(0)}(\jboundary)$
and let $\mathsf{v}_\jboundary \defeq \ord_{D_\jboundary}(b-c(\jboundary))$
for $\jboundary\in J$.
Also, we make the following definition for convenience in \S\ref{SEC:archimedean-endgame} (when we apply Lemma~\ref{LEM:denominator-bias}).

\begin{definition}
\label{DEFN:bad-set-S}
Let $\ol{\Sbad}$ be a finite set containing
the set $\Sbad$ defined after Lemma~\ref{LEM:snc-implies-nice-densities-implies-Tamagawa-extraction},
such that
$\set{q\in \QQ: I^q\ne \emptyset} \belongs \ZZ[1/\ol{\Sbad}]$
and $\set{q_1-q_2: q_1,q_2\in \QQ,\; I^{q_1},I^{q_2}\ne \emptyset}
\belongs \ZZ[1/\ol{\Sbad}]^\times \cup \{0\}$,
where $I^q$ is the set defined in \eqref{Ic}.
\end{definition}

\begin{lemma}
[Denominator bias]
\label{LEM:denominator-bias}
Suppose $v_p(\alpha) = -k < 0$.
Let
\begin{equation*}
\mathfrak{D}=\mathfrak{D}_p(\bm{s},\lambda,\alpha)\defeq H^\vee_p(\bm{s},\lambda,0,\alpha)
- \bm{1}_{p\notin \ol{\Sbad}} \sum_{c\in \QQ}
\sum_{\jboundary\in J^c_1} \phase(-c\alpha \bmod{\ZZ_p}) p^{-k (s_\jboundary-\mathsf{d}_\jboundary+1)/\mathsf{u}_\jboundary} \bm{1}_{\mathsf{u}_\jboundary\mid k}.
\end{equation*}
Then $\mathfrak{D} \ll_\eps p^{(\eps-2)k} \sum_{\jboundary\in J_1} (p^{\delta-1}
+ p^{-1/2} \bm{1}_{\mathsf{u}_\jboundary\mid k} + \bm{1}_{k>\mathsf{u}_\jboundary}) p^{(\delta-1)k/\mathsf{u}_\jboundary}$.
\end{lemma}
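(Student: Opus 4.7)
\emph{Plan.} I separate by size of $p$.

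For $p\in\ol{\Sbad}$ the subtracted main term vanishes, so it suffices to bound $\mathfrak{D}=H^\vee_p(\bm{s},\lambda,0,\alpha)$ via Lemma~\ref{LEM:general-fixed-p-bound}: with $\abs{\alpha}_p=p^k$, the first and $J_2$ pieces of that lemma are inert since $\abs{\alpha}_p>1$, and \eqref{INEQ:key-new-region} gives $-\Re(s_\jboundary-\mathsf{d}_\jboundary+1)/\mathsf{u}_\jboundary\le -2+(\delta-1)/\mathsf{u}_\jboundary$ for $\jboundary\in J_1$. The $p$-dependent implicit constant is absorbed into $p^{\delta-1}$ using finiteness of $\ol{\Sbad}$.

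For $p\notin\ol{\Sbad}$ I would partition $\mathscr{X}(\ZZ_p)$ into fibers $\redp^{-1}(x_0)$ indexed by $x_0\in\mathscr{X}(\FF_p)$ and use the analytic coordinates $(y,z)\in p\ZZ_p^2$ of Lemma~\ref{LEM:large-p-local-analytic-coordinates-generically} on each fiber. Fibers over $\mathscr{G}(\FF_p)$ or $\mathscr{D}_\jboundary(\FF_p)$ with $\jboundary\in J_2\cup J_3$ are cut by the integrality $v_p(a)\ge k$ to measure $\le p^{-k}$, and fibers over intersections $(\mathscr{D}_\jboundary\cap\mathscr{D}_{\jboundary'})(\FF_p)$ are $\FF_p$-sparse; these peripheral contributions are absorbed into the $p^{(\eps-2)k}\cdot p^{\delta-1}\cdot p^{(\delta-1)k/\mathsf{u}_\jboundary}$ error. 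The bulk therefore comes from principal fibers over the open stratum $(\mathscr{D}_\jboundary\setminus\bigcup_{\jboundary'\ne\jboundary}\mathscr{D}_{\jboundary'})(\FF_p)$ with $\jboundary\in J_1$. On such a fiber, Lemma~\ref{LEM:large-p-local-analytic-coordinates-generically}(1) together with Proposition~\ref{PROP:apply-lower-bound-on-anticanonical-Weil-divisor} give $a=\unitone y^{\mathsf{u}_\jboundary}$ and $b-c(\jboundary)=y^{\mathsf{v}_\jboundary}(\unittwo+z)$ with $0\le\mathsf{v}_\jboundary\le\mathsf{u}_\jboundary$, and a Jacobian calculation yields both $\mathsf{d}_\jboundary=1-\mathsf{v}_\jboundary$ and $H_p(\bm{s},g)^{-1}\,dg_p = \abs{y}_p^{s_\jboundary+\mathsf{v}_\jboundary-1}\abs{dy\,dz}_p/(1-p^{-1})$, accompanied by a $z$-independent unit character built from $\ol{\lambda}_p(\sgn_p(\alpha a))\lambda_{\Sbad,p}(\alpha a)$.

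Slicing this fiber by layers $v_p(y)=j\ge\lceil k/\mathsf{u}_\jboundary\rceil$ and factoring $\phase(-\alpha b_p\bmod\ZZ_p)$ as $\phase(-c(\jboundary)\alpha\bmod\ZZ_p)\,\phase(-\alpha y^{\mathsf{v}_\jboundary}(\unittwo+z)\bmod\ZZ_p)$, the inner $z$-integral over $p\ZZ_p$ equals $p^{-1}\phase(-\alpha y^{\mathsf{v}_\jboundary}\unittwo\bmod\ZZ_p)$ when $\mathsf{v}_\jboundary j-k\ge -1$ and vanishes when $\mathsf{v}_\jboundary j-k\le -2$. The non-oscillatory regime $\mathsf{v}_\jboundary j-k\ge 0$ with $j$ minimal forces $\mathsf{v}_\jboundary=\mathsf{u}_\jboundary$ (i.e.\ $\jboundary\in J^{c(\jboundary)}_1$) and $\mathsf{u}_\jboundary\mid k$, giving $j=k/\mathsf{u}_\jboundary$; summing this layer over $\bar{x}_0\in\mathscr{D}_\jboundary(\FF_p)$ produces the asserted main term $\phase(-c(\jboundary)\alpha\bmod\ZZ_p)\,p^{-k(s_\jboundary-\mathsf{d}_\jboundary+1)/\mathsf{u}_\jboundary}$ after applying Lang--Weil to count $\#\mathscr{D}_\jboundary(\FF_p)=p+O(\sqrt{p})$. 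Deeper non-oscillatory layers $j>k/\mathsf{u}_\jboundary$ and the entire non-special sub-case $\mathsf{v}_\jboundary<\mathsf{u}_\jboundary$ contribute errors of order $\bm{1}_{k>\mathsf{u}_\jboundary}\cdot p^{(\eps-2)k+(\delta-1)k/\mathsf{u}_\jboundary}$ after invoking \eqref{INEQ:key-new-region}.

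The main obstacle is the borderline layer $\mathsf{v}_\jboundary j-k=-1$: here the $z$-integral is nonzero but depends non-trivially on $y$ through the surviving phase $\phase(-\alpha y^{\mathsf{v}_\jboundary}\unittwo(\bar{x}_0)\bmod\ZZ_p)$. Summing over $\bar{x}_0$ leaves an exponential sum over the smooth $\FF_p$-curve $(\mathscr{D}_\jboundary\setminus\bigcup\mathscr{D}_{\jboundary'})\bmod p$, whose additive phase (combined with $\ol{\lambda}_p\lambda_{\Sbad,p}$) reduces, up to a $\bar{x}_0$-independent factor, to the mod-$p$ reduction of $(b-c(\jboundary))^{\mathsf{u}_\jboundary}/a^{\mathsf{v}_\jboundary}$ restricted to $D_\jboundary$. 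By Proposition~\ref{PROP:G-induced-nonconstancy-of-a-rational-function-on-Dj} this rational function is non-constant on $D_\jboundary$; since $X$ is split with $D$ of strict normal crossings, $D_\jboundary$ is smooth and geometrically irreducible, so non-constancy persists modulo every sufficiently large $p$. The Weil--Bombieri bound \cite{bombieri1966exponential} then supplies the required $p^{-1/2}$ saving on this curve sum and yields the $p^{-1/2}\bm{1}_{\mathsf{u}_\jboundary\mid k}$ term in the error, completing the proof.
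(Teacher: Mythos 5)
Your plan tracks the paper's own proof of Lemma~\ref{LEM:denominator-bias} quite closely: dispose of small and bad $p$ with Lemma~\ref{LEM:general-fixed-p-bound}; for large $p$ decompose $H^\vee_p$ over fibers $\rho^{-1}(x_0)$; extract the main term from the fibers over the open strata of $\mathscr{D}_\jboundary$ with $\jboundary\in J_1$ using the local coordinates of Lemma~\ref{LEM:large-p-local-analytic-coordinates-generically} (your Jacobian computation $\mathsf{d}_\jboundary=1-\mathsf{v}_\jboundary$ is correct, and matches what the paper uses implicitly via $\abs{\omega}_p$ and \eqref{EQN:define-local-Tamagawa-measure}); use Lang--Weil to count $\mathscr{D}^\ast_\jboundary(\FF_p)$; and invoke the Bombieri--Weil bound \cite{bombieri1966exponential} together with Proposition~\ref{PROP:G-induced-nonconstancy-of-a-rational-function-on-Dj} for the error. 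This is essentially the paper's argument.

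Two points deserve correction, though neither derails the plan. First, your description of the oscillatory phase in the borderline layer is off: the surviving phase is $\phase(-\alpha y^{\mathsf{v}_\jboundary}\unittwo(\bar{x}_0)\bmod\ZZ_p)$, which is \emph{not} proportional to the reduction of $F_7=(b-c)^{\mathsf{u}_\jboundary}/a^{\mathsf{v}_\jboundary}$ on $D_\jboundary$ (that would be $\unittwo^{\mathsf{u}_\jboundary}/\unitone^{\mathsf{v}_\jboundary}$). The paper separates two mechanisms: when $\mathsf{v}_\jboundary\ne 0$, one integrates over $y\in p^{k/\mathsf{u}_\jboundary}\ZZ_p^\times$ \emph{first} and the cancellation is a Gauss-sum phenomenon from the nonconstancy of $\bar y\mapsto\bar y^{\mathsf{v}_\jboundary}$ mod $p$, needing no geometric input; Proposition~\ref{PROP:G-induced-nonconstancy-of-a-rational-function-on-Dj} is only invoked when $\mathsf{v}_\jboundary=0$, where $\unittwo\equiv b-c$ restricted to $D_\jboundary$ must be nonconstant mod $p$ and one sums over $\bar x_0$. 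Your write-up conflates these, and ``Summing over $\bar x_0$ leaves an exponential sum over a curve'' does not cleanly account for the surviving $y$-dependence when $\mathsf{v}_\jboundary\ne 0$. Second, the assertion that the intersection-fiber contributions are ``$\FF_p$-sparse'' and absorbed into the error is true but not cheap: for $x_0\in(\mathscr{D}_\jboundary\cap\mathscr{D}_i)(\FF_p)$ with, say, $\jboundary\in J_1$, $i\in J_2$, one needs the linear-programming machinery of Lemma~\ref{LP-axiom} (together with Lemma~\ref{LEM:tricky-numerical-point}) to land in the stated error shape, and this step uses the full strength of the shifted region \eqref{INEQ:key-new-region}. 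Spelling this out would be the main work left to make your sketch a proof.
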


\begin{proof}
Here $\abs{\alpha}_p = p^k$, so by Lemma~\ref{LEM:general-fixed-p-bound}
(and the bound $p^{-k\Re(s_\jboundary-\mathsf{d}_\jboundary+1)/\mathsf{u}_\jboundary}
\le p^{-2k} p^{(\delta-1)k/\mathsf{u}_\jboundary}$ for $\jboundary\in J_1$,
using $\mathsf{u}_\jboundary>0$),
we may assume $p$ is large.
Following \cite{tanimoto2012height}*{proof of Lemma~5.6}, let $\rho$ be the reduction map $X(\QQ_p) = \mathscr{X}(\ZZ_p) \to \mathscr{X}(\FF_p)$;
then $H^\vee_p = \sum_{x_0\in \bigcup_{\jboundary\in J_1} \mathscr{D}_\jboundary(\FF_p)} \mathcal{I}_p(x_0)$, where
\begin{equation*}
\mathcal{I}_p(x_0)
\defeq \int_{\rho^{-1}(x_0):\, \alpha a\in \ZZ_p}
H_p(\bm{s},g)^{-1} \phase(-\alpha b\bmod{\ZZ_p}) \lambda_{\Sbad,p}(\alpha a)\, dg.
\end{equation*}
Let $\mathcal{I}^0_p(x_0)$ be the contribution to $\mathcal{I}_p(x_0)$ from $\alpha a\in \ZZ_p^\times$, and let $\mathcal{I}^1_p(x_0)$ be the contribution from $\alpha a\in p\ZZ_p$; then $\mathcal{I}_p(x_0) = \mathcal{I}^0_p(x_0) + \mathcal{I}^1_p(x_0)$, and we have
\begin{equation*}
\mathcal{I}^0_p(x_0)
= \int_{\rho^{-1}(x_0):\, \alpha a\in \ZZ_p^\times}
\frac{\phase(-\alpha b\bmod{\ZZ_p}) \, dg}{H_p(\bm{s},g)},
\quad
\abs{\mathcal{I}^1_p(x_0)}
\le \int_{\rho^{-1}(x_0):\, \alpha a\in p\ZZ_p}
\frac{dg}{\abs{H_p(\bm{s},g)}}.
\end{equation*}

For each $\jboundary\in J$,
let $U_\jboundary\belongs X$ be the largest open set on which $a, b-c(\jboundary)\in \Gamma(U_\jboundary\setminus D_\jboundary, \mathcal{O}_X)^\times$.
Let $\mathscr{Z}_\jboundary$ be the closure of $X\setminus U_\jboundary$ in $\mathscr{X}$.
Then let $\mathscr{D}^\ast_\jboundary\defeq \mathscr{D}_\jboundary\setminus \mathscr{Z}_\jboundary$.
We claim the following:
\begin{enumerate}
\item If $\jboundary\in J_1$, then
\begin{equation*}
\sum_{x_0\in \mathscr{D}^\ast_\jboundary(\FF_p)} \mathcal{I}_p(x_0)
= (\phase(-c(\jboundary)\alpha \bmod{\ZZ_p})
\bm{1}_{\mathsf{u}_\jboundary\mid k} \bm{1}_{\jboundary\in J^{c(\jboundary)}_1}
+ O(p^{-1/2} \bm{1}_{\mathsf{u}_\jboundary\mid k}) + O(p^{-1}))
\cdot p^{-(s_\jboundary-\mathsf{d}_\jboundary+1)k/\mathsf{u}_\jboundary}.
\end{equation*}

\item If $x_0\in \bigcup_{\jboundary\in J_1} \mathscr{D}_\jboundary(\FF_p)$, then
\begin{equation*}
\mathcal{I}_p(x_0)
\ll_\eps p^{(\eps-2)k} \sum_{\jboundary\in J_1:\, x_0\in \mathscr{D}_\jboundary(\FF_p)} (p^{\delta-1} + \bm{1}_{k>\mathsf{u}_\jboundary}) p^{(\delta-1)k/\mathsf{u}_\jboundary}.
\end{equation*}
\end{enumerate}
These two claims would imply the lemma,
since on the one hand
\begin{equation*}
\biggl\lvert{H^\vee - \sum_{\jboundary\in J_1}
\sum_{x_0\in \mathscr{D}^\ast_\jboundary(\FF_p)} \mathcal{I}_p(x_0)}\biggr\rvert
\le \sum_{\jboundary\in J_1} \sum_{x_0\in (\mathscr{D}_\jboundary\setminus \mathscr{D}^\ast_\jboundary)(\FF_p)} \abs{\mathcal{I}_p(x_0)}
\ll \max_{x_0\in \bigcup_{\jboundary\in J_1} \mathscr{D}_\jboundary(\FF_p)} \abs{\mathcal{I}_p(x_0)},
\end{equation*}
which may be bounded using (2);
and on the other hand for each $\jboundary\in J_1$, by (1),
\begin{equation*}
\begin{split}
\sum_{x_0\in \mathscr{D}^\ast_\jboundary(\FF_p)} \mathcal{I}_p(x_0)
&= \phase(-c(\jboundary)\alpha \bmod{\ZZ_p})
\bm{1}_{\mathsf{u}_\jboundary\mid k} \bm{1}_{\jboundary\in J^{c(\jboundary)}_1}
p^{-(s_\jboundary-\mathsf{d}_\jboundary+1)k/\mathsf{u}_\jboundary}
+ O(p^{-1/2} \bm{1}_{\mathsf{u}_\jboundary\mid k} + p^{-1})
p^{-2k} p^{(\delta-1)k/\mathsf{u}_\jboundary} \\
&= \sum_{c\in \QQ} \bm{1}_{\jboundary\in J^c_1}
\phase(-c\alpha \bmod{\ZZ_p}) p^{-k (s_\jboundary-\mathsf{d}_\jboundary+1)/\mathsf{u}_\jboundary} \bm{1}_{\mathsf{u}_\jboundary\mid k}
+ O(p^{-1/2} \bm{1}_{\mathsf{u}_\jboundary\mid k} + p^{-1})
p^{-2k} p^{(\delta-1)k/\mathsf{u}_\jboundary},
\end{split}
\end{equation*}
where the last equality follows from Proposition~\ref{PROP:compute-arg-max-set-C0j}.
Indeed, if $\jboundary\in J^c_1$ for some $c\in \QQ$,
then $\mathcal{C}^{(0)}(\jboundary) = \{c\}$
(by Proposition~\ref{PROP:compute-arg-max-set-C0j}
and the containment $J^c_1\belongs I^c$ from \eqref{Jc1-Ic-J1-containments}),
so $c(\jboundary) = c$.

We prove (1) first.
Suppose $\jboundary\in J_1$ and $c(\jboundary)=c$, and let $x_0\in \mathscr{D}^\ast_\jboundary(\FF_p)$.
By Lemma~\ref{LEM:large-p-local-analytic-coordinates-generically},
the set $\rho^{-1}(x_0) \belongs X(\QQ_p)$
has analytic coordinates $y,z\in p\ZZ_p$
such that $a = \unitone y^{\mathsf{u}_\jboundary}$
and $b-c = y^{\mathsf{v}_\jboundary} (\unittwo+z)$,
where $\unitone,\unittwo\in \ZZ_p^\times$.
Upon writing
$dg=\abs{\omega}_p/(1-p^{-1})$ in terms of $d\tau_p$
using \eqref{EQN:define-local-Tamagawa-measure},
as in \cite{tanimoto2012height}*{proof of Lemma~5.6}, we get
\begin{align*}
\mathcal{I}^0_p(x_0) &= \int_{y,z\in p\ZZ_p:\, v_p(y) = k/\mathsf{u}_\jboundary}
\abs{y}_p^{s_\jboundary-\mathsf{d}_\jboundary} \phase(-c\alpha-y^{\mathsf{v}_\jboundary}(\unittwo+z)\alpha \bmod{\ZZ_p})\, \frac{dy\,dz}{1-p^{-1}} \\
&= \phase(-c\alpha\bmod{\ZZ_p})
\int_{y\in p\ZZ_p:\, v_p(y) = k/\mathsf{u}_\jboundary} \abs{y}_p^{s_\jboundary-\mathsf{d}_\jboundary+1}
\phase(-y^{\mathsf{v}_\jboundary}\unittwo\alpha \bmod{\ZZ_p}) \frac{\bm{1}_{y^{\mathsf{v}_\jboundary}p\alpha\in \ZZ_p}}{p} \, \frac{dy/\abs{y}_p}{1-p^{-1}}.
\end{align*}
Thus $p\mathcal{I}^0_p(x_0) = p^{-(s_\jboundary-\mathsf{d}_\jboundary+1)k/\mathsf{u}_\jboundary} \phase(-c\alpha\bmod{\ZZ_p}) \bm{1}_{\mathsf{u}_\jboundary\mid k}$ if $\mathsf{v}_\jboundary\ge \mathsf{u}_\jboundary$;
and if $\mathsf{v}_\jboundary<\mathsf{u}_\jboundary$, then summing over $x_0\in \mathscr{D}^\ast_\jboundary(\FF_p)$, $y\in p^{k/\mathsf{u}_\jboundary}\ZZ_p^\times$ (integrating over $y$ first if $\mathsf{v}_\jboundary\ne 0$, and $x_0$ first if $\mathsf{v}_\jboundary=0$) gives
\begin{equation*}
\sum_{x_0\in \mathscr{D}^\ast_\jboundary(\FF_p)} p\mathcal{I}^0_p(x_0)
\ll p^{-\Re(s_\jboundary-\mathsf{d}_\jboundary+1)k/\mathsf{u}_\jboundary} \bm{1}_{\mathsf{u}_\jboundary\mid k}
\cdot p^{1/2} \bm{1}_{\mathsf{v}_\jboundary k/\mathsf{u}_\jboundary = k-1}
\end{equation*}
(by the bound \cite{bombieri1966exponential}*{Theorem~6} on exponential sums over curves;
this applies because $\sgn_p(y)^{\mathsf{v}_\jboundary}\bmod{p}$ is nonconstant if $\mathsf{v}_\jboundary\ne 0$, and if $\mathsf{v}_\jboundary=0$ then $\unittwo\equiv b-c\bmod{p}$ is nonconstant by Proposition~\ref{PROP:G-induced-nonconstancy-of-a-rational-function-on-Dj}).
Furthermore, if $\mathsf{v}_\jboundary\ge \mathsf{u}_\jboundary$, then (by Proposition~\ref{PROP:apply-lower-bound-on-anticanonical-Weil-divisor}) $\mathsf{v}_\jboundary=\mathsf{u}_\jboundary$ and $\jboundary\in J^c_1$.
On the other hand, for $\mathcal{I}^1_p(x_0)$, summing a geometric series over $v_p(y) \in \ZZ$ yields
\begin{equation*}
\mathcal{I}^1_p(x_0) \ll \int_{y,z\in p\ZZ_p:\, v_p(y) \ge (k+1)/\mathsf{u}_\jboundary}
\abs{y}_p^{\Re(s_\jboundary-\mathsf{d}_\jboundary)}\, dy\,dz
\le \frac{p^{-\Re(s_\jboundary-\mathsf{d}_\jboundary+1)(k+1)/\mathsf{u}_\jboundary}}{1-p^{-\Re(s_\jboundary-\mathsf{d}_\jboundary+1)}} \cdot \frac{1}{p}
\ll \frac{p^{-\Re(s_\jboundary-\mathsf{d}_\jboundary+1)k/\mathsf{u}_\jboundary}}{p^3},
\end{equation*}
since $\Re(s_\jboundary-\mathsf{d}_\jboundary+1)\ge 2\mathsf{u}_\jboundary$.
Finally, summing $\mathcal{I}^0_p(x_0)$, $\mathcal{I}^1_p(x_0)$
over $x_0\in \mathscr{D}^\ast_\jboundary(\FF_p)$ using the Lang--Weil estimate
delivers (1), since $D_\jboundary$ is geometrically irreducible.

We now prove claim~(2).
If $x_0\in \mathscr{D}_\jboundary(\FF_p) \setminus \bigcup_{i\in J\setminus \set{\jboundary}} \mathscr{D}_i(\FF_p)$ for some $\jboundary\in J_1$, then
\begin{equation*}
\mathcal{I}_p(x_0) \ll \int_{y,z\in p\ZZ_p:\, v_p(y) \ge k/\mathsf{u}_\jboundary}
\abs{y}_p^{\Re(s_\jboundary-\mathsf{d}_\jboundary)} \, dy\,dz
\ll p^{-\Re(s_\jboundary-\mathsf{d}_\jboundary+1) k/\mathsf{u}_\jboundary} p^{-1}
\le p^{-(2\mathsf{u}_\jboundary-\delta+1) k/\mathsf{u}_\jboundary} p^{-1},
\end{equation*}
which suffices.
Similarly, if $x_0\in (\mathscr{D}_\jboundary\cap \mathscr{D}_i)(\FF_p)$ for some $\jboundary\in J_1$ and $i\in J_3$, then
\begin{equation*}
\mathcal{I}_p(x_0) \ll \int_{y,z\in p\ZZ_p:\, v_p(y) \ge k/\mathsf{u}_\jboundary}
\abs{y}_p^{\Re(s_\jboundary-\mathsf{d}_\jboundary)} \abs{z}_p^{\Re(s_i-\mathsf{d}_i)} \, dy\,dz
\ll p^{-\Re(s_\jboundary-\mathsf{d}_\jboundary+1) k/\mathsf{u}_\jboundary} p^{-\Re(s_i-\mathsf{d}_i+1)},
\end{equation*}
which suffices since $\Re(s_\jboundary-\mathsf{d}_\jboundary+1)\ge 2\mathsf{u}_\jboundary-\delta+1$ and $\Re(s_i-\mathsf{d}_i+1)\ge 1-\delta$.

The remaining cases are messy to do explicitly,
so we now apply Lemma~\ref{LP-axiom}.
Initially, we take
$(\kboundary,\lboundary,\varsigma_\kboundary,\varsigma_\lboundary,d,e)
= (\jboundary,i,1,1,2,3)$,
with half-planes $\hbar_1,\hbar_2,\hbar_3\belongs \RR^2$ corresponding to
the half-regions $\{Y\le p^{-1}\}$, $\{Z\le p^{-1}\}$, and $\{\abs{\alpha}_p\, Y^{\mathsf{u}_\kboundary\varsigma_\kboundary}Z^{\mathsf{u}_\lboundary\varsigma_\lboundary} \le 1\}$ in $\RR_{>0}^2$,
so that $\mathfrak{S}'_R \belongs \mathfrak{S}_R(\alpha)$.
If $x_0\in (\mathscr{D}_\jboundary\cap \mathscr{D}_i)(\FF_p)$ for some distinct $\jboundary,i\in J_1$, then \eqref{lebesgue-sum} gives\footnote{The pesky term $(\abs{\alpha}_p^{-1}+\abs{\alpha}_p)^{-1/\eps} \le p^{-k/\eps}$ in \eqref{lebesgue-sum} is negligible, so we do not mention it further.}
\begin{equation*}
\mathcal{I}_p(x_0) \ll \int_{\substack{y,z\in p\ZZ_p: \\ y^{\mathsf{u}_\jboundary}z^{\mathsf{u}_i}\in p^k\ZZ_p}}
\abs{y}_p^{\Re(s_\jboundary-\mathsf{d}_\jboundary)} \abs{z}_p^{\Re(s_i-\mathsf{d}_i)} \, dy\,dz
\ll_\eps p^{k\eps} \max_{\substack{Y,Z\le p^{-1}: \\ Y^{\mathsf{u}_\jboundary}Z^{\mathsf{u}_i}\le p^{-k}}}
{Y^{\Re(s_\jboundary-\mathsf{d}_\jboundary+1)} Z^{\Re(s_i-\mathsf{d}_i+1)}},
\end{equation*}
which by \eqref{R-vertex-bounding} is $\le p^{k\eps} p^{-\Re(s_\jboundary-\mathsf{d}_\jboundary+1) \max(1, (k-\mathsf{u}_i)/\mathsf{u}_\jboundary)} p^{-\Re(s_i-\mathsf{d}_i+1)}$ (assuming $\argmax({\cdots}) \cap \set{Z=p^{-1}} \ne \emptyset$; otherwise, switch $\jboundary$, $i$),
which is in turn (using $\Re(s_\jboundary-\mathsf{d}_\jboundary+1)\ge (1-\delta) + 2\mathsf{u}_\jboundary$)
\begin{equation}
\label{INEQ:tricky-numerical-point}
\le p^{k\eps} p^{-(1-\delta)\max(1, (k-\mathsf{u}_i)/\mathsf{u}_\jboundary) - (2\mathsf{u}_\jboundary)(k-\mathsf{u}_i)/\mathsf{u}_\jboundary} p^{-(2\mathsf{u}_i-\delta+1)}
= p^{k\eps} p^{-(1-\delta)\max(2, (k+\mathsf{u}_\jboundary-\mathsf{u}_i)/\mathsf{u}_\jboundary)} p^{-2k};
\end{equation}
this suffices, since $\max(2, \frac{k+\mathsf{u}_\jboundary-\mathsf{u}_i}{\mathsf{u}_\jboundary})\ge \bm{1}_{k\le \max(\mathsf{u}_\jboundary,\mathsf{u}_i)} + \frac{k}{\max(\mathsf{u}_\jboundary,\mathsf{u}_i)}$ by Lemma~\ref{LEM:tricky-numerical-point} below.
If $x_0\in (\mathscr{D}_\jboundary\cap \mathscr{D}_i)(\FF_p)$ for some $\jboundary\in J_1$ and $i\in J_2$, then \eqref{lebesgue-sum} gives
\begin{equation*}
\mathcal{I}_p(x_0) \ll \int_{\substack{y,z\in p\ZZ_p: \\ y^{\mathsf{u}_\jboundary}z^{\mathsf{u}_i}\in p^k\ZZ_p}}
\abs{y}_p^{\Re(s_\jboundary-\mathsf{d}_\jboundary)} \abs{z}_p^{\Re(s_i-\mathsf{d}_i)} \, dy\,dz
\ll_\eps p^{k\eps} \max_{\substack{Y,Z\le p^{-1}: \\ Y^{\mathsf{u}_\jboundary}\le p^{-k}Z^{\abs{\mathsf{u}_i}}}}
{Y^{\Re(s_\jboundary-\mathsf{d}_\jboundary+1)} Z^{\Re(s_i-\mathsf{d}_i+1)}};
\end{equation*}
but the conditions $Z\le p^{-1}$ and $Y^{\mathsf{u}_\jboundary}\le p^{-k}Z^{\abs{\mathsf{u}_i}}$ together imply $Y<1$, say,
so if we extend the half-region $\{Y\le p^{-1}\}$ to $\{Y\le 1\}$
before applying \eqref{R-vertex-bounding}, then we find that
\begin{equation*}
\max_{\substack{Y,Z\le p^{-1}: \\ Y^{\mathsf{u}_\jboundary}\le p^{-k}Z^{\abs{\mathsf{u}_i}}}}
({\cdots})
\le \max_{\substack{Y\le 1,\; Z\le p^{-1}: \\ Y^{\mathsf{u}_\jboundary}\le p^{-k}Z^{\abs{\mathsf{u}_i}}}}
({\cdots})
= ({\cdots})\vert_{\substack{Z=p^{-1}, \\ Y^{\mathsf{u}_\jboundary}=p^{-k}Z^{\abs{\mathsf{u}_i}}}}
= p^{-\Re(s_\jboundary-\mathsf{d}_\jboundary+1) (k+\abs{\mathsf{u}_i})/\mathsf{u}_\jboundary} p^{-\Re(s_i-\mathsf{d}_i+1)},
\end{equation*}
which is $\le p^{-(2\mathsf{u}_\jboundary-\delta+1) (k+\abs{\mathsf{u}_i})/\mathsf{u}_\jboundary} p^{-(2\mathsf{u}_i-\delta+1)}
= p^{-2k} p^{(\delta-1) (k+\abs{\mathsf{u}_i})/\mathsf{u}_\jboundary} p^{\delta-1}
\le p^{-2k} p^{(\delta-1) k/\mathsf{u}_\jboundary} p^{\delta-1}$.
\end{proof}

\begin{lemma}
\label{LEM:tricky-numerical-point}
Let $k,l,m\in \RR_{>0}$.
Then $\max(1, \frac{k+m-l}{m}) \ge \min(\frac{k}{l}, \frac{k}{m})$.
\end{lemma}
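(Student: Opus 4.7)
The plan is to prove this by a short case analysis on the relative sizes of $k$, $l$, $m$, since both sides of the inequality change their active branch only at the thresholds $k=l$ and $l=m$.

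First I would dispose of the case $k \le l$. In that case $k/l \le 1$, so $\min(k/l, k/m) \le 1 \le \max(1, (k+m-l)/m)$ immediately, and we are done without even looking at $m$.

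From then on I may assume $k \ge l$, which gives $(k+m-l)/m \ge 1$, so the left-hand side simplifies to $(k+m-l)/m$. I would then split on whether $m \ge l$ or $m < l$. When $m \ge l$, the right-hand side is $\min(k/l, k/m) = k/m$, and the inequality reduces to $m - l \ge 0$, which holds. When $m < l$, the right-hand side is $k/l$, and after clearing denominators the inequality becomes $l(k+m-l) \ge km$, i.e.\ $(k-l)(l-m) \ge 0$; both factors are nonnegative under our assumptions, so this is immediate.

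There is no real obstacle here; the only mildly delicate point is making sure the $\max$ simplifies correctly so that one never accidentally uses $(k+m-l)/m$ when it is less than $1$, which is exactly why I split off $k\le l$ first. Putting the three cases together gives the desired inequality in all regimes of $k,l,m\in \RR_{>0}$.
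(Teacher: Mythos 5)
Your proof is correct and follows essentially the same case analysis as the paper, splitting on the same thresholds $k$ versus $l$ and $l$ versus $m$, and hinging on the same factorization $(k-l)(l-m)\ge 0$ in the only nontrivial regime.
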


\begin{proof}
If $l\le m$, then
$\frac{k+m-l}{m} \ge \frac{k}{m}$.
If $k\le l$, then
$1 \ge \frac{k}{l}$.
If $k>l>m$,
then $\frac{k+m-l}{m} > \frac{k}{l}$.
\end{proof}

\begin{lemma}
[Numerator bias]
\label{LEM:numerator-bias}
Suppose $v_p(\alpha) = k > 0$.
Let
\begin{equation*}
\mathfrak{N}=\mathfrak{N}_p(\bm{s},\lambda,\alpha)\defeq H^\vee_p(\bm{s},\lambda,0,\alpha)
- \sum_{\jboundary\in J^\ast_2} p^{-k (s_\jboundary-\mathsf{d}_\jboundary+1)/\abs{\mathsf{u}_\jboundary}} \bm{1}_{\mathsf{u}_\jboundary\mid k}.
\end{equation*}
Then $\mathfrak{N} \ll_\eps 
p^{(2+\eps)k} \sum_{\jboundary\in J_2} (p^{\delta-1} + p^{-1/2}\bm{1}_{\mathsf{u}_\jboundary\mid k} + \bm{1}_{k>\abs{\mathsf{u}_\jboundary}}) p^{(\delta-1)k/\abs{\mathsf{u}_\jboundary}}$.
\end{lemma}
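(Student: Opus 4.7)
The plan is to mirror the proof of Lemma~\ref{LEM:denominator-bias} closely, with $J^\ast_2$ replacing $J^c_1$ as the source of the main term and with no additive phase $\phase(-c\alpha\bmod{\ZZ_p})$ appearing. The latter simplification is available because $I^c\belongs J_1$ for every $c$ (by \eqref{Jc1-Ic-J1-containments}), so Proposition~\ref{PROP:compute-arg-max-set-C0j} gives $\mathcal{C}^{(0)}(\jboundary)=\QQ$ for every $\jboundary\in J_2$; we thus take $c(\jboundary)=0$ throughout. A preliminary application of Lemma~\ref{LEM:general-fixed-p-bound} with $\abs{\alpha}_p=p^{-k}<1$ produces, for each $\jboundary\in J_2$, a summand $\ll p^{k\eps}\cdot p^{-k\Re(s_\jboundary-\mathsf{d}_\jboundary+1)/\abs{\mathsf{u}_\jboundary}}\ll p^{(2+\eps)k+(\delta-1)k/\abs{\mathsf{u}_\jboundary}}$ (via \eqref{INEQ:key-new-region}), which the claimed error absorbs for bounded $p$ since $\ll_\eps$ allows $p$-dependent constants and at least one of the three factors $p^{\delta-1}$, $p^{-1/2}\bm{1}_{\mathsf{u}_\jboundary\mid k}$, $\bm{1}_{k>\abs{\mathsf{u}_\jboundary}}$ is positive. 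Henceforth assume $p$ is large (so $p\notin \ol{\Sbad}$).

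Next, decompose $H^\vee_p = \sum_{x_0\in \mathscr{X}(\FF_p)} \mathcal{I}_p(x_0)$ via the reduction $\rho$, as in the previous proof. For $x_0\in \mathscr{D}^\ast_\jboundary(\FF_p)$ with $\jboundary\in J_2$, Lemma~\ref{LEM:large-p-local-analytic-coordinates-generically}(1) furnishes coordinates with $a=\unitone y^{\mathsf{u}_\jboundary}$ and $b=y^{\mathsf{v}_0}(\unittwo+z)$; a direct Jacobian calculation gives $\abs{db\,da/a}_p = \abs{y}_p^{\mathsf{v}_0-1}\,dy\,dz$, so $\mathsf{d}_\jboundary=1-\mathsf{v}_0$. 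Writing $l\defeq v_p(y)\ge 1$, the constraint $\alpha a\in \ZZ_p$ becomes $l\le k/\abs{\mathsf{u}_\jboundary}$, while the phase $\phase(-\alpha y^{\mathsf{v}_0}(\unittwo+z)\bmod{\ZZ_p})$ is trivial precisely when $l\le k/\abs{\mathsf{v}_0}$ (``Case~A''). For $\jboundary\in J^\ast_2$, where $\mathsf{v}_0=\mathsf{u}_\jboundary$ and $\mathsf{d}_\jboundary=1+\abs{\mathsf{u}_\jboundary}$, only Case~A arises and the integrand is $x_0$-independent; summing thus inserts the Lang--Weil count $\#\mathscr{D}^\ast_\jboundary(\FF_p)=p+O(\sqrt p)$, and the $l=k/\abs{\mathsf{u}_\jboundary}$ term (nonzero only when $\mathsf{u}_\jboundary\mid k$) yields exactly $p^{-k(s_\jboundary-\mathsf{d}_\jboundary+1)/\abs{\mathsf{u}_\jboundary}}$, with the $O(\sqrt p)$ error matching the $p^{-1/2}\bm{1}_{\mathsf{u}_\jboundary\mid k}$ term. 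The terms $l<k/\abs{\mathsf{u}_\jboundary}$ form a geometric series of ratio $p^{2\abs{\mathsf{u}_\jboundary}+\delta-1}\ge p^{1+\delta}$ whose tail is $\le p^{-(2\abs{\mathsf{u}_\jboundary}+\delta-1)}\le p^{\delta-1}$ times the largest term, yielding the $p^{\delta-1}$ error.

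Contributions from $x_0\in \mathscr{G}(\FF_p)$ (where $H_p=1$, $\alpha b\in \ZZ_p$, and the total integral is $O(1)$) and from $x_0\in \mathscr{D}^\ast_\jboundary(\FF_p)$ with $\jboundary\in J_1\cup J_3$ (where the $y$-integral decays rapidly by \eqref{INEQ:key-new-region}) total $O(1)$ and are absorbed into any error term. For $\jboundary\in J_2\setminus J^\ast_2$ (where $\abs{\mathsf{v}_0}>\abs{\mathsf{u}_\jboundary}$), the range is empty unless $k\ge \abs{\mathsf{u}_\jboundary}$; a trivial bound on Case~A gives $\ll p^{2k+(\delta-1)k/\abs{\mathsf{u}_\jboundary}}$ (matching $\bm{1}_{k>\abs{\mathsf{u}_\jboundary}}$), while Case~B requires a $\sqrt p$ saving from a Weil/Bombieri-type Gauss sum on the $y$-fiber (justified by the nonconstancy of the relevant power map in $\unittwo$ on $D_\jboundary$ via Proposition~\ref{PROP:G-induced-nonconstancy-of-a-rational-function-on-Dj}). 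The main technical obstacle will be the intersection contribution $x_0\in (\mathscr{D}_\jboundary\cap \mathscr{D}_i)(\FF_p)$, bounded via the linear-programming framework of Lemma~\ref{LP-axiom} applied to monomial height integrals as in the denominator proof; the trickiest case is $\jboundary\in J_2$ meeting $i\in J_1\cup J_2$, whose polytope-vertex analysis parallels \eqref{INEQ:tricky-numerical-point} via Lemma~\ref{LEM:tricky-numerical-point}.
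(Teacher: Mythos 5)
Your proposal follows essentially the same approach as the paper's proof: pass to large $p$ via Lemma~\ref{LEM:general-fixed-p-bound} (and a bound on the main term), decompose $H^\vee_p$ by the reduction map, identify the contribution $p^{-k(s_\jboundary-\mathsf{d}_\jboundary+1)/\abs{\mathsf{u}_\jboundary}}$ from $J^\ast_2$ with Lang--Weil, and treat the non-special $\jboundary\in J_2$ and the intersection points via Gauss-sum/Weil cancellation and the linear-programming framework of Lemma~\ref{LP-axiom}.

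One mis-attribution worth flagging. In Case~B for $\jboundary\in J_2\setminus J^\ast_2$ you cite Proposition~\ref{PROP:G-induced-nonconstancy-of-a-rational-function-on-Dj} (nonconstancy of $\unittwo$ over $D_\jboundary$) as the source of the $\sqrt p$ saving. That mechanism is what the denominator lemma needs precisely when $\mathsf{v}_\jboundary=0$ (there the power $\sgn_p(y)^{\mathsf{v}_\jboundary}$ is constant, so one must instead vary $x_0$). Here that case never occurs: for $\jboundary\in J_2$, Proposition~\ref{PROP:apply-lower-bound-on-anticanonical-Weil-divisor} gives $\mathsf{v}_\jboundary\le \mathsf{u}_\jboundary<0$, so $\mathsf{v}_\jboundary\ne 0$ always, and the cancellation on the $y$-fiber already comes from the nonconstancy of $\sgn_p(y)^{\mathsf{v}_\jboundary}\bmod p$ with $\unittwo\in\ZZ_p^\times$ fixed. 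The geometric input of Proposition~\ref{PROP:G-induced-nonconstancy-of-a-rational-function-on-Dj} is not needed in this lemma (and invoking it for nonconstancy of $\unittwo\bmod p$ would require a separate argument: the proposition gives nonconstancy of $(b-c)^{\rord}/a^{\sord}=\unittwo^{\rord}/\unitone^{\sord}$ on $D_\jboundary$, which does not immediately give nonconstancy of $\unittwo$ alone). The rest of your sketch, including the use of Lemma~\ref{LEM:tricky-numerical-point} and the $\eqref{INEQ:tricky-numerical-point}$-style polytope analysis for the intersection cases, matches the paper.
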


\begin{proof}
Here $\abs{\alpha}_p = p^{-k}$, so by Lemma~\ref{LEM:general-fixed-p-bound} and the bound $1 + p^{-k\Re(s_\jboundary-\mathsf{d}_\jboundary+1)/\abs{\mathsf{u}_\jboundary}} \le 2p^{2k} p^{(\delta-1)k/\abs{\mathsf{u}_\jboundary}}$ (and the fact $J_2\ne \emptyset$), we may assume $p$ is large.
Define $\mathcal{I}_p(x_0)$ and $\mathscr{D}^\ast_\jboundary$ as before.
Then $H^\vee_p = \sum_{x_0\in \mathscr{X}(\FF_p)} \mathcal{I}_p(x_0)$.
As noted in \cite{tanimoto2012height}*{proof of Lemma~5.8}, we have
\begin{equation*}
\sum_{x_0\notin \bigcup_{\jboundary\in J_2} \mathscr{D}_\jboundary(\FF_p)} \abs{\mathcal{I}_p(x_0)}
\le \sum_{x_0\notin \bigcup_{\jboundary\in J_2} \mathscr{D}_\jboundary(\FF_p)}
\int_{\rho^{-1}(x_0)} \abs{H_p(\bm{s},g)}^{-1}\, dg
\ll 1;
\end{equation*}
this is because $\Re(s_\jboundary-\mathsf{d}_\jboundary+1) \ge 1-\delta > 0$ for $\jboundary\in J_1\cup J_3$.
This $O(1)$ bound is satisfactory, since $J_2\ne \emptyset$ and $1\le p^{2k} p^{\delta-1} p^{(\delta-1)k/\abs{\mathsf{u}_\jboundary}}$.
Thus it remains to study $\mathcal{I}_p(x_0)$ for $x_0\in \bigcup_{\jboundary\in J_2} \mathscr{D}_\jboundary(\FF_p)$.
We claim the following, of which the lemma is a direct consequence:
\begin{enumerate}
\item If $\jboundary\in J_2$ and $x_0\in \mathscr{D}^\ast_\jboundary(\FF_p)$, then
\begin{equation*}
p\mathcal{I}_p(x_0)
= p^{-(s_\jboundary-\mathsf{d}_\jboundary+1)k/\abs{\mathsf{u}_\jboundary}} \bm{1}_{\mathsf{u}_\jboundary\mid k} \bm{1}_{\jboundary\in J^\ast_2}
+ p^{(2\abs{\mathsf{u}_\jboundary}+\delta-1)k/\abs{\mathsf{u}_\jboundary}} O(p^{-1/2}\bm{1}_{\mathsf{u}_\jboundary\mid k} + p^{-1}).
\end{equation*}

\item If $x_0\in \bigcup_{\jboundary\in J_2} \mathscr{D}_\jboundary(\FF_p)$, then
\begin{equation*}
\mathcal{I}_p(x_0)
\ll_\eps p^{(2+\eps)k} \sum_{\jboundary\in J_2:\, x_0\in \mathscr{D}_\jboundary(\FF_p)} (p^{\delta-1} + \bm{1}_{k>\abs{\mathsf{u}_\jboundary}}) p^{(\delta-1)k/\abs{\mathsf{u}_\jboundary}}.
\end{equation*}
\end{enumerate}

For (1),
suppose $\jboundary\in J_2$ and $x_0\in \mathscr{D}^\ast_\jboundary(\FF_p)$.
Then by Lemma~\ref{LEM:large-p-local-analytic-coordinates-generically},
the set $\rho^{-1}(x_0)$ has analytic coordinates $y,z\in p\ZZ_p$
with $a = \unitone y^{\mathsf{u}_\jboundary}$
and $b-c(\jboundary) = y^{\mathsf{v}_\jboundary} (\unittwo+z)$,
where $\unitone,\unittwo\in \ZZ_p^\times$.
Since $c(\jboundary)\alpha\in \ZZ_p$, it follows that
\begin{equation*}
\mathcal{I}_p(x_0) = \int_{y,z\in p\ZZ_p:\, v_p(y) \le k/\abs{\mathsf{u}_\jboundary}}
\abs{y}_p^{s_\jboundary-\mathsf{d}_\jboundary} \phase(-y^{\mathsf{v}_\jboundary}(\unittwo+z)\alpha \bmod{\ZZ_p})
\lambda_{\Sbad,p}(\alpha y^{\mathsf{u}_\jboundary}) \, \frac{dy\,dz}{1-p^{-1}}.
\end{equation*}
For any given $y$, the integral over $z$ vanishes unless $y^{\mathsf{v}_\jboundary}p\alpha\in \ZZ_p$, i.e.~$v_p(y)\le (k+1)/\abs{\mathsf{v}_\jboundary}$;
here $\mathsf{v}_\jboundary\le \mathsf{u}_\jboundary<0$ by Proposition~\ref{PROP:apply-lower-bound-on-anticanonical-Weil-divisor}, since $\jboundary\in J_2$.
Integrating over the ranges $v_p(y) = (k+1)/\abs{\mathsf{v}_\jboundary}$
(i.e.~$y^{\mathsf{v}_\jboundary}\alpha\in p^{-1}\ZZ_p^\times$)
and $v_p(y) \le k/\abs{\mathsf{v}_\jboundary}$ (i.e.~$y^{\mathsf{v}_\jboundary}\alpha\in \ZZ_p$) separately, we then get
\begin{align*}
p \mathcal{I}_p(x_0)
&= p \int_{y,z\in p\ZZ_p:\, (k+1)/\abs{\mathsf{v}_\jboundary} = v_p(y) \le k/\abs{\mathsf{u}_\jboundary}} ({\cdots})
+ p \int_{y,z\in p\ZZ_p:\, v_p(y) \le k/\abs{\mathsf{v}_\jboundary}} ({\cdots}) \\
&= p^{-\Re(s_\jboundary-\mathsf{d}_\jboundary+1)(k+1)/\abs{\mathsf{v}_\jboundary}} O(p^{-1/2})
\bm{1}_{\ZZ\ni (k+1)/\abs{\mathsf{v}_\jboundary} \le k/\abs{\mathsf{u}_\jboundary}}
+ \sum_{1\le r\le k/\abs{\mathsf{v}_\jboundary}} p^{-(s_\jboundary-\mathsf{d}_\jboundary+1)r} \lambda_{\Sbad,p}(p^{k-r\mathsf{u}_\jboundary});
\end{align*}
the $O(p^{-1/2})$ factor comes from cancellation over $y$,
occurring since $\mathsf{v}_\jboundary\ne 0$ and $\unittwo\in \ZZ_p^\times$.
Since $-\Re(s_\jboundary-\mathsf{d}_\jboundary+1) \le 2\abs{\mathsf{u}_\jboundary}+\delta-1$, and any integer $<k/\abs{\mathsf{u}_\jboundary}$ is $\le (k-1)/\abs{\mathsf{u}_\jboundary}$, we get
\begin{equation*}
p \mathcal{I}_p(x_0)
= p^{-(s_\jboundary-\mathsf{d}_\jboundary+1)k/\abs{\mathsf{u}_\jboundary}} \bm{1}_{\mathsf{u}_\jboundary\mid k} \bm{1}_{\mathsf{v}_\jboundary=\mathsf{u}_\jboundary}
+ p^{(2\abs{\mathsf{u}_\jboundary}+\delta-1)k/\abs{\mathsf{u}_\jboundary}} O(p^{-1/2}\bm{1}_{\mathsf{u}_\jboundary\mid k} + p^{-(2\abs{\mathsf{u}_\jboundary}+\delta-1)/\abs{\mathsf{u}_\jboundary}}).
\end{equation*}
This implies (1), since $2\abs{\mathsf{u}_\jboundary}+\delta-1 \ge \abs{\mathsf{u}_\jboundary}$.

We now turn to (2).
If $\jboundary\in J_2$ and $x_0\in \mathscr{D}_\jboundary(\FF_p) \setminus \bigcup_{i\in J\setminus \set{\jboundary}} \mathscr{D}_i(\FF_p)$, then
\begin{equation*}
\mathcal{I}_p(x_0) \ll \int_{p\ZZ_p^2:\, v_p(y) \le k/\abs{\mathsf{u}_\jboundary}}
\abs{y}_p^{\Re(s_\jboundary-\mathsf{d}_\jboundary)} \, dy\,dz
\le \int_{p\ZZ_p^2:\, v_p(y) \le k/\abs{\mathsf{u}_\jboundary}}
\abs{y}_p^{2\mathsf{u}_\jboundary-\delta} \, dy\,dz
\ll \frac{p^{-(2\mathsf{u}_\jboundary-\delta+1)k/\abs{\mathsf{u}_\jboundary}}}{p},
\end{equation*}
which suffices.
Similarly, if $x_0\in (\mathscr{D}_\jboundary\cap \mathscr{D}_i)(\FF_p)$ for some $\jboundary\in J_2$ and $i\in J_3$, then
\begin{equation*}
\mathcal{I}_p(x_0)
\ll \int_{p\ZZ_p^2:\, v_p(y) \le k/\abs{\mathsf{u}_\jboundary}}
\abs{y}_p^{2\mathsf{u}_\jboundary-\delta} \abs{z}_p^{2\mathsf{u}_i-\delta} \, dy\,dz
\ll p^{-(2\mathsf{u}_\jboundary-\delta+1)k/\abs{\mathsf{u}_\jboundary}} p^{-(1-\delta)}
\quad(\textnormal{since $\mathsf{u}_i=0$}).
\end{equation*}
We treat the remaining cases by Lemma~\ref{LP-axiom},
with $(d,e)=(2,3)$ initially.
If $x_0\in (\mathscr{D}_\jboundary\cap \mathscr{D}_i)(\FF_p)$ for some distinct $\jboundary,i\in J_2$,
then $\mathcal{I}_p(x_0)=0$ unless $k\ge \abs{\mathsf{u}_i}+\abs{\mathsf{u}_\jboundary}$, in which case
\begin{equation*}
\mathcal{I}_p(x_0) \ll \int_{\substack{y,z\in p\ZZ_p: \\ p^k y^{\mathsf{u}_\jboundary}z^{\mathsf{u}_i}\in \ZZ_p}}
\abs{y}_p^{2\mathsf{u}_\jboundary-\delta} \abs{z}_p^{2\mathsf{u}_i-\delta} \, dy\,dz
\ll_\eps p^{k\eps} \max_{\substack{Y,Z\le p^{-1}: \\ Y^{\abs{\mathsf{u}_\jboundary}}Z^{\abs{\mathsf{u}_i}}\ge p^{-k}}} {Y^{2\mathsf{u}_\jboundary-\delta+1} Z^{2\mathsf{u}_i-\delta+1}}
\end{equation*}
by \eqref{lebesgue-sum},
where $\argmax({\cdots})
\belongs \set{Y^{\abs{\mathsf{u}_\jboundary}}Z^{\abs{\mathsf{u}_i}} = p^{-k}}$
by \eqref{R-vertex-bounding}
(since if $Y^{\abs{\mathsf{u}_\jboundary}}Z^{\abs{\mathsf{u}_i}} > p^{-k}$,
then $Y^{2\mathsf{u}_\jboundary-\delta+1} Z^{2\mathsf{u}_i-\delta+1}$
can be increased by decreasing $Y$ or $Z$)
and thus, after switching $\jboundary$, $i$ if necessary so that $\argmax({\cdots}) \cap \set{Z=p^{-1}} \ne \emptyset$, we have
\begin{equation*}
\max({\cdots})
= p^{(2\mathsf{u}_\jboundary-\delta+1)(\abs{\mathsf{u}_i}-k)/\abs{\mathsf{u}_\jboundary}} p^{-(2\mathsf{u}_i-\delta+1)}
= p^{2k} p^{(\delta-1)(k+\abs{\mathsf{u}_\jboundary}-\abs{\mathsf{u}_i})/\abs{\mathsf{u}_\jboundary}}
\end{equation*}
(cf.~\eqref{INEQ:tricky-numerical-point});
here $\frac{k+\abs{\mathsf{u}_\jboundary}-\abs{\mathsf{u}_i}}{\abs{\mathsf{u}_\jboundary}} = \max(2, \frac{k+\abs{\mathsf{u}_\jboundary}-\abs{\mathsf{u}_i}}{\abs{\mathsf{u}_\jboundary}}) \ge \bm{1}_{k\le \max(\abs{\mathsf{u}_\jboundary},\abs{\mathsf{u}_i})} + \frac{k}{\max(\abs{\mathsf{u}_\jboundary},\abs{\mathsf{u}_i})}$ by Lemma~\ref{LEM:tricky-numerical-point}.
If $x_0\in (\mathscr{D}_\jboundary\cap \mathscr{D}_i)(\FF_p)$ for some $\jboundary\in J_2$ and $i\in J_1$, then
\begin{equation*}
\mathcal{I}_p(x_0) \ll \int_{\substack{y,z\in p\ZZ_p: \\ p^k y^{\mathsf{u}_\jboundary}z^{\mathsf{u}_i}\in \ZZ_p}}
\abs{y}_p^{2\mathsf{u}_\jboundary-\delta} \abs{z}_p^{2\mathsf{u}_i-\delta} \, dy\,dz
\ll_\eps p^{k\eps} \max_{\substack{Y,Z\le p^{-1}: \\ Y^{\abs{\mathsf{u}_\jboundary}}\ge p^{-k}Z^{\mathsf{u}_i}}}
{Y^{2\mathsf{u}_\jboundary-\delta+1} Z^{2\mathsf{u}_i-\delta+1}}
\end{equation*}
by \eqref{lebesgue-sum},
where (after decreasing $Y$ if necessary,
which will only increase $Y^{2\mathsf{u}_\jboundary-\delta+1}$)
\begin{equation*}
\max_{\substack{Y,Z\le p^{-1}: \\ Y^{\abs{\mathsf{u}_\jboundary}}\ge p^{-k}Z^{\mathsf{u}_i}}}
({\cdots})
= \max_{\substack{Y,Z\le p^{-1}: \\ Y^{\abs{\mathsf{u}_\jboundary}} = p^{-k}Z^{\mathsf{u}_i}}}
({\cdots})
\le \max_{\substack{Y\le 1,\; Z\le p^{-1}: \\ Y^{\abs{\mathsf{u}_\jboundary}} = p^{-k}Z^{\mathsf{u}_i}}}
({\cdots})
= ({\cdots})\vert_{\substack{Z = p^{-1}: \\ Y^{\abs{\mathsf{u}_\jboundary}} = p^{-k}Z^{\mathsf{u}_i}}}
\end{equation*}
(by the $(d,e) = (1,2)$ case of \eqref{R-vertex-bounding},\footnote{We take the affine line $V\belongs \RR^2$ corresponding to the curve $\{Y^{\abs{\mathsf{u}_\jboundary}} = p^{-k}Z^{\mathsf{u}_i}\}$ in $\RR_{>0}^2$,
and we take $\hbar_1,\hbar_2\belongs \RR^2$ corresponding to $\{Y\le 1\}$ and $\{Z\le p^{-1}\}$ in $\RR_{>0}^2$.
Note that $\emptyset\ne \mathfrak{S}'_R\belongs \mathfrak{S}_R(\alpha)$.}
since $\set{Z\le p^{-1}}\cap \set{Y^{\abs{\mathsf{u}_\jboundary}} = p^{-k}Z^{\mathsf{u}_i}} \belongs \set{Y<1}$),
which is $$= p^{-(2\mathsf{u}_\jboundary-\delta+1) (k+\mathsf{u}_i)/\abs{\mathsf{u}_\jboundary}} p^{-(2\mathsf{u}_i-\delta+1)}
= p^{2k} p^{(\delta-1)(k+\mathsf{u}_i)/\abs{\mathsf{u}_\jboundary}} p^{\delta-1}
\le p^{2k} p^{(\delta-1)k/\abs{\mathsf{u}_\jboundary}} p^{\delta-1}.$$
This completes the proof of (2).
\end{proof}

We now handle the ``generic'' case of $p$ coprime to the numerator and denominator of $\alpha$.

\begin{lemma}
\label{LEM:generic-local-factor-estimate}
Suppose $v_p(\alpha) = 0$.
Let $$\mathfrak{G}=\mathfrak{G}_p(\bm{s},\lambda,\alpha)\defeq H^\vee_p(\bm{s},\lambda,0,\alpha) - 1.$$
Then $\mathfrak{G} \ll p^{2(\delta-1)}$.
\end{lemma}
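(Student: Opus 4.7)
The plan is to follow the templates of Lemmas~\ref{LEM:denominator-bias} and~\ref{LEM:numerator-bias}, with the ``main term'' now extracted from integration over $\mathscr{G}(\ZZ_p)$ rather than from boundary strata. First, by Lemma~\ref{LEM:general-fixed-p-bound} with $\abs{\alpha}_p = 1$, we have $H^\vee_p\ll_{p,\eps} 1$, so $\mathfrak{G}\ll_p 1$, which yields $\mathfrak{G}\ll p^{2(\delta-1)}$ over any fixed bounded range of $p$; we may therefore assume $p$ is sufficiently large (in particular $p\notin \ol{\Sbad}$ and $p\nmid \Nbad$).

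Following \cite{tanimoto2012height}, let $\rho = \redp$ and decompose $H^\vee_p = \sum_{x_0\in \mathscr{X}(\FF_p)} \mathcal{I}_p(x_0)$. For $x_0\in \mathscr{G}(\FF_p)$ the fiber $\rho^{-1}(x_0)\belongs \mathscr{G}(\ZZ_p)$, where $H_p = 1$ by \eqref{large-p-GZp-height}, the integrality condition $\Nbad\alpha a\in \ZZ_p$ is automatic, $\phase(-\alpha b\bmod{\ZZ_p}) = 1$ (since $\alpha b\in \ZZ_p$), and the twists $\ol{\lambda}_p(\sgn_p(\alpha a))\lambda_{\Sbad,p}(\alpha a)$ reduce to $1$. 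Hence $\sum_{x_0\in \mathscr{G}(\FF_p)} \mathcal{I}_p(x_0) = \int_{\mathscr{G}(\ZZ_p)} dg_p = 1$, and it remains to bound $\sum_{x_0\in D(\FF_p)} \mathcal{I}_p(x_0)$.

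We split the boundary contribution by the type of $\jboundary$ with $x_0\in \mathscr{D}_\jboundary(\FF_p)$, using the analytic coordinates of Lemma~\ref{LEM:large-p-local-analytic-coordinates-generically} on generic fibers $\mathscr{D}^\ast_\jboundary(\FF_p)$; the $\ll 1$ exceptional points (in $\mathscr{D}_\jboundary\setminus \mathscr{D}^\ast_\jboundary$ or in intersections $\mathscr{D}_\jboundary\cap \mathscr{D}_i$ by SNC) are handled as in claim~(2) of Lemma~\ref{LEM:denominator-bias} via Lemma~\ref{LP-axiom}, giving a crude product bound $\ll p^{-\Re(s_\jboundary-\mathsf{d}_\jboundary+1)-\Re(s_i-\mathsf{d}_i+1)}\le p^{2(\delta-1)}$ in the worst case $\jboundary,i\in J_3$. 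Points $x_0$ with $\jboundary\in J_2$ contribute $0$: the factorization $a = \unitone y^{\mathsf{u}_\jboundary}$ with $\mathsf{u}_\jboundary<0$ forces $v_p(a)<0$ on $G(\QQ_p)\cap \rho^{-1}(x_0)$, contradicting $\alpha a\in \ZZ_p$. For $x_0\in \mathscr{D}^\ast_\jboundary(\FF_p)$ with $\jboundary\in J_1$, the absolute-value bound $\abs{\mathcal{I}_p(x_0)} \ll p^{-1-(3-\delta)}$ is already comfortable thanks to $\Re(s_\jboundary-\mathsf{d}_\jboundary+1)\ge 3-\delta$ from \eqref{INEQ:key-new-region}; summing over $\#D_\jboundary(\FF_p)\ll p$ (Lang--Weil) gives $\ll p^{\delta-3}$.

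The main obstacle is $x_0\in \mathscr{D}^\ast_\jboundary(\FF_p)$ with $\jboundary\in J_3$, where $\Re(s_\jboundary-\mathsf{d}_\jboundary+1)\ge 1-\delta$ is too weak for a crude bound. Here we exploit phase cancellation: in the coordinates $a = \unittwo+z$, $b-c(\jboundary) = \unitone y^{\mathsf{v}_\jboundary}$ of Lemma~\ref{LEM:large-p-local-analytic-coordinates-generically}(2), with $\mathsf{v}_\jboundary<0$ since $\jboundary\in J_3$, the $z$-integral contributes a trivial factor $p^{-1}$, while the $y$-integral factors along $y\in p^r\ZZ_p^\times$ as
\[
\sum_{r\ge 1} p^{-r(s_\jboundary-\mathsf{d}_\jboundary+1)}(1-p^{-1})\mathcal{K}(r),
\]
where $\mathcal{K}(r)$ is a normalized character average over $y_0\in \ZZ_p^\times$ of $\phase(-\alpha\unitone p^{r\mathsf{v}_\jboundary} y_0^{\mathsf{v}_\jboundary}\bmod{\ZZ_p})$ (using $\alpha c(\jboundary)\in \ZZ_p$ to discard the constant phase), which descends to a sum on $(\ZZ/p^{r\abs{\mathsf{v}_\jboundary}})^\times$. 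After the substitution $y_1 = y_0^{-1}$, this sum takes the form $\sum_{y_1}\phase(\tilde{c}\,y_1^{\abs{\mathsf{v}_\jboundary}}/p^{r\abs{\mathsf{v}_\jboundary}})$ with $\tilde{c}\in \ZZ_p^\times$; by the Weil bound \cite{bombieri1966exponential}*{Theorem~6} (or the explicit Ramanujan sum computation for $\abs{\mathsf{v}_\jboundary} = 1$), one gets $\abs{\mathcal{K}(r)}\ll p^{-1}$ uniformly in $r$. Summing the geometric series yields $\abs{\mathcal{I}_p(x_0)}\ll p^{-2}\cdot p^{\delta-1} = p^{\delta-3}$, and summing over $\ll p$ points in $D_\jboundary(\FF_p)$ gives $\ll p^{\delta-2}$. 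Combining all cases, $\abs{\mathfrak{G}}\ll p^{\delta-3}+p^{\delta-2}+p^{2(\delta-1)}\ll p^{2(\delta-1)}$, as required.
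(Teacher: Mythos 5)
Your proof is essentially correct and follows the same approach as the paper: decompose $H^\vee_p$ over fibers $\rho^{-1}(x_0)$, extract the main term $1$ from $\mathscr{G}(\FF_p)$, dispatch the $J_1$ and $J_2$ boundary contributions by crude absolute bounds and automatic vanishing respectively, and exploit oscillation in the $J_3$ contribution. The one imprecision is the assertion that the Weil bound (Lemma~\ref{gauss-bound}) gives $\abs{\mathcal{K}(r)}\ll p^{-1}$ uniformly in $r$: Lemma~\ref{gauss-bound} gives only $\ll_\eps N^{\eps-1/2}$ with $N = p^{r\abs{\mathsf{v}_\jboundary}}$, which is $\ll_\eps p^{-1/2+\eps}$ when $(r,\abs{\mathsf{v}_\jboundary})=(1,1)$ and $\ll_\eps p^{-1+O(\eps)}$ (but not $\ll p^{-1}$ on the nose) when $r\abs{\mathsf{v}_\jboundary}\ge 2$. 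Your parenthetical Ramanujan-sum remark rescues the $(1,1)$ case, and either an $\eps<\delta$ choice or the observation that the complete sum over $(\ZZ/p^m\ZZ)^\times$ actually \emph{vanishes} for $m\ge 2$ and $p\nmid\abs{\mathsf{v}_\jboundary}$ (which is what the paper uses, giving the closed form $\frac{p^{-(s_\jboundary-\mathsf{d}_\jboundary+1)}}{p-1}\cdot\frac{-\bm{1}_{\abs{\mathsf{v}_\jboundary}=1}}{p}$) rescues the rest; so the conclusion $\mathcal{I}_p(x_0)\ll p^{\delta-3}$ stands. A second minor stylistic difference: for the exceptional (intersection) points, the paper sidesteps Lemma~\ref{LP-axiom} entirely by writing $\abs{y}_p^{2\mathsf{u}_\jboundary}\abs{z}_p^{2\mathsf{u}_i}=\abs{y^{\mathsf{u}_\jboundary}z^{\mathsf{u}_i}}_p^2\le 1$ and integrating $\abs{y}_p^{-\delta}\abs{z}_p^{-\delta}$ directly, which is simpler than the linear-programming route you invoke, though both give $\ll p^{2\delta-2}$.
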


\begin{proof}
For small $p$, use Lemma~\ref{LEM:general-fixed-p-bound} (which gives the satisfactory bound $\mathfrak{G} \ll_p 1$).
For large $p$, we roughly follow \cite{tanimoto2012height}*{proof of Lemma~5.10}.
Define $\mathcal{I}_p(x_0)$ and $\mathscr{D}^\ast_\jboundary$ as before.
Since $\mathcal{I}_p(x_0) = p^{-2} / (1-p^{-1}) = 1/\card{\mathscr{G}(\FF_p)}$ for all $x_0\notin \bigcup_{\jboundary\in J} \mathscr{D}_\jboundary(\FF_p)$, we have
\begin{equation*}
H^\vee_p(\bm{s},\lambda,0,\alpha) - 1
= \sum_{x_0\in \bigcup_{\jboundary\in J} \mathscr{D}_\jboundary(\FF_p)} \mathcal{I}_p(x_0)
= \sum_{x_0\in \bigcup_{\jboundary\in J_1\cup J_3} \mathscr{D}_\jboundary(\FF_p)} \mathcal{I}_p(x_0),
\end{equation*}
since $\mathcal{I}_p(x_0) = 0$ for all $x_0\in \mathscr{D}_\jboundary(\FF_p) \setminus \bigcup_{i\in J_1} \mathscr{D}_i(\FF_p)$ if $\jboundary\in J_2$.
We claim the following, which readily imply the lemma (by summing over $\mathscr{D}^\ast_\jboundary$ first, and then $\mathscr{D}_\jboundary\setminus \mathscr{D}^\ast_\jboundary$ separately):
\begin{enumerate}
    \item If $\jboundary\in J_1\cup J_3$ and $x_0\in \mathscr{D}^\ast_\jboundary(\FF_p)$, then $\mathcal{I}_p(x_0) \ll p^{\delta-3}$.

    \item If $\jboundary\in J_1\cup J_3$ and $x_0\in \mathscr{D}_\jboundary(\FF_p)$, then $\mathcal{I}_p(x_0) \ll p^{2\delta-2}$.
\end{enumerate}

For (2),
we integrate absolutely.
If $x_0\in \mathscr{D}_\jboundary(\FF_p) \setminus \bigcup_{i\in J\setminus \set{\jboundary}} \mathscr{D}_i(\FF_p)$ for some $\jboundary\in J_1\cup J_3$, then
\begin{equation}
\label{INEQ:crude-unit-alpha-generic-x_0-p-adic-integral-bound}
\mathcal{I}_p(x_0) \ll \int_{p\ZZ_p^2}
\abs{y}_p^{\Re(s_\jboundary-\mathsf{d}_\jboundary)} \, dy\,dz
\le \int_{p\ZZ_p^2}
\abs{y}_p^{2\mathsf{u}_\jboundary-\delta} \, dy\,dz
\ll p^{-(2\mathsf{u}_\jboundary-\delta+1)} p^{-1}
= p^{\delta-2-2\mathsf{u}_\jboundary},
\end{equation}
which is $\le p^{\delta-2}$ since $\mathsf{u}_\jboundary\ge 0$.
If $x_0\in (\mathscr{D}_\jboundary\cap \mathscr{D}_i)(\FF_p)$ for some distinct $\jboundary,i\in J$, then
\begin{equation*}
\mathcal{I}_p(x_0)
\ll \int_{p\ZZ_p^2:\, y^{\mathsf{u}_\jboundary}z^{\mathsf{u}_i}\in \ZZ_p}
\abs{y}_p^{\Re(s_\jboundary-\mathsf{d}_\jboundary)} \abs{z}_p^{\Re(s_i-\mathsf{d}_i)} \, dy\,dz
\le \int_{p\ZZ_p^2:\, y^{\mathsf{u}_\jboundary}z^{\mathsf{u}_i}\in \ZZ_p}
\abs{y}_p^{2\mathsf{u}_\jboundary-\delta} \abs{z}_p^{2\mathsf{u}_i-\delta} \, dy\,dz,
\end{equation*}
which (since $\abs{y^{\mathsf{u}_\jboundary}z^{\mathsf{u}_i}}_p\le 1$) is
$\le \int_{p\ZZ_p^2} \abs{y}_p^{-\delta} \abs{z}_p^{-\delta} \, dy\, dz
\ll p^{2\delta-2}$.
Thus (2) holds.

Yet (1) requires cancellation in some cases.
Suppose $x_0\in \mathscr{D}^\ast_\jboundary(\FF_p)$.
Claim~(1) is clear by \eqref{INEQ:crude-unit-alpha-generic-x_0-p-adic-integral-bound} if $\jboundary\in J_1$, because then $\mathsf{u}_\jboundary\ge 1$.
On the other hand, if $\jboundary\in J_3$, then
Lemma~\ref{LEM:large-p-local-analytic-coordinates-generically} gives
\begin{equation*}
\begin{split}
\mathcal{I}_p(x_0)
&= \int_{p\ZZ_p^2} \abs{y}_p^{s_\jboundary-\mathsf{d}_\jboundary} \phase(-\unitone y^{\mathsf{v}_\jboundary}\alpha\bmod{\ZZ_p}) \,\frac{dy\,dz}{1-p^{-1}} \\
&= \int_{p\ZZ_p^\times} \abs{y}_p^{s_\jboundary-\mathsf{d}_\jboundary+1} \phase(-\unitone y^{\mathsf{v}_\jboundary}\alpha\bmod{\ZZ_p}) \,\frac{dy/\abs{y}_p}{p-1}
= \frac{p^{-(s_\jboundary-\mathsf{d}_\jboundary+1)}}{p-1} \cdot \frac{-\bm{1}_{\abs{\mathsf{v}_\jboundary} = 1}}{p}
\ll p^{-\Re(s_\jboundary-\mathsf{d}_\jboundary+3)}
\end{split}
\end{equation*}
(since $c(\jboundary)\alpha\in \ZZ_p$ and $\mathsf{v}_\jboundary<0$),
which suffices for (1) since $\Re(s_\jboundary-\mathsf{d}_\jboundary) \ge -\delta$.
\end{proof}

Aside from the \emph{estimates} above, it is essential for us to have a local \emph{constancy} result:

\begin{lemma}
\label{LEM:local-constancy}
There exists $A_9\in \ZZ_{\ge 1}$, independent of $p$,
such that if
$$\alpha'/\alpha \equiv 1 \bmod{p^{\max(1,-v_p(\alpha))}A_9\ZZ_p},$$
then $\mathfrak{F}_p(\bm{s},\lambda,\alpha) = \mathfrak{F}_p(\bm{s},\lambda,\alpha')$,
where $\mathfrak{F} = \mathfrak{D}\cdot \bm{1}_{v_p(\alpha)<0}
+ \mathfrak{N}\cdot \bm{1}_{v_p(\alpha)>0}
+ \mathfrak{G}\cdot \bm{1}_{v_p(\alpha)=0}$.
\end{lemma}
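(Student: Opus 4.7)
The main terms subtracted in $\mathfrak{D}$, $\mathfrak{N}$, $\mathfrak{G}$ are easy to handle: each depends on $\alpha$ only through $v_p(\alpha)$ (unchanged, by hypothesis) and, for $\mathfrak{D}$, through $\phase(-c\alpha\bmod \ZZ_p)$ for finitely many $c$. The indicator $\bm{1}_{p\notin \ol{\Sbad}}$ in the $\mathfrak{D}$ main term forces $c\in \ZZ_p$ by Definition~\ref{DEFN:bad-set-S}, while the hypothesis gives $v_p(\alpha-\alpha')\ge v_p(\alpha)+\max(1,-v_p(\alpha))\ge 0$; combined, $c(\alpha-\alpha')\in \ZZ_p$ and the phase is preserved. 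Hence it suffices to show $H^\vee_p(\bm{s},\lambda,0,\alpha) = H^\vee_p(\bm{s},\lambda,0,\alpha')$.

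The plan is a piecewise change of variables in \eqref{EXPR:formula-for-H^vee_p}. Using Proposition~\ref{PROP:U-with-nonempty-c(U)-cover-X}, fix once and for all a finite open cover $U_1,\dots,U_r\in \mathcal{C}^{(2)}$ of $X$ with chosen $c_i\in \mathcal{C}^{(1)}(U_i)$. For each $p$, partition $X(\QQ_p)$ into finitely many compact open pieces $\oset_i\subset U_i(\QQ_p)$; for all but finitely many $p$, we may further arrange $\oset_i\subset \mathscr{U}_i(\ZZ_p)$, since $\bigcap_i \overline{X\setminus U_i}\subset \mathscr{X}$ has empty generic fiber (as $\bigcup_i U_i = X$) and is therefore supported over only finitely many primes. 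On each piece $\oset_i\cap G(\QQ_p)$ of the $\alpha'$-integral, substitute $a_p' = a_p u$ and $b_p' = c_i+(b_p-c_i)u$ with $u\defeq \alpha'/\alpha$. One computes $\alpha' a_p' = \alpha a_p$ and $\alpha' b_p' = \alpha b_p + (\alpha'-\alpha)c_i$, so the integrality cutoff, the sign character, and the $\lambda_{\Sbad,p}$-factor all transform cleanly, while the phase acquires the residual factor $\phase(-(\alpha'-\alpha)c_i\bmod \ZZ_p)$.

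By hypothesis $u\in 1+p^{\max(1,-v_p(\alpha))}A_9\ZZ_p$, so Lemma~\ref{LEM:large-p-localized-height-symmetry-constancy} (for all but finitely many $p$, with the $\mathscr{U}_i(\ZZ_p)$-partition) and Lemma~\ref{LEM:fixed-p-localized-height-symmetry-constancy} (for the remaining finitely many primes, with constants $A_7(U_i,c_i,p,\oset_i)$) together guarantee that the substitution preserves $\oset_i\cap G(\QQ_p)$ and fixes $H_p(\bm{s},g_p)$; the Haar measure is preserved as $\abs{u}_p = 1$. The residual phase is trivial whenever $(\alpha'-\alpha)c_i\in \ZZ_p$, which is automatic for $p\notin \ol{\Sbad}$ and can be enforced for $p\in \ol{\Sbad}$ by making $A_9$ divisible by enough powers of each such $p$ to clear the (fixed) denominators of the $c_i$. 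All constraints on $A_9$ involve only a fixed finite set of primes, so $A_9$ may be chosen as a single positive integer independent of $p$. The main obstacle is precisely this $p$-uniform synchronization: any single local substitution is routine, but weaving together the piecewise substitutions---with varying $c_i$, varying partitions across primes, and the small-$p$ exceptions---into a single $p$-independent $A_9$ is the whole point of the two-lemma structure of \S\ref{SEC:new-geometry}.
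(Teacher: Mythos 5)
Your proof follows essentially the same route as the paper's: both partition $X(\QQ_p)$ using $\mathcal{C}^{(2)}$ and Proposition~\ref{PROP:U-with-nonempty-c(U)-cover-X}, invoke Lemmas~\ref{LEM:large-p-localized-height-symmetry-constancy} and~\ref{LEM:fixed-p-localized-height-symmetry-constancy} for the $(1+pA_9\ZZ_p)$-scaling symmetry, and absorb the residual phases $\phase((\alpha'-\alpha)c_i\bmod\ZZ_p)$ by taking $A_9$ divisible by the denominators of the finitely many $c_i$. One small slip: with your substitution $a'_p = a_p u$, $b'_p = c_i + (b_p-c_i)u$, the identities $\alpha' a'_p = \alpha a_p$ and $\alpha' b'_p = \alpha b_p + (\alpha'-\alpha)c_i$ require $u = \alpha/\alpha'$ rather than $\alpha'/\alpha$ (both lie in $1 + p^{\max(1,-v_p(\alpha))}A_9\ZZ_p$, so nothing else changes).
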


\begin{proof}
Let $A_9\ge 1$ be a highly divisible integer.
By Lemmas~\ref{LEM:large-p-localized-height-symmetry-constancy} and~\ref{LEM:fixed-p-localized-height-symmetry-constancy} and Proposition~\ref{PROP:U-with-nonempty-c(U)-cover-X}, we may partition $X(\QQ_p)$ into compact open sets $\oset_{c,p}$ (indexed by a finite set of $c\in \QQ$) such that
(1) $\oset_{c,p}$ is invariant under an action of $x_1,x_2\in 1+pA_9\ZZ_p$ that is defined on $G(\QQ_p)$ by scaling $a$, $b-c$ by $x_1$, $x_2$, respectively;
and (2) $H_p(\bm{s},g)$ is invariant under this group action.
So by \eqref{EXPR:formula-for-H^vee_p}, we may decompose $H^\vee_p(\bm{s},\lambda,0,\alpha)$ (for any $\alpha\in \QQ_p^\times$) as a sum of the form $\sum_c \phase(c\alpha\bmod{\ZZ_p}) f_{23,c,p}(\bm{s},\lambda,\alpha)$, where $f_{23,c,p}$ is invariant under scaling of $\alpha\in \QQ_p^\times$ by $1+pA_9\ZZ_p$.
Since $\phase(c\alpha\bmod{\ZZ_p})$ depends only on $c\alpha\bmod{\ZZ_p}$, the desired lemma follows.

(Previously, a similar but more explicit non-archimedean symmetry argument has played an essential role in the circle method over function fields; see \cite{glas2022question}*{Lemma~3.6}.)
\end{proof}

\section{Archimedean endgame}
\label{SEC:archimedean-endgame}

In this section, assume $X$ is split and $D$ has strict normal crossings.
For convenience, let
$$\mathbf{u}\defeq \div(a)
=\sum_{\jboundary\in J} \mathsf{u}_\jboundary D_\jboundary\in \RR^J,
\quad
\mathbf{d}\defeq -\div(\omega)
=\sum_{\jboundary\in J} \mathsf{d}_\jboundary D_\jboundary\in \RR^J.$$
Let $C^\infty(X(\RR))$ be the set of smooth functions $X(\RR) \to \CC$
on the (possibly non-orientable) closed manifold $X(\RR)$.
The \emph{left-invariant} differential operators $a\,\frac{\partial}{\partial a}$ and $a\,\frac{\partial}{\partial b}$ play an important role in \cite{tanimoto2012height}*{proof of Lemma~5.9}, but the (non-invariant) operators $(b-c)\,\frac{\partial}{\partial b}$ help us too.

\begin{lemma}
\label{LEM:localized-regular-and-height-derivative-estimates}
Let $k_1,k_2,k_3\in \ZZ_{\ge 0}$ and $f_{24}\in C^\infty(X(\RR))$.
Suppose $f_{24}$ is supported on a compact set $K\belongs U(\RR)$
for some $U\in \mathcal{C}^{(2)}$
(see Proposition~\ref{PROP:U-with-nonempty-c(U)-cover-X}),
and choose $c\in \mathcal{C}^{(1)}(U)$.
\begin{enumerate}
\item The function $((b-c)\,\frac{\partial}{\partial b})^{k_1}
(a\,\frac{\partial}{\partial a})^{k_2}
(a\,\frac{\partial}{\partial b})^{k_3}
f_{24}\vert_{G(\RR)}$
extends to a function in $C^\infty(X(\RR))$.

\item Let $\jboundary\in J$.
The function $$H_{D_\jboundary,\infty}(a,b)
\cdot ((b-c)\,\tfrac{\partial}{\partial b})^{k_1}
(a\,\tfrac{\partial}{\partial a})^{k_2}
(a\,\tfrac{\partial}{\partial b})^{k_3}
[f_{24}(a,b) H_{D_\jboundary,\infty}(a,b)^{-1}]$$ on $G(\RR)$
extends to a function in $C^\infty(X(\RR))$.
\end{enumerate}
\end{lemma}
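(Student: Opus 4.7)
The plan is to reduce both claims to local analytic calculations near each point of the compact support $K$, using Lemma~\ref{LEM:convenient-local-analytic-coordinates-generically} at generic boundary points and an snc extension at intersections, and to deduce (2) from (1) via Leibniz.

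For (1), I argue by induction on $k_1+k_2+k_3$ that each $\Top\in\set{(b-c)\partial_b,\,a\partial_a,\,a\partial_b}$ locally preserves $C^\infty(X(\RR))$-germs on $U(\RR)$, so it suffices to check smoothness at each $x\in K$. On $G(\RR)$, all three operators are manifestly smooth vector fields in the $(a,b)$-chart. At a boundary point $x\in D_\jboundary\cap U$ lying on a single component, Lemma~\ref{LEM:convenient-local-analytic-coordinates-generically}(3) (applicable since $c\in\mathcal{C}^{(0)}(\jboundary)$) yields real-analytic local coordinates $y,z$ in which $\Top y/y,\,\Top z\in C^\infty$, so $\Top=(\Top y/y)(y\partial_y)+(\Top z)\partial_z$ is a smooth vector field in the $(y,z)$-chart. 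At an snc intersection $x\in D_\jboundary\cap D_i$, I would choose snc coordinates $y,z$ cutting out $D_\jboundary,D_i$ and write $a=u\,y^{\mathsf{u}_\jboundary}z^{\mathsf{u}_i}$ and $b-c=v\,y^{\mathsf{v}_\jboundary}z^{\mathsf{v}_i}$ with analytic units $u,v$ (valid because $c\in\mathcal{C}^{(1)}(U)\subseteq\mathcal{C}^{(0)}(\jboundary)\cap\mathcal{C}^{(0)}(i)$), then invert the Jacobian of $(y,z)\mapsto(a,b)$ explicitly to express each $\Top$ as a smooth vector field in $(y,z)$. As in the proof of Lemma~\ref{LEM:convenient-local-analytic-coordinates-generically}(3), the case $\Top=a\partial_b$ further reduces to the case $\Top=(b-c)\partial_b$ via regularity of $a/(b-c)$ on $U$ from Proposition~\ref{PROP:apply-lower-bound-on-anticanonical-Weil-divisor}.

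For (2), iteratively applying the identity $H\cdot\Top[\phi/H]=\Top\phi-\phi\,\Top\log H$ (with $H=H_{D_\jboundary,\infty}$) rewrites the expression in (2) as a finite sum of products $(\Top^{(\alpha)}f_{24})\prod_l\Top^{(\beta_l)}\log H$. Each $\Top^{(\alpha)}f_{24}$ lies in $C^\infty(X(\RR))$ by (1). Iteration together with (1) reduces smoothness of $\Top^{(\beta_l)}\log H$ to smoothness of $\Top\log H$: locally, Definition~\ref{DEFN:standard-local-Weil-height-for-boundary-divisors} gives $H=\varphi/|f|$ with $\varphi$ smooth positive and $f$ a local equation for $\mathscr{D}_\jboundary$, so $\Top\log H=\Top\log\varphi-\Top f/f$; since $f$ agrees with $y$ up to a unit in the coordinate charts above, $\Top f/f$ is smooth by the local analyses of part (1), and $\Top\log\varphi$ is smooth because $\Top$ is locally a smooth vector field.

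I expect the main obstacle to be the snc intersection case in (1): explicit Jacobian inversion produces a denominator whose leading behavior at the crossing is controlled by the determinant $\mathsf{u}_\jboundary\mathsf{v}_i-\mathsf{u}_i\mathsf{v}_\jboundary$, which can vanish in degenerate configurations (for instance when both $\jboundary,i\in J_3$, as in the blowup of $\PP^1\times\PP^1$ at a smooth point of $\div_\infty(b)$). In such cases one must exploit finer structural constraints — notably that $a\partial_a$ and $a\partial_b$ are globally smooth vector fields as fundamental fields for the right $G$-action on $X$ — to force the necessary cancellations in the Taylor expansions of the units $u,v$, thereby rescuing the smoothness of $(b-c)\partial_b$ in these borderline configurations.
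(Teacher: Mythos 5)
Your approach for (1) is sound away from the crossing points, and you correctly identify that the strict normal crossings intersections are where the difficulty lies — but you do not resolve that case, and your proposed rescue ("exploit finer structural constraints\dots to force the necessary cancellations in the Taylor expansions of the units") is not a proof. Explicit Jacobian inversion at the crossing produces a denominator governed by $\mathsf{u}_\jboundary\mathsf{v}_i-\mathsf{u}_i\mathsf{v}_\jboundary$, which (as you note) can vanish, and chasing Taylor coefficients to exhibit the needed cancellation is exactly the kind of local analysis that the paper is designed to avoid.

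The missing idea is an \emph{algebraic Hartogs} reduction, which eliminates the crossing case entirely. After the chain-rule reduction, (1) amounts to showing that $\Top\tloc,\Top\uloc\in\Rat(X)$ are regular near $x$, where $\tloc,\uloc$ are local algebraic coordinates at $x$. Since $X$ is smooth (hence normal) and the crossing locus has codimension $2$, the algebraic Hartogs lemma lets one verify regularity only at generic (codimension-$1$) boundary points, where Lemma~\ref{LEM:convenient-local-analytic-coordinates-generically}(3) applies directly. This is exactly the route the paper takes, citing \cite{chambert2002distribution}*{proof of Proposition~2.2}. Your observation that $a\,\partial_a$ and $a\,\partial_b$ are fundamental vector fields for the right $G$-action (hence automatically smooth on all of $X$) is correct, and in fact the paper notes in the proof of Lemma~\ref{LEM:convenient-local-analytic-coordinates-generically}(3) that those two operators are the classical cases; the genuinely new case is $(b-c)\,\partial_b$, and it too is handled via Hartogs, not by canceling Taylor coefficients. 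For (2), your $\log H$ decomposition works, but the paper's reduction is cleaner: by induction with (1) one reduces to $k_1+k_2+k_3=1$, then uses the product rule and Definition~\ref{DEFN:standard-local-Weil-height-for-boundary-divisors} to reduce (2) to showing that $\Top\tloc$ is divisible by $\tloc$ where $\tloc=0$ cuts out $D_\jboundary$ — and again algebraic Hartogs reduces this divisibility check to generic points of the boundary, where Lemma~\ref{LEM:convenient-local-analytic-coordinates-generically} finishes.
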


\begin{proof}
(1):
Let $x\in K$.
By the chain rule, it suffices to prove that the derivatives of
local coordinates $\tloc,\uloc\in \Rat(X)$ near $x$
are regular.
By algebraic Hartogs, it suffices to prove regularity
away from a codimension $2$ locus
(as in \cite{chambert2002distribution}*{proof of Proposition~2.2}).
Via analytic coordinates,
regularity then follows from Lemma~\ref{LEM:convenient-local-analytic-coordinates-generically}
(applicable by Definition~\ref{DEFN:C1U-maximizes-for-any-Dj-intersecting-U}).

(2):
By an induction with (1), we may assume $k_1+k_2+k_3=1$.
By the product rule,
together with (1) and Definition~\ref{DEFN:standard-local-Weil-height-for-boundary-divisors},
it suffices to show that the derivative of $\tloc\in \Rat(X)$,
where $\tloc=0$ is a local equation for $D_\jboundary$,
is divisible by $\tloc$.
It suffices to do this away from a codimension $2$ locus.
Divisibility then follows from Lemma~\ref{LEM:convenient-local-analytic-coordinates-generically}.
\end{proof}

Via Proposition~\ref{PROP:U-with-nonempty-c(U)-cover-X},
cover $X(\RR)$ by finitely many open sets $\Omega\in \set{U(\RR): U\in \mathcal{C}^{(2)}}$.
Take a smooth partition of unity
$1 = \sum_\Omega w_\Omega$ of $X(\RR)$
subordinate to $\set{\Omega}$.
For each $\Omega$,
choose a constant $\cfinal_0=\cfinal_0(\Omega)\in \mathcal{C}^{(1)}(U)$,
for any $U\in \mathcal{C}^{(2)}$ with $U(\RR)=\Omega$.
Let
\begin{equation*}
\mathcal{I}_\Omega = \mathcal{I}_\Omega(\bm{s},\lambda,\alpha)
\defeq \int_{G(\RR)} w_\Omega(g_\infty)
H_\infty(\bm{s},g_\infty)^{-1}
\phase(\alpha b_\infty)
\lambda_\infty(\alpha a_\infty)\, dg_\infty,
\quad \mathcal{J}_\Omega
\defeq \frac{\mathcal{I}_\Omega}{\phase(\cfinal_0\alpha)}.
\end{equation*}
Additionally, we will need to study $\alpha \ll 1$ and $\abs{\alpha} \gg 1$ separately;
so take a smooth partition of unity $1 = w_0 + w_\infty$ of $\RR$
subordinate to the open sets $\openint(-2,2)$ and $\RR \setminus [-1,1]$, and let
\begin{equation*}
\mathcal{I}_{\Omega,0} \defeq w_0(\alpha) \mathcal{I}_\Omega,
\quad \mathcal{I}_{\Omega,\infty} \defeq w_\infty(\alpha) \mathcal{I}_\Omega,
\quad \mathcal{J}_{\Omega,0} \defeq w_0(\alpha) \mathcal{J}_\Omega,
\quad \mathcal{J}_{\Omega,\infty} \defeq w_\infty(\alpha) \mathcal{J}_\Omega.
\end{equation*}
Then we have the decomposition
\begin{equation}
\label{decompose_Hveeinfty}
H^\vee_\infty(\bm{s},\lambda,0,\alpha)
= \sum_\Omega \mathcal{I}_\Omega
= \sum_\Omega \phase(\cfinal_0\alpha)
(\mathcal{J}_{\Omega,0} + \mathcal{J}_{\Omega,\infty}).
\end{equation}

\begin{lemma}
\label{LEM:uniform-derivative-bound-on-localized-modulated-integrals}
For all $k_1,k_2\in \ZZ_{\ge 0}$,
$\alpha\in \RR^\times$,
$t\in \RR$,
and $\bm{s}$ in the region \eqref{INEQ:key-new-region},
we have
\begin{equation*}
(\alpha\,\tfrac{\partial}{\partial\alpha})^{k_2}
{\mathcal{J}_\Omega(\bm{s}-it\mathbf{u},\lambda,\alpha)}
\ll_{k_1,k_2,A_8,\delta}
\frac{1+\norm{\bm{s}}^{2+k_1+2k_2}}{\alpha^2 (1+\abs{t}^{k_1})}.
\end{equation*}
The same estimate holds if we replace $\mathcal{J}_\Omega$
with $\mathcal{J}_{\Omega,0}$ or $\mathcal{J}_{\Omega,\infty}$.
(Here $\norm{\bm{s}}\defeq \max_\jboundary{\abs{s_\jboundary}}$.)
\end{lemma}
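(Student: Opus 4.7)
The plan is to extract the claimed decay via iterated integration by parts. First, differentiating under the integral sign: since $\lambda_\infty$ is a character of $\RR^\times/(\RR^\times)^2$ and $|a_\infty|^{-it}$ carries no $\alpha$-dependence, $\alpha\partial_\alpha$ acts on the integrand of $\mathcal{J}_\Omega(\bm{s}-it\mathbf{u},\lambda,\alpha)$ only through $\phase(\alpha(b_\infty-\cfinal_0))$, multiplying it by $2\pi i\alpha(b_\infty-\cfinal_0)$. Hence
\begin{equation*}
(\alpha\partial_\alpha)^{k_2}\mathcal{J}_\Omega(\bm{s}-it\mathbf{u},\lambda,\alpha)
= (2\pi i)^{k_2}\int_{G(\RR)} w_\Omega\, H_\infty(\bm{s},g_\infty)^{-1}(\alpha(b_\infty-\cfinal_0))^{k_2}\lambda_\infty(\alpha a_\infty)|a_\infty|^{-it}\phase(\alpha(b_\infty-\cfinal_0))\,dg_\infty.
\end{equation*}

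To extract the factor $(1+|t|^{k_1})^{-1}$ (for $|t|\ge 1$; otherwise it is trivial), I would integrate by parts $k_1$ times using the left-invariant derivation $a\partial_a$, which acts on $|a|^{-it}$ by multiplication by $-it$ and annihilates $\lambda_\infty(\alpha a)$, $\phase(\alpha(b-\cfinal_0))$, and $(b-\cfinal_0)$. Each IBP produces $(-it)^{-1}$ and transfers $a\partial_a$ onto $w_\Omega H_\infty(\bm{s},\cdot)^{-1}$; since $a\partial_a$ is one of the good operators in Lemma~\ref{LEM:localized-regular-and-height-derivative-estimates}(2), iteration preserves the bound up to a factor $O(\norm{\bm{s}}^{k_1})$ (each derivative of $H_{D_\jboundary,\infty}^{-s_\jboundary}$ contributes a factor proportional to $s_\jboundary$).

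To extract the factor $\alpha^{-2}$ (for $|\alpha|\ge 1$; otherwise it is trivial), I would cover $\mathrm{supp}(w_\Omega)$ by finitely many analytic charts of Lemma~\ref{LEM:convenient-local-analytic-coordinates-generically}, applicable because $\cfinal_0\in \mathcal{C}^{(1)}(\Omega)$. In a chart near $D_\jboundary$ with $\jboundary\in J_1\cup J_2$, coordinates $(y,z)$ satisfy $a=\unitone y^{\mathsf{u}_\jboundary}$ and $b-\cfinal_0 = y^{\sord}(\unittwo+z)$ where $\sord=\ord_{D_\jboundary}(b-\cfinal_0)$, and the identity $\partial_z\phase(\alpha(b-\cfinal_0))=2\pi i\alpha y^{\sord}\phase$ enables IBP in $z$. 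I would IBP $k_2+2$ times; crucially, in the Leibniz expansion of $\partial_z^j[(\unittwo+z)^{k_2}\cdot(\text{smooth})]$, the terms $\partial_z^i[(\unittwo+z)^{k_2}]$ with $i>k_2$ vanish, so no negative powers of $\unittwo+z$ arise and every intermediate integrand stays regular. After these IBPs, the accumulated prefactor is $\alpha^{k_2}/(2\pi i\alpha y^{\sord})^{k_2+2} = \alpha^{-2}y^{-2\sord}/(2\pi i)^{k_2+2}$, and the remaining integrand is bounded in modulus by $|y|^{\Re(s_\jboundary)-\sord-1}$ times a smooth compactly supported factor of size $O(\norm{\bm{s}}^{k_2+2})$; this is absolutely integrable since $\Re(s_\jboundary)>\sord$ in region \eqref{INEQ:key-new-region}, by combining Propositions~\ref{PROP:apply-lower-bound-on-anticanonical-Weil-divisor} and~\ref{PROP:upper-bound-on-anticanonical-Weil-divisor} with $\delta<1$. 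Analogous IBPs in the $y$-direction (for $\jboundary\in J_3$ charts, where $b-\cfinal_0=\unitone y^{\sord}$ with $\sord<0$) and in the $b$-direction (for charts inside $G(\RR)$, where $(a,b)$ are smooth coordinates) cover the remaining cases. Combining with the previous paragraph yields the claimed bound, with $\norm{\bm{s}}^{2+k_1+2k_2}$ a convenient loose upper bound on the polynomial in $\norm{\bm{s}}$ of degree $k_1+k_2+2$ arising from the derivatives. The bounds for $\mathcal{J}_{\Omega,0}$ and $\mathcal{J}_{\Omega,\infty}$ follow identically: $w_0$ and $w_\infty$ depend only on $\alpha$, so they commute with every IBP in $a, b, y, z$.

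The main obstacle is the possible presence of $\{b=\cfinal_0\}$ in $\mathrm{supp}(w_\Omega)$, which, via naive $\partial_b$ IBP, would threaten to introduce non-integrable singularities of the form $(b-\cfinal_0)^{-j}$. This is circumvented by working in the coordinates of Lemma~\ref{LEM:convenient-local-analytic-coordinates-generically}, where $b-\cfinal_0$ factors as a monomial in $y$ times the unit-like factor $\unittwo+z$, and where the Leibniz expansion of $\partial_z^j$ never produces negative powers of $\unittwo+z$; this ensures that every intermediate step of the IBP is regular.
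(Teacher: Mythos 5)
Your proposal takes the same raw ingredients (integration by parts, the local analytic coordinates from Lemma~\ref{LEM:convenient-local-analytic-coordinates-generically}, the derivative bounds from Lemma~\ref{LEM:localized-regular-and-height-derivative-estimates}), but the order of operations — differentiate under the integral sign first, then integrate by parts in local charts — creates two gaps that the paper's own proof is specifically designed to avoid.

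First, the step ``$(\alpha\partial_\alpha)^{k_2}\mathcal{J}_\Omega = \int_{G(\RR)}[\dots(b_\infty-\cfinal_0)^{k_2}\dots]$'' is not justified in the region \eqref{INEQ:key-new-region}. Near $D_\jboundary$ with $\jboundary\in J_2$, the integrand of $\mathcal{J}_\Omega$ already has modulus $\asymp |y|^{\Re(s_\jboundary-\mathsf{d}_\jboundary)}$ against $dy\,dz$, and \eqref{INEQ:key-new-region} only guarantees $\Re(s_\jboundary-\mathsf{d}_\jboundary)\ge 2\mathsf{u}_\jboundary-\delta$, which is well below $-1$ when $\mathsf{u}_\jboundary<0$; so $\mathcal{J}_\Omega$ itself is already given only by analytic continuation there, and pulling down an additional factor $(b-\cfinal_0)^{k_2}\asymp y^{k_2\mathsf{v}_\jboundary}$ (with $\mathsf{v}_\jboundary\le\mathsf{u}_\jboundary<0$) makes the formal integral \emph{more} divergent, not less. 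One must establish the integration-by-parts identity at large $\Re(\bm{s})$ and only then continue. The paper does exactly this, and moreover performs the two $a\partial_b$ integrations by parts \emph{before} any $\alpha$-derivative: those two IBPs simultaneously produce the $\alpha^{-2}$ and convert the integrand to one of modulus $\asymp|y|^{\Re(s_\jboundary-\mathsf{d}_\jboundary-2\mathsf{u}_\jboundary)}$, which converges absolutely in \eqref{INEQ:key-new-region} by construction. Only after that, and after a change of variables $(a',b')=(\alpha a,\alpha(b-\cfinal_0))$ that strips the $\alpha$-dependence from $\phase$ and $\lambda_\infty$, are the $\alpha$-derivatives applied, becoming $-a\,\partial_a-(b-\cfinal_0)\,\partial_b$ on the remaining factors — operators already controlled by Lemma~\ref{LEM:localized-regular-and-height-derivative-estimates}.

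Second, and more seriously: the local coordinates of Lemma~\ref{LEM:convenient-local-analytic-coordinates-generically} exist only on the maximal open set $U_2$ where $a$ and $b-c$ are units off $D_\jboundary$, which excludes the intersections $D_\jboundary\cap D_{\jboundary'}$; since $\{w_\Omega\}$ is a partition of unity of $X(\RR)$, some $\mathrm{supp}(w_\Omega)$ necessarily contains such crossing points, and there your chart-by-chart IBP in $z$ (or $y$) has no coordinate system in which to run. The paper sidesteps this by working with the globally defined operators $a\,\partial_b$, $a\,\partial_a$, $(b-\cfinal_0)\,\partial_b$ and by using Lemma~\ref{LEM:localized-regular-and-height-derivative-estimates}, which (via the algebraic Hartogs argument in its proof) extends the needed regularity to all of $X(\RR)$ — including the normal-crossing points. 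A further minor point: your $k_1$ IBPs in $\log|a|$ produce $(-it)^{-k_1}$, singular at $t=0$; you address this by only doing them for $|t|\ge 1$, which is fine, whereas the paper's two prior $b$-IBPs shift $|a|^{-it}$ to $|a|^{-2-it}$ so that the $\log|a|$-IBPs produce $(2+it)^{-(k_1+k_2)}$, nonsingular for all $t$ and large enough to absorb the $(it)^{\le k_2}$ arising from $(\alpha\partial_\alpha)^{k_2}$ hitting the surviving $|\alpha|^{it}$.
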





\begin{proof}
(The case $k_2=0$ is essentially \cite{tanimoto2012height}*{Lemma~5.9}.
But differentiability and uniform control for $k_2\ge 1$ are not obvious;
cf.~the ``junior arc'' difficulties in \cite{hooley1986HasseWeil}.
Nonetheless,
``nicer than expected'' derivatives have appeared in other settings as well;
see e.g.~the integral derivative estimates in \cite{heath1996new},
in the circle method for \emph{homogeneous} equations.)

Writing $dg\defeq dg_\infty$, etc.,
and applying \eqref{EQN:local-character-height-consistency-relation}
(cf.~\eqref{EQN:change-of-t-identity}),
we have
$$\mathcal{I}_\Omega(\bm{s}-it\mathbf{u},\lambda,\alpha)
= \int_{G(\RR)} w_\Omega(g)
H_\infty(\bm{s},g)^{-1} \phase(\alpha b) \lambda_\infty(\alpha a)
\abs{a}_\infty^{-it}\, dg.$$
Integrating by parts twice in $b$,
then $k_1+k_2$ times in $\log{\abs{a}}$,
gives (by Lemma~\ref{LEM:localized-regular-and-height-derivative-estimates})
\begin{equation*}
\mathcal{I}_\Omega(\bm{s}-it\mathbf{u},\lambda,\alpha)
= \alpha^{-2} (2+it)^{-k_1-k_2} \int_{G(\RR)} w_{\Omega,1}(\bm{s},g)
H_\infty(\bm{s},g)^{-1} \phase(\alpha b) \lambda_\infty(\alpha a)
\abs{a}_\infty^{-2-it} \, dg
\end{equation*}
where $w_{\Omega,1}$ is a degree $2+k_1+k_2$ polynomial function of
$\bm{s}$ and finitely many elements of $C^\infty(X(\RR))$;
cf.~\cite{tanimoto2012height}*{proof of Lemma~5.9}.
Letting $(a', b') = (\alpha a, \alpha (b-\cfinal_0))$, we get
\begin{equation*}
\mathcal{J}_\Omega(\bm{s}-it\mathbf{u},\lambda,\alpha)
= \frac{\abs{\alpha}_\infty^{it}}{(2+it)^{k_1+k_2}} \int_{(a',b')\in G(\RR)}
\frac{w_{\Omega,1}(\bm{s},(a,b))}
{H_\infty(\bm{s},(a,b))}
\frac{\phase(b') \lambda_\infty(a')}{\abs{a'}_\infty^{2+it}}
\, \frac{db'}{\abs{\alpha}_\infty}\, \frac{da'}{\abs{a'}_\infty},
\end{equation*}
where $(a,b) = (a'/\alpha, \cfinal_0+b'/\alpha)$
is viewed as a function of $(a',b',\alpha)$.
Applying $\alpha\,\frac{\partial}{\partial\alpha}$ repeatedly ($k_2$ times)
using Lemma~\ref{LEM:localized-regular-and-height-derivative-estimates}
and the chain rule identity
\begin{equation*}
\alpha\,\frac{\partial\varphi(a,b)}{\partial\alpha}
= \alpha\,\frac{\partial{a}}{\partial\alpha}\frac{\partial\varphi(a,b)}{\partial{a}}
+ \alpha\,\frac{\partial{b}}{\partial\alpha}\frac{\partial\varphi(a,b)}{\partial{b}}
= -a\,\frac{\partial\varphi(a,b)}{\partial{a}}
- (b-\cfinal_0)\,\frac{\partial\varphi(a,b)}{\partial{b}},
\end{equation*}
and then changing variables back from $(a',b')$ to $(a,b)$ (and applying Lemma~\ref{LEM:snc-implies-nice-densities-implies-Tamagawa-extraction}(1)$\Rightarrow$(2)), we get the desired bound.
\end{proof}

We now rewrite \eqref{EQN:difficult-part-of-spectral-expansion}.
Plugging \eqref{EQN:change-of-t-identity} into \eqref{EQN:difficult-part-of-spectral-expansion}
and recalling the definition of $\mathscr{S}$ from \eqref{EQN:define-shifted-integral},
we get
\begin{equation*}
\mathsf{Z}_1(\bm{s},1_G)
= \sum_{\lambda\in \Mbad:\, \lambda(-1)=1}\,
\sum_{\alpha\in \QQ^\times}
\mathscr{S}{\left(\prod_v H^\vee_v(\bm{s},\lambda,0,\alpha)\right)},
\end{equation*}
for large $\Re(\bm{s})$.
Decomposing the factor $H^\vee_\infty$
using \eqref{decompose_Hveeinfty}
gives
\begin{equation}
\label{london}
\mathsf{Z}_1(\bm{s},1_G)
= \sum_{f_{25},\lambda}\,
\sum_{\alpha\in \QQ^\times} \phase(\cfinal_0(\Omega)\alpha)
\mathscr{S}{\left(f_{25}(\bm{s},\lambda,\alpha)
\prod_p H^\vee_p(\bm{s},\lambda,0,\alpha)\right)}.
\end{equation}
where $f_{25}$ ranges over the set
$\bigcup_\Omega \set{\mathcal{J}_{\Omega,0}, \mathcal{J}_{\Omega,\infty}}$,
and where $\lambda\in \Mbad$ with $\lambda(-1)=1$.

For each $p$, we now decompose $H^\vee_p$
in terms of $\mathfrak{N}_p$, $\mathfrak{D}_p$, or $\mathfrak{G}_p$,
according as $v_p(\alpha)>0$, $v_p(\alpha)<0$, or $v_p(\alpha)=0$,
respectively.
Specifically, if $v_p(\alpha)>0$, then
by the definition of $\mathfrak{N}_p$ from Lemma~\ref{LEM:numerator-bias}
we have
\begin{equation}
\label{paris-1}
H^\vee_p = \mathfrak{N}_p
+ \sum_{\jboundary\in J^\ast_2} p^{-v_p(\alpha) (s_\jboundary-\mathsf{d}_\jboundary+1)/\abs{\mathsf{u}_\jboundary}} \bm{1}_{\mathsf{u}_\jboundary\mid v_p(\alpha)};
\end{equation}
if $v_p(\alpha)<0$, then
by the definition of $\mathfrak{D}_p$ from Lemma~\ref{LEM:denominator-bias}
we have
\begin{equation}
\label{paris-2}
H^\vee_p = \mathfrak{D}_p
+ \bm{1}_{p\notin \ol{\Sbad}} \sum_{c\in \QQ}
\sum_{\jboundary\in J^c_1} \phase(-c\alpha \bmod{\ZZ_p}) p^{-\abs{v_p(\alpha)} (s_\jboundary-\mathsf{d}_\jboundary+1)/\mathsf{u}_\jboundary} \bm{1}_{\mathsf{u}_\jboundary\mid v_p(\alpha)};
\end{equation}
and if $v_p(\alpha)=0$, then
by the definition of $\mathfrak{G}_p$ from Lemma~\ref{LEM:generic-local-factor-estimate}
we have $H^\vee_p = 1 + \mathfrak{G}_p$.

For convenience,
let $$\mathfrak{m}\maps \set{1,\dots,\kspecial} \to J^\ast_2,
\quad \mathfrak{n}\maps \set{1,\dots,\lspecial} \to \bigcup_{q\in \QQ} J^q_1$$
be bijective functions, so that
$\kspecial,\lspecial\in \ZZ_{\ge 0}$
are the number of special divisors in $J_2$ and in $J_1$, respectively.
Let $\cfinal_\jspecial\defeq q$ if $\mathfrak{n}(\jspecial)\in J^q_1$,
and for any $\ispecial\in \set{1,\dots,\kspecial}$
or $\jspecial\in \set{1,\dots,\lspecial}$ let
\begin{equation*}
\beta_\ispecial
\defeq s_{\mathfrak{m}(\ispecial)}-\mathsf{d}_{\mathfrak{m}(\ispecial)}+1,
\quad \gamma_\jspecial
\defeq s_{\mathfrak{n}(\jspecial)}-\mathsf{d}_{\mathfrak{n}(\jspecial)}+1,
\quad \mathfrak{u}_\ispecial
\defeq \abs{\mathsf{u}_{\mathfrak{m}(\ispecial)}},
\quad \mathfrak{v}_\jspecial
\defeq \mathsf{u}_{\mathfrak{n}(\jspecial)},
\end{equation*}
and let $\mathfrak{F}_n \defeq \prod_{p\mid n} \mathfrak{F}_p$
for all $\mathfrak{F}\in \set{\mathfrak{N}, \mathfrak{D}, \mathfrak{G}}$.
Write $\abs{\alpha} = \alpha_1/\alpha_2$,
where $\alpha_1,\alpha_2\in \ZZ_{\ge 1}$ and $\gcd(\alpha_1,\alpha_2)=1$.
Multiplying \eqref{paris-1} over $p\mid \alpha_1$, we find that
\begin{equation}
\label{wien-1}
\prod_{p:\, v_p(\alpha)>0} H^\vee_p(\bm{s},\lambda,0,\alpha)
= \sum_{\substack{m_0,\dots,m_\kspecial\ge 1: \\
\textnormal{pairwise coprime}, \\
m_0m_1^{\mathfrak{u}_1}\cdots m_\kspecial^{\mathfrak{u}_\kspecial}
= \alpha_1}}\, 
\frac{\mathfrak{N}_{m_0}(\bm{s},\lambda,\alpha)}
{m_1^{\beta_1}\cdots m_\kspecial^{\beta_\kspecial}},
\end{equation}
where $\mathfrak{N}_{m_0}$
comes from the term $\mathfrak{N}_p$ in \eqref{paris-1} for $p\mid m_0$,
and where $m_\ispecial^{-\beta_\ispecial}$
comes from the term
$$p^{-v_p(m_\ispecial^{\mathfrak{u}_\ispecial})
(s_{\mathfrak{m}(\ispecial)}-\mathsf{d}_{\mathfrak{m}(\ispecial)}+1)/\abs{\mathsf{u}_{\mathfrak{m}(\ispecial)}}}
\bm{1}_{\mathsf{u}_{\mathfrak{m}(\ispecial)}\mid v_p(m_\ispecial^{\mathfrak{u}_\ispecial})}
= p^{-v_p(m_\ispecial) \beta_\ispecial}$$
in \eqref{paris-1} for $p\mid m_\ispecial$.
Similarly, multiplying \eqref{paris-2} over $p\mid \alpha_2$, we find that
\begin{equation}
\label{wien-2}
\prod_{p:\, v_p(\alpha)<0} H^\vee_p(\bm{s},\lambda,0,\alpha)
= \sum_{\substack{n_0,\dots,n_\lspecial\ge 1: \\
\textnormal{pairwise coprime}, \\
p\mid n_1\cdots n_\lspecial \Rightarrow p\notin \ol{\Sbad}, \\
n_0n_1^{\mathfrak{v}_1}\cdots n_\lspecial^{\mathfrak{v}_\lspecial}
= \alpha_2}}\, 
\mathfrak{D}_{n_0}(\bm{s},\lambda,\alpha)
\prod_{1\le \jspecial\le \lspecial}
\frac{\phase(-\cfinal_\jspecial\alpha\bmod{\ZZ_{n_\jspecial}})}
{n_\jspecial^{\gamma_\jspecial}}.
\end{equation}
Finally,
\begin{equation}
\label{wien-3}
\prod_{p:\, v_p(\alpha)=0} H^\vee_p(\bm{s},\lambda,0,\alpha)
= \sum_{\substack{r\ge 1: \\
\gcd(r,\alpha_1\alpha_2)=1, \\
\textnormal{$r$ square-free}}}\, 
\mathfrak{G}_r(\bm{s},\lambda,\alpha).
\end{equation}

Multiplying \eqref{wien-1}, \eqref{wien-2}, and \eqref{wien-3}
together, we find that
\begin{equation*}
\prod_p H^\vee_p(\bm{s},\lambda,0,\alpha)
= \sum_{\substack{r,m_0,\dots,m_\kspecial,n_0,\dots,n_\lspecial\ge 1: \\
\textnormal{pairwise coprime}, \\
\textnormal{$r$ square-free}, \\
p\mid n_1\cdots n_\lspecial \Rightarrow p\notin \ol{\Sbad}, \\
(m_0m_1^{\mathfrak{u}_1}
\cdots m_\kspecial^{\mathfrak{u}_\kspecial})/(n_0n_1^{\mathfrak{v}_1}
\cdots n_\lspecial^{\mathfrak{v}_\lspecial})
= \abs{\alpha}}}\, 
\frac{(\mathfrak{N}_{m_0} \mathfrak{D}_{n_0} \mathfrak{G}_r)(\bm{s},\lambda,\alpha)}
{m_1^{\beta_1}\cdots m_\kspecial^{\beta_\kspecial}}
\prod_{1\le \jspecial\le \lspecial}
\frac{\phase(-\cfinal_\jspecial\alpha\bmod{\ZZ_{n_\jspecial}})}
{n_\jspecial^{\gamma_\jspecial}}.
\end{equation*}
Plugging this into \eqref{london},
we conclude that for $\Re(\bm{s})$ large,
the contribution to \eqref{EQN:difficult-part-of-spectral-expansion} from $\alpha>0$
(the case $\alpha<0$ being completely analogous)
equals
\begin{equation}
\label{EXPR:final-expanded-adelic-data-sum}
\sum_{f_{25}, \lambda}
\, \sum_{\substack{r,m_0,\dots,m_\kspecial,n_0,\dots,n_\lspecial\ge 1: \\
\textnormal{pairwise coprime}, \\
\textnormal{$r$ square-free}, \\
p\mid n_1\cdots n_\lspecial \Rightarrow p\notin \ol{\Sbad}}}
\, \mathscr{S}{\left(
\frac{(\mathfrak{N}_{m_0} \mathfrak{D}_{n_0} \mathfrak{G}_r f_{25})
(\bm{s},\lambda,\alpha)
\phase(\cfinal_0\alpha)}{m_1^{\beta_1}\cdots m_\kspecial^{\beta_\kspecial}}
\prod_{1\le \jspecial\le \lspecial}
\frac{\phase(-\cfinal_\jspecial\alpha\bmod{\ZZ_{n_\jspecial}})}
{n_\jspecial^{\gamma_\jspecial}} \right)},
\end{equation}
where for any given $m_0,\dots,m_\kspecial,n_0,\dots,n_\lspecial\ge 1$ we let
\begin{equation}
\label{alpha-factorize}
\alpha=\alpha(m_0,\dots,m_\kspecial,n_0,\dots,n_\lspecial)
\defeq \frac{m_0m_1^{\mathfrak{u}_1}\cdots m_\kspecial^{\mathfrak{u}_\kspecial}}
{n_0n_1^{\mathfrak{v}_1}\cdots n_\lspecial^{\mathfrak{v}_\lspecial}}.
\end{equation}
(Cf.~\eqref{EXPR:desired-key-general-multiple-Dirichlet-series}.)
Let $\mathsf{P}(\alpha) \defeq m_1\cdots m_\kspecial n_1\cdots n_\lspecial$,
for convenience.



Before proceeding, we decompose \eqref{EXPR:final-expanded-adelic-data-sum} into three pieces.
By reordering if necessary, assume
\begin{equation*}
\set{\cfinal_\jspecial: 1\le \jspecial\le \lspecial}
= \set{\cfinal_1,\dots,\cfinal_{\lspecial'}},
\end{equation*}
where $\cfinal_1,\dots,\cfinal_{\lspecial'}$ are pairwise distinct
and $\lspecial'\le \lspecial$.
For each $\jspecial'\in \set{1,\dots,\lspecial'}$, let
\begin{equation*}
n[\jspecial']
\defeq \prod_{1\le \jspecial\le \lspecial:\, \cfinal_\jspecial=\cfinal_{\jspecial'}}
n_\jspecial^{\mathfrak{v}_\jspecial}.
\end{equation*}
Let $\xi>0$ be small.
Let $\Xi \defeq \xi^{1/2}$.
We are now prepared to define some important functions
\begin{equation*}
\mathcal{P}_i = \mathcal{P}_i(r,m_0,\dots,m_\kspecial,n_0,\dots,n_\lspecial),
\end{equation*}
where $i\in \{1,2,3\}$.
Let $\mathcal{P}_3 \defeq 1 - \mathcal{P}_1 - \mathcal{P}_2$, where
\begin{align*}
\mathcal{P}_1
&\defeq w_0{\left(\frac{rm_0n_0\alpha}{\mathsf{P}(\alpha)^\xi}\right)}
w_0{\left(\frac{rm_0n_0\alpha^{-1}}{\mathsf{P}(\alpha)^\xi}\right)}
\sum_{\mathfrak{V}\belongs \set{1,\dots,\lspecial'}:\, \card{\mathfrak{V}}\le 1}\,
\prod_{\jspecial'\in \mathfrak{V}}
w_\infty{\left(\frac{n[\jspecial']}{\mathsf{P}(\alpha)^\Xi}\right)}
\prod_{\jspecial'\notin \mathfrak{V}}
w_0{\left(\frac{n[\jspecial']}{\mathsf{P}(\alpha)^\Xi}\right)}, \\
\mathcal{P}_2
&\defeq w_0{\left(\frac{rm_0n_0\alpha}{\mathsf{P}(\alpha)^\xi}\right)}
w_0{\left(\frac{rm_0n_0\alpha^{-1}}{\mathsf{P}(\alpha)^\xi}\right)}
\sum_{\mathfrak{V}\belongs \set{1,\dots,\lspecial'}:\, \card{\mathfrak{V}}\ge 2}\,
\prod_{\jspecial'\in \mathfrak{V}}
w_\infty{\left(\frac{n[\jspecial']}{\mathsf{P}(\alpha)^\Xi}\right)}
\prod_{\jspecial'\notin \mathfrak{V}}
w_0{\left(\frac{n[\jspecial']}{\mathsf{P}(\alpha)^\Xi}\right)}.
\end{align*}
Using smooth, as opposed to sharp, cutoffs, helps in $\mathcal{P}_1$ (see the proof of Lemma~\ref{LEM:range-1}).

\begin{definition}
\label{piece-defined-by-P_i}
The \emph{piece of \eqref{EXPR:final-expanded-adelic-data-sum} defined by $\mathcal{P}_i$} is
the quantity
\begin{equation*}
\sum_{f_{25}, \lambda}
\, \sum_{\substack{r,m_0,\dots,m_\kspecial,n_0,\dots,n_\lspecial\ge 1: \\
\textnormal{pairwise coprime}, \\
\textnormal{$r$ square-free}, \\
p\mid n_1\cdots n_\lspecial \Rightarrow p\notin \ol{\Sbad}}}
\, \mathcal{P}_i\cdot \mathscr{S}{\left(
\frac{(\mathfrak{N}_{m_0} \mathfrak{D}_{n_0} \mathfrak{G}_r f_{25})
(\bm{s},\lambda,\alpha)
\phase(\cfinal_0\alpha)}{m_1^{\beta_1}\cdots m_\kspecial^{\beta_\kspecial}}
\prod_{1\le \jspecial\le \lspecial}
\frac{\phase(-\cfinal_\jspecial\alpha\bmod{\ZZ_{n_\jspecial}})}
{n_\jspecial^{\gamma_\jspecial}} \right)},
\end{equation*}
where $\mathcal{P}_i = \mathcal{P}_i(r,m_0,\dots,m_\kspecial,n_0,\dots,n_\lspecial)$.
\end{definition}

In other words, Definition~\ref{piece-defined-by-P_i} introduces a weight of $\mathcal{P}_i$ into the original sum \eqref{EXPR:final-expanded-adelic-data-sum}.
Roughly speaking,
$\mathcal{P}_3$ is supported on very degenerate ranges of variables,
$\mathcal{P}_2$ is supported on ranges where two or more of the quantities $n[\jspecial']$ are large,
and $\mathcal{P}_1$ is supported on ranges where only one of the quantities $n[\jspecial']$ is large.
We formalize these support conditions in the next paragraph.
For now, we mention that the weight $\mathcal{P}_3$ can be handled quite crudely, the weight $\mathcal{P}_2$ allows for significant cancellation via a global reciprocity argument, and the weight $\mathcal{P}_1$ allows for cancellation via local averaging modulo all but the largest quantity $n[\jspecial']$.

Since $\Supp{w_0} \belongs \openint(-2,2)$
and $\Supp{w_\infty} \belongs \RR \setminus [-1,1]$,
we see that if $n\in \ZZ_{\ge 1}$,
then $$w_0(n/\mathsf{P}(\alpha)^\Xi) \ne 0
\Rightarrow n<2\mathsf{P}(\alpha)^\Xi,
\quad w_\infty(n/\mathsf{P}(\alpha)^\Xi) \ne 0
\Rightarrow n>\mathsf{P}(\alpha)^\Xi.$$
Therefore, the function $\mathcal{P}_1$ is supported on
\begin{equation}
\label{COND:define-dominant-Z_1-range}
rm_0n_0 \max(\alpha, \alpha^{-1}) < 2\mathsf{P}(\alpha)^\xi,
\quad
\#\set{\jspecial'\in \set{1,\dots,\lspecial'}:
n[\jspecial'] < 2\mathsf{P}(\alpha)^\Xi}
\ge \lspecial'-1
\end{equation}
(i.e.~every point in the support of $\mathcal{P}_1$ satisfies \eqref{COND:define-dominant-Z_1-range}),
and $\mathcal{P}_2$ is supported on
\begin{equation}
\label{COND:define-challenging-Kloosterman-oscillation-Z_1-range}
rm_0n_0 \max(\alpha, \alpha^{-1}) < 2\mathsf{P}(\alpha)^\xi,
\quad
\#\set{\jspecial'\in \set{1,\dots,\lspecial'}:
n[\jspecial'] > \mathsf{P}(\alpha)^\Xi}
\ge 2.
\end{equation}
Also, since $w_0\vert_{[-1,1]} = 1$ and $(w_0+w_\infty)(\frac{n[\jspecial']}{\mathsf{P}(\alpha)^\Xi}) = 1$, we see that if $rm_0n_0 \max(\alpha, \alpha^{-1}) \le \mathsf{P}(\alpha)^\xi$, then $\mathcal{P}_1 + \mathcal{P}_2 = 1$.
Therefore, $\mathcal{P}_3$ is supported on
\begin{equation}
\label{COND:define-absolute-decay-range-in-Z_1}
rm_0n_0 \max(\alpha, \alpha^{-1}) > \mathsf{P}(\alpha)^\xi.
\end{equation}

Recall the definition of $\mathcal{H}_J(-\delta,\infty)$
from \S\ref{SUBSEC:polar-combinatorics}.
Let
\begin{equation}
\label{define-translate-space-H-star}
\mathcal{H}_\star(\delta)
\defeq \set{h(\bm{s}):
h(\bm{z}+\mathbf{d}+2\mathbf{u})\in \mathcal{H}_J(-\delta,\infty)},
\end{equation}
where $h(\bm{z}+\mathbf{d}+2\mathbf{u})$ is viewed as a function of $\bm{z}$.
The point of this definition is that
if $\bm{s}$ lies in the region \eqref{INEQ:key-new-region},
then $\bm{z}\defeq \bm{s}-\mathbf{d}-2\mathbf{u}$
satisfies $\Re(z_\jboundary)\ge -\delta$ for all $\jboundary\in J$.

\begin{lemma}
\label{LEM:range-3}
Let $\xi>0$ be small.
Then the piece of \eqref{EXPR:final-expanded-adelic-data-sum} defined by $\mathcal{P}_3$
extends to an element of $\mathcal{H}_\star(\delta)$, provided $\delta$ is sufficiently small (in terms of $\xi$).
\end{lemma}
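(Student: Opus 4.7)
The plan is to show that the $\mathcal{P}_3$-piece converges absolutely on the region \eqref{INEQ:key-new-region}, with bounds polynomial in $\norm{\bm{s}}$ uniformly over vertical strips. Absolute convergence of a sum of holomorphic terms then yields a holomorphic, polynomially bounded function; since $A_8$ in \eqref{INEQ:key-new-region} is arbitrary (and enters all bounds polynomially), this identifies the $\mathcal{P}_3$-piece with an element of $\mathcal{H}_\star(\delta)$ via \eqref{define-translate-space-H-star}.

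First, I bound the shifted integrand. By \eqref{EQN:change-of-t-identity}, each $H^\vee_p(\bm{s}-it\mathbf{u},\lambda,0,\alpha)$ equals $H^\vee_p(\bm{s},\lambda,t,\alpha)$, whose modulus is independent of $t$ because the only $t$-dependence is the unimodular twist $\abs{a_p}_p^{-it}$; hence the absolute bounds from Lemmas~\ref{LEM:numerator-bias}, \ref{LEM:denominator-bias}, \ref{LEM:generic-local-factor-estimate} apply uniformly in $t$. For the archimedean factor, Lemma~\ref{LEM:uniform-derivative-bound-on-localized-modulated-integrals} with $(k_1,k_2)=(2,0)$ gives $\abs{f_{25}(\bm{s}-it\mathbf{u},\lambda,\alpha)} \ll \norm{\bm{s}}^{O(1)} \alpha^{-2} (1+t^2)^{-1}$. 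Integrating over $t$ against $(1+t^2)^{-1}$ yields the pointwise bound
\begin{equation*}
\biggl\lvert \mathscr{S}\Bigl(f_{25} \prod_p H^\vee_p\Bigr) \biggr\rvert
\ll \norm{\bm{s}}^{O(1)} \alpha^{-2} \abs{\mathfrak{N}_{m_0} \mathfrak{D}_{n_0} \mathfrak{G}_r}
\end{equation*}
on each summand, to be combined with $\abs{m_\ispecial^{\beta_\ispecial}}^{-1}$ and $\abs{n_\jspecial^{\gamma_\jspecial}}^{-1}$ from \eqref{EXPR:final-expanded-adelic-data-sum}.

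Second, I bound the summand multiplicatively. For square-free $r$, $\abs{\mathfrak{G}_r} \ll r^{\eps-2(1-\delta)}$. Expanding $\alpha^{\pm 2}$ through \eqref{alpha-factorize} and tracking contributions prime by prime, Lemmas~\ref{LEM:numerator-bias} and \ref{LEM:denominator-bias} always provide at least one genuine power saving ($p^{\delta-1}$, $p^{-1/2-O(\delta)}$, or $p^{(\delta-1)k/\abs{\mathsf{u}_\jboundary}}$) beyond the ``main-size'' factors $p^{(2+\eps)k}$ and $p^{(\eps-2)k}$. A short case analysis in $\abs{\mathsf{u}_\jboundary}$ over $J_2$ (resp.\ $J_1$) shows that the Euler factors for $p \mid m_0$ (resp.\ $p \mid n_0$) decay like $p^{-3/2+O(\delta+\eps)}$, uniformly in $\bm{s}$ on \eqref{INEQ:key-new-region}. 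Hence the sums over $r$, $m_0$, $n_0$ converge absolutely and yield $O(1)$. The residual factors in $m_\ispecial$, $n_\jspecial$ carry exponents of order $-1+O(\delta)$---only \emph{marginally} convergent.

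Third, I invoke the support condition \eqref{COND:define-absolute-decay-range-in-Z_1}: on the support of $\mathcal{P}_3$, the ratio $rm_0n_0\max(\alpha,\alpha^{-1})/\mathsf{P}(\alpha)^\xi$ exceeds $1$, so I may majorize the summand by its $\eta$th power for any $\eta>0$. For $\eta$ a small positive constant, this increases the exponents of $r$, $m_0$, $n_0$, $\max(\alpha,\alpha^{-1})$ by $O(\eta)$ (well within previous decay) and decreases those of $m_\ispecial$, $n_\jspecial$ by $\xi\eta$. Choosing $\delta$ sufficiently small (depending on $\xi$) so that $\xi\eta$ exceeds the $O(\delta)$ deficit, all exponents drop strictly below $-1$, giving absolute convergence; summing over finite $\lambda\in\Mbad$ and finitely many $f_{25}$ completes the argument. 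The main obstacle is the second step, particularly in geometric configurations where $\abs{\mathsf{u}_\jboundary}$ is large and the savings in Lemmas~\ref{LEM:numerator-bias}--\ref{LEM:denominator-bias} come only from the $p^{\delta-1}$ and $p^{(\delta-1)k/\abs{\mathsf{u}_\jboundary}}$ terms---verifying that the per-Euler-factor exponents $-3/2+O(\delta)$ for $m_0$ and $n_0$ survive uniformly is the central calculation; once this is done, balancing $\eta$, $\delta$, $\xi$ is routine.
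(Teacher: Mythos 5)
You correctly identify the paper's overall strategy---absolute convergence via the $p$-adic bias estimates, with a multiplicative exploitation of the support condition \eqref{COND:define-absolute-decay-range-in-Z_1}---and the Step~2 cancellation $\alpha^{-2}\prod_\ispecial m_\ispecial^{2\mathfrak{u}_\ispecial}\prod_\jspecial n_\jspecial^{-2\mathfrak{v}_\jspecial}=(n_0/m_0)^2$ against the size of $\abs{m_\ispecial^{\beta_\ispecial}n_\jspecial^{\gamma_\jspecial}}^{-1}$ on \eqref{INEQ:key-new-region} is essentially the paper's. But your Step~3 contains a genuine gap.

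When you insert $1 \le [rm_0n_0\max(\alpha,\alpha^{-1})/\mathsf{P}(\alpha)^\xi]^\eta$, the factor $\max(\alpha,\alpha^{-1})^\eta$ is \emph{not} an independent quantity: by \eqref{alpha-factorize} it contributes the exponent $\mathfrak{u}_\ispecial\eta$ to $m_\ispecial$, or $\mathfrak{v}_\jspecial\eta$ to $n_\jspecial$, depending on the sign of $\log\alpha$, and $\mathfrak{u}_\ispecial,\mathfrak{v}_\jspecial\ge 1$. Since $\mathsf{P}(\alpha)^{-\xi\eta}$ only subtracts $\xi\eta$ and $\xi<1$, the net change on the $m_\ispecial$- or $n_\jspecial$-exponent is $+(\mathfrak{u}_\ispecial-\xi)\eta>0$ or $+(\mathfrak{v}_\jspecial-\xi)\eta>0$. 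Your exponents do not ``drop strictly below $-1$''; they increase, breaking convergence. The accounting ``increases the exponents of \dots, $\max(\alpha,\alpha^{-1})$ by $O(\eta)$'' treats $\max(\alpha,\alpha^{-1})$ as a free variable, which it is not.

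The paper's fix uses the $\CC\div(a)$-invariance of the output of $\mathscr{S}$ in a way you omit. Splitting \eqref{EXPR:final-expanded-adelic-data-sum} according to $\alpha\ge 1$ ($\varsigma=1$) or $\alpha<1$ ($\varsigma=-1$), one has $\mathscr{S}(F_{26,\varsigma})(\bm{s})=\mathscr{S}(F_{26,\varsigma})(\bm{s}-\varsigma\xi\mathbf{u})$, and evaluating $F_{26,\varsigma}$ at the shifted argument $\bm{s}-\varsigma\xi\mathbf{u}-it\mathbf{u}$ produces, after the same cancellation you perform, an \emph{extra} factor $(\alpha n_0/m_0)^{-\varsigma\xi}$. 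Inserting $(rm_0n_0\alpha^\varsigma)^\xi/\mathsf{P}(\alpha)^{\xi^2}\ge 1$ (using $\alpha^\varsigma$, not $\max(\alpha,\alpha^{-1})$, which is why the $\varsigma$-split matters) then produces $\alpha^{\varsigma\xi}$, which cancels $\alpha^{-\varsigma\xi}$ \emph{exactly}. What remains is $(n_0/m_0)^{2-\varsigma\xi}(rm_0n_0)^\xi$, absorbed by the bias lemmas, times $(m_1\cdots n_\lspecial)^{\delta-1-\xi^2}$, summable once $\delta<\xi^2$. This shift-and-cancel step and the $\varsigma$-dependent choice of which power of $\alpha$ to insert are the missing ideas.

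Two smaller points. First, the Euler-factor decay you claim ($p^{-3/2+O(\delta+\eps)}$) is too strong: from Lemma~\ref{LEM:numerator-bias} at $k=1$ with $\abs{\mathsf{u}_\jboundary}$ large, the decay is only $p^{-1-1/\abs{\mathsf{u}_\jboundary}+O(\delta+\eps)}$, which barely converges and is why $\xi$ (hence the $p^{O(\xi)}$ loss) must be small in terms of $\max_\jboundary\abs{\mathsf{u}_\jboundary}$. Second, to land in $\mathcal{H}_\star(\delta)$ rather than just prove holomorphy one must verify the integrable $(1+t^2)^{-1}$-decay in \eqref{INEQ:growth-decay-condition-for-f-in-H_dagger,J} and then apply Lemma~\ref{LEM:shifted-integral-polar-and-residue-structure}(4); your Step~1 gestures at this but does not carry it out.
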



\begin{proof}
Assume $\delta\le \xi$.
We would like to apply Lemma~\ref{LEM:shifted-integral-polar-and-residue-structure}(4), but will need to treat small and large $\alpha$ separately, using two different $\RR\div(a)$-translates of the variable $\bm{s}$.
Let
\begin{equation*}
F_{26,-1}(\bm{s}) = \sum_{f_{25}, \lambda} \sum_{r,m_0,\dots}
\bm{1}_{\alpha<1} \cdot \mathcal{P}_3 \cdot
\frac{(\mathfrak{N}_{m_0} \mathfrak{D}_{n_0} \mathfrak{G}_r f_{25})(\bm{s},\lambda,\alpha)
\phase(\cfinal_0\alpha)}{m_1^{\beta_1}\cdots m_\kspecial^{\beta_\kspecial}}
\prod_{1\le \jspecial\le \lspecial}
\frac{\phase(-\cfinal_\jspecial\alpha\bmod{\ZZ_{n_\jspecial}})}
{n_\jspecial^{\gamma_\jspecial}},
\end{equation*}
and let $F_{26,1}(\bm{s})$ be the corresponding sum
with $\bm{1}_{\alpha\ge 1}$ in place of $\bm{1}_{\alpha<1}$.
In $F_{26,\varsigma}$, where $\varsigma\in \set{\pm 1}$,
the weight $\mathcal{P}_3(r,m_0,n_0,\dots)$ implies,
via \eqref{COND:define-absolute-decay-range-in-Z_1},
that
$rm_0n_0 \alpha^\varsigma
> \mathsf{P}(\alpha)^\xi$.
Therefore, upon taking absolute values in $F_{26,\varsigma}$, and plugging in Lemma~\ref{LEM:uniform-derivative-bound-on-localized-modulated-integrals} and the inequality $1 \le (rm_0n_0 \alpha^\varsigma)^\xi / \mathsf{P}(\alpha)^{\xi^2}$,
we get that $F_{26,\varsigma}(\bm{s}-\varsigma\xi\mathbf{u} - it\mathbf{u})$ is $\ll_\xi (1+\norm{\bm{s}}^4)/(1+t^2)$ times
\begin{equation*}
\sum_{r,m_0,\dots} \frac{\abs{\mathfrak{G}_r\mathfrak{N}_{m_0}\mathfrak{D}_{n_0}(\bm{s}-\varsigma\xi\mathbf{u} - it\mathbf{u},\lambda,\alpha)}
\, (n_0/m_0)^2(\alpha n_0/m_0)^{-\varsigma\xi}}{(m_1\cdots m_\kspecial n_1\cdots n_\lspecial)^{1-\delta}}
\cdot \frac{(rm_0n_0 \alpha^\varsigma)^\xi}{(m_1\cdots m_\kspecial n_1\cdots n_\lspecial)^{\xi^2}},
\end{equation*}
for all $\bm{s}$ in \eqref{INEQ:key-new-region} and $t\in \RR$,
in view of the inequality
\begin{equation*}
\begin{split}
\frac{\alpha^{-2}}{\prod_{1\le \ispecial\le \kspecial} \abs{m_\ispecial^{\beta_\ispecial-\varsigma\xi\mathsf{u}_{\mathfrak{m}(\ispecial)}}}}
\prod_{1\le \jspecial\le \lspecial}
\frac{1}{\abs{n_\jspecial^{\gamma_\jspecial-\varsigma\xi\mathsf{u}_{\mathfrak{n}(\jspecial)}}}}
&\le \frac{\alpha^{-2}}{\prod_{1\le \ispecial\le \kspecial} m_\ispecial^{(2-\varsigma\xi)\mathsf{u}_{\mathfrak{m}(\ispecial)}+1-\delta}}
\prod_{1\le \jspecial\le \lspecial}
\frac{1}{n_\jspecial^{(2-\varsigma\xi)\mathsf{u}_{\mathfrak{n}(\jspecial)}+1-\delta}} \\
&= \frac{\alpha^{-2} (\alpha n_0/m_0)^{2-\varsigma\xi}}{(m_1\cdots m_\kspecial n_1\cdots n_\lspecial)^{1-\delta}}
= \frac{(n_0/m_0)^2 (\alpha n_0/m_0)^{-\varsigma\xi}}{(m_1\cdots m_\kspecial n_1\cdots n_\lspecial)^{1-\delta}},
\end{split}
\end{equation*}
where $\alpha^{-2}$ comes from Lemma~\ref{LEM:uniform-derivative-bound-on-localized-modulated-integrals}.
Observe that the powers of $\alpha^\varsigma$ above cancel out.

When all summation variables but $n_0$ are fixed,
Lemma~\ref{LEM:denominator-bias}
implies
\begin{equation*}
\sum_{n_0\ge 1}
n_0^{2-\varsigma\xi} n_0^\xi
\abs{\mathfrak{D}_{n_0}}
\le \prod_p \left(1 + O_\xi(p^{O(\xi)}) \sum_{\jboundary\in J_1} \left(p^{-(1 + 1/\mathsf{u}_\jboundary)}
+ p^{-1/2} p^{-1} + p^{-(\mathsf{u}_\jboundary+1)/\mathsf{u}_\jboundary}\right)\right)
\ll_\xi 1,
\end{equation*}
provided $\bm{s}$ lies in \eqref{INEQ:key-new-region} and $\xi$ is sufficiently small.
If we then similarly use Lemma~\ref{LEM:numerator-bias} to sum over $m_0$, and Lemma~\ref{LEM:generic-local-factor-estimate} to sum over $r$, we get
(suppressing coprimality restrictions)
\begin{equation}
\label{INEQ:model-r,m_0,n_0-total-bound}
\sum_{r,m_0,n_0\ge 1}
(n_0/m_0)^{2-\varsigma\xi} (rm_0n_0)^\xi
\abs{\mathfrak{G}_r\mathfrak{N}_{m_0}\mathfrak{D}_{n_0}(\bm{s}-\varsigma\xi\mathbf{u} - it\mathbf{u},\lambda,\alpha)}
\ll_\xi 1.
\end{equation}
So if $\delta < \xi^2$, the sum $F_{26,\varsigma}(\bm{s}-\varsigma\xi\mathbf{u} - it\mathbf{u})$ (and its analog with absolute values everywhere) is
\begin{equation*}
\ll_\xi \frac{1+\norm{\bm{s}}^{O(1)}}{1+t^2}
\sum_{m_1,\dots,m_\kspecial,n_1,\dots,n_\lspecial\ge 1}
\frac{(m_1\cdots m_\kspecial n_1\cdots n_\lspecial)^{\delta-1}}{(m_1\cdots m_\kspecial n_1\cdots n_\lspecial)^{\xi^2}}
\ll_{\delta,\xi} \frac{1+\norm{\bm{s}}^{O(1)}}{1+t^2}.
\end{equation*}
Applying Lemma~\ref{LEM:shifted-integral-polar-and-residue-structure}(4) to $\mathscr{S}(F_{26,\varsigma})$ now gives Lemma~\ref{LEM:range-3},
since the piece of \eqref{EXPR:final-expanded-adelic-data-sum} defined by $\mathcal{P}_3$
is precisely $\sum_{\varsigma\in \set{\pm 1}} \mathscr{S}(F_{26,\varsigma})(\bm{s}) = \sum_{\varsigma\in \set{\pm 1}} \mathscr{S}(F_{26,\varsigma})(\bm{s}-\varsigma\xi\mathbf{u})$ (when $\Re(\bm{s})$ is large).
\end{proof}

For $\mathcal{P}_2$, we use a Weyl-type inequality for monomials in several variables.
There are many ways to establish such inequalities (see e.g.~\cites{parsell2012hua,parsell2013near,bourgain2022multi} and references within).
However, handling technical issues such as lopsidedness and coprimality requires care.

\begin{proposition}
\label{PROP:Weyl-inequality-for-monomials-in-several-variables}
Let $M_1,\dots,M_\kspecial,y,q\in \ZZ_{\ge 1}$ with $\gcd(y,q)=1$.
Let $P_1,\dots,P_\kspecial$ be arithmetic progressions contained in $[M_1,2M_1),\dots,[M_\kspecial,2M_\kspecial)$, respectively, all with modulus $\le R$.
Let $K(I) = \sum_{i\in I} (\mathfrak{u}_i-1)$.
Then for any nonempty set $I\belongs \set{1,\dots,\kspecial}$, we have
\begin{equation*}
\sum_{(m_1,\dots,m_\kspecial) \in P_1 \times \dots \times P_\kspecial}
\phase(ym_1^{\mathfrak{u}_1}\cdots m_\kspecial^{\mathfrak{u}_\kspecial}/q)
\ll_\eps \frac{R^{O(1)} (M_1\cdots M_\kspecial)^{1+\eps}}
{\min(q \prod_{i\notin I} M_i^{-\mathfrak{u}_i},
\min_{i\in I}(M_i), q^{-1} \prod_{i\in I} M_i^{\mathfrak{u}_i})^{1/2^{K(I)}}}.
\end{equation*}
\end{proposition}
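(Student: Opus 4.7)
The strategy is classical Weyl differencing, applied one variable at a time over $I$. Re-index so that $I=\set{1,\dots,\ell}$, and by the triangle inequality pull the outer variables $(m_i)_{i\notin I}$ outside; it suffices to bound, uniformly in those fixed outer variables, the inner sum in $(m_1,\dots,m_\ell)$ of $\phase(y'm_1^{\mathfrak{u}_1}\cdots m_\ell^{\mathfrak{u}_\ell}/q)$, where $y'\defeq y\prod_{i\notin I}m_i^{\mathfrak{u}_i}$. For each $i\in I$ with $\mathfrak{u}_i\ge 2$, apply the Cauchy--Schwarz inequality $\mathfrak{u}_i-1$ times in the variable $m_i$: each step squares the sum on the left, brings in an auxiliary ``difference'' parameter $h_{i,j}$ of size $\ll M_i$, and lowers the degree in $m_i$ by one. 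After $K(I)=\sum_{i\in I}(\mathfrak{u}_i-1)$ total differencings, the quantity $|S|^{2^{K(I)}}$ is bounded, up to trivial bulk factors and a factor of $R^{O(1)}$, by a sum over the $h_{i,j}$ of an exponential sum $T$ whose exponent is multilinear in the $m_i$ (of degree $\le 1$ in each separately), with ``top'' multilinear coefficient proportional to $y\bigl(\prod_{i\in I}\mathfrak{u}_i!\prod_{j=1}^{\mathfrak{u}_i-1}h_{i,j}\bigr)\bigl(\prod_{i\notin I}m_i^{\mathfrak{u}_i}\bigr)/q$.

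For fixed differencing parameters, sum $T$ over $m_1$, then $m_2$, \dots, then $m_\ell$, one variable at a time; each one-dimensional sum is over an AP of modulus $\le R$ and is bounded by $R\cdot\min(M_i,\|c_i/q\|^{-1})$ where $c_i$ depends on $y$, $q$, the $h_{i',j}$, and the ``later'' $m_{i'}$. Iterating the standard counting estimate
\begin{equation*}
\#\set{n\le N:\|cn/q\|\le 1/B}\ll_\eps q^\eps\bigl(NB^{-1}+Nq^{-1}+1\bigr)
\end{equation*}
(applicable once $\gcd(c,q)$ is controlled, $\gcd$-factors being absorbed into $(M_1\cdots M_\kspecial)^\eps$ via $\tau(q)\ll_\eps q^\eps$ together with $\gcd(y,q)=1$) over the ranges of the $h_{i,j}$ yields three candidate upper bounds, each raised to the power $1/2^{K(I)}$. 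These three bounds match the three quantities inside the $\min$ of the proposition: $\min_{i\in I}M_i$ arises by saturating some $\min(M_i,\cdot)$ at its trivial value $M_i$; $q^{-1}\prod_{i\in I}M_i^{\mathfrak{u}_i}$ is the generic cancellation bound obtained when $\|c/q\|^{-1}$ is integrated over the full range of coefficients; and $q\prod_{i\notin I}M_i^{-\mathfrak{u}_i}$ is the ``major arc'' Ramanujan-sum-type bound that dominates when $q$ is small compared with the size of the monomial in the outer variables.

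The main obstacle is careful bookkeeping rather than any fundamentally new idea. One must track how the AP moduli propagate through each Weyl differencing and each one-dimensional summation, verifying they grow by only a polynomial factor $R^{O(1)}$; absorb the $\gcd(\text{coefficient},q)$ factors into $(M_1\cdots M_\kspecial)^\eps$ using divisor bounds; and check that the coprimality hypothesis $\gcd(y,q)=1$ is not consumed by the differencings, so that the ``effective modulus'' $q^\ast=q/\gcd(c,q)$ in each inner bound remains close to $q$ on average. The extraction of the outer variables via the triangle inequality is lossless at the exponent $1/2^{K(I)}$ precisely because differencing is carried out only on the variables in $I$, and the outer monomial $\prod_{i\notin I}m_i^{\mathfrak{u}_i}$ survives only in the top coefficient.
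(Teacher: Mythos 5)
Your high-level strategy matches the paper's: first Weyl-difference $K(I)$ times in the $I$-variables to linearize the monomial, then freeze the outer variables $(m_j)_{j\notin I}$ and bound the resulting inner sum. But the paper's completion step is more surgical than yours, and the difference matters. After differencing, the paper fixes a \emph{single} $i_\star\in I$, sums over $m_{i_\star}$ with the $\min(|P_{i_\star}|,\|\cdot\|^{-1})$ bound, and then replaces the whole collection of remaining linear factors (both the differencing parameters $h_{i,j}$ \emph{and} the $m_i$ for $i\ne i_\star$) by a single integer $h\le M'$ carrying divisor-function multiplicity, following Parsell's Lemma~2.2; only then is the one-dimensional counting estimate (Vaughan's Lemma~2.2) applied, once. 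You instead propose to sum over $m_1$, then $m_2$, \dots, one variable at a time. That can in principle be made to work, but it requires iterating the counting estimate $|I|$ times while propagating an $m$-dependent effective modulus $q^\ast=q/\gcd(c,q)$ from one sum into the next, and you must still decide at what point (and how) the $h$'s get summed. The bookkeeping you wave off is genuinely heavier in your scheme, and you haven't shown that it reassembles into the particular min-of-three bound.

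Two specific points of imprecision. First, your remark that the effective modulus ``remains close to $q$ on average'' is only appropriate for the \emph{averaged} parameters $h_{i,j}$, where divisor bounds recover the loss. The frozen outer variables $(m_j)_{j\notin I}$ are not averaged, and their gcd with $q$ can genuinely shrink the working modulus to as little as $q\prod_{j\notin I}M_j^{-\mathfrak{u}_j}$. The paper handles this precisely as a worst-case lower bound $q' \gg q\prod_{j\notin I}M_j^{-\mathfrak{u}_j}$ (its Case~2), which is exactly the first entry of the $\min$; calling that a ``Ramanujan-sum-type bound'' as you do is not the right picture. Second, your attribution of the three entries of the $\min$ to three ``candidate upper bounds'' from the iterated counting estimate is heuristic: in the paper's argument, one of those entries already appears before the counting estimate (it is the diagonal/degenerate term $\sum_{i\in I}|P_i|^{-1}$ in which some $h_{i,j}=0$), another comes from the $q$-term in Vaughan's lemma, and the third comes from the outer-variable modulus drop in Case~2 — the split is not a clean trichotomy from the one-variable sums. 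So: same family of techniques, but your last step is vague where the paper's is exact, and the way you describe the outer-variable loss is misleading. To close the gap, restructure the completion exactly as the paper does: prove the $M_i=1$ ($i\notin I$) case first with Parsell's divisor trick applied to $\sum_{1\le h\le M'}\min(|P_{i_\star}|,\|yhr/q\|^{-1})$, then deduce the general case by freezing the outer variables and using the lower bound $q'\gg q\prod_{j\notin I}M_j^{-\mathfrak{u}_j}$.
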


\begin{proof}
We first prove a simpler case, and then the general case.

\emph{Case~1: $M_i=1$ and $P_i=\set{1}$ for all $i\notin I$.}
For each $i\in I$, let $P_i-P_i \defeq \{x-x': x,x'\in P_i\}$.
We use Weyl differencing $K=K(I)$ times (to ``linearize'' the monomial $\prod_{i\in I} m_i^{\mathfrak{u}_i}$) to get
\begin{equation*}
{\textstyle \abs{\EE_{\bm{m}\in \prod_{i\in I} P_i}\, \phase(y(\prod_{i\in I} m_i^{\mathfrak{u}_i})/q)}^{2^K}}
\ll \EE_{\bm{h}\in \prod_{i\in I} (P_i-P_i)^{\mathfrak{u}_i-1}}\,
\abs{\EE_{\bm{m}\in \prod_{i\in I} P_i}\, \phase(yM(\bm{h},\bm{m})/q) \bm{1}_{m_i\in Q_i(\bm{h})}},
\end{equation*}
where $\EE_{x\in A}$ denotes an average over $x\in A$,
where $Q_i(\bm{h})$ is a real interval defined in terms of $P_i$ and $\bm{h}$,
and where $M(\bm{h},\bm{m})\defeq \prod_{i\in I} (\mathfrak{u}_i!\, m_i \prod_{1\le j\le \mathfrak{u}_i-1} h_{i,j})$.
Fix an $i_\star\in I$, sum over $m_{i_\star}\in P_{i_\star} \cap Q_{i_\star}(\bm{h})$, and use the divisor bound as in \cite{parsell2012hua}*{proof of Lemma~2.2}, to get
\begin{equation*}
{\textstyle \abs{\EE_{\bm{m}\in \prod_{i\in I} P_i}\, \phase(y(\prod_{i\in I} m_i^{\mathfrak{u}_i})/q)}^{2^K}}
\ll_\eps \sum_{i\in I} \frac{1}{\card{P_i}}
+ \frac{(M')^\eps}{\prod_{i\in I} \card{P_i}^{\mathfrak{u}_i}} \sum_{1\le h\le M'} \min(\card{P_{i_\star}}, \norm{yhr/q}_{\RR/\ZZ}^{-1})
\end{equation*}
for some $r\in \ZZ\cap [1, R \prod_{i\in I} \mathfrak{u}_i!]$,
where $M' \defeq M_{i_\star}^{-1}\prod_{i\in I}(2M_i)^{\mathfrak{u}_i}$
and $\norm{\vartheta}_{\RR/\ZZ}\defeq
\min_{n\in \ZZ}{\abs{\vartheta-n}}$.
By \cite{vaughan1997hardy}*{Lemma~2.2}, the sum over $h$ is
$$\ll_\eps M' \card{P_{i_\star}}
\left(\frac{r}{q}
+ \frac{1}{\card{P_{i_\star}}}
+ \frac{q}{M' \card{P_{i_\star}}}\right)
(M' q)^\eps.$$
Multiplying by $\prod_{i\in I} \card{P_i}^{2^K}$, we now find that $\abs{\sum_{\bm{m} \in P_1 \times \dots \times P_\kspecial}
\phase(ym_1^{\mathfrak{u}_1}\cdots m_\kspecial^{\mathfrak{u}_\kspecial}/q)}^{2^K}$ is
\begin{equation*}
\ll_\eps \sum_{i\in I} \frac{(M_1\cdots M_\kspecial)^{2^K}}{M_i}
+ (M' q)^\eps \left(\frac{M'M_{i_\star}R}{q} + M' + q\right)
\prod_{i\in I} M_i^{2^K - \mathfrak{u}_i},
\end{equation*}
since $2^K\ge \mathfrak{u}_i$.
This implies the result up to $q^\eps$,
which suffices,
since we may assume $\prod_{i\in I} M_i^{\mathfrak{u}_i} \ge q$.
(The result is trivial if $\prod_{i\in I} M_i^{\mathfrak{u}_i} \le q$.)

\emph{Case~2: The general case.}
Suppose we freeze $(m_j)_{j\notin I}$
and sum over the remaining variables.
By Case~1,
with $y'/q' = y(\prod_{j\notin I} m_j^{\mathfrak{u}_j})/q$ in place of $y/q$,
we find that
\begin{equation*}
\sum_{(m_i)_{i\in I} \in \prod_{i\in I} P_i}
\phase(ym_1^{\mathfrak{u}_1}\cdots m_\kspecial^{\mathfrak{u}_\kspecial}/q)
\ll_\eps \frac{R^{O(1)} \prod_{i\in I} M_i^{1+\eps}}
{\min(q',
\min_{i\in I}(M_i), (q')^{-1} \prod_{i\in I} M_i^{\mathfrak{u}_i})^{1/2^{K(I)}}}
\end{equation*}
However, since $q\ge q'\gg q \prod_{j\notin I} M_j^{-\mathfrak{u}_j}$, we have
$${\textstyle \min(q',
\min_{i\in I}(M_i), (q')^{-1} \prod_{i\in I} M_i^{\mathfrak{u}_i})
\gg \min(q \prod_{j\notin I} M_j^{-\mathfrak{u}_j},
\min_{i\in I}(M_i), q^{-1} \prod_{i\in I} M_i^{\mathfrak{u}_i})}.$$
Combining the last two displays
and summing over $(m_j)_{j\notin I}$, we conclude that
\begin{equation*}
\sum_{(m_1,\dots,m_\kspecial) \in P_1 \times \dots \times P_\kspecial}
\phase(ym_1^{\mathfrak{u}_1}\cdots m_\kspecial^{\mathfrak{u}_\kspecial}/q)
\ll_\eps \frac{\card{\prod_{j\notin I} P_j}\,
R^{O(1)} \prod_{i\in I} M_i^{1+\eps}}
{\min(q \prod_{j\notin I} M_j^{-\mathfrak{u}_j},
\min_{i\in I}(M_i), q^{-1} \prod_{i\in I} M_i^{\mathfrak{u}_i})}.
\end{equation*}
This suffices, since $\card{P_j}\ll M_j$ for all $j\notin I$.
\end{proof}

\begin{corollary}
\label{COR:clean-Weyl-inequality}
In the setting of Proposition~\ref{PROP:Weyl-inequality-for-monomials-in-several-variables}, we have (for some constant $\eta>0$)
\begin{equation*}
\sum_{(m_1,\dots,m_\kspecial) \in P_1 \times \dots \times P_\kspecial}
\phase(ym_1^{\mathfrak{u}_1}\cdots m_\kspecial^{\mathfrak{u}_\kspecial}/q)
\ll_\eps \frac{R^{O(1)} (M_1\cdots M_\kspecial)^{1+\eps}}
{\min(q, M_1^{\mathfrak{u}_1} \cdots M_\kspecial^{\mathfrak{u}_\kspecial} / q)^\eta}.
\end{equation*}
\end{corollary}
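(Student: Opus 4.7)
The plan is to deduce the corollary from Proposition~\ref{PROP:Weyl-inequality-for-monomials-in-several-variables} by making a judicious choice of $I$. Set $Q\defeq M_1^{\mathfrak{u}_1}\cdots M_\kspecial^{\mathfrak{u}_\kspecial}$ and $T\defeq \min(q,Q/q)$. I first observe that if $T\le 1$ the corollary is immediate from the trivial bound $\abs{P_1\times \dots\times P_\kspecial}\le M_1\cdots M_\kspecial$, so I may assume $T\ge 1$. With $\mathfrak{u}_\star\defeq \max_i \mathfrak{u}_i$, I then fix a constant $\eta\in (0,1/2]$ small enough that $\eta\kspecial\mathfrak{u}_\star\le 1/2$, and define
$$I\defeq \set{i\in \set{1,\dots,\kspecial}:M_i\ge T^\eta},$$
so that $I$ picks out the ``large'' coordinates.

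The first key step is to check that $I\ne \emptyset$: if $I=\emptyset$ then $Q<T^{\eta\kspecial\mathfrak{u}_\star}\le T^{1/2}$, but the definition $T=\min(q,Q/q)$ unconditionally forces $Q\ge T$, giving $T\le 1$, a contradiction. The second key step is to verify that each of the three quantities appearing in the denominator of the Proposition's bound is at least $T^\eta$. By construction $\min_{i\in I}M_i\ge T^\eta$; on the other hand $\prod_{i\notin I} M_i^{\mathfrak{u}_i}\le T^{\eta\kspecial\mathfrak{u}_\star}\le T^{1/2}$, and combining this with the unconditional inequalities $q\ge T$ and $q^{-1}Q\ge T$ (both of which follow directly from $T=\min(q,Q/q)$) yields
$$q\prod_{i\notin I} M_i^{-\mathfrak{u}_i}\ge T\cdot T^{-1/2}=T^{1/2}
\qquad\textnormal{and}\qquad
q^{-1}\prod_{i\in I} M_i^{\mathfrak{u}_i}=q^{-1}Q\prod_{i\notin I} M_i^{-\mathfrak{u}_i}\ge T\cdot T^{-1/2}=T^{1/2}.$$

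Finally, applying Proposition~\ref{PROP:Weyl-inequality-for-monomials-in-several-variables} with this $I$, and noting that $K(I)\le K\defeq \sum_i(\mathfrak{u}_i-1)$, I obtain the desired bound with $\eta/2^K$ in the role of the corollary's $\eta$. The main obstacle is the bookkeeping required to identify a single $I$ that simultaneously controls all three quantities across the two regimes $q\le Q^{1/2}$ and $Q^{1/2}\le q\le Q$; the point is that thresholding on one cutoff $T^\eta$ tied to $T=\min(q,Q/q)$ uniformizes these regimes, precisely because $q\ge T$ and $Q/q\ge T$ both hold by definition of $T$.
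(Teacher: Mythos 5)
Your proposal is correct and takes essentially the same route as the paper's proof: both set $\mathfrak{Y}=T=\min(q,\prod_iM_i^{\mathfrak{u}_i}/q)$, choose $I$ by thresholding the $M_i$ against a small power of $T$, and verify that all three quantities in the denominator of Proposition~\ref{PROP:Weyl-inequality-for-monomials-in-several-variables} are bounded below by that same small power of $T$. The only cosmetic differences are in the bookkeeping (the paper uses $Q\ge\mathfrak{Y}^2$ where you use $Q\ge T$, and the smallness condition on the threshold exponent is phrased slightly differently), but the argument is identical in substance.
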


\begin{proof}
If $\prod_{1\le i\le \kspecial} M_i^{\mathfrak{u}_i} \le q$, the desired bound is trivial.
So suppose $\prod_{1\le i\le \kspecial} M_i^{\mathfrak{u}_i} \ge q$.
Let $\mathfrak{Y} = \min(q, M_1^{\mathfrak{u}_1} \cdots M_\kspecial^{\mathfrak{u}_\kspecial} / q) \ge 1$;
then $M_1^{\mathfrak{u}_1} \cdots M_\kspecial^{\mathfrak{u}_\kspecial} \ge \mathfrak{Y}^2$.
Let $$I = \set{1\le i\le \kspecial: M_i\ge \mathfrak{Y}^\varrho}$$
for some $\varrho>0$
small enough in terms of $\mathfrak{u}_1,\dots,\mathfrak{u}_\kspecial$.
Then $I \ne \emptyset$,
and $\min_{i\in I}(M_i) \ge \mathfrak{Y}^\varrho$.
Also, $${\textstyle \min(q \prod_{i\notin I} M_i^{-\mathfrak{u}_i},
q^{-1} \prod_{i\in I} M_i^{\mathfrak{u}_i})
\ge \mathfrak{Y} / \prod_{i\notin I} M_i^{\mathfrak{u}_i}}
\ge \mathfrak{Y}^\varrho,$$
say.
Consequently, by Proposition~\ref{PROP:Weyl-inequality-for-monomials-in-several-variables}, the exponent $\eta = \varrho/2^{K(\{1,\dots,\kspecial\})}$ is admissible.
\end{proof}

\begin{corollary}
\label{COR:clean-Weyl-inequality-with-gcd-conditions}
In the setting of Proposition~\ref{PROP:Weyl-inequality-for-monomials-in-several-variables}, we have (for some constant $\eta>0$)
\begin{equation*}
\sum_{\substack{(m_1,\dots,m_\kspecial) \in P_1 \times \dots \times P_\kspecial: \\ \gcd(m_i,m_j) = \gcd(m_j,Q) = 1}}
\phase(ym_1^{\mathfrak{u}_1}\cdots m_\kspecial^{\mathfrak{u}_\kspecial}/q)
\ll_\eps \frac{Q^\eps R^{O(1)} (M_1\cdots M_\kspecial)^{1+\eps}}
{\min(q, M_1^{\mathfrak{u}_1} \cdots M_\kspecial^{\mathfrak{u}_\kspecial} / q)^\eta}
\quad \textnormal{for all $Q\in \ZZ_{\ge 1}$}.
\end{equation*}
\end{corollary}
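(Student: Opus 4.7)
The plan is to deduce this corollary from Corollary~\ref{COR:clean-Weyl-inequality} via Möbius inversion. I would first use $\bm{1}_{\gcd(m_i,m_j)=1} = \sum_{d_{ij}\mid\gcd(m_i,m_j)}\mu(d_{ij})$ and $\bm{1}_{\gcd(m_j,Q)=1} = \sum_{e_j\mid\gcd(m_j,Q)}\mu(e_j)$ to rewrite the sum as one over tuples $\Delta = ((d_{ij})_{i<j}, (e_k)_k)$ of squarefree positive integers with $e_k\mid Q$, weighted by $\mu(\Delta) \defeq \prod_{i<j}\mu(d_{ij})\prod_k\mu(e_k)$. Setting $f_k \defeq \lcm(\{d_{ij}: k\in\{i,j\}\}\cup\{e_k\})$, the divisibility conditions on the $m_k$ become $f_k\mid m_k$. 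After the substitution $m_k = f_k m_k'$, the inner sum takes the form handled by Corollary~\ref{COR:clean-Weyl-inequality}: each $m_k'$ ranges over an arithmetic progression $P_k'\subset [M_k/f_k, 2M_k/f_k)$ of modulus $\le R$, and the exponential is $\phase(y'\prod_k(m_k')^{\mathfrak{u}_k}/q')$ where $y'/q' = y\prod_k f_k^{\mathfrak{u}_k}/q$ in lowest terms.

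Applying Corollary~\ref{COR:clean-Weyl-inequality} to each inner sum, and using the key lower bound $\min(q', \prod_k(M_k/f_k)^{\mathfrak{u}_k}/q') \ge \min(q,\prod_k M_k^{\mathfrak{u}_k}/q)/G$ where $G\defeq\prod_k f_k^{\mathfrak{u}_k}$ (verified from $q'\ge q/G$ together with $\prod_k(M_k/f_k)^{\mathfrak{u}_k}/q' \ge \prod_k M_k^{\mathfrak{u}_k}/(Gq)$), one obtains a per-$\Delta$ bound of shape
\[
R^{O(1)}\,\Bigl(\prod\nolimits_k M_k/F\Bigr)^{1+\eps}\, \frac{G^{\eta_0}}{\min(q,\prod_k M_k^{\mathfrak{u}_k}/q)^{\eta_0}},
\]
where $F\defeq\prod_k f_k$ and $\eta_0>0$ is the constant from Corollary~\ref{COR:clean-Weyl-inequality}.

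The main obstacle will be summing this per-$\Delta$ bound over $\Delta$ while retaining a positive power of $\min$ in the denominator. I would carry this out by a prime-by-prime (Euler-product) analysis: the Möbius sum factors over primes, and at each prime $p$ the local factor is a finite sum over boolean choices $(d_{ij}^{(p)}, e_k^{(p)})\in\{0,1\}$, of shape $1 + O(\sum_k p^{\eta_0\mathfrak{u}_k - 1 - \eps})$. Choosing $\eta_0$ strictly less than $1/\max_k\mathfrak{u}_k$, the corresponding Euler product converges absolutely whenever $\eps$ exceeds $\eta_0\max_k\mathfrak{u}_k$. For smaller $\eps$, I would truncate at $F \le \min(q,\prod_k M_k^{\mathfrak{u}_k}/q)^{1/(2\max\mathfrak{u}_k)}$, applying the displayed bound on this truncated range and instead the trivial inner-sum bound $\prod_k|P_k'|\le\prod_k M_k/F$ on the complement (combined with the divisor estimate $\#\{\Delta: F=F_0\}\ll_\eps (F_0 Q)^\eps$). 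Taking $\eta$ to be a suitable fraction of $\eta_0/\max_k\mathfrak{u}_k$, the two regimes combine to yield the stated bound.
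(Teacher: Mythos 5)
Your plan follows the same broad strategy as the paper's proof (M\"obius inversion plus a truncation in the size of the divisors), but you take an unnecessary detour, and your tail estimate has a genuine gap.

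The detour: you substitute $m_k = f_k m_k'$ and apply Corollary~\ref{COR:clean-Weyl-inequality} in the new variables, which changes $q$ to $q'$ and produces the loss factor $G^{\eta_0}$ with $G = \prod_k f_k^{\mathfrak{u}_k}$. The paper instead leaves the $m_k$ in their original ranges and observes that imposing $h_{ij}\mid m_i, m_j$ and $g_j \mid m_j, Q$ (with all $h_{ij}, g_j \le B$) merely restricts each $m_k$ to a subprogression of $P_k$ of modulus $\ll B^{O(1)}R$; Corollary~\ref{COR:clean-Weyl-inequality} then applies directly with the enlarged modulus, leaving $q$, and hence $\min(q, M_1^{\mathfrak{u}_1}\cdots M_\kspecial^{\mathfrak{u}_\kspecial}/q)$, untouched. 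The $B^{O(1)}$ is absorbed by taking $B$ a small power of this $\min$, much as you would absorb $G^{\eta_0}$ via your truncation in $F$; so this difference is mostly cosmetic, though your version is heavier.

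The gap is in the complement regime $F > T$. You propose the trivial bound $\prod_k \lvert P_k' \rvert \le \prod_k M_k/F$ combined with $\#\{\Delta : \prod_k f_k = F_0\} \ll_\eps (F_0 Q)^\eps$. That gives a tail of the form $Q^\eps (\prod_k M_k) \sum_{F > T} F^{\eps - 1}$, and the sum $\sum_{F > T} F^{\eps - 1}$ does not decay in $T$ (its first $T$ terms already contribute $\gg T^\eps$). So this argument delivers no saving over the trivial count $(\prod_k M_k)^{1+\eps}$, and the claimed factor $\min(q,\cdot)^{-\eta}$ is not produced. The missing ingredient is that each prime $p$ dividing $d_{ij}$ divides \emph{both} $f_i$ and $f_j$, so $F = \prod_k f_k$ grows quadratically in each $d_{ij}$. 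In Dirichlet-series terms, $\sum_\Delta F^{-s}$ has local factors $1 + O(p^{-2s})$ at primes $p\nmid Q$ and hence converges for $\Re(s) > 1/2$, not merely $\Re(s) > 1$; a Rankin-type argument then gives a tail $\ll_\eps Q^\eps (\prod_k M_k)\, T^{-1/2 + \eps}$, which does save a power of $T$. The paper's estimate on $T_B(\bm{m})$ exploits exactly this quadratic decay, via $\sum_{h > B} (M_i/h)(M_j/h) \ll M_i M_j / B$.
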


\begin{proof}
We sieve, i.e.~use the M\"{o}bius inversion formula
$\bm{1}_{\gcd(m,n)=1} = \sum_{h\mid m,n} \mu(h)$,
to rewrite the $\gcd$ conditions.
We separately analyze small and large values of the divisor $h$.
Let $B\in \RR_{>0}$.
If $h_{ij},g_j\in \ZZ\cap [1,B]$, then Corollary~\ref{COR:clean-Weyl-inequality} implies
\begin{equation*}
\sum_{(m_1,\dots,m_\kspecial) \in P_1 \times \dots \times P_\kspecial}
\bm{1}_{h_{ij}\mid m_i,m_j} \bm{1}_{g_j\mid m_j,Q}\,
\phase(ym_1^{\mathfrak{u}_1}\cdots m_\kspecial^{\mathfrak{u}_\kspecial}/q)
\ll_\eps \frac{(BR)^{O(1)} (M_1\cdots M_\kspecial)^{1+\eps}}
{\min(q, M_1^{\mathfrak{u}_1} \cdots M_\kspecial^{\mathfrak{u}_\kspecial} / q)^\eta}.
\end{equation*}
Multiplying by $\prod_{1\le i<j\le \kspecial} \mu(h_{ij})$ and $\prod_{1\le j\le \kspecial} \mu(g_j)$, and summing over $h_{ij},g_j\ge 1$, we get
\begin{equation*}
\sum_{\substack{(m_1,\dots,m_\kspecial) \in P_1 \times \dots \times P_\kspecial: \\ \gcd(m_i,m_j) = \gcd(m_j,Q) = 1}}
\phase(ym_1^{\mathfrak{u}_1}\cdots m_\kspecial^{\mathfrak{u}_\kspecial}/q)
\ll_\eps \frac{(BR)^{O(1)} (M_1\cdots M_\kspecial)^{1+\eps}}
{\min(q, M_1^{\mathfrak{u}_1} \cdots M_\kspecial^{\mathfrak{u}_\kspecial} / q)^\eta}
+ \sum_{\substack{m_1,\dots,m_\kspecial\ge 1: \\ M_j\le m_j<2M_j}} T_B(\bm{m}),
\end{equation*}
where $T_B(\bm{m}) \defeq (\sum_{1\le i<j\le \kspecial} \sum_{h>B} \bm{1}_{h\mid m_i,m_j} + \sum_{1\le j\le \kspecial} \sum_{g>B:\, g\mid Q} \bm{1}_{g\mid m_j}) \cdot (m_1\cdots m_\kspecial)^\eps$.
Here $$\sum_{\substack{m_1,\dots,m_\kspecial\ge 1: \\ M_j\le m_j<2M_j}} T_B(\bm{m})
\ll_\eps (M_1\cdots M_\kspecial)^{1+\eps} (1+Q^\eps) / B,$$
since $Q$ has $O_\eps(Q^\eps)$ divisors.
Now take $B$ to be a small power of $\min(q, M_1^{\mathfrak{u}_1} \cdots M_\kspecial^{\mathfrak{u}_\kspecial} / q)$.
\end{proof}

We can now handle $\mathcal{P}_2$,
using reciprocity ($\psi(\QQ)=1$)
and partial summation.

\begin{lemma}
\label{LEM:range-2}
Let $\xi>0$ be small.
Then the piece of \eqref{EXPR:final-expanded-adelic-data-sum} defined by $\mathcal{P}_2$
extends to an element of $\mathcal{H}_\star(\delta)$, provided $\delta$ is sufficiently small (in terms of $\xi$).
\end{lemma}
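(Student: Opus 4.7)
The plan is to exploit cancellation from having at least two distinct $\cfinal_{\jspecial'}$ values with large $n[\jspecial']$, using reciprocity together with Corollary~\ref{COR:clean-Weyl-inequality-with-gcd-conditions}. As in the proof of Lemma~\ref{LEM:range-3}, I would split into the cases $\alpha < 1$ and $\alpha \ge 1$, translate $\bm{s}$ by $\mp\xi\mathbf{u}$ so that the absolute convergence in $r,m_0,n_0$ (via Lemmas~\ref{LEM:denominator-bias}--\ref{LEM:generic-local-factor-estimate} and the model bound \eqref{INEQ:model-r,m_0,n_0-total-bound}) and the polynomial decay in $t$ along the vertical integration line (via Lemma~\ref{LEM:uniform-derivative-bound-on-localized-modulated-integrals}) come for free.

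Next, holding $r,m_0,n_0$ fixed and decomposing the remaining variables into dyadic boxes $M_\ispecial \le m_\ispecial < 2M_\ispecial$ and $N_\jspecial \le n_\jspecial < 2N_\jspecial$, I would apply reciprocity ($\phase(x) = \prod_p \phase(x\bmod\ZZ_p)$ for $x \in \QQ$, using $\cfinal_\jspecial \in \ZZ_{n_\jspecial}$ from Definition~\ref{DEFN:bad-set-S}) together with pairwise coprimality of the $n_\jspecial$ and CRT to consolidate $\prod_\jspecial \phase(-\cfinal_\jspecial \alpha \bmod \ZZ_{n_\jspecial})$ into a single Kloosterman-type fraction $\phase(yM/q)$, where $M \defeq m_0 m_1^{\mathfrak{u}_1}\cdots m_\kspecial^{\mathfrak{u}_\kspecial}$ and $q$ is built from products $n_\jspecial^{\mathfrak{v}_\jspecial}$. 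Picking two indices $a,b \in \set{1,\dots,\lspecial'}$ with $n[a], n[b] > \mathsf{P}(\alpha)^\Xi$, the relation $\cfinal_a - \cfinal_b \in \ZZ[1/\ol{\Sbad}]^\times$ from Definition~\ref{DEFN:bad-set-S} forces $\gcd(y, q') = 1$ for some divisor $q' \mid q$ of size at least $\mathsf{P}(\alpha)^{2\Xi}$; this is exactly the input needed to make Weyl-type cancellation usable.

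Third, I would sum over $(m_1,\dots,m_\kspecial)$ in its dyadic box via Corollary~\ref{COR:clean-Weyl-inequality-with-gcd-conditions}, with modulus $q$ and arithmetic-progression modulus $R = O(A_9 \Nbad^{O(1)})$ coming from the $p$-adic local constancy of $\mathfrak{N}_{m_0}\mathfrak{D}_{n_0}\mathfrak{G}_r$ from Lemma~\ref{LEM:local-constancy}; partial summation in each $m_\ispecial$ handles the smooth factor $f_{25}(\bm{s}-it\mathbf{u},\lambda,\alpha)$ using the derivative bound of Lemma~\ref{LEM:uniform-derivative-bound-on-localized-modulated-integrals} (noting that $m_\ispecial$ enters only through $\alpha$). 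The resulting saving $\min(q', M_1^{\mathfrak{u}_1}\cdots M_\kspecial^{\mathfrak{u}_\kspecial}/q')^\eta$ translates, once $\Xi = \xi^{1/2}$ is chosen so that $\eta \Xi$ dominates the $\eps$-loss, into a genuine power saving $(M_1\cdots M_\kspecial N_1\cdots N_\lspecial)^{-\eta'}$ per dyadic box. Summing dyadic scales geometrically, summing absolutely over $r,m_0,n_0$ as in \eqref{INEQ:model-r,m_0,n_0-total-bound}, and then applying Lemma~\ref{LEM:shifted-integral-polar-and-residue-structure}(4) to $\mathscr{S}(\cdot)$ places the result in $\mathcal{H}_\star(\delta)$.

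The main obstacle is the reciprocity-CRT bookkeeping: pinning down the modulus $q'$ on which the numerator $y$ is genuinely coprime, and verifying that the saving of Corollary~\ref{COR:clean-Weyl-inequality-with-gcd-conditions} survives in the most lopsided dyadic ranges --- those in which one $M_\ispecial$ or one $N_\jspecial$ vastly dominates the others. This is precisely the regime where Proposition~\ref{PROP:Weyl-inequality-for-monomials-in-several-variables}, with its flexible choice of subset $I \belongs \set{1,\dots,\kspecial}$, succeeds ``just barely'' (as the introduction warns); the rest of the argument is modelled on Lemma~\ref{LEM:range-3} and should go through once this step is in place.
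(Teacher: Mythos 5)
Your plan follows the paper's strategy closely — reciprocity to produce a coprime Kloosterman-type phase, then Weyl differencing via Corollary~\ref{COR:clean-Weyl-inequality-with-gcd-conditions}, with Lemma~\ref{LEM:local-constancy} supplying the arithmetic-progression modulus $R$ and Lemma~\ref{LEM:uniform-derivative-bound-on-localized-modulated-integrals} supplying the partial-summation control — and the final $\Xi = \xi^{1/2}$ accounting is also the right shape.  The $\alpha<1$ / $\alpha\ge1$ split with the $\mp\xi\mathbf{u}$ translate is borrowed from Lemma~\ref{LEM:range-3} but is unnecessary here: in the $\mathcal{P}_2$ range the constraint $rm_0n_0\max(\alpha,\alpha^{-1})<2\mathsf{P}(\alpha)^\xi$ already pins $\alpha$ to $\mathsf{P}(\alpha)^{\pm O(\xi)}$, so no shift is needed to tame the archimedean factor.

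There are two concrete gaps in the reciprocity/coprimality step, which is the heart of the argument.  First, the coprimality you want does not follow from $\cfinal_a-\cfinal_b\in\ZZ[1/\ol{\Sbad}]^\times$ unless you actually \emph{shift}: you must use $\psi(\cfinal_b\alpha)=1$ (say) to replace each coefficient $\cfinal_\jspecial$ by $\Cfinal_\jspecial\defeq\cfinal_\jspecial-\cfinal_b$.  After that, $\Cfinal_\jspecial\in\ZZ[1/\ol{\Sbad}]^\times$ for $\cfinal_\jspecial\ne\cfinal_b$, hence a unit modulo $n_\jspecial$, and the numerator of the resulting fraction is coprime to the $n_\jspecial$-parts.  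Without the shift, the raw coefficient $\cfinal_a$ can vanish (e.g.\ $\cfinal_a=0$) or share primes with $n[a]$, and $\gcd(y,n[a])$ need not be bounded.  Your phrase ``reciprocity \dots to consolidate $\prod_\jspecial\phase(-\cfinal_\jspecial\alpha\bmod\ZZ_{n_\jspecial})$'' is also slightly off: that product already collapses into a single fraction by CRT with no reciprocity; reciprocity is needed to transfer an \emph{additional} archimedean phase $\phase(\cfinal_b\alpha)$ (or $\phase(\cfinal_0\alpha)$) to the finite places so as to perform the shift.  Second, the claim $q'\ge\mathsf{P}(\alpha)^{2\Xi}$ is wrong and would actually backfire.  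After shifting by $\cfinal_b$, the factor $n[b]$ \emph{drops out} of the coprime modulus, so $q'\asymp n[a]\cdot(\text{small})\gg\mathsf{P}(\alpha)^\Xi$; meanwhile $m_1^{\mathfrak{u}_1}\cdots m_\kspecial^{\mathfrak{u}_\kspecial}/q' \asymp n[b]\cdot\mathsf{P}(\alpha)^{\pm O(\xi)}\gg\mathsf{P}(\alpha)^{\Xi-O(\xi)}$.  Corollary~\ref{COR:clean-Weyl-inequality-with-gcd-conditions} only saves $\min(q',\,m^{\mathfrak{u}}/q')^\eta$, so \emph{both} sides of the $\min$ must be $\gg\mathsf{P}(\alpha)^{\Xi-O(\xi)}$; if you insisted on $q'\ge\mathsf{P}(\alpha)^{2\Xi}$ by retaining all of $n[a]n[b]$ in the modulus, you would find $m^{\mathfrak{u}}/q'\asymp\mathsf{P}(\alpha)^{O(\xi)}$ and the Weyl bound would save nothing.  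Once these two points are fixed, the rest of your outline matches the paper's proof.
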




\begin{proof}
If $\kspecial\lspecial=0$,
then $m_0n_0 \max(\alpha,\alpha^{-1}) \ge \mathsf{P}(\alpha)$,
so the piece of \eqref{EXPR:final-expanded-adelic-data-sum}
satisfying \eqref{COND:define-challenging-Kloosterman-oscillation-Z_1-range}
is a finite sum (if $\xi$ is sufficiently small),
and thus lies in $\mathcal{H}_\star(\delta)$.
So we may assume $\kspecial,\lspecial\ge 1$.

If $\lspecial'\le 1$ then \eqref{COND:define-challenging-Kloosterman-oscillation-Z_1-range} is impossible, so assume $\lspecial'\ge 2$.
Fix $\Omega$, $f_{25}$, $\lambda$.
By a partition of unity on
$[n[1]:\dots:n[\lspecial']]\in \RR_{>0}^{\lspecial'}/\RR_{>0}$,
we may assume $n[1]\gg n[2]\gg \dots\gg n[\lspecial']$.
More precisely, the right-hand side of the equality
\begin{equation}
\label{ordering-partition-of-unity}
1 = \prod_{1\le i<j\le \lspecial'} (w_0(n[i]/n[j]) + w_\infty(n[i]/n[j]))
\end{equation}
expands into $2^{\binom{\lspecial'}{2}}$ terms,
each of which approximately imposes an ordering of $n[1],\dots,n[\lspecial']$.
Let $\Cfinal_0 = \cfinal_0-\cfinal_2$
and $\Cfinal_\jspecial = \cfinal_\jspecial-\cfinal_2$,
where $\cfinal_0=\cfinal_0(\Omega)$ is defined in the paragraph containing \eqref{decompose_Hveeinfty}.
For any $r,m_0,\dots$ satisfying \eqref{COND:define-challenging-Kloosterman-oscillation-Z_1-range}, we have $n[1] \gg n[2] \gg \mathsf{P}(\alpha)^\Xi$.

On the range $n[1]\gg n[2]\gg \dots\gg n[\lspecial']$,
the piece of \eqref{EXPR:final-expanded-adelic-data-sum} we are interested in is (for some weight $\nu=\nu(n[1],\dots,n[\lspecial'])\in C^\infty(\RR_{>0}^{\lspecial'}/\RR_{>0})$ supported on $n[1]\gg n[2]\gg \dots\gg n[\lspecial']$)
\begin{equation*}
\mathcal{Q}_3\defeq
\sum_{r,m_0,\dots} \nu \cdot \mathcal{P}_2 \cdot \mathscr{S}{\left(
\frac{(\mathfrak{N}_{m_0} \mathfrak{D}_{n_0} \mathfrak{G}_r f_{25})(\bm{s},\lambda,\alpha)
\phase(\Cfinal_0\alpha) \phase(\cfinal_2\alpha\bmod{\ZZ_{c'_2n_0}})}{m_1^{\beta_1}\cdots m_\kspecial^{\beta_\kspecial}}
\prod_{1\le \jspecial\le \lspecial} \frac{\phase(-\Cfinal_\jspecial\alpha\bmod{\ZZ_{n_\jspecial}})}{n_\jspecial^{\gamma_\jspecial}} \right)};
\end{equation*}
here we have rewritten the exponentials in \eqref{EXPR:final-expanded-adelic-data-sum} using $\psi(\cfinal_2\alpha) = 1$ (reciprocity) in the form $\phase(\cfinal_2\alpha) = \phase(\cfinal_2\alpha\bmod{\ZZ_{c'_2n_0}}) \prod_{1\le \jspecial\le \lspecial} \phase(\cfinal_2\alpha\bmod{\ZZ_{n_\jspecial}})$,
where $c'_2\ge 1$ is the denominator of $\cfinal_2$.
(Note that $c'_2n_0,n_1,\dots,n_\lspecial$ are pairwise coprime, by Definition~\ref{DEFN:bad-set-S}.)

Let $\bm{s}$ lie in \eqref{INEQ:key-new-region}.
We seek to obtain \emph{nontrivial cancellation} in $\mathcal{Q}_3$ over the variables
$(m_\ispecial)_{1\le \ispecial\le \kspecial}$,
$q_1 \defeq n[1]n[3]\cdots n[\lspecial']$,
and $q_2 \defeq n[2]$.
We have $\alpha = (m_0m_1^{\mathfrak{u}_1}\cdots m_\kspecial^{\mathfrak{u}_\kspecial})/(n_0q_1q_2)$
by \eqref{alpha-factorize}.
Since $\Cfinal_2=0$ and $\Cfinal_1\Cfinal_3\cdots \Cfinal_{\lspecial'} \ne 0$,
there exist coprime integers $\mathfrak{a},q\ge 1$ such that
\begin{equation*}
\phase{\left(\frac{\cfinal_2m_0}{n_0q_1q_2}\bmod{\ZZ_{c'_2n_0}}\right)}\,
\prod_{1\le \jspecial\le \lspecial}
\phase{\left(-\frac{\Cfinal_\jspecial m_0}{n_0q_1q_2}\bmod{\ZZ_{n_\jspecial}}\right)}
= \phase(\mathfrak{a}/q),
\end{equation*}
where $q\mid n_0q_1$
and $q\gg q_1$.
We now want to apply Corollary~\ref{COR:clean-Weyl-inequality-with-gcd-conditions}
with this value of $q$,
and with $Q=rm_0n_0q_1q_2$,
but some explanation is needed.
First, we split $(m_\ispecial)_{1\le \ispecial\le \kspecial}$
into residue classes modulo $R\ll m_0n_0r$
on which $\mathfrak{N}_{m_0} \mathfrak{D}_{n_0} \mathfrak{G}_r$ is constant;
this is possible by Lemma~\ref{LEM:local-constancy}.
There are $R^\kspecial$ residue classes in total.
Second, we break $(m_\ispecial)_{1\le \ispecial\le \kspecial}$
into dyadic ranges $M_\ispecial\le m_\ispecial<2M_\ispecial$,
where $M_\ispecial\in \set{2^e: e\in \ZZ_{\ge 0}}$.
Third, we use partial summation
in each variable $m_\ispecial\in [M_\ispecial,2M_\ispecial)$,
in order to remove the archimedean weight
$f_{25}(\bm{s},\lambda,\alpha)/(m_1^{\beta_1}\cdots m_\kspecial^{\beta_\kspecial})$
when applying Corollary~\ref{COR:clean-Weyl-inequality-with-gcd-conditions};
this is possible by the $k_2\in \{0,\dots,\kspecial\}$ case
of Lemma~\ref{LEM:uniform-derivative-bound-on-localized-modulated-integrals},
which controls the derivatives of
$f_{25}\in \bigcup_\Omega \set{\mathcal{J}_{\Omega,0}, \mathcal{J}_{\Omega,\infty}}$
with respect to $\alpha$.
In the end, we find that
the quantity $\mathcal{Q}_3$
is\footnote{and, more precisely, may be rewritten as a sum of holomorphic functions whose sum of absolute values is}
at most $1+\norm{\bm{s}}^{O(\kspecial)}$ times
\begin{equation}
\label{EXPR:main-factor-of-near-final-reciprocity-plus-Weyl-cancellation-bound}
\mathcal{Q}_4\defeq
\sum_{r,m_0,\ldots:
\, \eqref{COND:define-challenging-Kloosterman-oscillation-Z_1-range}}\,
R^\kspecial
\frac{\abs{\mathfrak{N}_{m_0}\mathfrak{D}_{n_0}\mathfrak{G}_r}
\, (n_0/m_0)^2 (1+\abs{\alpha})^\kspecial}
{(m_1\cdots m_\kspecial n_1\cdots n_\lspecial)^{1-\delta}}
\frac{O_\eps(Q^\eps R^{O(1)}(m_1\cdots m_\kspecial)^\eps)}
{\min(q,
m_1^{\mathfrak{u}_1}\cdots m_\kspecial^{\mathfrak{u}_\kspecial}/q)^\eta},
\end{equation}
where $q_1\ll q\le n_0q_1$, $Q=rm_0n_0q_1q_2$, and $R\ll m_0n_0r$.
Since
\begin{equation*}
rm_0n_0 \max(\alpha, \alpha^{-1}) \ll \mathsf{P}(\alpha)^\xi,
\quad
\min(q_1,q_2) \gg \mathsf{P}(\alpha)^\Xi,
\quad \frac{m_1^{\mathfrak{u}_1}\cdots m_\kspecial^{\mathfrak{u}_\kspecial}}{n_0q_1}
= \frac{\alpha q_2}{m_0},
\end{equation*}
a crude application of Lemmas~\ref{LEM:denominator-bias}, \ref{LEM:numerator-bias}, and~\ref{LEM:generic-local-factor-estimate}
(e.g.~$\mathfrak{N}_{m_0}\mathfrak{D}_{n_0}\mathfrak{G}_r \ll (m_0n_0r)^{O(1)}$)
gives
\begin{equation*}
\mathcal{Q}_4
\ll_{\xi,\delta} \sum_{m,n\ge 1}
\frac{(mn)^{O(\xi) - \Xi\eta}}{(mn)^{1-\delta}}
\ll \sum_{m,n\ge 1} \frac{(mn)^{-\Xi\eta/2}}{(mn)^{1-\delta}}
\ll_{\xi,\delta} 1
\end{equation*}
(if we take $\eps=\xi$ and choose $\xi \ll 1$ and $\delta \ll_\xi 1$ appropriately),
since $\xi = \Xi^2 = o_{\xi\to\infty}(\Xi)$.
\end{proof}

Finally, we rewrite the $\mathcal{P}_1$ piece in a form convenient for \S\ref{SEC:final-reductions},
via an equidistribution estimate:

\begin{proposition}
\label{PROP:Poisson-sum-based-equidistribution-estimate}
Let $y,q,Q,M,A\in \ZZ_{\ge 1}$ and $Y\in \RR_{>0}$.
Let $w\in C^\infty(\RR)$ be supported on $[1,10]$,
and suppose $\sup{\abs{w^{(B)}}} \ll_B A^B Y$ for all $B\ge 0$,
where $w^{(B)}$ is the $B$th derivative of $w$.
Let $\EE_{n\in \ZZ_q:\, \gcd(n,Q) = 1}[\ast]$
be the average of $\ast$ over the set $\set{n\in \ZZ_q: \gcd(n,Q) = 1}$.
Then
\begin{equation}
\label{q-goal}
\sum_{m\in \ZZ:\, \gcd(m,Q)=1} w(\tfrac{m}{M})\, (\bm{1}_{m\equiv y\bmod{q}}
- \EE_{n\in \ZZ_q:\, \gcd(n,Q) = 1}[\bm{1}_{n\equiv y\bmod{q}}])
\ll_\eps (QM)^\eps AYq.
\end{equation}
\end{proposition}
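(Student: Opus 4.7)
The plan is to M\"obius-invert the coprimality condition $\gcd(m,Q)=1$ and apply Poisson summation to each resulting arithmetic-progression sum, then verify that the main terms of $S := \sum_m w(m/M)\bm{1}_{\gcd(m,Q)=1,\,m\equiv y\bmod q}$ and $T := c(y)\sum_{m:\gcd(m,Q)=1}w(m/M)$ (with $c(y)$ the displayed Haar average) cancel exactly, so that only smooth Fourier tails survive. First I would reduce to $\gcd(y,\gcd(q,Q))=1$: otherwise every $m\equiv y\bmod q$ is divisible by a common prime of $q$ and $Q$, forcing $S=0$, and the same condition forces $c(y)=0$, so $T=0$ as well. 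Factor $Q=Q_1Q_2$ with $Q_1$ collecting the prime-power factors of $Q$ coprime to $q$; then ``$\gcd(m,Q)=1$ and $m\equiv y\bmod q$'' reduces to ``$\gcd(m,Q_1)=1$ and $m\equiv y\bmod q$''.

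Next I apply M\"obius $\bm{1}_{\gcd(m,Q_1)=1}=\sum_{d\mid Q_1}\mu(d)\bm{1}_{d\mid m}$; since $\gcd(d,q)=1$ for $d\mid Q_1$, the pair $d\mid m$, $m\equiv y\bmod q$ collapses to a single AP $m\equiv y_d\bmod{qd}$. Poisson summation then yields a main term $(M/(qd))\int w$ plus a Fourier tail
\[
\frac{M}{qd}\sum_{h\ne 0}\hat{w}\!\left(\frac{hM}{qd}\right)\phase\!\left(\frac{hy_d}{qd}\right).
\]
The hypothesis $\sup|w^{(B)}|\ll_B A^BY$ with $\Supp{w}\belongs[1,10]$ gives, via integration by parts $B$ times, the decay $|\hat{w}(\xi)|\ll_B Y(1+|\xi|/A)^{-B}$; choosing $B=2$ and splitting $\sum_{h\ne 0}$ at $|h|\asymp qdA/M$ shows the tail is $O(AY)$ uniformly in $d$ (in both regimes $qd\le M/A$ and $qd>M/A$). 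Summing $|\mu(d)|\le 1$ over $d\mid Q_1$ with the divisor bound, and running the same M\"obius+Poisson analysis on the no-AP sum $U_0:=\sum_{m:\gcd(m,Q)=1}w(m/M)$, yields
\[
S = \frac{M}{q}\int w\cdot\frac{\phi(Q_1)}{Q_1}+O_\eps(Q^\eps AY),
\qquad
U_0 = M\int w\cdot\frac{\phi(Q)}{Q}+O_\eps(Q^\eps AY).
\]

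Finally I verify the main terms cancel. Computing $c(y)$ directly against the Haar measure on $\ZZ_q=\prod_{p\mid q}\ZZ_p$ gives
\[
c(y)=\frac{\gcd(q,Q)}{q\,\phi(\gcd(q,Q))}\,\bm{1}_{\gcd(y,\gcd(q,Q))=1};
\]
factoring $\phi(Q)/Q=(\phi(Q_1)/Q_1)(\phi(Q_2)/Q_2)$ together with $\phi(Q_2)/Q_2=\phi(\gcd(q,Q))/\gcd(q,Q)$ (since $Q_2$ and $\gcd(q,Q)$ share the same radical) makes the main term of $T=c(y)U_0$ collapse to $(M/q)(\int w)\phi(Q_1)/Q_1$, matching that of $S$ exactly. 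Therefore $|S-T|\ll_\eps Q^\eps AY\le(QM)^\eps AYq$, which is in fact stronger than stated by a factor of $q$. The main obstacle I anticipate is precisely this Euler-factor bookkeeping at the end: the convention $\ZZ_q=\prod_{p\mid q}\ZZ_p$ with Haar measure (rather than counting on $\ZZ/q\ZZ$) is what inserts the compensating ratio $\gcd(q,Q)/\phi(\gcd(q,Q))$ into $c(y)$, and that ratio is exactly what converts $\phi(Q)/Q$ into $\phi(Q_1)/Q_1$ when multiplied into the main term of $U_0$; the cancellation is delicate and contingent on interpreting the average in exactly the right way.
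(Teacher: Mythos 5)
Your proof is correct, and it uses the same two tools as the paper (M\"obius sieving on the coprimality condition and Poisson summation on arithmetic progressions), but organizes them differently. The paper avoids ever seeing a main term: it centers the arithmetic indicator first, replacing $\bm{1}_{h\mid m}\bm{1}_{m\equiv y\bmod q}$ by the subtracted version $f_{28}$ with $\hat{f}_{28}(0)=0$, so each Poisson application produces only Fourier tails; it then sums over $h\mid Q$ (small $h$ via rapid decay, large $h$ trivially) to reach an intermediate estimate centered at $\EE_{n\in\ZZ_{Qq}}$, and closes by taking the $q=1$ case, multiplying by $\EE_{n\in\ZZ_q:\,\gcd(n,Q)=1}[\bm{1}_{n\equiv y\bmod q}]$, and subtracting, using the factorization $\EE_{n\in\ZZ_{Qq}}[\cdot]=\EE_{n\in\ZZ_Q}[\cdot]\,\EE_{n\in\ZZ_q:\,\gcd(n,Q)=1}[\cdot]$. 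Your route instead extracts the $h=0$ Poisson mode explicitly for each divisor $d\mid Q_1$, then verifies that the density $\frac{M}{q}\frac{\phi(Q_1)}{Q_1}\int w$ produced on the $S$-side is exactly $c(y)$ times the density $M\frac{\phi(Q)}{Q}\int w$ on the $U_0$-side, via the Euler-product identity $\frac{\phi(Q)}{Q}=\frac{\phi(Q_1)}{Q_1}\cdot\frac{\phi(\gcd(q,Q))}{\gcd(q,Q)}$ and the formula $c(y)=\frac{\gcd(q,Q)}{q\,\phi(\gcd(q,Q))}$; your preliminary reduction to $\gcd(y,\gcd(q,Q))=1$ keeps all the CRT steps honest and disposes of the degenerate case where both sides vanish. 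Both approaches buy the same thing; yours is more explicit at the cost of the Euler-factor bookkeeping (as you anticipated), while the paper's centering trick hides exactly that bookkeeping inside the product identity at the end. As a small bonus, your uniform Poisson-tail estimate actually yields the slightly cleaner error $O_\eps((QM)^\eps AY)$ without the factor of $q$; the paper's trivial large-$h$ bound only gives $O_\eps((QM)^\eps AYq)$, which is still all that the statement needs.
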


\begin{proof}
By replacing $w$ with $w/Y$, assume $Y=1$.
Let $f_{27}(x')\defeq w(x'/M)$
and let $$\hat{f}_{27}(x)
\defeq \int_\RR f_{27}(x')\phase(-xx')\,dx'
\ll_k M (1+\abs{xM/A})^{-k},$$
where the bound on the right-hand side follows from
repeated integration by parts over $x'$ if $\abs{xM/A}\ge 1$,
or from the triangle inequality if $\abs{xM/A}\le 1$.
Given $h\in \ZZ_{\ge 1}$,
let $$f_{28}(m)\defeq \bm{1}_{h\mid m} \bm{1}_{m\equiv y\bmod{q}}
- \EE_{n\in \ZZ_{hq}}[\bm{1}_{h\mid n} \bm{1}_{n\equiv y\bmod{q}}]$$
and let $\hat{f}_{28}(x)
\defeq \EE_{m\in \ZZ_{hq}}[f_{28}(m) \phase(xm\bmod{\ZZ_{hq}})]$,
where $\EE_{m\in \ZZ_{hq}}[\ast]$ is the average of $\ast$ over $\ZZ_{hq}$.
By the Poisson summation formula in $\RR \times (\ZZ/hq\ZZ)$,\footnote{By this,
we mean the adelic Poisson formula
$\sum_{m\in \QQ} \phi(m) = \sum_{x\in \QQ} \hat{\phi}(x)$,
with a test function $\phi\maps \adele_\QQ \to \CC$
supported on $\adele_\ZZ = \RR \times \prod_p \ZZ_p$,
such that $\phi\vert_{\adele_\ZZ}$ factors through the reduction map
$\adele_\ZZ \to \RR \times (\ZZ/hq\ZZ)$.}
or by splitting $\sum_{m\in \ZZ}$ into residue classes modulo $hq$
and applying \cite{iwaniec2004analytic}*{(4.24), with $v=hq$} in each class,
we have
\begin{equation*}
\sum_{m\in \ZZ} f_{27}(m) f_{28}(m)
= \sum_{m'\in \ZZ} \hat{f}_{27}(\tfrac{m'}{hq}) \hat{f}_{28}(\tfrac{m'}{hq}).
\end{equation*}
Clearly $\hat{f}_{28}(0) = 0$
and $\sup\abs{\hat{f}_{28}} \le \sup\abs{f_{28}} \le 1$.
If $h\le \frac{M^{1-\eps}}{Aq}$,
then $\hat{f}_{27}(\frac{m'}{hq}) \ll_k \frac{M}{\abs{m'M^\eps}^k}$, so
\begin{equation}
\label{rapid-decay}
\sum_{m\in \ZZ} w(\tfrac{m}{M})\, (\bm{1}_{h\mid m} \bm{1}_{m\equiv y\bmod{q}}
- \EE_{n\in \ZZ_{hq}}[\bm{1}_{h\mid n} \bm{1}_{n\equiv y\bmod{q}}])
= \sum_{m\in \ZZ} f_{27}(m) f_{28}(m)
\ll_{\eps,k} M^{-k}.
\end{equation}

We have $\bm{1}_{\gcd(m,Q)=1} = \sum_{h\mid m,Q} \mu(h)
= \sum_{h\mid Q} \mu(h) \bm{1}_{h\mid m}$;
cf.~the sieving in Corollary~\ref{COR:clean-Weyl-inequality-with-gcd-conditions}.
Multiplying \eqref{rapid-decay} by $\mu(h)$,
summing over $h\mid Q$,
and writing $$\EE_{n\in \ZZ_{hq}}[\bm{1}_{h\mid n} \bm{1}_{n\equiv y\bmod{q}}]
= \EE_{n\in \ZZ_{Qq}}[\bm{1}_{h\mid n} \bm{1}_{n\equiv y\bmod{q}}]$$
for each $h\mid Q$,
we find that
\begin{equation*}
\sum_{m\in \ZZ} w(\tfrac{m}{M})\, (\bm{1}_{\gcd(m,Q)=1} \bm{1}_{m\equiv y\bmod{q}}
- \EE_{n\in \ZZ_{Qq}}[\bm{1}_{\gcd(n,Q)=1} \bm{1}_{n\equiv y\bmod{q}}])
\ll_{\eps,k} \mathcal{Q}_5
+ \sum_{h\le \frac{M^{1-\eps}}{Aq}} M^{-k},
\end{equation*}
where
\begin{equation*}
\begin{split}
\mathcal{Q}_5 &\defeq
\sum_{m\in \ZZ} w(\tfrac{m}{M})
\sum_{h>\frac{M^{1-\eps}}{Aq}:\, h\mid Q}
\abs{\bm{1}_{h\mid m} \bm{1}_{m\equiv y\bmod{q}}
- \EE_{n\in \ZZ_{Qq}}[\bm{1}_{h\mid n} \bm{1}_{n\equiv y\bmod{q}}]} \\
&\ll \sum_{M\le m\le 10M}
\sum_{h>\frac{M^{1-\eps}}{Aq}:\, h\mid Q}
(\bm{1}_{h\mid m} + \tfrac{1}{h})
\ll \sum_{h>\frac{M^{1-\eps}}{Aq}:\, h\mid Q} \tfrac{M}{h}
\ll_\eps Q^\eps \tfrac{M}{M^{1-\eps}/(Aq)}.
\end{split}
\end{equation*}
Thus
\begin{equation}
\label{q-blah}
\sum_{m\in \ZZ} w(\tfrac{m}{M})\, (\bm{1}_{\gcd(m,Q)=1} \bm{1}_{m\equiv y\bmod{q}}
- \EE_{n\in \ZZ_{Qq}}[\bm{1}_{\gcd(n,Q)=1} \bm{1}_{n\equiv y\bmod{q}}])
\ll_\eps (QM)^\eps Aq.
\end{equation}

On the other hand, the $q=1$ case of \eqref{q-blah} gives
\begin{equation}
\label{q=1}
\sum_{m\in \ZZ} w(\tfrac{m}{M})\, (\bm{1}_{\gcd(m,Q)=1}
- \EE_{n\in \ZZ_Q}[\bm{1}_{\gcd(n,Q)=1}])
\ll_\eps (QM)^\eps A.
\end{equation}
Multiplying \eqref{q=1}
by $\EE_{n\in \ZZ_q:\, \gcd(n,Q) = 1}[\bm{1}_{n\equiv y\bmod{q}}]$,
then subtracting \eqref{q-blah},
gives \eqref{q-goal}, since
$\EE_{n\in \ZZ_{Qq}}[\bm{1}_{\gcd(n,Q)=1} \bm{1}_{n\equiv y\bmod{q}}]
= \EE_{n\in \ZZ_Q}[\bm{1}_{\gcd(n,Q)=1}]
\cdot \EE_{n\in \ZZ_q:\, \gcd(n,Q) = 1}[\bm{1}_{n\equiv y\bmod{q}}]$.
\end{proof}

\begin{lemma}
\label{LEM:range-1}
Let $\xi>0$ be small.
For each $\jspecial'\in \{1,\dots,\lspecial'\}$, let
\begin{equation}
\label{define-restricted-alpha-product}
\alpha_{\jspecial'}
\defeq \frac{m_1^{\mathfrak{u}_1}\cdots m_\kspecial^{\mathfrak{u}_\kspecial}}
{\prod_{\jspecial\ge 1:\, \cfinal_\jspecial=\cfinal_{\jspecial'}}
n_\jspecial^{\mathfrak{v}_\jspecial}}.
\end{equation}
If $\delta$ is small enough in terms of $\xi$,
then the piece of \eqref{EXPR:final-expanded-adelic-data-sum} defined by $\mathcal{P}_1$
equals (for $\Re(\bm{s})$ large)
\begin{equation*}
h_0(\bm{s})
+ \sum_{1\le \jspecial'\le \lspecial'}
\, \sum_{m_1,\dots,m_\kspecial\ge 1}\,
\sum_{n_\jspecial\ge 1:\, \jspecial\ge 1,\; \cfinal_\jspecial=\cfinal_{\jspecial'}}\,
\frac{h_{\jspecial'}(\bm{s},m_1,\dots,m_\kspecial,(n_\jspecial)_{\jspecial\ge 1:\, \cfinal_\jspecial=\cfinal_{\jspecial'}})}
{\alpha_{\jspecial'}^2 (\alpha_{\jspecial'}^\xi+\alpha_{\jspecial'}^{-\xi}) \cdot
m_1^{\beta_1}\cdots m_\kspecial^{\beta_\kspecial} \prod_{\jspecial\ge 1:\, \cfinal_\jspecial=\cfinal_{\jspecial'}} n_\jspecial^{\gamma_\jspecial}}
\end{equation*}
for some functions $h_0,\dots,h_{\lspecial'}$ such that \emph{uniformly} over $m_1,\dots,m_\kspecial,n_1,\dots,n_\lspecial\ge 1$, we have $h_0,h_{\jspecial'}\in \mathcal{H}_\star(\delta)$ and $h_0,h_{\jspecial'} \ll_K (1+\norm{\bm{s}})^{O(1)}$ in vertical strips $\Re(\bm{s})\in K$ (for compact $K$).
\end{lemma}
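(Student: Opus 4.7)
The plan is to partition the $\mathcal{P}_1$-piece by the sum over $\mathfrak{V}$ in the definition of $\mathcal{P}_1$, writing $\mathcal{P}_1 = \mathcal{P}_1^{(\emptyset)} + \sum_{\jspecial_0'=1}^{\lspecial'} \mathcal{P}_1^{(\{\jspecial_0'\})}$, and to handle each piece separately. For $\mathcal{P}_1^{(\emptyset)}$ (no $n[\jspecial']$ large), I will show the support is contained in a finite set of summation variables: the combined conditions $rm_0n_0 \max(\alpha,\alpha^{-1}) \ll \mathsf{P}(\alpha)^\xi$ and $n[\jspecial'] \ll \mathsf{P}(\alpha)^\Xi$ for every $\jspecial'$, together with $\mathfrak{u}_\ispecial, \mathfrak{v}_\jspecial \ge 1$, force $\mathsf{P}(\alpha)^{1-O(\xi+\Xi)} \ll 1$. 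For $\delta$ small enough in terms of $\xi$, this contribution is then a finite sum absorbable into $h_0(\bm{s}) \in \mathcal{H}_\star(\delta)$.

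For each $\mathcal{P}_1^{(\{\jspecial_0'\})}$, I will split the variables into \emph{dominant} ones ($m_1, \dots, m_\kspecial$ and the $n_\jspecial$ with $\cfinal_\jspecial = \cfinal_{\jspecial_0'}$, whose product equals $n[\jspecial_0']$) and \emph{small} ones ($r, m_0, n_0$ together with the $n_\jspecial$ for which $\cfinal_\jspecial \ne \cfinal_{\jspecial_0'}$). The key step is to apply Proposition~\ref{PROP:Poisson-sum-based-equidistribution-estimate} to each dyadically localized $m_\ispecial$-sum, with congruence modulus $q \defeq \prod_{\jspecial:\cfinal_\jspecial \ne \cfinal_{\jspecial_0'}} n_\jspecial$ and a coprimality modulus $Q$ encoding the pairwise-coprimality restrictions. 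First I would perform a dyadic dissection $m_\ispecial \in [M_\ispecial, 2M_\ispecial)$, then use partial summation to reduce the weight $w(m_\ispecial/M_\ispecial) \cdot m_\ispecial^{-\beta_\ispecial} \cdot \mathcal{J}_\Omega(\bm{s}, \lambda, \alpha)$ to one satisfying the smoothness hypothesis of the Proposition, using Lemma~\ref{LEM:uniform-derivative-bound-on-localized-modulated-integrals} to control the $\alpha$-derivatives. The Proposition then replaces $\prod_{\jspecial:\cfinal_\jspecial \ne \cfinal_{\jspecial_0'}} \phase(-\cfinal_\jspecial \alpha \bmod \ZZ_{n_\jspecial})$ by its average over $m_\ispecial$-residues modulo $q$, yielding a bounded character sum depending only on the small variables and on the coprimality of the dominant $m_\ispecial$'s; the Poisson error contributes to $h_0$.

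After averaging, the main term is of the form $(\text{dominant factor})(\bm{s}, m_\ispecial, (n_\jspecial)_{\cfinal_\jspecial = \cfinal_{\jspecial_0'}}) \times (\text{small-variable sum})$. Summing out $r, m_0, n_0$, and the small $n_\jspecial$'s yields $h_{\jspecial_0'}$, uniformly bounded by $(1 + \norm{\bm{s}})^{O(1)}$ in vertical strips. The denominator factor $\alpha_{\jspecial_0'}^{-2}(\alpha_{\jspecial_0'}^\xi + \alpha_{\jspecial_0'}^{-\xi})^{-1}$ arises from Lemma~\ref{LEM:uniform-derivative-bound-on-localized-modulated-integrals}: the basic $\alpha^{-2}$ decay of $\mathcal{J}_\Omega$ combined with the identity $\alpha = \alpha_{\jspecial_0'} \cdot (m_0/(n_0 \prod_{\jspecial'' \ne \jspecial_0'} n[\jspecial'']))$, where the multiplicand is of size $\mathsf{P}(\alpha)^{O(\xi)}$ on the support, and the pre-existing split $\mathcal{J}_\Omega = \mathcal{J}_{\Omega,0} + \mathcal{J}_{\Omega,\infty}$ isolating the scales $\abs{\alpha} \lesssim 1$ and $\abs{\alpha} \gtrsim 1$. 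In each scale, a further partial summation in $\log \alpha_{\jspecial_0'}$ trades the smooth cutoff for the small power $\alpha_{\jspecial_0'}^{\pm \xi}$.

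The main obstacle will be ensuring the Poisson error lies in $\mathcal{H}_\star(\delta)$ with the required polynomial growth. The per-$m_\ispecial$ error is $O_\eps((QM)^\eps A Y q)$ with $q \ll \mathsf{P}(\alpha)^{O(\Xi)}$, $M = M_\ispecial \gg \mathsf{P}(\alpha)^{\Omega(1)}$ on the support, $A \ll 1 + \norm{\bm{s}}^{O(1)}$, and $Y \ll \alpha^{-2}$; the gain $M/q \gg \mathsf{P}(\alpha)^{1 - O(\Xi)}$ from the Proposition beats the combinatorial cost of summation provided $\delta \ll \xi \ll \Xi = \xi^{1/2}$, and the bookkeeping needs to be done uniformly in $f_{25}, \lambda$, and over all dyadic scales. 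Holomorphicity of $h_0, h_{\jspecial'}$ in $\bm{s}$, and compatibility with the shifted-integral operator $\mathscr{S}$, will then follow from absolute convergence of the rearranged series in the shifted region near $\Re(\bm{s}) = \mathbf{d} + 2\mathbf{u}$.
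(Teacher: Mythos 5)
Your high-level plan—split $\mathcal{P}_1$ by which $n[\jspecial']$ is large, show $m_\kspecial$ is forced large on the support, average the residual local factors and small-modulus phases over $m_\kspecial$ residue classes via Proposition~\ref{PROP:Poisson-sum-based-equidistribution-estimate}, and control the weight's $\alpha$-derivatives via Lemma~\ref{LEM:uniform-derivative-bound-on-localized-modulated-integrals}—is structurally the same as the paper's. But there are two omissions that are not bookkeeping; the argument as written would break at both.

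\textbf{Missing reciprocity.} Your weight for the Poisson step is $w(m_\ispecial/M_\ispecial)\, m_\ispecial^{-\beta_\ispecial}\, \mathcal{J}_\Omega$, and you intend to average out $\prod_{\cfinal_\jspecial \ne \cfinal_{\jspecial_0'}}\phase(-\cfinal_\jspecial\alpha\bmod\ZZ_{n_\jspecial})$ modulo $q = \prod_{\cfinal_\jspecial\ne\cfinal_{\jspecial_0'}} n_\jspecial$. But nothing in your plan addresses the phases $\phase(-\cfinal_{\jspecial_0'}\alpha\bmod\ZZ_{n_\jspecial})$ for $\cfinal_\jspecial = \cfinal_{\jspecial_0'}$ — the \emph{large}-modulus phases with modulus $n[\jspecial_0']$. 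These are not constant modulo $q$ (they vary with period roughly $n[\jspecial_0']$ in $m_\kspecial$), they are not smooth in the sense required by the Poisson proposition, and — crucially for the statement you are trying to prove — they depend on the small variables $r,m_0,n_0$ and all small $n_\jspecial$'s through $\alpha$, so they cannot be put inside an $h_{\jspecial'}$ that depends only on $(\bm{s}, m_1,\dots,m_\kspecial,(n_\jspecial)_{\cfinal_\jspecial=\cfinal_{\jspecial'}})$. The paper removes this obstruction by writing $\phase(\cfinal_0\alpha) = \phase((\cfinal_0-\cfinal_{\lspecial'})\alpha)\,\phase(\cfinal_{\lspecial'}\alpha)$ and expanding $\phase(\cfinal_{\lspecial'}\alpha)$ by additive reciprocity $\psi(\cfinal_{\lspecial'}\alpha)=1$, which shifts every non-archimedean phase coefficient to $\Cfinal_\jspecial = \cfinal_\jspecial - \cfinal_{\lspecial'}$ so that $\Cfinal_{\lspecial'}=0$ and the large-modulus phase disappears. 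Without this (or an equivalent device), the Poisson step does not apply and the claimed factorization of the main term fails.

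\textbf{Missing Gauss-sum cancellation.} You describe the output of the $m_\kspecial$-averaging as a ``bounded character sum'' and propose to simply ``sum out $r,m_0,n_0$, and the small $n_\jspecial$'s.'' That does not converge. The factor $\alpha^{-2}$ from Lemma~\ref{LEM:uniform-derivative-bound-on-localized-modulated-integrals}, once rewritten in terms of $\alpha_{\jspecial'}^{-2}$, produces a multiplier $(\prod_{\jspecial' < \lspecial'} n[\jspecial'])^2$, and after cancelling against $n_\jspecial^{\Re\gamma_\jspecial}\gtrsim n_\jspecial^{1+2\mathfrak{v}_\jspecial-\delta}$ you are left with $\sum_{n_\jspecial} n_\jspecial^{-(1-\delta)+O(\xi)}$ over the small $n_\jspecial$'s, which diverges. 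The paper gets convergence from the cancellation $f_{31,\jspecial'}\ll_\eps n[\jspecial']^{\eps-1/2}$ (Lemma~\ref{gauss-bound}) in the $m_\kspecial$-averages of $\phase(-\Cfinal_{\jspecial'}\alpha\bmod\ZZ_{n[\jspecial']})$, which is possible precisely because after the reciprocity shift $\Cfinal_{\jspecial'}\ne 0$ for $\jspecial'<\lspecial'$. Your ``bounded'' estimate gives you none of this saving. (A smaller issue: you write ``apply the Proposition to each dyadically localized $m_\ispecial$-sum,'' but only the largest $m_\ispecial$ is guaranteed to be $\gg\mathsf{P}(\alpha)^{\Omega(1)}$ on the $\mathcal{P}_1$ support, so one needs the ordering partition of unity to single it out, and $A$ should be $\asymp(1+\norm{\bm{s}})^{O(1)}\mathsf{P}(\alpha)^\xi$ rather than $(1+\norm{\bm{s}})^{O(1)}$ because of the $\phase(\Cfinal_0\alpha)$ weight.)
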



\begin{proof}
As in the proof of Lemma~\ref{LEM:range-2}, we may assume $\kspecial,\lspecial\ge 1$.
Fix $\Omega$, $f_{25}$, $\lambda$.
Assume $n[1]\ll n[2]\ll \dots\ll n[\lspecial']$
and $m_1\ll m_2\ll \dots\ll m_\kspecial$,
by introducing a suitable factor
$\nu=\nu(n[1],\dots,n[\lspecial'],m_1,\dots,m_\kspecial)
\in C^\infty(\RR_{>0}^{\lspecial'+\kspecial}/\RR_{>0})$,
belonging to a smooth partition of unity on $\RR_{>0}^{\lspecial'+\kspecial}/\RR_{>0}$
(cf.~\eqref{ordering-partition-of-unity} from before).
Let $\Cfinal_0 = \cfinal_0-\cfinal_{\lspecial'}$
and $\Cfinal_\jspecial = \cfinal_\jspecial-\cfinal_{\lspecial'}$.
For any $r,m_0,\dots$ satisfying \eqref{COND:define-dominant-Z_1-range},
we have $n[\jspecial'] \ll \mathsf{P}(\alpha)^\Xi$ for all $1\le \jspecial'<\lspecial'$.

Let $c'_{\lspecial'} \in \ZZ_{\ge 1}$ be the denominator of $\cfinal_{\lspecial'}$.
The piece of \eqref{EXPR:final-expanded-adelic-data-sum} of interest is
$\mathscr{S}(F_{29})(\bm{s})$, where
\begin{equation*}
F_{29}(\bm{s}) = \sum_{r,m_0,\dots} \nu \cdot \mathcal{P}_1 \cdot
\frac{(\mathfrak{N}_{m_0} \mathfrak{D}_{n_0} \mathfrak{G}_r f_{25})(\bm{s},\lambda,\alpha)
\phase(\Cfinal_0\alpha) \phase(\cfinal_{\lspecial'}\alpha\bmod{\ZZ_{c'_{\lspecial'}n_0}})}{m_1^{\beta_1}\cdots m_\kspecial^{\beta_\kspecial}}
\prod_{1\le \jspecial\le \lspecial} \frac{\phase(-\Cfinal_\jspecial\alpha\bmod{\ZZ_{n_\jspecial}})}{n_\jspecial^{\gamma_\jspecial}}.
\end{equation*}
Write $\bm{m}' = (m_0,\dots,m_{\kspecial-1})$ and $\bm{n} = (n_0,\dots,n_\lspecial)$.
Let
\begin{equation*}
\begin{split}
f_{30}(\bm{s},\lambda,r,\bm{m}',\bm{n})
&\defeq \EE[\phase(\cfinal_{\lspecial'}\alpha\bmod{\ZZ_{c'_{\lspecial'}n_0}})
\mathfrak{N}_{m_0}\mathfrak{D}_{n_0}\mathfrak{G}_r], \\
f_{31,\jspecial'}(r,\bm{m}',\bm{n})
&\defeq \EE[\phase(-\Cfinal_{\jspecial'}\alpha\bmod{\ZZ_{n[\jspecial']}})],
\end{split}
\end{equation*}
where for a locally constant function $\ast$ of $\alpha$ on $\QQ_q$
we let $\EE[\ast]$ denote the average of $\ast$
over the set $\set{m_\kspecial\in \ZZ_q:
\gcd(m_\kspecial, rm_0\cdots m_{\kspecial-1}n_0\cdots n_\lspecial) = 1}$.
Let
\begin{equation*}
F_{32}(\bm{s}) = \sum_{r,m_0,\dots} \nu \cdot \mathcal{P}_1 \cdot
\frac{f_{25}(\bm{s},\lambda,\alpha)
\phase(\Cfinal_0\alpha) f_{30}(\bm{s},\lambda,r,\bm{m}',\bm{n})}{m_1^{\beta_1}\cdots m_\kspecial^{\beta_\kspecial}}
\frac{\prod_{1\le \jspecial'<\lspecial'} f_{31,\jspecial'}(r,\bm{m}',\bm{n})}
{n_1^{\gamma_1}\cdots n_\lspecial^{\gamma_\lspecial}}.
\end{equation*}
Let $\bm{z}\defeq \bm{s}-\mathbf{d}-2\mathbf{u}$, and assume $\bm{s}$ lies in \eqref{INEQ:key-new-region} with $\delta = \xi/2$.

For $t\in \RR$,
we now aim to bound the difference $(F_{29}-F_{32})(\bm{s}-it\mathbf{u})$
by summing over $m_\kspecial$,
using Proposition~\ref{PROP:Poisson-sum-based-equidistribution-estimate}
with suitable parameters $M$, $A$, $Y$, $q$, $Q$.
First we show that $m_\kspecial$ is large.
The weight $\nu$ in $F_{29}(\bm{s})$ forces
$m_\kspecial \gg m_1,m_2,\dots,m_\kspecial$,
and the weight $\mathcal{P}_1$ forces $m_0,n_0,\alpha^{-1}\ll \mathsf{P}(\alpha)^\xi$
by \eqref{COND:define-dominant-Z_1-range}.
Yet by \eqref{alpha-factorize}
and the inequality $\mathfrak{v}_1,\dots,\mathfrak{v}_\lspecial\ge 1$, we have
$$\alpha n_0/m_0
= \frac{m_1^{\mathfrak{u}_1}\cdots m_\kspecial^{\mathfrak{u}_\kspecial}}
{n_1^{\mathfrak{v}_1}\cdots n_\lspecial^{\mathfrak{v}_\lspecial}}
\le \frac{m_1^{\mathfrak{u}_1}\cdots m_\kspecial^{\mathfrak{u}_\kspecial}}
{n_1\cdots n_\lspecial}
= \frac{m_1^{1+\mathfrak{u}_1}\cdots m_\kspecial^{1+\mathfrak{u}_\kspecial}}
{\mathsf{P}(\alpha)}.$$
Letting $A_{10}\defeq (1+\mathfrak{u}_1)+\dots+(1+\mathfrak{u}_\kspecial)
\ge 2\kspecial > 0$, it follows that
\begin{equation}
\label{m-large-bound}
m_\kspecial
\gg (m_1^{1+\mathfrak{u}_1}\cdots m_\kspecial^{1+\mathfrak{u}_\kspecial})^{1/A_{10}}
= (\mathsf{P}(\alpha) \alpha n_0/m_0)^{1/A_{10}}
\gg \mathsf{P}(\alpha)^{(1-2\xi)/A_{10}},
\end{equation}
since $n_0\ge 1$ and $\alpha,m_0^{-1}\gg \mathsf{P}(\alpha)^{-\xi}$.
Second, we show that $\sum_{m_\kspecial}$ is smoothly weighted.
By the chain rule,
the $k_1=2$ case of Lemma~\ref{LEM:uniform-derivative-bound-on-localized-modulated-integrals},
the corollary $m_\kspecial\,\tfrac{\partial\alpha}{\partial{m_\kspecial}}
= \mathfrak{u}_\kspecial\alpha$ to \eqref{alpha-factorize},
and the corollary $(\alpha\,\frac{\partial}{\partial\alpha})^{k_2} \phase(\Cfinal_0\alpha) \ll_{k_2} (1+\alpha)^{k_2} \ll_{k_2} \mathsf{P}(\alpha)^{\xi k_2}$ to \eqref{COND:define-dominant-Z_1-range},
we have
\begin{equation}
\label{m-derivative-bound}
(m_\kspecial\,\tfrac{\partial}{\partial{m_\kspecial}})^{k_2}\,
\frac{\nu \cdot \mathcal{P}_1
\cdot f_{25}(\bm{s}-it\mathbf{u},\lambda,\alpha) \phase(\Cfinal_0\alpha)}
{m_\kspecial^{\beta_\kspecial+it\mathfrak{u}_\kspecial}}
\ll_{k_2} \frac{(1+\norm{\bm{s}})^{O(1+k_2)} \mathsf{P}(\alpha)^{\xi k_2}}
{\alpha^2 (1+t^2)}.
\end{equation}
Third, Lemma~\ref{LEM:local-constancy} lets us break $\sum_{m_k}$
into $q$ residue classes $\mathcal{R}$ modulo $q$
on which the quantities
$\phase(\cfinal_{\lspecial'}\alpha\bmod{\ZZ_{c'_{\lspecial'}n_0}})$,
$\mathfrak{N}_{m_0}$, $\mathfrak{D}_{n_0}$, $\mathfrak{G}_r$,
$\phase(-\Cfinal_{\jspecial'}\alpha\bmod{\ZZ_{n[\jspecial']}})$
are constant, where
\begin{equation*}
q\ll rm_0c'_{\lspecial'}n_0\prod_{1\le \jspecial'<\lspecial'} n[\jspecial']
\ll \mathsf{P}(\alpha)^{3\xi+\lspecial'\Xi}
\end{equation*}
by \eqref{COND:define-dominant-Z_1-range}.
Fourth, we have a coprimality condition $\gcd(m_k,Q) = 1$, where
\begin{equation*}
Q = rm_0\cdots m_{\kspecial-1}n_0\cdots n_\lspecial
\le rm_0n_0 \mathsf{P}(\alpha)
\ll \mathsf{P}(\alpha)^{1+3\xi}
\end{equation*}
by \eqref{COND:define-dominant-Z_1-range}.
Finally, if in $F_{29}(\bm{s})$,
we make a smooth dyadic partition of unity on the variable $m_\kspecial$,
then Proposition~\ref{PROP:Poisson-sum-based-equidistribution-estimate}
with $M\gg \mathsf{P}(\alpha)^{(1-2\xi)/A_{10}}$ (in view of \eqref{m-large-bound}),
with $A \asymp (1+\norm{\bm{s}})^{O(1)} \mathsf{P}(\alpha)^{\xi}$
and $Y \asymp (1+\norm{\bm{s}})^{O(1)} \alpha^{-2}(1+t^2)^{-1}$
(in view of \eqref{m-derivative-bound}),
and with $q$ and $Q$ as displayed above,
reveals that
\begin{equation*}
(F_{29}-F_{32})(\bm{s}-it\mathbf{u})
\ll_\eps \frac{(1+\norm{\bm{s}})^{O(1)}}{1+t^2}
\sum_{\substack{r,m_0,\ldots:
\, \eqref{COND:define-dominant-Z_1-range}, \\
m_\kspecial\gg \mathsf{P}(\alpha)^{(1-2\xi)/A_{10}}}}\,
\frac{\abs{\mathfrak{N}_{m_0}\mathfrak{D}_{n_0}\mathfrak{G}_r}
\, (n_0/m_0)^2}
{(m_1\cdots m_\kspecial n_1\cdots n_\lspecial)^{1-\delta}}
\frac{q^2 (Qm_\kspecial)^\eps A}{m_\kspecial}.
\end{equation*}
Key here is the rightmost factor of $m_\kspecial$ in the denominator,
coming from cancellation in dyadic ranges $m_\kspecial\asymp M$.
At this point,
writing $m_\kspecial\gg \mathsf{P}(\alpha)^{(1-2\xi)/A_{10}}
= (m_1\cdots m_\kspecial n_1\cdots n_\lspecial)^{(1-2\xi)/A_{10}}$ in the denominator,
and using the crude bound $\mathfrak{N}_{m_0}\mathfrak{D}_{n_0}\mathfrak{G}_r
\ll (m_0n_0r)^{O(1)}
\ll \mathsf{P}(\alpha)^{O(\xi)}$
as we did for Lemma~\ref{LEM:range-2} after \eqref{EXPR:main-factor-of-near-final-reciprocity-plus-Weyl-cancellation-bound},
we find that if $\xi$ is small enough in terms of $A_{10}$, then
\begin{equation*}
(F_{29}-F_{32})(\bm{s}-it\mathbf{u})
\ll \frac{(1+\norm{\bm{s}})^{O(1)}}{1+t^2}
\sum_{m,n\ge 1} \frac{1}{(mn)^{1+1/(2A_{10})}}
\ll \frac{(1+\norm{\bm{s}})^{O(1)}}{1+t^2}.
\end{equation*}
Thus, by definition \eqref{INEQ:growth-decay-condition-for-f-in-H_dagger,J},
\begin{equation}
\label{EQN:F39-F42-convergence-bound}
(F_{29}-F_{32})(\bm{s})
= (F_{29}-F_{32})(\bm{z}+\mathbf{d}+2\mathbf{u})
\in \mathcal{H}_{\dagger,J}(-\delta,\infty).
\end{equation}
Therefore, $\mathscr{S}(F_{29}-F_{32})(\bm{s})\in \mathcal{H}_\star(\delta)$
by Lemma~\ref{LEM:shifted-integral-polar-and-residue-structure}(4)
and the definition \eqref{define-translate-space-H-star} of $\mathcal{H}_\star(\delta)$.

It remains to analyze $\mathscr{S}(F_{32})$.
Thanks to the factorization \eqref{alpha-factorize} of $\alpha$,
the average over $m_\kspecial$ in $f_{31,\jspecial'}$
exhibits cancellation
for each $1\le \jspecial'<\lspecial'$,
since $\Cfinal_{\jspecial'}\ne 0$.
By the $$(u,N,C)
= \left(\mathfrak{u}_\kspecial,n[\jspecial'],
\frac{-\Cfinal_{\jspecial'}m_0\prod_{1\le \ispecial<\kspecial}m_\ispecial^{\mathfrak{u}_\ispecial}}
{n_0\prod_{1\le \jspecial\le \lspecial':\, \jspecial\ne \jspecial'} n[\jspecial]}
\right)$$
case of Lemma~\ref{gauss-bound} below, we have
\begin{equation}
\label{local-m-cancellation}
f_{31,\jspecial'}
= \EE[\phase(m_\kspecial^{\mathfrak{u}_\kspecial}CN^{-1}\bmod{\ZZ_N})]
\ll_\eps n[\jspecial']^{\eps-1/2},
\end{equation}
whenever $1\le \jspecial'<\lspecial'$.
On the other hand, we will satisfactorily bound $f_{30}$ by
simply plugging in Lemmas~\ref{LEM:denominator-bias}, \ref{LEM:numerator-bias}, and~\ref{LEM:generic-local-factor-estimate} pointwise
(just as in the proof of \eqref{INEQ:model-r,m_0,n_0-total-bound}
for Lemma~\ref{LEM:range-3}),
without attempting to obtain cancellation over $m_\kspecial$.

Let $\varsigma = -1$ if $f_{25} = \mathcal{J}_{\Omega, 0}$,
and $\varsigma = 1$ if $f_{25} = \mathcal{J}_{\Omega, \infty}$.
For $\Re(\bm{s})$ large, we have $\mathscr{S}(F_{32})(\bm{s}) = \mathscr{S}(F_{32})(\bm{s}-\varsigma\xi\mathbf{u})$ (cf.~the proof of Lemma~\ref{LEM:range-3}).
But for $\bm{s}$ in \eqref{INEQ:key-new-region},
the Dirichlet coefficient $$\mathfrak{C}=\mathfrak{C}(\bm{s},t,m_1,\dots,m_\kspecial,(n_\jspecial)_{\jspecial\ge 1:\, \cfinal_\jspecial=\cfinal_{\jspecial'}})$$
of $m_1^{\beta_1}\cdots m_\kspecial^{\beta_\kspecial}
\prod_{\jspecial\ge 1:\, \cfinal_\jspecial=\cfinal_{\lspecial'}}
n_\jspecial^{\gamma_\jspecial}$
in the series $F_{32}(\bm{s}-\varsigma\xi\mathbf{u}-it\mathbf{u})$
is of the form
\begin{equation*}
\sum_{r,m_0,n_0\ge 1}
\sum_{n_\jspecial\ge 1:\, \jspecial\ge 1,\; \cfinal_\jspecial\ne \cfinal_{\lspecial'}}
\frac{O(1+\norm{\bm{s}}^{O(1)}) \bm{1}_{\alpha^\varsigma \gg 1}}{\alpha^2 (1+t^2)}
\frac{f_{30}(\bm{s}-\varsigma\xi\mathbf{u}-it\mathbf{u},\lambda,r,\bm{m}',\bm{n})}
{(\alpha n_0/m_0)^{\varsigma\xi}}
\frac{O(\prod_{1\le \jspecial'<\lspecial'} n[\jspecial']^{\xi-1/2})}
{\prod_{\jspecial\ge 1:\, \cfinal_\jspecial\ne \cfinal_{\lspecial'}} n_\jspecial^{\gamma_\jspecial}},
\end{equation*}
by Lemma~\ref{LEM:uniform-derivative-bound-on-localized-modulated-integrals}
and \eqref{local-m-cancellation}.
We proceed to bound this via the strategy of Lemma~\ref{LEM:range-3}.

By \eqref{alpha-factorize} and \eqref{define-restricted-alpha-product},
$\alpha/\alpha_{\lspecial'}
= m_0/(n_0
\prod_{\jspecial\ge 1:\, \cfinal_\jspecial\ne \cfinal_{\lspecial'}}
n_\jspecial^{\mathfrak{v}_\jspecial})$.
Thus $$rm_0n_0
\prod_{\jspecial\ge 1:\, \cfinal_\jspecial\ne \cfinal_{\lspecial'}}
n_\jspecial^{\mathfrak{v}_\jspecial}
\ge \max(1, (\alpha/\alpha_{\lspecial'})^\varsigma)
\gg 1 + \alpha_{\lspecial'}^{-\varsigma},$$
since $\alpha^\varsigma \gg 1$.
Plugging $1\ll (rm_0n_0 \prod_{\jspecial\ge 1:\, \cfinal_\jspecial\ne \cfinal_{\lspecial'}} n_\jspecial^{\mathfrak{v}_\jspecial})^{2\xi} / (1 + \alpha_{\lspecial'}^{-\varsigma})^{2\xi}$
and the identities $\alpha^2 = (m_0/n_0)^2 (\alpha n_0/m_0)^2$
and $\alpha n_0/m_0 = \alpha_{\lspecial'}/\prod_{1\le \jspecial'<\lspecial'} n[\jspecial']$ into our bound for $\mathfrak{C}$ above,
and summing over $r$, $m_0$, $n_0$ as in the proof of \eqref{INEQ:model-r,m_0,n_0-total-bound}, we get
\begin{equation*}
\mathfrak{C}\ll_\xi
\sum_{n_\jspecial\ge 1:\, \jspecial\ge 1,\; \cfinal_\jspecial\ne \cfinal_{\lspecial'}}
\frac{O(1+\norm{\bm{s}}^{O(1)})}{\alpha_{\lspecial'}^{2+\varsigma\xi} (1+t^2)}
\frac{O(\prod_{1\le \jspecial'<\lspecial'} n[\jspecial']^{2+\varsigma\xi+\xi-1/2+2\xi})}
{\abs{\prod_{\jspecial\ge 1:\, \cfinal_\jspecial\ne \cfinal_{\lspecial'}} n_\jspecial^{\gamma_\jspecial}}}
\frac{1}{(1 + \alpha_{\lspecial'}^{-\varsigma})^{2\xi}}.
\end{equation*}
But $\abs{\prod_{\jspecial\ge 1:\, \cfinal_\jspecial\ne \cfinal_{\lspecial'}} n_\jspecial^{\gamma_\jspecial}}
\ge \prod_{\jspecial\ge 1:\, \cfinal_\jspecial\ne \cfinal_{\lspecial'}} n_\jspecial^{2\mathsf{u}_{\mathfrak{n}(\jspecial)}+1-\delta}
= (\prod_{1\le \jspecial'<\lspecial'} n[\jspecial'])^2
\prod_{\jspecial\ge 1:\, \cfinal_\jspecial\ne \cfinal_{\lspecial'}} n_\jspecial^{1-\delta}$, so
\begin{equation*}
\mathfrak{C}\ll_\xi
\sum_{n_\jspecial\ge 1:\, \jspecial\ge 1,\; \cfinal_\jspecial\ne \cfinal_{\lspecial'}}
\frac{O(1+\norm{\bm{s}}^{O(1)})}{\alpha_{\lspecial'}^{2+\varsigma\xi} (1+t^2)}
\frac{O(\prod_{1\le \jspecial'<\lspecial'} n[\jspecial']^{\varsigma\xi+\xi-1/2+2\xi})}
{\prod_{\jspecial\ge 1:\, \cfinal_\jspecial\ne \cfinal_{\lspecial'}} n_\jspecial^{1-\delta}}
\frac{1}{(1 + \alpha_{\lspecial'}^{-\varsigma})^{2\xi}}.
\end{equation*}
Since $\prod_{1\le \jspecial'<\lspecial'} n[\jspecial']
\ge \prod_{\jspecial\ge 1:\, \cfinal_\jspecial\ne \cfinal_{\lspecial'}} n_\jspecial$,
we find that if $\xi$ is small enough that $4\xi+\delta < 1/2$, then
\begin{equation*}
\mathfrak{C}\ll_\xi
\frac{1+\norm{\bm{s}}^{O(1)}}{\alpha_{\lspecial'}^{2+\varsigma\xi} (1+t^2)}
\frac{1}{(1 + \alpha_{\lspecial'}^{-\varsigma})^{2\xi}}
\asymp \frac{1+\norm{\bm{s}}^{O(1)}}
{\alpha_{\lspecial'}^2 (1+t^2) \alpha_{\lspecial'}^{\varsigma\xi}}
\cdot \frac{1}{1 + \alpha_{\lspecial'}^{-2\varsigma\xi}}
= \frac{1+\norm{\bm{s}}^{O(1)}}{\alpha_{\lspecial'}^2 (1+t^2)}
\cdot \frac{1}{\alpha_{\lspecial'}^\xi + \alpha_{\lspecial'}^{-\xi}}.
\end{equation*}
Integrating $\mathfrak{C}$ over $t\in \RR$
(and summing over $f_{25}$, $\lambda$, $\nu$)
gives the lemma,
with $$h_{\lspecial'}
\defeq \alpha_{\lspecial'}^2 (\alpha_{\lspecial'}^\xi + \alpha_{\lspecial'}^{-\xi})
\sum_{f_{25},\lambda,\nu} \int_{t\in \RR} \mathfrak{C}(\bm{s},t,m_1,\dots,m_\kspecial,(n_\jspecial)_{\jspecial\ge 1:\, \cfinal_\jspecial=\cfinal_{\jspecial'}})\, dt
\in \mathcal{H}_\star(\delta),$$
where we restrict $\sum_\nu$ to weights $\nu$ associated to the index $\jspecial'=\lspecial'$.
(In the partition of unity at the beginning of the proof, each weight $\nu$ is associated to some index $1\le \jspecial'\le \lspecial'$, corresponding approximately to which of the quantities $n[1],\dots,n[\lspecial']$ is largest.)
\end{proof}

The following standard lemma on Gauss sums
is similar to \cite{tanimoto2012height}*{Lemma~5.7}:
\begin{lemma}
\label{gauss-bound}
Let $u,N\in \ZZ_{\ge 1}$ and $C\in \ZZ[1/N]^\times$.
Then $$\frac{1}{\card{(\ZZ/N\ZZ)^\times}}\sum_{x\in (\ZZ/N\ZZ)^\times}
\phase(x^uCN^{-1}\bmod{\ZZ_N})
\ll_{u,\eps} N^{\eps-1/2},$$
where $\phase(x^uCN^{-1}\bmod{\ZZ_N})\defeq \phase(\tilde{x}^uCN^{-1}\bmod{\ZZ_N})$
for any lift $\tilde{x}\in \ZZ$ of $x$ modulo $N$.
\end{lemma}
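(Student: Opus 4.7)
The plan is to reduce the bound to a product of Gauss-type sums over prime power moduli, bound each factor by $O_u(p^{k/2})$ with a uniform implied constant, and conclude using $\phi(N) \gg_\eps N^{1-\eps}$ and the fact that the number of prime factors of $N$ is $O(\log N/\log\log N)$.

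First I would apply the Chinese Remainder Theorem. Writing $N = \prod_p p^{k_p}$, the hypothesis $C \in \ZZ[1/N]^\times$ lets us decompose $C/N \equiv \sum_p C_p/p^{k_p} \bmod \ZZ$ with $\gcd(C_p,p) = 1$ for each prime $p \mid N$. Since $u$th-power residues factor through CRT, we obtain
\begin{equation*}
\sum_{x \in (\ZZ/N\ZZ)^\times} \phase(x^u C N^{-1} \bmod \ZZ_N)
= \prod_{p \mid N} S(p^{k_p}),
\qquad
S(p^k) \defeq \sum_{y \in (\ZZ/p^k\ZZ)^\times} \phase(y^u C_p/p^k).
\end{equation*}

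Next I would bound each local factor. For $k = 1$ and $p \nmid u$, the Weil bound for additive character sums against a nontrivial polynomial $y^u C_p$ of degree $u$ on $\FF_p^\times$ gives $\lvert S(p)\rvert \le (u-1) p^{1/2}$ with an absolute constant. For $k = 1$ and $p \mid u$, there are only finitely many such $p$, and the trivial bound $\lvert S(p)\rvert \le p - 1$ can be packaged as $\ll_u p^{1/2}$. For $k \ge 2$ I would use the standard $p$-adic stationary phase argument: writing $y = y_0 + p^{\lceil k/2 \rceil} z$ with $y_0$ ranging over lifts in $(\ZZ/p^{\lceil k/2\rceil}\ZZ)^\times$ and $z$ over $\ZZ/p^{\lfloor k/2 \rfloor}\ZZ$, and expanding $y^u$ to first order in $p^{\lceil k/2 \rceil} z$, the inner sum over $z$ vanishes unless $u y_0^{u-1} C_p \equiv 0 \bmod p^{\lfloor k/2 \rfloor}$, which (once $k > 2 v_p(u)$) pins $y_0$ down to $\le u$ residue classes and yields $\lvert S(p^k)\rvert \ll_u p^{k/2}$ uniformly in $p$. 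The finitely many primes $p \mid u$ and small values of $k$ give a bounded contribution absorbed into the $O_u$.

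Finally, I would multiply the local bounds. The uniform constant $C_u$ in $\lvert S(p^{k_p})\rvert \le C_u p^{k_p/2}$ contributes a factor $C_u^{\omega(N)} \le \exp(O_u(\log N/\log\log N)) \ll_{u,\eps} N^{\eps/2}$, and the product of the $p^{k_p/2}$ is exactly $N^{1/2}$. Dividing by $\card{(\ZZ/N\ZZ)^\times} = \phi(N) \gg_\eps N^{1-\eps/2}$ gives the claimed bound $\ll_{u,\eps} N^{\eps - 1/2}$. The main technical point is the uniformity (in $p$) of the stationary phase bound when $k \ge 2$, which is handled by the Hensel lifting argument sketched above; everything else is either CRT, Weil, or the elementary estimate $\phi(N) \gg_\eps N^{1-\eps}$.
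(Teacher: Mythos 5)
Your proof is correct, and it reaches the key bound $\lvert S(p^k)\rvert \ll_u p^{k/2}$ by a genuinely different route from the paper. Both arguments reduce via CRT and the divisor bound to the prime-power case $N = p^k$, but there the paper proceeds by detecting $u$th powers with a finite group $\mathfrak{H}$ of multiplicative characters (so that $\#\{x \colon x^u = y\} = \sum_{\chi\in\mathfrak{H}}\chi(y)$ with $\lvert\mathfrak{H}\rvert \ll_u 1$) and then invokes the general Gauss-sum bound $\lvert\sum_{y}\chi(y)\phase(yCp^{-k}\bmod{\ZZ_{p^k}})\rvert \le p^{k/2}$ from \cite{iwaniec2004analytic}*{Lemma~3.1}, which treats $k = 1$ and $k \ge 2$ uniformly. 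Your proof instead splits by $k$: for $k = 1$ you apply Weil's bound for complete exponential sums of polynomials over $\FF_p$, and for $k \ge 2$ you run $p$-adic stationary phase. Your version trades the appeal to classical Gauss-sum theory for more hands-on techniques, at the cost of handling the two regimes separately. One small imprecision in your stationary-phase step: once $k > 2 v_p(u)$, the critical-point condition $u y_0^{u-1} C_p \equiv 0 \bmod p^{\lfloor k/2\rfloor}$ with $p \nmid y_0 C_p$ forces $p^{\lfloor k/2\rfloor} \mid u$, which is impossible, so $S(p^k)$ actually vanishes outright rather than being supported on $\le u$ residue classes; for the finitely many $(p,k)$ with $k \le 2 v_p(u)$ the trivial bound $\lvert S(p^k)\rvert \le p^k \ll_u p^{k/2}$ covers you directly. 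This does not affect your conclusion, and the final bookkeeping (the factor $C_u^{\omega(N)} \ll_{u,\eps} N^{\eps/2}$ from the uniform local constants, the product $\prod_p p^{k_p/2} = N^{1/2}$, and $\phi(N) \gg_\eps N^{1-\eps/2}$) is sound.
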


\begin{proof}
By the Chinese remainder theorem and the divisor bound,
it suffices to treat the case where $N$ is a prime power.
Suppose $N=p^k$ with $k\ge 1$.
By the structure theory of finite abelian groups,
there exists a finite group $\mathfrak{H}$
of characters $\chi\maps (\ZZ/N\ZZ)^\times\to \CC^\times$
such that
\begin{equation}
\label{power-detector}
\#\{x\in (\ZZ/N\ZZ)^\times: \bm{1}_{x^u = y}\} = \sum_{\chi\in \mathfrak{H}} \chi(y)
\end{equation}
for all $y\in (\ZZ/N\ZZ)^\times$,
with $\card{\mathfrak{H}}
= \#\{x\in (\ZZ/N\ZZ)^\times: \bm{1}_{x^u = 1}\}
\ll_u 1$.
By \eqref{power-detector},
\begin{equation*}
\mathcal{Q}_6\defeq \sum_{x\in (\ZZ/N\ZZ)^\times}
\phase(x^uCN^{-1}\bmod{\ZZ_N})
= \sum_{y\in (\ZZ/N\ZZ)^\times}
\phase(yCN^{-1}\bmod{\ZZ_N})
\sum_{\chi\in \mathfrak{H}} \chi(y).
\end{equation*}
But by \cite{iwaniec2004analytic}*{Lemma~3.1},
we have $\abs{\sum_{y\in (\ZZ/N\ZZ)^\times}
\chi(y)\phase(yCN^{-1}\bmod{\ZZ_N})} \le \sqrt{N}$
(with equality if and only if $\chi$ is primitive
in the sense of \cite{iwaniec2004analytic}*{\S3.3}).
Therefore,
$\abs{\mathcal{Q}_6}
\le \card{\mathfrak{H}} \sqrt{N}
\ll_u \sqrt{N}$.
This suffices, since $\card{(\ZZ/N\ZZ)^\times} = (1-p^{-1}) N \ge N/2$.
\end{proof}






\section{Final reductions}
\label{SEC:final-reductions}

In this section, assume $X$ is strictly split but not that $D$ has strict normal crossings.
We proceed using the blowup strategy of \cite{chambert2002distribution}*{\S6} (see also \cite{tanimoto2012height}*{paragraph after Theorem~5.1}).
Let $\pi\maps \wt{X}\to X$ be an equivariant morphism of the form specified in Definition~\ref{DEFN:strictly-split-X}.
Then in particular, $\wt{X}$ is split and its boundary $\wt{D}\defeq \wt{X}\setminus G$ has strict normal crossings.


Index the irreducible components of $\wt{D}$ by $\wt{J}\contains J$
so that $\wt{D}_\jboundary$ is the strict transform of $D_\jboundary$ for $\jboundary\in J$,
and $\wt{D}_\jboundary$ is an exceptional divisor ($\cong \PP^1$, since $\wt{X}$ is split) for $\jboundary\in \wt{J}\setminus J$.
Then
\begin{equation*}
\Pic(\wt{X}) = \ZZ^{\wt{J}\setminus J} \oplus \pi^\ast{\Pic(X)}, \quad
\Pic^G(\wt{X}) = \ZZ^{\wt{J}} = \ZZ^{\wt{J}\setminus J} \oplus \pi^\ast{\Pic^G(X)},
\quad \div(a\vert_{\wt{X}}) = \pi^\ast{\div(a)}.
\end{equation*}
Thus we may choose $\wt{H}$ (satisfying Proposition~\ref{PROP:homomorphic-adelic-local-height-decomposition} for $\wt{X}$) so that $\wt{H}(\pi^\ast\bm{s},g) = H(\bm{s},g)$.

For numerical comparison of $X$ and $\wt{X}$,
the following standard lemma is essential:
\begin{lemma}
In $\Pic^G(\wt{X})$, we have $-\pi^\ast{\div(\omega)}
\in -\div(\pi^\ast{\omega})
+ \sum_{\jboundary\in \wt{J}\setminus J} \ZZ_{\ge 1} \wt{D}_\jboundary$.
\end{lemma}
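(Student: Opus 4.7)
The plan is to identify the displayed containment with the statement that the relative canonical divisor of $\pi$ has strictly positive coefficient on each exceptional divisor. More precisely, for any non-vanishing rational top form $\omega$ on $X$, the pullback $\pi^\ast\omega$ is a non-vanishing rational top form on $\wt{X}$, and one has the divisor identity
\begin{equation*}
\div(\pi^\ast\omega) - \pi^\ast\div(\omega) = R_\pi,
\end{equation*}
where $R_\pi$ is the (effective) relative canonical divisor of $\pi$. Since $\pi$ is an isomorphism over $X\setminus \pi(\mathrm{Exc}(\pi))$, the divisor $R_\pi$ is supported on the exceptional locus, so $R_\pi\in \sum_{\jboundary\in \wt{J}\setminus J} \ZZ_{\ge 0}\wt{D}_\jboundary$. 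The claim then reduces to showing that the coefficient of each $\wt{D}_\jboundary$ (for $\jboundary\in \wt{J}\setminus J$) in $R_\pi$ is at least $1$.

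First I would invoke that $X$ is a smooth projective surface (being birational to $G$). By Definition~\ref{DEFN:strictly-split-X}, $\pi$ factors as a finite composition $\wt{X}=X_n\to X_{n-1}\to\dots\to X_0=X$ of blowups along smooth $G$-invariant centers. Each such center is a connected smooth closed subscheme of a smooth surface, hence either a point or a divisor; since blowing up along a divisor is an isomorphism, we may assume each center is a (closed) point of codimension $2$. The exceptional divisors of $\pi$ are then, one per blowup, the total transforms of these point-blowup exceptionals; together they correspond bijectively to the indices in $\wt{J}\setminus J$.

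Second, I would compute $R_\pi$ iteratively. For a single blowup $\pi_i\maps X_i\to X_{i-1}$ of a smooth surface at a point, the classical formula gives $K_{X_i} = \pi_i^\ast K_{X_{i-1}} + E_i$, so the discrepancy of $\pi_i$ along its exceptional $E_i$ equals $1$. Composing the blowups, the relative canonical divisor $R_\pi = K_{\wt{X}} - \pi^\ast K_X$ is a $\ZZ_{\ge 0}$-linear combination of the exceptional divisors, in which the $E_i$ from the $i$-th step appears with coefficient $1$ plus the (nonnegative) multiplicity arising from pulling back contributions of prior blowups; in particular, each exceptional divisor $\wt{D}_\jboundary$ ($\jboundary\in \wt{J}\setminus J$) occurs in $R_\pi$ with coefficient at least $1$.

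Finally, applying $-1$ to $\div(\pi^\ast\omega) = \pi^\ast\div(\omega) + R_\pi$ yields
\begin{equation*}
-\pi^\ast\div(\omega) = -\div(\pi^\ast\omega) + R_\pi \in -\div(\pi^\ast\omega) + \sum_{\jboundary\in \wt{J}\setminus J} \ZZ_{\ge 1}\wt{D}_\jboundary,
\end{equation*}
which is the stated containment. The only non-routine point is the positivity of the discrepancy on each exceptional divisor, and this is handled cleanly by the iterative point-blowup formula on a smooth surface; no further input is needed.
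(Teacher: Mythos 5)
The paper states this lemma without proof, calling it ``standard,'' so there is no argument of the paper's to compare against; your proof supplies the expected justification and is correct. You reduce the containment to effectivity, with coefficients $\geq 1$, of the relative canonical divisor $R_\pi = \div(\pi^\ast\omega) - \pi^\ast\div(\omega)$, observe via Definition~\ref{DEFN:strictly-split-X} that $\pi$ is a composition of blowups of smooth projective surfaces along smooth $G$-invariant centers (which, after discarding divisorial components, may be taken to be finite sets of points), and then iterate the classical formula $K_{X_i} = \pi_i^\ast K_{X_{i-1}} + E_i$ to get coefficient $\geq 1$ on every exceptional divisor. The one point you leave implicit, and which is worth flagging for completeness, is that the centers necessarily lie in the boundary $D$ --- since $G$ acts simply transitively on itself, the only $G$-invariant closed subsets of $G$ are $\emptyset$ and $G$, so any proper $G$-invariant closed subscheme of $X$ is contained in $D$ --- and this is what identifies the exceptional divisors of $\pi$ with precisely the components of $\wt{D}$ indexed by $\wt{J}\setminus J$.
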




We can now study $\mathsf{Z}(sK_X^{-1},1_G) = \wt{\mathsf{Z}}(s\pi^\ast{K_X^{-1}},1_G)$.
Let $w\in C^\infty_c(\RR)$.
Then the Mellin transform $w^\vee(s)\defeq \int_0^\infty w(x) x^{s-1}\, dx$ is holomorphic on $\Re(s)>0$, with rapid decay (i.e.~$w^\vee(s) \ll_L (1+\abs{s})^{-L}$ for all $L\ge 0$) in vertical strips.
Let $\sigma,B\ge 2$ be large.
By Mellin inversion,
\begin{equation}
\label{EQN:Mellin-inversion-for-Z}
\sum_{x\in G(\QQ)} w{\left(\frac{H(K_X^{-1},x)}{B}\right)}
= \frac{1}{2\pi} \int_{\Re(s) = \sigma} w^\vee(s) B^s \wt{\mathsf{Z}}(s\pi^\ast{K_X^{-1}},1_G)\, dt,
\quad\textnormal{where $t=\Im(s)$}.
\end{equation}

Contour shifting, via Proposition~\ref{PROP:Z_0-partial-canonical-main-terms} (for $\wt{X}$, with $\bm{\kappa} = \pi^\ast{\mathbf{d}}=-\pi^\ast{\div(\omega)}$), gives
\begin{equation}
\label{EQN:main-final-Z_0-result}
\frac{1}{2\pi} \int_{\Re(s) = \sigma} w^\vee(s) B^s \wt{\mathsf{Z}}_0(s\pi^\ast{\mathbf{d}},1_G)\, dt
= \left(\tilde{c}_{X,H} w^\vee(1) + O{\left(\frac{1}{\log{B}}\right)}\right)
\frac{B (\log{B})^{\card{J}-2}}{(\card{J}-2)!},
\end{equation}
where $\tilde{c}_{X,H} = \mathcal{X}_{\Lambda_J(\wt{X})}(\mathbf{d}) \lim_{\bm{s}\to \mathbf{d}}{H^\ast(\bm{s},1) \prod_{\jboundary\in J}(s_\jboundary-\mathsf{d}_\jboundary)}$.
By \eqref{EQN:define-X-Lambda_I-quotient-cone-function},
$\mathcal{X}_{\Lambda_J(\wt{X})} = \mathcal{X}_{\Lambda_J(X)}$.
Also, Lemma~\ref{LEM:snc-implies-nice-densities-implies-Tamagawa-extraction}(1)$\Rightarrow$(2) for $\wt{H}^\ast_v(\pi^\ast{\bm{s}},1)$ on $\wt{X}$, followed by Lemma~\ref{LEM:snc-implies-nice-densities-implies-Tamagawa-extraction}(2)$\Rightarrow$(3) for $X$, gives $\lim_{\bm{s}\to \mathbf{d}}{H^\ast(\bm{s},1) \prod_{\jboundary\in J}(s_\jboundary-\mathsf{d}_\jboundary)} = \tau(X,\mathsf{H})$.
Meanwhile, $\card{J}-2 = \rank(\Pic(X))-1$ by Proposition~\ref{PROP:compute-Picard-group-in-terms-of-boundary}.
Therefore, $\tilde{c}_{X,H}/(\card{J}-2)!$
equals Peyre's constant $\mathcal{A}_{X,\mathsf{H}}$ \eqref{EQN:Peyre-constant}.

Still, $\wt{\mathsf{Z}}_1$ remains.
Let $\bm{s} \defeq s(\mathbf{d}+\mathbf{u}) + \mathbf{u} = \mathbf{d}+2\mathbf{u} + (s-1)(\mathbf{d}+\mathbf{u})$;
note that $\mathbf{d}+\mathbf{u} \in \ZZ_{\ge 1}^J$ by Proposition~\ref{PROP:poles-of-left-invariant-top-form}.
Define $\mathfrak{m}$, $\mathfrak{n}$, $\kspecial$, $\lspecial$, $\cfinal_\jspecial$, $\lspecial'$, $\alpha_{\jspecial'}$ in terms of $X$ as in \S\ref{SEC:archimedean-endgame}, even if $D$ does not have strict normal crossings.
Let $\upsilon_\ispecial
\defeq \mathsf{d}_{\mathfrak{m}(\ispecial)}+\mathsf{u}_{\mathfrak{m}(\ispecial)}\ge 1$
for $\ispecial\in \set{1,\dots,\kspecial}$,
and $\nu_\jspecial\defeq \mathsf{d}_{\mathfrak{n}(\jspecial)}+\mathsf{u}_{\mathfrak{n}(\jspecial)}\ge 1$
for $\jspecial\in \set{1,\dots,\lspecial}$.
Let $\Psi_{\jspecial'}\defeq m_1^{\upsilon_1}\cdots m_\kspecial^{\upsilon_\kspecial}
\prod_{\jspecial\ge 1:\, \cfinal_\jspecial=\cfinal_{\jspecial'}} n_\jspecial^{\nu_j}$
and $\varrho_{\jspecial'} \defeq B / \Psi_{\jspecial'}$
for $\jspecial'\in \set{1,\dots,\lspecial'}$.
By Lemmas~\ref{LEM:range-3}, \ref{LEM:range-2}, and~\ref{LEM:range-1} for $\wt{X}$, we have (since $1 = \mathcal{P}_1 + \mathcal{P}_2 + \mathcal{P}_3$)
\begin{equation*}
\int_{\Re(s) = \sigma} w^\vee(s) B^{s-1} \wt{\mathsf{Z}}_1(\pi^\ast{\bm{s}},1_G)\, dt
\ll B^{-\delta}
+ \sum_{1\le \jspecial'\le \lspecial'}\, \sum_{m_1,\dots,m_\kspecial\ge 1}\,
\sum_{n_\jspecial\ge 1:\, \jspecial\ge 1,\; \cfinal_\jspecial=\cfinal_{\jspecial'}}\,
\frac{(\alpha_{\jspecial'}^\delta+\alpha_{\jspecial'}^{-\delta})^{-1} (\varrho_{\jspecial'}^{\delta}+\varrho_{\jspecial'}^{-\delta})^{-1}}
{m_1\cdots m_\kspecial
\prod_{\jspecial\ge 1:\, \cfinal_\jspecial=\cfinal_{\jspecial'}} n_\jspecial}
\end{equation*}
for some $\delta=\delta_X>0$,
where the factor of $(\varrho_{\jspecial'}^{\delta}+\varrho_{\jspecial'}^{-\delta})^{-1}$ comes from the bound
\begin{equation*}
\int_{\Re(s) = \sigma} w^\vee(s) \varrho_{\jspecial'}^{s-1}
h_{\jspecial'}(\bm{s},m_1,\dots,m_\kspecial,(n_\jspecial)_{\jspecial\ge 1:\, \cfinal_\jspecial=\cfinal_{\jspecial'}})\, dt
\ll (\varrho_{\jspecial'}^{\delta}+\varrho_{\jspecial'}^{-\delta})^{-1}
\end{equation*}
proven by shifting $\Re(s) = \sigma$ to $\Re(s) = 1 \pm \delta$.

Suppose $\jspecial'\in \set{1,\dots,\lspecial'}$.
Let $\cfinal_{\jspecial'}=q$;
then $\card{\set{\jspecial\ge 1: \cfinal_\jspecial=\cfinal_{\jspecial'}}}
= \card{J^q_1}$.
Let $A,P\in \RR_{>0}$.
The exponents $\upsilon_i$, $\nu_j$ in $\Psi_{\jspecial'}$ are positive,
and the exponent vectors of $\alpha_{\jspecial'}$, $\Psi_{\jspecial'}$ are linearly independent over $\RR$,
so the number of tuples of $M_\ispecial,N_\jspecial\in \set{2^e: e\in \ZZ_{\ge 0}}$
admitting $m_\ispecial\in [M_\ispecial, 2M_\ispecial)$,
$n_\jspecial\in [N_\jspecial, 2N_\jspecial)$
with $\alpha_{\jspecial'}\in [A, 2A)$,
$\varrho_{\jspecial'}\in [P, 2P)$
is $\ll (\log(2 + B/P))^{\kspecial+\card{J^q_1}-2}$.
Thus
\begin{equation*}
\sum_{m_1,\dots,m_\kspecial\ge 1}\,
\sum_{n_\jspecial\ge 1:\, \jspecial\ge 1,\; \cfinal_\jspecial=\cfinal_{\jspecial'}}\,
\frac{(\alpha_{\jspecial'}^\delta+\alpha_{\jspecial'}^{-\delta})^{-1}
(\varrho_{\jspecial'}^{\delta}+\varrho_{\jspecial'}^{-\delta})^{-1}}
{m_1\cdots m_\kspecial
\prod_{\jspecial\ge 1:\, \cfinal_\jspecial=\cfinal_{\jspecial'}} n_\jspecial}
\ll \sum_{A,P\in \set{2^e: e\in \ZZ}}
\frac{(\log(2 + B/P))^{\kspecial+\card{J^q_1}-2}}
{(A^\delta+A^{-\delta}) (P^\delta+P^{-\delta})},
\end{equation*}
which is $\ll (\log{B})^{\kspecial+\card{J^q_1}-2}$.
But $\kspecial = \card{J^\ast_2}$,
so $\kspecial+\card{J^q_1} \le \rank(\Pic(X))$
by Proposition~\ref{PROP:critical-index-upper-bound}.
Summing over $1\le \jspecial'\le \lspecial'$,
and using the $\CC\pi^\ast{\mathbf{u}}$-invariance of $\wt{\mathsf{Z}}_1(\bm{z},1_G)$ for $\Re(\bm{z})$ large
(which follows from \eqref{EQN:difficult-part-of-spectral-expansion}, \eqref{EQN:change-of-t-identity} for $\wt{X}$),
we get \eqref{EQN:log-saving-Manin-Peyre-conjecture-for-standard-anticanonical-Weil-heights}, in view of \eqref{EQN:Mellin-inversion-for-Z}, \eqref{EQN:main-final-Z_0-result}.

The statement \eqref{sharp-cutoff} follows from
\eqref{EQN:log-saving-Manin-Peyre-conjecture-for-standard-anticanonical-Weil-heights}
and the fact that any continuous function on a compact interval can be approximated from above and below by functions in $C^\infty_c(\RR)$.

It would be interesting to know whether ergodic methods
(cf.~\cites{gorodnik2008manin,
ELMV2009,Zamojski2010,ELMV2011,GorodnikOh2011,ShapiraZheng2024})
could prove Theorem~\ref{THM:main-Manin-application},
either quantitatively or up to $o(1)$.
Another natural question is whether universal torsors
(cf.~\cite{salberger1998tamagawa})
could be used in this setting.

\section{Examples}
\label{SEC:examples}

Classification of $X$ is open.
Some examples can be found in \cite{tanimoto2012height}*{\S1};
see also \cite{derenthal2015equivariant}, which is however written in terms of left actions rather than right actions, which are equivalent via the map $g\mapsto g^{-1}$.
One can obtain further examples
by blowing up along closed $G$-invariant subschemes,
or by taking orbit closures
in projective $G$-representations
(cf.~\cite{shalika2015height}*{first two paragraphs of \S1})
or in tensor products thereof.
Such operations might not keep $X$ split.
However, we can verify the hypotheses of Theorem~\ref{THM:main-Manin-application}
in each example below.

We focus on constructions where interesting behavior with special divisors may occur.

\begin{example}
\label{EX:split-quadric-Hirzebruch}
Embed $G$ in $X \defeq \PP^1 \times \PP^1$ by $(a,b)\mapsto ([1:a],[a:b])$.
The left $G$-action on $G$ does not extend to $X$.
The right $G$-action does:
$$([x_0:x_1],[t_0:t_1])(\ucord,\vcord)
\defeq ([x_0:x_1\ucord],[t_0\ucord:t_0\vcord+t_1]).$$
Here $a = \frac{x_1}{x_0}$, $b = \frac{x_1t_1}{x_0t_0}$,
and $D = \{x_1=0\} \cup \{x_0=0\} \cup \{t_0=0\}$.
Thus $X$ is split,
and $D$ has strict normal crossings.
Moreover, $\card{J}=3$ and
$\lspecial=\card{J^0_1}
=\kspecial=\card{J^\ast_2}
=\card{J_3}=1$.
\end{example}

\begin{example}
\label{EX:A3A1quartic}
Via universal torsors,
Manin's conjecture has been resolved in essentially full generality
for singular quartic del Pezzo surfaces of singularity type
$\mathbf{A}_3+\mathbf{A}_1$;
papers on the topic include \cites{derenthal2009counting,
Bourqui2013,DerenthalFrei2014,FreiPieropan2016,derenthal2020split}.
Let $S\belongs \PP^4_\QQ$
be the split form $$x_0^2+x_0x_3+x_2x_4 = x_1x_3-x_2^2 = 0.$$
Then $S$ admits a unique structure as an equivariant compactification of $G$
\cite{derenthal2015equivariant}*{Lemma~15},
but is neither toric nor a vector group compactification
\cite{DerenthalLoughran2010}*{\S5},
nor a bi-equivariant compactification of $G$
\cite{tanimoto2012height}*{Proposition~1.6}.
To realize this structure,
embed $G$ in $S$ by $(a,b)\mapsto [1-b:a^2:a:1:-(1-b)(2-b)/a]$,
with right $G$-action
\begin{equation*}
[x_0:x_1:x_2:x_3:x_4]
(\ucord,\vcord)
\defeq [x_0-x_2\vcord:
x_1\ucord^2:x_2\ucord:x_3:
(x_4+(2x_0+x_3)\vcord-x_2\vcord^2)/\ucord].
\end{equation*}
Here $a=\frac{x_1}{x_2}=\frac{x_2}{x_3}$
and $b=\frac{x_3-x_0}{x_3}$.
Moreover, $1-b = \frac{x_0}{x_3}$
and $b-2=\frac{-x_0-x_3}{x_3}=\frac{x_2x_4}{x_0x_3}=\frac{ax_4}{x_0}$.


The singular points of $S$ are
$\mathbf{A}_3 = [0:0:0:0:1]$
and $\mathbf{A}_1 = [0:1:0:0:0]$.
By \cite{derenthal2009counting}*{\S8},
the lines on $S$ are $\ell_1 = \{x_0=x_1=x_2=0\}$,
$\ell_2 = \{x_0+x_3=x_1=x_2=0\}$,
and $\ell_3 = \{x_0=x_2=x_3=0\}$.
Here $\mathbf{A}_3\in \ell_1\cap \ell_2\cap \ell_3$
and $\mathbf{A}_1\in \ell_3$.
We note that set-theoretically, $\ell_1\cup \ell_2\cup \ell_3 = \{x_2=0\}\cap S$
and $\{x_2\ne 0\}\cap S = \{x_1x_2x_3\ne 0\}\cap S = G$.

The surface $S$ is normal.
Since $x_2$ cuts out $\ell_1$ on the open set $\{(x_0+x_3)x_3\ne 0\}$,
and $x_0$ cuts out $\ell_1$ on the open set $\{x_3x_4\ne 0\}$,
we have $\ord_{\ell_1}(a) = \ord_{\ell_1}(b-1) = 1$.
Similarly, $x_2$ cuts out $\ell_2$ on $\{x_0x_3\ne 0\}$,
and $x_0+x_3$ cuts out $\ell_2$ on $\{x_3x_4\ne 0\}$,
so $\ord_{\ell_2}(a) = \ord_{\ell_2}(b-2) = 1$.
Therefore, $J^1_1\ne \emptyset$
and $J^2_1\ne \emptyset$.
It turns out that $J^\ast_2\ne \emptyset$ as well, as we will see below.

Let $\pi_1\maps S'\to S$ be the blowup of $S$ along $\mathbf{A}_3\cup \mathbf{A}_1$.
Then $\pi_1^{-1}(\mathbf{A}_1)=\ell_0\cong \PP^1_\QQ$, say.
%
%
%
%
Also, $\pi_1^{-1}(\mathbf{A}_3)=\ell_4\cup \ell_5
\cong \PP^1_\QQ\cup_{[1:0:0]} \PP^1_\QQ\belongs \PP^2_\QQ$, say.
%
%
%
%
Let $\pi_2\maps S''\to S'$ be the blowup of $S'$
along the nodal singularity $\ell_4\cap \ell_5\in \pi_1^{-1}(\mathbf{A}_3)$.
Then $\pi_2^{-1}(\ell_4\cap \ell_5) = \ell_6\cong \PP^1_\QQ$, say.
Moreover, $S''$ is smooth.
This can all be checked explicitly, as we now describe.

Let $\ell'_1,\ell'_2,\ell'_3$ be the strict transforms of $\ell_1,\ell_2,\ell_3$ under $\pi_1$.
Since $\ell_1,\ell_2,\ell_3\in \PP^4_\QQ$ are distinct lines,
$\ell'_1,\ell'_2,\ell'_3$ are pairwise disjoint.
Also, $\ell'_1,\ell'_2$ are disjoint from $\ell_0$,
since their images under $\pi_1$ are disjoint.
On the other hand, $\ell'_3\belongs \{x_0=x_2=0\}$ intersects $\ell_0$
in the affine chart
\begin{equation}
\label{A1-blowup-chart}
\{((y_0x_4,y_2x_4,x_4),[y_0:y_2:1])\in \Aff^3\times \PP^2:
y_0^2+y_0y_2^2x_4+y_2=0\}\belongs S'
\end{equation}
(given by $x_1=y_4=1$ and $x_3=x_2^2$)
at the scheme $\{y_0=y_2=0\} \cap \{x_4=y_0^2+y_2=0\}$,
which is a reduced point.
Similarly, $\ell'_1,\ell'_2,\ell'_3$ intersect $\ell_4\cup \ell_5$
in the affine chart $$\{((y_0z,y_1z,(1-y_1)z),[y_0:y_1:1-y_1])\in \Aff^3\times \PP^2:
y_1(1-y_1)-(y_0+1-y_1)^2y_0^2z^2=0\}\belongs S'$$
(given by $x_4=y_1+y_3=1$, $x_2=-x_0^2-x_0x_3$, and $x_1+x_3=z$)
at the schemes
\begin{equation*}
\begin{split}
\{y_0=y_1=0\} \cap \{z=y_1(1-y_1)=0\}
&= \{y_0=y_1=z=0\} \\
\{y_0=-1,\; y_1=0\} \cap \{z=y_1(1-y_1)=0\}
&= \{y_0=-1,\; y_1=z=0\}\\
\{y_0=0,\; y_1=1\} \cap \{z=y_1(1-y_1)=0\}
&= \{y_0=z=0,\; y_1=1\},
\end{split}
\end{equation*}
respectively.
By symmetry, we may identify $\ell_4$ and $\ell_5$ with the closures in $S'$
of $\{z=y_1=0\}$ and $\{z=1-y_1=0\}$, respectively.
Then $\ell'_1,\ell'_2$ intersect $\ell_4\setminus \ell_5$
at the reduced points $(\bm{0},[0:0:1]),(\bm{0},[-1:0:1])$, respectively,
and $\ell'_3$ intersects $\ell_5\setminus \ell_4$
at the reduced point $(\bm{0},[0:1:0])$.
We can also check that $\ell_4,\ell_5$ intersect
in the affine chart $$\{((x_0,y_1x_0,y_3x_0),[1:y_1:y_3])\in \Aff^3\times \PP^2:
y_1y_3-(1+y_3)^2x_0^2=0\}\belongs S'$$
(given by $x_4=y_0=1$ and $x_2=-x_0^2-x_0x_3$)
at the reduced point $(\bm{0},[1:0:0])$.

Let $\ell'_0,\ell''_1,\ell''_2,\ell''_3,\ell'_4,\ell'_5$ be the strict transforms of $\ell'_0,\ell'_1,\ell'_2,\ell'_3,\ell_4,\ell_5$ under $\pi_2$.
Then $\ell'_0,\ell''_1,\ell''_2,\ell''_3$ are disjoint from $\ell_6$.
Also, $\ell'_4\cap \ell_6$ and $\ell'_5\cap \ell_6$ are distinct, reduced $\QQ$-points.
We conclude that $X\defeq S''$ is split,
and that $D = \ell'_0\cup \ell''_1\cup \ell''_2\cup \ell''_3\cup \ell'_4\cup \ell'_5\cup \ell_6$ has strict normal crossings.
So Theorem~\ref{THM:main-Manin-application} applies to $X$.
But in the chart \eqref{A1-blowup-chart},
we have $y_0^2 + y_2(1+y_0y_2x_4) = 0$,
so $$a = \frac{x_1}{x_2} = \frac{1}{y_2x_4} = -\frac{1+y_0y_2x_4}{y_0^2x_4},
\quad b = \frac{x_3-x_0}{x_3} = 1-\frac{y_0x_4}{(y_2x_4)^2}
= 1-\frac{(1+y_0y_2x_4)^2}{y_0^3x_4},$$
where $y_0$ cuts out $\ell'_3$ and $x_4$ cuts out $\ell_0$.
In particular, $\ord_{\ell'_0}(a)=\ord_{\ell'_0}(b)=-1$,
so $J^\ast_2\ne \emptyset$.
Thus \eqref{INEQ:mysterious-numerical-condition} fails,
and one cannot apply \cite{tanimoto2012height}*{Theorem~5.1} to $X$.
\end{example}

\begin{remark}

Theorem~\ref{THM:main-Manin-application}
does not give any new examples of Manin's conjecture for
\emph{generalized del Pezzo surfaces}
in the sense of \cite{derenthal2015equivariant}*{\S4}.
%
Nonetheless, Theorem~\ref{THM:main-Manin-application}
is in general new for other rational surfaces.
Our $X$ generally satisfies
$\kspecial\lspecial>0$,
and is generally
not del Pezzo,
not bi-equivariant,
not toric,
and not a vector group compactification.
To make $\rank(\Pic(X))$ arbitrarily large, one could repeatedly apply blowups, centered at $G$-invariant $\QQ$-points, to $X$ in Example~\ref{EX:A3A1quartic}.
This keeps $X$ split, because the tangent line to a smooth $\QQ$-curve at a $\QQ$-point is defined over $\QQ$.
The resulting cover would suffice by \cite{tanimoto2012height}*{Proposition~1.3}.

\end{remark}


We end by discussing an explicit projective representation
related to \cite{chambert2002distribution}.

\begin{example}
\label{EX:orbit-closure}



Let $G$ act on $\PP(\set{q\in \QQ[x]: \deg{q}\le n}) \cong \PP^n$
by $q(x) \mapsto q(ax+b)$.
Suppose $n\ge 3$,
and choose a set $R\in \binom{\QQ}{n}$
on which $G(\QQ)$ acts faithfully.
The closure $C$ of the $G$-orbit of $[\prod_{\varrho\in R} (x-\varrho)]\in \PP^n$ is a singular equivariant compactification of $G$.
Its boundary includes
the twisted curve $[(Ax+B)^n]$ via $(a,b) = (\lambda A, \lambda B)$ with $\lambda\to \infty$,
and the line $[Ax+B]$ via $(a,b) = (\eps A, \varrho+\eps B)$ with $\varrho\in R$, $\eps\to 0$.
These are the only boundary components, since $C$ is the image of the map
$X\defeq \map{Bl}_{\set{[0:\rho:1]:\, \rho\in R}}(\PP^2) \to \PP^n$
resolving the rational map
\begin{equation*}
\PP^2\ni [a:b:c] \mapsto [{\textstyle\prod_{\rho\in R} (ax+b-c\rho)}] \in \PP^n.
\end{equation*}
The $G$-action $[a:b:c](\ucord,\vcord)\defeq [a\ucord:a\vcord+b:c]$,
being trivial on the line $\{a=0\}$,
makes $X\to C$ an equivariant resolution of singularities.
Here $X$ is split,
$D = \{c=0\}\cup \{a=0\}$ has strict normal crossings,
$\card{J}=n+2$,
$\kspecial=\card{J^\ast_2}=1$,
$\card{J^\varrho_1}=1$ (for $\varrho\in R$),
and $\lspecial=\card{R}=n$.
Incidentally, one can also realize $X$ as a vector group compactification,
so that Manin's conjecture for $X$ is given by \cite{chambert2002distribution}
(or \cite{chambert2000pointsI}).
Theorem~\ref{THM:main-Manin-application} gives a new proof.



This $X$ is reminiscent of
\cite{duke1993density}*{Example~1.8} on $\SL_2(\ZZ)$-orbits of binary forms,
whose geometry is explained in \cites{hassett2003integral,tanimoto2015distribution}.
But our groups, height functions, and points differ.
It would be interesting to find a common generalization of \cite{tanimoto2015distribution} and our work.
\end{example}





\section*{Acknowledgements}

I thank Yuri Tschinkel for introducing me to the beautiful paper \cite{tanimoto2012height} and associated open questions,
and thank him as well as Ramin Takloo-Bighash and Sho Tanimoto for their encouragement and comments.
Also, I thank Tim Browning and Dan Loughran for comments and suggestions concerning
Manin--Peyre,
homogeneous spaces, and splitness.
Thanks also to Anshul Adve, Peter Sarnak, Philip Tosteson, Katy Woo, and Nina Zubrilina for some interesting discussions.
I thank the Browning Group and Andy O'Desky for many conversations.
This project has received funding from the European Union's Horizon~2020 research and innovation program under the Marie Sk\l{}odowska-Curie Grant Agreement No.~101034413.
Finally, I thank the editors and referees for their detailed input,
which substantially improved the paper.

\bibliographystyle{amsxport}
\bibliography{final.bib}

@article {BoyerHurtubiseMilgram,
    AUTHOR = {Boyer, C. P. and Hurtubise, J. C. and Milgram, R. J.},
     TITLE = {Stability theorems for spaces of rational curves},
   JOURNAL = {Internat. J. Math.},
  FJOURNAL = {International Journal of Mathematics},
    VOLUME = {12},
      YEAR = {2001},
    NUMBER = {2},
     PAGES = {223--262},
      ISSN = {0129-167X,1793-6519},
   MRCLASS = {55P10 (32C18 58D15)},
  MRNUMBER = {1823576},
MRREVIEWER = {T.\ Arleigh\ Crawford},
       DOI = {10.1142/S0129167X01000721},
       URL = {https://doi.org/10.1142/S0129167X01000721},
}

@article {ShapiraZheng2024,
    AUTHOR = {Shapira, Uri and Zheng, Cheng},
     TITLE = {Translates of {$S$}-arithmetic periodic orbits and applications},
   JOURNAL = {J. Anal. Math.},
  FJOURNAL = {Journal d'Analyse Math\'ematique},
    VOLUME = {152},
      YEAR = {2024},
    NUMBER = {1},
     PAGES = {137--180},
      ISSN = {0021-7670,1565-8538},
   MRCLASS = {22E40 (11K60 20G20)},
  MRNUMBER = {4756599},
       DOI = {10.1007/s11854-023-0288-x},
       URL = {https://doi.org/10.1007/s11854-023-0288-x},
}

@article {GorodnikOh2011,
    AUTHOR = {Gorodnik, Alex and Oh, Hee},
     TITLE = {Rational points on homogeneous varieties and equidistribution of adelic periods},
      NOTE = {With an appendix by Mikhail Borovoi},
   JOURNAL = {Geom. Funct. Anal.},
  FJOURNAL = {Geometric and Functional Analysis},
    VOLUME = {21},
      YEAR = {2011},
    NUMBER = {2},
     PAGES = {319--392},
      ISSN = {1016-443X,1420-8970},
   MRCLASS = {11G50 (11D45 11E72 11G35 28D15 37A17 37A45)},
  MRNUMBER = {2795511},
MRREVIEWER = {B.\ Sury},
       DOI = {10.1007/s00039-011-0113-z},
       URL = {https://doi.org/10.1007/s00039-011-0113-z},
}

@article {ELMV2011,
    AUTHOR = {Einsiedler, Manfred and Lindenstrauss, Elon and Michel, Philippe and Venkatesh, Akshay},
     TITLE = {Distribution of periodic torus orbits and {D}uke's theorem for cubic fields},
   JOURNAL = {Ann. of Math. (2)},
  FJOURNAL = {Annals of Mathematics. Second Series},
    VOLUME = {173},
      YEAR = {2011},
    NUMBER = {2},
     PAGES = {815--885},
      ISSN = {0003-486X,1939-8980},
   MRCLASS = {37A17 (11E57 11M36 22E40 37A45)},
  MRNUMBER = {2776363},
MRREVIEWER = {Valentin\ Blomer},
       DOI = {10.4007/annals.2011.173.2.5},
       URL = {https://doi.org/10.4007/annals.2011.173.2.5},
}

@article {ELMV2009,
    AUTHOR = {Einsiedler, Manfred and Lindenstrauss, Elon and Michel, Philippe and Venkatesh, Akshay},
     TITLE = {Distribution of periodic torus orbits on homogeneous spaces},
   JOURNAL = {Duke Math. J.},
  FJOURNAL = {Duke Mathematical Journal},
    VOLUME = {148},
      YEAR = {2009},
    NUMBER = {1},
     PAGES = {119--174},
      ISSN = {0012-7094,1547-7398},
   MRCLASS = {37A17 (11E57 37A45)},
  MRNUMBER = {2515103},
MRREVIEWER = {Dmitry\ Y.\ Kleinbock},
       DOI = {10.1215/00127094-2009-023},
       URL = {https://doi.org/10.1215/00127094-2009-023},
}

@book {Zamojski2010,
    AUTHOR = {Zamojski, Thomas},
     TITLE = {Counting rational matrices of a fixed irreducible characteristic polynomial},
      NOTE = {Thesis (Ph.D.)--The University of Chicago},
 PUBLISHER = {ProQuest LLC, Ann Arbor, MI},
      YEAR = {2010},
     PAGES = {58},
      ISBN = {978-1124-04954-0},
   MRCLASS = {99-05},
  MRNUMBER = {2941378},
       URL =
              {http://gateway.proquest.com/openurl?url_ver=Z39.88-2004&rft_val_fmt=info:ofi/fmt:kev:mtx:dissertation&res_dat=xri:pqdiss&rft_dat=xri:pqdiss:3408614},
}

@article {DerenthalLoughran2010,
    AUTHOR = {Derenthal, U. and Loughran, D.},
     TITLE = {Singular del {P}ezzo surfaces that are equivariant compactifications},
   JOURNAL = {Zap. Nauchn. Sem. S.-Peterburg. Otdel. Mat. Inst. Steklov. (POMI)},
  FJOURNAL = {Rossi\u iskaya Akademiya Nauk. Sankt-Peterburgskoe Otdelenie. Matematicheski\u i\ Institut im. V. A. Steklova. Zapiski Nauchnykh Seminarov (POMI)},
    VOLUME = {377},
      YEAR = {2010},
     PAGES = {26--43, 241},
      ISSN = {0373-2703},
   MRCLASS = {14J26 (11G35 14G05 14M27)},
  MRNUMBER = {2753646},
MRREVIEWER = {Paul\ A.\ Hacking},
       DOI = {10.1007/s10958-010-0174-9},
       URL = {https://doi.org/10.1007/s10958-010-0174-9},
}

@article {FreiPieropan2016,
    AUTHOR = {Frei, Christopher and Pieropan, Marta},
     TITLE = {O-minimality on twisted universal torsors and {M}anin's conjecture over number fields},
   JOURNAL = {Ann. Sci. \'Ec. Norm. Sup\'er. (4)},
  FJOURNAL = {Annales Scientifiques de l'\'Ecole Normale Sup\'erieure. Quatri\`eme S\'erie},
    VOLUME = {49},
      YEAR = {2016},
    NUMBER = {4},
     PAGES = {757--811},
      ISSN = {0012-9593,1873-2151},
   MRCLASS = {14J45 (03C64 14G05)},
  MRNUMBER = {3552013},
MRREVIEWER = {Ulrich\ Derenthal},
       DOI = {10.24033/asens.2295},
       URL = {https://doi.org/10.24033/asens.2295},
}

@article {DerenthalFrei2014,
    AUTHOR = {Derenthal, Ulrich and Frei, Christopher},
     TITLE = {Counting imaginary quadratic points via universal torsors, {II}},
   JOURNAL = {Math. Proc. Cambridge Philos. Soc.},
  FJOURNAL = {Mathematical Proceedings of the Cambridge Philosophical Society},
    VOLUME = {156},
      YEAR = {2014},
    NUMBER = {3},
     PAGES = {383--407},
      ISSN = {0305-0041,1469-8064},
   MRCLASS = {14G05 (11R11 14G25 14J26)},
  MRNUMBER = {3181632},
MRREVIEWER = {Jing-Jing\ Huang},
       DOI = {10.1017/S0305004113000728},
       URL = {https://doi.org/10.1017/S0305004113000728},
}

@article {Bourqui2013,
    AUTHOR = {Bourqui, David},
     TITLE = {Exemples de comptages de courbes sur les surfaces},
   JOURNAL = {Math. Ann.},
  FJOURNAL = {Mathematische Annalen},
    VOLUME = {357},
      YEAR = {2013},
    NUMBER = {4},
     PAGES = {1291--1327},
      ISSN = {0025-5831,1432-1807},
   MRCLASS = {14J26 (11G50 14C20 14D22 14H10)},
  MRNUMBER = {3124933},
MRREVIEWER = {James\ N.\ Brawner},
       DOI = {10.1007/s00208-013-0933-2},
       URL = {https://doi.org/10.1007/s00208-013-0933-2},
}

@MISC {ramin,
    TITLE = {On the meromorphic continuation of a multiple {D}irichlet series},
    AUTHOR = {Pi, Qinghua},
    ORGANIZATION = {MathOverflow},
    NOTE = {URL: \url{https://mathoverflow.net/q/346158} (version: 2019-11-16)},
year={2019},
}

@article {hassett2003integral,
    AUTHOR = {Hassett, Brendan and Tschinkel, Yuri},
     TITLE = {Integral points and effective cones of moduli spaces of stable maps},
   JOURNAL = {Duke Math. J.},
  FJOURNAL = {Duke Mathematical Journal},
    VOLUME = {120},
      YEAR = {2003},
    NUMBER = {3},
     PAGES = {577--599},
      ISSN = {0012-7094,1547-7398},
   MRCLASS = {14H10 (11E16 11G50 14E30)},
  MRNUMBER = {2030096},
MRREVIEWER = {Domenico\ Fiorenza},
       DOI = {10.1215/S0012-7094-03-12033-5},
       URL = {https://doi.org/10.1215/S0012-7094-03-12033-5},
}

@article{tanimoto2015distribution,
    AUTHOR = {Tanimoto, S. and Tanis, J.},
     TITLE = {The distribution of {$S$}-integral points on {${\rm SL}_2$}-orbit closures of binary forms},
   JOURNAL = {J. Lond. Math. Soc. (2)},
  FJOURNAL = {Journal of the London Mathematical Society. Second Series},
    VOLUME = {92},
      YEAR = {2015},
    NUMBER = {3},
     PAGES = {760--777},
      ISSN = {0024-6107,1469-7750},
   MRCLASS = {11D45 (11G50 14G05)},
  MRNUMBER = {3431661},
MRREVIEWER = {Benjamin\ A.\ Hutz},
       DOI = {10.1112/jlms/jdv048},
       URL = {https://doi.org/10.1112/jlms/jdv048},
}

@article{derenthal2020split,
    AUTHOR = {Derenthal, Ulrich and Pieropan, Marta},
     TITLE = {The split torsor method for {M}anin's conjecture},
   JOURNAL = {Trans. Amer. Math. Soc.},
  FJOURNAL = {Transactions of the American Mathematical Society},
    VOLUME = {373},
      YEAR = {2020},
    NUMBER = {12},
     PAGES = {8485--8524},
      ISSN = {0002-9947,1088-6850},
   MRCLASS = {11D45 (11G35 14G05 14J26 14J45)},
  MRNUMBER = {4177266},
MRREVIEWER = {Gabriel\ Andreas\ Dill},
       DOI = {10.1090/tran/8133},
       URL = {https://doi.org/10.1090/tran/8133},
}

@article{derenthal2009counting,
    AUTHOR = {Derenthal, Ulrich},
     TITLE = {Counting integral points on universal torsors},
   JOURNAL = {Int. Math. Res. Not. IMRN},
  FJOURNAL = {International Mathematics Research Notices. IMRN},
      YEAR = {2009},
    NUMBER = {14},
     PAGES = {2648--2699},
      ISSN = {1073-7928,1687-0247},
   MRCLASS = {14G05 (11G35 11G50 14G25)},
  MRNUMBER = {2520770},
MRREVIEWER = {Yuri\ Tschinkel},
       DOI = {10.1093/imrn/rnp030},
       URL = {https://doi.org/10.1093/imrn/rnp030},
}

@article {he2020representation,
    AUTHOR = {He, Hongyu},
     TITLE = {Representations of {$ax+b$} group and {D}irichlet series},
   JOURNAL = {J. Ramanujan Math. Soc.},
  FJOURNAL = {Journal of the Ramanujan Mathematical Society},
    VOLUME = {36},
      YEAR = {2021},
    NUMBER = {1},
     PAGES = {73--84},
      ISSN = {0970-1249,2320-3110},
   MRCLASS = {22E27 (11M41 22E30)},
  MRNUMBER = {4246591},
}

@book {folland2016course,
    AUTHOR = {Folland, Gerald B.},
     TITLE = {A course in abstract harmonic analysis},
    SERIES = {Textbooks in Mathematics},
   EDITION = {Second},
 PUBLISHER = {CRC Press, Boca Raton, FL},
      YEAR = {2016},
     PAGES = {xiii+305 pp.+loose errata},
      ISBN = {978-1-4987-2713-6},
   MRCLASS = {43-01 (22-01 42-01 46-01)},
  MRNUMBER = {3444405},
MRREVIEWER = {D.\ L.\ Salinger},
}

@article{glas2022question,
  title={On a question of {D}avenport and diagonal cubic forms over {$\FF_q(t)$}},
  author={Glas, Jakob and Hochfilzer, Leonhard},
  journal={{\tt arXiv:2208.05422}; Math. Ann.},
  year={2022}
}

@misc{stacks-project,
  author       = {The {Stacks project authors}},
  title        = {The {Stacks project}},
  note = {URL: \url{https://stacks.math.columbia.edu} (accessed 2023-08-25)},
  year         = {2023},
}

@article {manin1993notes,
    AUTHOR = {Manin, Yu. I.},
     TITLE = {Notes on the arithmetic of {F}ano threefolds},
   JOURNAL = {Compositio Math.},
  FJOURNAL = {Compositio Mathematica},
    VOLUME = {85},
      YEAR = {1993},
    NUMBER = {1},
     PAGES = {37--55},
      ISSN = {0010-437X,1570-5846},
   MRCLASS = {11G35 (14G25 14J20 14J45)},
  MRNUMBER = {1199203},
MRREVIEWER = {Joseph\ H.\ Silverman},
       URL = {http://www.numdam.org/item?id=CM_1993__85_1_37_0},
}

@article {duke1993density,
    AUTHOR = {Duke, W. and Rudnick, Z. and Sarnak, P.},
     TITLE = {Density of integer points on affine homogeneous varieties},
   JOURNAL = {Duke Math. J.},
  FJOURNAL = {Duke Mathematical Journal},
    VOLUME = {71},
      YEAR = {1993},
    NUMBER = {1},
     PAGES = {143--179},
      ISSN = {0012-7094,1547-7398},
   MRCLASS = {11G99 (11P21)},
  MRNUMBER = {1230289},
MRREVIEWER = {D.\ Mili\v{c}i\'{c}},
       DOI = {10.1215/S0012-7094-93-07107-4},
       URL = {https://doi.org/10.1215/S0012-7094-93-07107-4},
}

@article {getz2018secondary,
    AUTHOR = {Getz, Jayce R.},
     TITLE = {Secondary terms in asymptotics for the number of zeros of
              quadratic forms over number fields},
   JOURNAL = {J. Lond. Math. Soc. (2)},
  FJOURNAL = {Journal of the London Mathematical Society. Second Series},
    VOLUME = {98},
      YEAR = {2018},
    NUMBER = {2},
     PAGES = {275--305},
      ISSN = {0024-6107},
   MRCLASS = {11D45 (11E12 11E45 11N45)},
  MRNUMBER = {3873109},
MRREVIEWER = {Julia Brandes},
       DOI = {10.1112/jlms.12130},
       URL = {https://doi.org/10.1112/jlms.12130},
}

@book {tran_thesis,
    AUTHOR = {Tran, T. H.},
     TITLE = {Secondary {T}erms in {A}symptotics for the {N}umber of {Z}eros of {Q}uadratic {F}orms},
      NOTE = {Thesis (Ph.D.)--Duke University},
 PUBLISHER = {ProQuest LLC, Ann Arbor, MI},
      YEAR = {2020},
     PAGES = {110 pages},
      ISBN = {979-8645-46643-5},
   MRCLASS = {99-05},
       URL = {http://gateway.proquest.com/openurl?url_ver=Z39.88-2004&rft_val_fmt=info:ofi/fmt:kev:mtx:dissertation&res_dat=xri:pqm&rft_dat=xri:pqdiss:27741755},
}

@article {gorodnik2008manin,
    AUTHOR = {Gorodnik, Alex and Maucourant, Fran\c{c}ois and Oh, Hee},
     TITLE = {Manin's and {P}eyre's conjectures on rational points and adelic mixing},
   JOURNAL = {Ann. Sci. \'{E}c. Norm. Sup\'{e}r. (4)},
  FJOURNAL = {Annales Scientifiques de l'\'{E}cole Normale Sup\'{e}rieure. Quatri\`eme S\'{e}rie},
    VOLUME = {41},
      YEAR = {2008},
    NUMBER = {3},
     PAGES = {383--435},
      ISSN = {0012-9593,1873-2151},
   MRCLASS = {14G25 (11G35 14G05 20G35 37A25)},
  MRNUMBER = {2482443},
MRREVIEWER = {Boris\ \`E.\ Kunyavski\u{\i}},
       DOI = {10.24033/asens.2071},
       URL = {https://doi.org/10.24033/asens.2071},
}

@incollection {salberger1998tamagawa,
    AUTHOR = {Salberger, Per},
     TITLE = {Tamagawa measures on universal torsors and points of bounded
              height on {F}ano varieties},
      NOTE = {Nombre et r\'{e}partition de points de hauteur born\'{e}e
              (Paris, 1996)},
   JOURNAL = {Ast\'{e}risque},
  FJOURNAL = {Ast\'{e}risque},
    NUMBER = {251},
      YEAR = {1998},
     PAGES = {91--258},
      ISSN = {0303-1179,2492-5926},
   MRCLASS = {11G50 (11G35 14G05 14G25 14J45)},
  MRNUMBER = {1679841},
MRREVIEWER = {David\ Harari},
}

@article{chambert2010integral,
    title={Integral points of bounded height on toric varieties},
    author = {Chambert-Loir, Antoine and Tschinkel, Yuri},
    journal = {{\tt arXiv:1006.3345}},
    year = {2012}
}

@article{wang2024_isolating_special_solutions,
	AUTHOR = {Wang, Victor Y.},
	TITLE = {Special cubic zeros and the dual variety},
   JOURNAL = {J. Lond. Math. Soc. (2)},
  FJOURNAL = {Journal of the London Mathematical Society. Second Series},
    VOLUME = {110},
      YEAR = {2024},
    NUMBER = {3},
     PAGES = {Paper No. e12975, 35 pp},
       DOI = {10.1112/jlms.12975},
       URL = {https://doi.org/10.1112/jlms.12975},
}

@article {shalika2007rational,
    AUTHOR = {Shalika, Joseph and Takloo-Bighash, Ramin and Tschinkel, Yuri},
     TITLE = {Rational points on compactifications of semi-simple groups},
   JOURNAL = {J. Amer. Math. Soc.},
  FJOURNAL = {Journal of the American Mathematical Society},
    VOLUME = {20},
      YEAR = {2007},
    NUMBER = {4},
     PAGES = {1135--1186},
      ISSN = {0894-0347,1088-6834},
   MRCLASS = {14G05 (11F70 11G50)},
  MRNUMBER = {2328719},
MRREVIEWER = {Tam\'{a}s\ Szamuely},
       DOI = {10.1090/S0894-0347-07-00572-3},
       URL = {https://doi.org/10.1090/S0894-0347-07-00572-3},
}

@article {shalika2015height,
    AUTHOR = {Shalika, Joseph and Tschinkel, Yuri},
     TITLE = {Height zeta functions of equivariant compactifications of
              unipotent groups},
   JOURNAL = {Comm. Pure Appl. Math.},
  FJOURNAL = {Communications on Pure and Applied Mathematics},
    VOLUME = {69},
      YEAR = {2016},
    NUMBER = {4},
     PAGES = {693--733},
      ISSN = {0010-3640,1097-0312},
   MRCLASS = {14G40 (11G50 11M41 14G10)},
  MRNUMBER = {3465086},
MRREVIEWER = {Sinan\ \"{U}nver},
       DOI = {10.1002/cpa.21572},
       URL = {https://doi.org/10.1002/cpa.21572},
}

@misc{elkiescomplete,
	title = {Complete cubic parametrization of the {F}ermat cubic surface},
	author = {Elkies, Noam},
	year = {2001},
	note = {URL: \url{https://people.math.harvard.edu/~elkies/4cubes.html}}
}

@article{bourgain2022multi,
    AUTHOR = {Bourgain, Jean and Mirek, Mariusz and Stein, Elias M. and
              Wright, James},
     TITLE = {On a multi-parameter variant of the {B}ellow--{F}urstenberg problem},
   JOURNAL = {Forum Math. Pi},
  FJOURNAL = {Forum of Mathematics. Pi},
    VOLUME = {11},
      YEAR = {2023},
     PAGES = {Paper No. e23, 64 pp},
      ISSN = {2050-5086},
   MRCLASS = {37A30 (11L07 11P55 37A46 42A45 42B25)},
  MRNUMBER = {4651616},
       DOI = {10.1017/fmp.2023.21},
       URL = {https://doi.org/10.1017/fmp.2023.21},
}

@article {parsell2012hua,
    AUTHOR = {Parsell, Scott T.},
     TITLE = {Hua-type iteration for multidimensional {W}eyl sums},
   JOURNAL = {Mathematika},
  FJOURNAL = {Mathematika. A Journal of Pure and Applied Mathematics},
    VOLUME = {58},
      YEAR = {2012},
    NUMBER = {2},
     PAGES = {209--224},
      ISSN = {0025-5793},
   MRCLASS = {11D45 (11D72 11L07 11L15 11P55)},
  MRNUMBER = {2965969},
MRREVIEWER = {S. W. Graham},
       DOI = {10.1112/S0025579312000034},
       URL = {https://doi.org/10.1112/S0025579312000034},
}

@article {parsell2013near,
    AUTHOR = {Parsell, Scott T. and Prendiville, Sean M. and Wooley, Trevor
              D.},
     TITLE = {Near-optimal mean value estimates for multidimensional {W}eyl
              sums},
   JOURNAL = {Geom. Funct. Anal.},
  FJOURNAL = {Geometric and Functional Analysis},
    VOLUME = {23},
      YEAR = {2013},
    NUMBER = {6},
     PAGES = {1962--2024},
      ISSN = {1016-443X},
   MRCLASS = {11D45 (11D72 11L07 11L15 11P55)},
  MRNUMBER = {3132907},
MRREVIEWER = {D. R. Heath-Brown},
       DOI = {10.1007/s00039-013-0242-7},
       URL = {https://doi.org/10.1007/s00039-013-0242-7},
}

@article {bombieri1966exponential,
    AUTHOR = {Bombieri, Enrico},
     TITLE = {On exponential sums in finite fields},
   JOURNAL = {Amer. J. Math.},
  FJOURNAL = {American Journal of Mathematics},
    VOLUME = {88},
      YEAR = {1966},
     PAGES = {71--105},
      ISSN = {0002-9327},
   MRCLASS = {14.48 (10.41)},
  MRNUMBER = {200267},
MRREVIEWER = {D. J. Lewis},
       DOI = {10.2307/2373048},
       URL = {https://doi.org/10.2307/2373048},
}

@article {bettin2018trilinear,
    AUTHOR = {Bettin, Sandro and Chandee, Vorrapan},
     TITLE = {Trilinear forms with {K}loosterman fractions},
   JOURNAL = {Adv. Math.},
  FJOURNAL = {Advances in Mathematics},
    VOLUME = {328},
      YEAR = {2018},
     PAGES = {1234--1262},
      ISSN = {0001-8708},
   MRCLASS = {11M06 (11L05 11M26)},
  MRNUMBER = {3771152},
MRREVIEWER = {Ping Xi},
       DOI = {10.1016/j.aim.2018.01.026},
       URL = {https://doi.org/10.1016/j.aim.2018.01.026},
}

@article {duke1997bilinear,
    AUTHOR = {Duke, W. and Friedlander, J. and Iwaniec, H.},
     TITLE = {Bilinear forms with {K}loosterman fractions},
   JOURNAL = {Invent. Math.},
  FJOURNAL = {Inventiones Mathematicae},
    VOLUME = {128},
      YEAR = {1997},
    NUMBER = {1},
     PAGES = {23--43},
      ISSN = {0020-9910},
   MRCLASS = {11L07},
  MRNUMBER = {1437494},
MRREVIEWER = {D. R. Heath-Brown},
       DOI = {10.1007/s002220050135},
       URL = {https://doi.org/10.1007/s002220050135},
}

@article {derenthal2015equivariant,
    AUTHOR = {Derenthal, Ulrich and Loughran, Daniel},
     TITLE = {Equivariant compactifications of two-dimensional algebraic groups},
   JOURNAL = {Proc. Edinb. Math. Soc. (2)},
  FJOURNAL = {Proceedings of the Edinburgh Mathematical Society. Series II},
    VOLUME = {58},
      YEAR = {2015},
    NUMBER = {1},
     PAGES = {149--168},
      ISSN = {0013-0915},
   MRCLASS = {14L30 (11D45 14J26)},
  MRNUMBER = {3333982},
MRREVIEWER = {Ralph J. Bremigan},
       DOI = {10.1017/S001309151400042X},
       URL = {https://doi.org/10.1017/S001309151400042X},
}

@article {peyre1995hauteurs,
    AUTHOR = {Peyre, Emmanuel},
     TITLE = {Hauteurs et mesures de {T}amagawa sur les vari\'{e}t\'{e}s de {F}ano},
   JOURNAL = {Duke Math. J.},
  FJOURNAL = {Duke Mathematical Journal},
    VOLUME = {79},
      YEAR = {1995},
    NUMBER = {1},
     PAGES = {101--218},
      ISSN = {0012-7094},
   MRCLASS = {11G35 (14G05 14J45)},
  MRNUMBER = {1340296},
MRREVIEWER = {Shouwu Zhang},
       DOI = {10.1215/S0012-7094-95-07904-6},
       URL = {https://doi.org/10.1215/S0012-7094-95-07904-6},
}

@article {franke1989rational,
    AUTHOR = {Franke, Jens and Manin, Yuri I. and Tschinkel, Yuri},
     TITLE = {Rational points of bounded height on {F}ano varieties},
   JOURNAL = {Invent. Math.},
  FJOURNAL = {Inventiones Mathematicae},
    VOLUME = {95},
      YEAR = {1989},
    NUMBER = {2},
     PAGES = {421--435},
      ISSN = {0020-9910},
   MRCLASS = {11G35 (14G25 14J20)},
  MRNUMBER = {974910},
MRREVIEWER = {Joseph H. Silverman},
       DOI = {10.1007/BF01393904},
       URL = {https://doi.org/10.1007/BF01393904},
}

@book {hindry2000diophantine,
    AUTHOR = {Hindry, Marc and Silverman, Joseph H.},
     TITLE = {Diophantine geometry},
    SERIES = {Graduate Texts in Mathematics},
    VOLUME = {201},
      NOTE = {An introduction},
 PUBLISHER = {Springer-Verlag, New York},
      YEAR = {2000},
     PAGES = {xiv+558},
      ISBN = {0-387-98975-7; 0-387-98981-1},
   MRCLASS = {11Gxx (11-02 11G10 11G30 11G50 14G25)},
  MRNUMBER = {1745599},
MRREVIEWER = {Dino J. Lorenzini},
       DOI = {10.1007/978-1-4612-1210-2},
       URL = {https://doi.org/10.1007/978-1-4612-1210-2},
}

@book {tanimoto2012distribution,
    AUTHOR = {Tanimoto, Sho},
     TITLE = {Distribution of rational points on algebraic varieties},
      NOTE = {Thesis (Ph.D.)--New York University},
 PUBLISHER = {ProQuest LLC, Ann Arbor, MI},
      YEAR = {2012},
     PAGES = {162},
      ISBN = {978-1267-58466-3},
   MRCLASS = {Thesis},
  MRNUMBER = {3078522},
       URL =
              {http://gateway.proquest.com/openurl?url_ver=Z39.88-2004&rft_val_fmt=info:ofi/fmt:kev:mtx:dissertation&res_dat=xri:pqm&rft_dat=xri:pqdiss:3524195},
}

@article {heller2015equivariant,
    AUTHOR = {Heller, J. and Voineagu, M. and {\O}stv{\ae}r, P. A.},
     TITLE = {Equivariant cycles and cancellation for motivic cohomology},
   JOURNAL = {Doc. Math.},
  FJOURNAL = {Documenta Mathematica},
    VOLUME = {20},
      YEAR = {2015},
     PAGES = {269--332},
      ISSN = {1431-0635},
   MRCLASS = {14F42 (14C15 19E15 55N91)},
  MRNUMBER = {3398714},
MRREVIEWER = {Giuseppe Ancona},
}

@article {kresch2008effectivity,
    AUTHOR = {Kresch, Andrew and Tschinkel, Yuri},
     TITLE = {Effectivity of {B}rauer-{M}anin obstructions},
   JOURNAL = {Adv. Math.},
  FJOURNAL = {Advances in Mathematics},
    VOLUME = {218},
      YEAR = {2008},
    NUMBER = {1},
     PAGES = {1--27},
      ISSN = {0001-8708},
   MRCLASS = {14G25 (14F22)},
  MRNUMBER = {2409407},
MRREVIEWER = {Vasil\cprime  \={I}. Andr\={\i}\u{\i}chuk},
       DOI = {10.1016/j.aim.2007.11.017},
       URL = {https://doi.org/10.1016/j.aim.2007.11.017},
}

@article {chambert2010igusa,
    AUTHOR = {Chambert-Loir, Antoine and Tschinkel, Yuri},
     TITLE = {Igusa integrals and volume asymptotics in analytic and adelic
              geometry},
   JOURNAL = {Confluentes Math.},
  FJOURNAL = {Confluentes Mathematici},
    VOLUME = {2},
      YEAR = {2010},
    NUMBER = {3},
     PAGES = {351--429},
      ISSN = {1793-7442},
   MRCLASS = {11G50 (11G35 14G05)},
  MRNUMBER = {2740045},
MRREVIEWER = {Boris \`E. Kunyavski\u{\i}},
       DOI = {10.1142/S1793744210000223},
       URL = {https://doi.org/10.1142/S1793744210000223},
}

@article {chambert2000pointsI,
    AUTHOR = {Chambert-Loir, Antoine and Tschinkel, Yuri},
     TITLE = {Points of bounded height on equivariant compactifications of vector groups. {I}},
   JOURNAL = {Compositio Math.},
  FJOURNAL = {Compositio Mathematica},
    VOLUME = {124},
      YEAR = {2000},
    NUMBER = {1},
     PAGES = {65--93},
      ISSN = {0010-437X,1570-5846},
   MRCLASS = {11G50 (11G35 14G05 14G40)},
  MRNUMBER = {1797654},
MRREVIEWER = {Matthias\ Strauch},
       DOI = {10.1023/A:1002431622732},
       URL = {https://doi.org/10.1023/A:1002431622732},
}

@article {chambert2002distribution,
    AUTHOR = {Chambert-Loir, Antoine and Tschinkel, Yuri},
     TITLE = {On the distribution of points of bounded height on equivariant compactifications of vector groups},
   JOURNAL = {Invent. Math.},
  FJOURNAL = {Inventiones Mathematicae},
    VOLUME = {148},
      YEAR = {2002},
    NUMBER = {2},
     PAGES = {421--452},
      ISSN = {0020-9910},
   MRCLASS = {11G50 (11G35 14G05 14G10)},
  MRNUMBER = {1906155},
MRREVIEWER = {Tam\'{a}s Szamuely},
       DOI = {10.1007/s002220100200},
       URL = {https://doi.org/10.1007/s002220100200},
}

@incollection {chambert2001fonctions,
    AUTHOR = {Chambert-Loir, Antoine and Tschinkel, Yuri},
     TITLE = {Fonctions z\^{e}ta des hauteurs des espaces fibr\'{e}s},
 BOOKTITLE = {Rational points on algebraic varieties},
    SERIES = {Progr. Math.},
    VOLUME = {199},
     PAGES = {71--115},
 PUBLISHER = {Birkh\"{a}user, Basel},
      YEAR = {2001},
   MRCLASS = {11G50 (11G35 14G05 14G10 14G40)},
  MRNUMBER = {1875171},
MRREVIEWER = {Matthias Strauch},
}

@article {knill2012analytic,
    AUTHOR = {Knill, Oliver and Lesieutre, John},
     TITLE = {Analytic continuation of {D}irichlet series with almost
              periodic coefficients},
   JOURNAL = {Complex Anal. Oper. Theory},
  FJOURNAL = {Complex Analysis and Operator Theory},
    VOLUME = {6},
      YEAR = {2012},
    NUMBER = {1},
     PAGES = {237--255},
      ISSN = {1661-8254},
   MRCLASS = {30B40 (11M41 30B50 33E20)},
  MRNUMBER = {2886617},
MRREVIEWER = {Stamatis Koumandos},
       DOI = {10.1007/s11785-010-0064-7},
       URL = {https://doi.org/10.1007/s11785-010-0064-7},
}

@article {diaconu2003multiple,
    AUTHOR = {Diaconu, Adrian and Goldfeld, Dorian and Hoffstein, Jeffrey},
     TITLE = {Multiple {D}irichlet series and moments of zeta and
              {$L$}-functions},
   JOURNAL = {Compositio Math.},
  FJOURNAL = {Compositio Mathematica},
    VOLUME = {139},
      YEAR = {2003},
    NUMBER = {3},
     PAGES = {297--360},
      ISSN = {0010-437X},
   MRCLASS = {11M06 (11F66 11M41)},
  MRNUMBER = {2041614},
MRREVIEWER = {Emmanuel P. Royer},
       DOI = {10.1023/B:COMP.0000018137.38458.68},
       URL = {https://doi.org/10.1023/B:COMP.0000018137.38458.68},
}

@article {Breteche,
    AUTHOR = {de la Bret\`eche, R\'{e}gis},
     TITLE = {Estimation de sommes multiples de fonctions arithm\'{e}tiques},
   JOURNAL = {Compositio Math.},
  FJOURNAL = {Compositio Mathematica},
    VOLUME = {128},
      YEAR = {2001},
    NUMBER = {3},
     PAGES = {261--298},
      ISSN = {0010-437X},
   MRCLASS = {11N37 (11N45)},
  MRNUMBER = {1858338},
MRREVIEWER = {Y.-F. S. P\'{e}termann},
       DOI = {10.1023/A:1011803816545},
       URL = {https://doi.org/10.1023/A:1011803816545},
}

@article {batyrev1995manin,
    AUTHOR = {Batyrev, Victor V. and Tschinkel, Yuri},
     TITLE = {Manin's conjecture for toric varieties},
   JOURNAL = {J. Algebraic Geom.},
  FJOURNAL = {Journal of Algebraic Geometry},
    VOLUME = {7},
      YEAR = {1998},
    NUMBER = {1},
     PAGES = {15--53},
      ISSN = {1056-3911},
   MRCLASS = {11G50 (11G35 14G05 14G40 14M25)},
  MRNUMBER = {1620682},
}

@article {takloo2016distribution,
    AUTHOR = {Takloo-Bighash, Ramin and Tanimoto, Sho},
     TITLE = {Distribution of rational points of bounded height on
              equivariant compactifications of {${\rm PGL}_2$} {I}},
   JOURNAL = {Res. Number Theory},
  FJOURNAL = {Research in Number Theory},
    VOLUME = {2},
      YEAR = {2016},
     PAGES = {Paper No. 6, 35 pp},
      ISSN = {2522-0160},
   MRCLASS = {14G05 (11G50 14M27)},
  MRNUMBER = {3501019},
MRREVIEWER = {J\"{o}rg Jahnel},
       DOI = {10.1007/s40993-016-0037-7},
       URL = {https://doi.org/10.1007/s40993-016-0037-7},
}

@incollection {tanimoto2012height,
    AUTHOR = {Tanimoto, Sho and Tschinkel, Yuri},
     TITLE = {Height zeta functions of equivariant compactifications of
              semi-direct products of algebraic groups},
 BOOKTITLE = {Zeta functions in algebra and geometry},
    SERIES = {Contemp. Math.},
    VOLUME = {566},
     PAGES = {119--157},
 PUBLISHER = {Amer. Math. Soc., Providence, RI},
      YEAR = {2012},
   MRCLASS = {11G50 (11G35)},
  MRNUMBER = {2858922},
MRREVIEWER = {Timothy D. Browning},
       DOI = {10.1090/conm/566/11218},
       URL = {https://doi.org/10.1090/conm/566/11218},
}

@article{heath1996new,
	AUTHOR = {Heath-Brown, D. R.},
	TITLE = {A new form of the circle method, and its application to quadratic forms},
	JOURNAL = {J. Reine Angew. Math.},
	FJOURNAL = {Journal f\"{u}r die Reine und Angewandte Mathematik. [Crelle's Journal]},
	VOLUME = {481},
	YEAR = {1996},
	PAGES = {149--206},
	ISSN = {0075-4102},
	MRCLASS = {11P55 (11E20)},
	MRNUMBER = {1421949},
	MRREVIEWER = {R. C. Baker},
	DOI = {10.1515/crll.1996.481.149},
	URL = {https://doi.org/10.1515/crll.1996.481.149}
}

@article{hooley1986HasseWeil,
	AUTHOR = {Hooley, Christopher},
	TITLE = {On {W}aring's problem},
	JOURNAL = {Acta Math.},
	FJOURNAL = {Acta Mathematica},
	VOLUME = {157},
	YEAR = {1986},
	NUMBER = {1-2},
	PAGES = {49--97},
	ISSN = {0001-5962},
	MRCLASS = {11P05 (11G40 11L05)},
	MRNUMBER = {857679},
	MRREVIEWER = {D. R. Heath-Brown},
	DOI = {10.1007/BF02392591},
	URL = {https://doi.org/10.1007/BF02392591}
}

@book{iwaniec2004analytic,
	AUTHOR = {Iwaniec, Henryk and Kowalski, Emmanuel},
	TITLE = {Analytic number theory},
	SERIES = {American Mathematical Society Colloquium Publications},
	VOLUME = {53},
	PUBLISHER = {American Mathematical Society, Providence, RI},
	YEAR = {2004},
	PAGES = {xii+615},
	ISBN = {0-8218-3633-1},
	MRCLASS = {11-02 (11Fxx 11Lxx 11Mxx 11Nxx)},
	MRNUMBER = {2061214},
	MRREVIEWER = {K. Soundararajan},
	DOI = {10.1090/coll/053},
	URL = {https://doi.org/10.1090/coll/053}
}

@book{vaughan1997hardy,
	AUTHOR = {Vaughan, R. C.},
	TITLE = {The {H}ardy-{L}ittlewood method},
	SERIES = {Cambridge Tracts in Mathematics},
	VOLUME = {125},
	EDITION = {Second},
	PUBLISHER = {Cambridge University Press, Cambridge},
	YEAR = {1997},
	PAGES = {xiv+232},
	ISBN = {0-521-57347-5},
	MRCLASS = {11P55 (11L15 11P05)},
	MRNUMBER = {1435742},
	MRREVIEWER = {D. R. Heath-Brown},
	DOI = {10.1017/CBO9780511470929},
	URL = {https://doi.org/10.1017/CBO9780511470929}
}



\end{document}